\newtheorem{corollary}{Corollary}[section]
\newtheorem{theorem}[corollary]{Theorem}
\newtheorem{lemma}[corollary]{Lemma}
\newtheorem{proposition}[corollary]{Proposition}
\newtheorem{conjecture}[corollary]{Conjecture}
\newcommand{\Prob} {{\mathbb P}}
\newcommand{\Z}{{\mathbb Z}}
\newcommand{\E}{{\mathbb E}}
\newcommand{\R}{{\mathbb{R}}}
\newcommand{\C}{{\mathbb C}}
\newcommand{\dist}{{\rm dist}}
\def \Im {{\rm Im}}
\def \Re {{\rm Re}}
\def \p {\partial}
\def \Half {{\mathbb H}}
\def \Disk {{\mathbb D}}
\def \diam {{\rm diam}}
\def \hcap {{\rm hcap}}
\def \ppsum {{\sum^{PP}}}
\def \F  {{\mathcal F}}
\def \exc {{\mathcal E}}
\def \z {{\bf z}}
\def \w {{\bf w}}
\def \n{{\bf n}}
\def \cottwo {{\cot_2}}
\def \annmass {{F}}
\newenvironment{definition}[1][Definition]{\begin{trivlist}
\item[\hskip \labelsep {\bfseries #1}]}{\end{trivlist}}
\newenvironment{example}[1][Example]{\begin{trivlist}
\item[\hskip \labelsep {\bfseries #1}]}{\end{trivlist}}
\def \cent{{\bf c}}
\def \cpois {\mathcal H}
\def \tmass {\Psi}
\def \funnerone {\tilde \funone}
\def\funnerthree{\tilde \funthree}
\def \funnertwo{ \biggy}
\def \funone{{\bf A}}
\def  \funthree{{\bf L}}
\def \biggy{\tilde {\bf H}_I}
\def \zhanh {{\mathbf H}}
\def \zhan {\zhanh}
\def \nzhan {{\mathbf J}}
\def \newzhan{{\Theta}}
\def \Heuristic {\noindent {$\clubsuit$}}
\def \linehere { {\hrule}}
\def \labove { \mtwo \linehere \linehere \linehere \ms   }
\def \lbelow {{\ms \linehere \linehere \linehere \mtwo}}
\def \mtwo {{\medskip \medskip}}
\def \ms {{\medskip}}
\title[SLE in multiply connected domains]
{Defining SLE in multiply connected domains with the Brownian loop measure}
\author{Gregory F. Lawler}
\thanks{Research supported by National
 Science Foundation grant DMS-0907143.}
\begin{document}
\begin{abstract}
We  define  the Schramm-Loewner evolution ($SLE_\kappa$)
in multiply connected domains for $\kappa \leq 4$ using the Brownian loop measure.
We show that in the case of the annulus, this is the same as the measure
obtained recently by Dapeng Zhan.  We use the loop formulation to give a
different derivation of the partial differential equation for the partition
function in the annulus.
\end{abstract}
\maketitle
%\tableofcontents

\section{Introduction}

The Schramm-Loewner evolution ($SLE$) is a conformally
invariant or conformally covariant family of measures on curves
in the plane.  It was proposed by Schramm \cite{Schramm}
as a candidate for the scaling limit of loop-erased
walk and percolation interfaces, and it has turned out
to be the crucial tool in the rigorous development of
two-dimensional critical phenomenon.  Before $SLE$, there
had been much theoretical, but mathematically nonrigorous,
development using conformal field theory.

In conformal field theory, the standard parameter to
characterize a field is the central charge $\cent$.  There
is a major difference between $\cent \leq 1$ and
$\cent > 1$, and $SLE$ appears in the former case
which is all we consider
in this paper.   The parameter for $SLE$ is denoted $\kappa > 0$.
For each $\cent < 1$, there are two values of $\kappa$, one
less than four and one greater than four, given by
\[     \cent = \frac{(6 - \kappa)(3\kappa-8)}
  {2 \kappa}. \] The smaller value
corresponds to the simple curve case, and we concentrate
on this in this paper.  For $\cent = 1$, $\kappa = 4$ is
a double root which also corresponds to simple curves.
Important examples are $\kappa = 2, \cent = -2$ (loop-erased
walks), $\kappa = 8/3, \cent = 0$ (self-avoiding walks),
$\kappa = 3, \cent = 1/2$ (interfaces of Ising clusters),
$\kappa = 4, \cent = 1$ (interfaces of free fields).  
In all cases, but for self-avoiding walk, $SLE$ has been
proved to be the scaling limits of the
 models \cite{LSWlerw,StasIsing,SSgaussian}
 
 \labove \textsf
{\begin{small}  \Heuristic  
 The letter $c$ is standard in the physics literature for central
 charge. Since we use $c$ for arbitrary constants, it is not
 a good choice for a parameter.  Our compromise is to use
 a bold-face $\cent$.
 \end{small}}
 \lbelow

Another conformally invariant measure on (in this case,
nonsimple) curves in the plane
is given by Brownian motion.   A variant of this measure,
called the Brownian loop measure, arises in the 
study of $SLE$ \cite{LSWrest,LW}.  This is a $\sigma$-finite
measure on nonsimple curves that arises as a scaling limit
of a random walk loop measure, see \cite{LJose} and \cite[Chapter 9]
{LLimic}.  It is closely related to the determinant of the
Laplacian and the Gaussian free field, see, e.g., \cite{DubPart},
but we will only need to view it as a measure on paths.
The key properties of the measure are conformal invariance
and the restriction property. 

In this paper we view $SLE_\kappa$ as a (positive)
measure 
$\mu_D(z,w)$ on curves (modulo increasing reparametrization)
in a domain $D$ of total mass $\tmass_D(z,w)$
connecting two distinct points $z,w$.
Here $z,w$ can be interior points or boundary points
but in the latter case we make some smoothness assumptions
on the boundary.  We expect these curves to 
arise as normalized
limits of measures on lattice curves. If $0 < \tmass_D(z,w)
< \infty$, we can normalize the measure to produce a
probability measure that we denote by $\mu_D^\#(z,w)$.
There are various assumptions we can make on the measures.
We will be more precise later, but let us discuss them now.
The first is conformal covariance:
\begin{itemize}
\item {\bf Conformal covariance.}
 There exist boundary and interior scaling exponents
$b,\tilde b$ such that if $f:D \rightarrow f(D)$ is
a conformal transformation,
\[       f \circ \mu_D(z,w) = |f'(z)|^{b_z}
\, |f'(w)|^{b_w} \, \mu_{f(D)}(f(z),f(w)),\]
where $b_\zeta = b$ if $\zeta \in \p D$ and
$b_\zeta = \tilde b$ if $z \in D$.
\end{itemize}
This implies  conformal \textit{invariance}  of
the probability measures,
\[  f \circ \mu_D^\#(z,w) =  
  \mu_{f(D)}^\#(f(z),f(w)).\]
If one is only considering the probability measures, then
one does not need to make smoothness assumptions at
the boundary. The domain Markov property below uses
the probability measures for nonsmooth boundary points.

There are three other assumptions we will discuss.  It
turns out that they are redundant, so we do not need
to make all of them assumptions, but this is not
obvious.
\begin{itemize}

\item \textbf{Reversibility.}  The measure $\mu_D(w,z)$
can be obtained from $\mu_D(z,w)$ by reversing the paths.

\item \textbf{Domain Markov property}.  In the probability
measure $\mu_D^\#(z,w),$ given an initial segment of
the curve $\gamma_t = \gamma(0,t]$, 
the conditional distribution of
the remainder of the curve is $\mu_{D \setminus \gamma_t}
^\#(\gamma(t),w).$

\item  \textbf{Boundary perturbation.}  Suppose $D_1 \subset 
D$ and the domains agree in neighborhods of $z,w$.  Then
$\mu_{D_1}(z,w)$ is absolutely continuous with respect
to $ \mu_{D}(z,w)$.  In fact, if $\gamma$ is a curve connecting $z$
and $w$ in $D_1$, then the Radon-Nikodym derivative
is given by
\[        \exp \left\{\frac \cent 2 \, m_D(
   \gamma, D \setminus D_1)\right\} , \]
where $m_D(\gamma,D \setminus D_1)$ denotes the
   (Brownian) loop
measure of loops in $D$ that intersect both $\gamma$
and $D_1$. 

\end{itemize}

Schramm \cite{Schramm}  studied the probability
measures $\mu_D^\#(z,w)$ where $z \in \p D$ and $w \in 
D$ or $w \in \p D$.  He showed that for simply connected
$D$, there is only a one-parameter family of measures
satisfying conformal invariance and the domain
Markov property.  He used $\kappa$ as the parameter
and these are now
called   radial and chordal $SLE_\kappa$ (in $D$
from $z$ to $w$), respectively.  It is known \cite{RS,Beffara}
that for $\kappa \leq 4$, the measure is supported
on simple curves of Hausdorff dimension $d = 1
+ \frac \kappa 8 \in (1,\frac 32]$. 
The following has been proved for $SLE_\kappa,
0 < \kappa \leq 4$ in simply connected domains.
\begin{itemize}

\item   Let
\[ b = \frac{6-\kappa}{2\kappa} 
, \;\;\;\;
 \tilde b =  \frac{b(\kappa-2)}{4
 } = \frac{2b + \cent}{12}.\]
Let $\tmass_\Half(0,1) = 1, \tmass_\Disk(1,0) = $
and define $\tmass_D(z,w)$ for other simply
connected domains by
the  scaling rule
\[     \tmass_D(z,w) = |f'(z)|^b \, |f'(w)|^{b_w} \,
 \tmass_{f(D)}(f(w),f(w)), \]
 where $b_w = b$ if $w \in \p D$ and $b_w = \tilde b$
 if $w \in D$. 
Then \cite{LSWrest,LW,LPark} 
if $\mu_D(z,w)
 = \tmass_D(z,w) \, \mu_D^\#(z,w)$, 
 the family $\{\mu_D(z,w)\}$ restricted to
 simply connected domains satisfies conformal
 covariance, domain Markov property, and the
 boundary perturbation rule.
 
 \item If $w \in \p D$, then \cite{Zhanrev}
 $\mu_D^\#(w,z)$ is the same as the reversal
 of $\mu_D^\#(z,w)$.

\end{itemize}

In the chordal case, $\tmass_D(z,w) = H_{\p D}(z,w)^b$
where $H_{\p D}(z,w)$ denotes a multiple of the boundary
Poisson kernel.  This follows from the scaling rule for
the kernel,
\[    H_{\p D}(z,w) = |f'(z)| \, |f'(w)| \, H_{\p f(D)}
  (f(z), f(w)). \] 
If $w \in D$, the Poisson kernel satisfies
\[        H_D(w,z) = |f'(z)| \, H_{f(D)}(f(w), f(z)) , \]
and hence $\tmass_D(w,z)$ is not given by a power of the
Poisson kernel.  If $\kappa = 2$, for which $b=1,\tilde b=0$,
the partition function is given by the boundary Poisson
kernel (chordal case) or Poisson kernel (radial case).  One
can also see this from the relationship with loop-erased
random walk.

In his argument, Schramm uses the fact that if
one slits a simply connected domain $D$ at its boundary
then the resulting domain $D \setminus \gamma_t$
is also simply connected and hence by the Riemann
mapping theorem is conformally equivalent
to the original domain.  If $D$ is not simply
connected, or $D$ is ``slit on the inside'', this
is no longer true.  For this reason, conformal
invariance of the probability measures and the
domain Markov property are not sufficient to determine
the measures $\mu_D^\#(z,w)$ for nonsimply connected
domains. In \cite{LJSP} it was suggested to use
the boundary perturbation rule to extend the definition.
We continue this approach in this paper.  There have been
other approaches, see, e.g., \cite{BF2,BF1,HLeD,HBB}, but none
have directly used the boundary perturbation rule.   

We
will show the following.  (If $z$ or $w$
are boundary points, we implicitly
assume sufficient smoothness
at the boundary.)
\begin{itemize}
\item  There is a unique way (up to some arbitrary
multiplicative constants)  to extend the measures
$\mu_D(z,w)$ so that it satisfies conformal
covariance and the boundary perturbation rule.
\item  If $\kappa \leq 8/3$ ($\cent \leq 0)$, then
$\tmass_D(z,w) < \infty$, and the probability measures
satisfy the domain Markov property.
\item  If $8/3 < \kappa \leq 4$, and $D$ is $1$-connected,
$\tmass_D(z,w) < \infty$.
\end{itemize}
The key observation is that the restriction property
for the Brownian loop measure holds for multiply
connected domains.
We conjecture that $\tmass_D(z,w) < \infty$ for all
$\kappa \leq 4$, but have not shown this.  However, we prove
a weaker fact.
\begin{itemize}
\item  If $\kappa \leq 4$ and $D_1$ is a simply connected
subdomain and $\mu_D(z,w;D_1)$ denotes the measure $\mu_D(z,w)$ restricted
to curves staying in $D_1$, then
\[             \|\mu_D(z,w;D_1)\| < \infty. \]
\item  The probability measures
$\mu_D^\#(z,w;D_1)$ satisfy the domain Markov property.
\item  If $\tmass_D(z,w)< \infty$ for all $k$-connected domains,
then the measures $\mu_D^\#(z,w)$, restricted to $k$-connected
domains, satisfy the domain Markov property.
\end{itemize}
The next property will
 follow from the definition and Zhan's result for
simply connected domains \cite{Zhanrev}.
\begin{itemize}
\item  The measure $\mu_D(w,z)$ is the reversal
of $\mu_D(z,w)$.  
\end{itemize}

Zhan \cite{Zhanannulus} recently took a different approach
to extending $SLE_\kappa$ in the case of an annulus.
Roughly speaking, he shows that there is a unique way
of defining $\mu_D^\#(z,w)$ for conformal annuli so that
it satisfies the domain Markov property and reversibility.
(Note that the combination of the two properties allows
one to describe conditional distributions given both an
initial segment and a terminal segment of the path.)

In this paper, we consider our process for $1$-connected
domains and show that it is the same as that defined
by Zhan.  In particular,  reversibility of the process
follows.  We use the boundary perturbation rule to 
give an equation for the partition function and give a
somewhat more direct proof of existence of the solution.
Although this paper does not directly
use the results in \cite{Zhanannulus},
it does use an idea from that paper.  In particular, the
annulus Loewner equation is used to find PDEs and
the 
Feynman-Kac formula is used to analyze PDEs that
arise.  

We now summarize the contents of the paper.
We describe in Section \ref{latticesec} a model
introduced in \cite{KL} called the $\lambda$-SAW.
It is a two-parameter family of lattice models
for which it is conjectured that there is a
one-parameter subfamily of critical models.  One of
the parameters in \cite{KL} was denoted $\lambda$ but
we have chosen to set $\lambda = -\cent/2$ here. 
It is a generalization of the loop-erased walk
($\cent = -2$) and self-avoiding walk ($\cent=
0$).  This model was created after studying
$SLE$.  While we cannot prove that this has a limit
at the moment (except for $\cent = -2$ and a somewhat
different version for $\cent = 1$ for which we can
use current results), it is useful 
for heuristic understanding of  our definition of
  $SLE$ in multiply connected domains.

Section \ref{prelsec} contains many results that are needed
in the paper most of which have been proved elsewhere.  
This can be skimmed at first reading and referred back
to as needed.  Section \ref{poissec} reviews facts about
the Poisson kernel and sets some notation; this is followed
by discussion of the annulus version.  The annulus Poisson
kernel is often written in terms of theta functions.  We
choose instead to write the functions in terms of infinite
sums which arise naturally when raising the annulus
to the covering space of an infinite strip.  The next
three subsections review the important tools in this area:
$SLE$ in $\Half$, the Brownian bubble measure, and the
Brownian loop measure.   Section \ref{simplesec} 
reviews the methods to analyze $SLE$ in simply connected
domains in terms of the Brownian loop measure and extends
this idea to shrinking domains.  This will allow us to
view radial $SLE$ or annulus $SLE$ in terms of chordal
$SLE$ in $\Half$ where the domain is shrinking by all the
translates of the path. 
 In the case of annulus $SLE$
we get a process that we call locally chordal $SLE_\kappa$.
We write this using an annulus parametrization and
this leads to  the annulus
Loewner equation which we write as an equation in the covering
infinite strip.   

The definiton of $SLE$ is given in Section \ref{defsec}.  In the
boundary to boundary case, this is essentially the same
definition as in \cite{LJSP}.  We extend this to boundary/bulk
and bulk/bulk cases.  One nice thing about our definition
is that reversibility is immediate, given reversibility for
chordal $SLE$ in simply connected domains.
There are some subtleties in defining the bulk/bulk measure
in subdomains of $\C$ in terms of the measure on $\C$,
see Proposition  \ref{prop.subtle}.  The definitions make use
of facts about annulus $SLE$ that are discussed in the next
section.  The extension of the definition to multiple paths
with disjoint endpoints is immediate as in \cite{KL}.

The next two  sections discuss the results about  annulus
$SLE_\kappa$.  Most of the results in this section were
proved in \cite{Zhanannulus}, but there are some differences
in our approach.  We focus on the ``crossing'' case
although the ``chordal'' case can be done similarly as
we point out.   In Section \ref{crosssec} we study
 annulus $SLE_\kappa$ with a given winding number.
By taking its premage under the logarithm, we can consider
it as a measure on curves connecting points of an infinite
strip, and we in turn can compare this measure to chordal
$SLE_\kappa$ in the strip.  This requires comparing the
loop measures in the strip to the preimage of the loop
measure in the annulus.  (Although the loop measure is
conformally invariant, the logarithm is a multi-valued
function, so some care is needed.)  At an intermediate
step we consider the locally chordal $SLE_\kappa$ discused
in Section 3. 
Although this latter process is not
the same as annulus $SLE_\kappa$, it turns out that the
partition function for annulus $SLE_\kappa$ can be given in
terms of a functional of this process.   As in \cite{Zhanannulus}, 
we can then use the Feynman-Kac theorem to write a PDE for
the partition function and this allows us to show that it
gives the quantity we want.

 Section \ref{partsec} takes a different approach and
  derives
the differential equation for the partition function
in the annulus by comparing annulus $SLE_\kappa$ to radial
$SLE_\kappa$.  Smoothness of the partition function follows
from the work of the previous section, so only the
It\^o formula calculation is needed.  The work here shows
that the process we get is the same as the process in \cite{Zhanannulus}.
Our approach  gives a little more than what
is stated explicitly in \cite{Zhanannulus}.  The annulus partition
function is of the form $\tmass(r,x)$, which denotes the
total mass of $SLE_\kappa$ from $1$ to $e^{-r + ix}$ in 
the annulus $A_r = \{e^{-r} < |z| < 1\}$.  The probability
measure is obtained by normalization.  Multiplying the
partition function by a function of
$r$ does not change the probability
measure.  Here we get not only the probability measure but
the correct $r$  dependence.

I would like to thank Dapeng Zhan for useful conversations.

\section{The lattice model}  \label{latticesec}

Here we describe a lattice model for random
walks called the $\lambda$-SAW \cite{KL}.  For
simplicity, we will start with the bulk/bulk version
in a bounded domain $D$.  For convenience, we will
use the integer lattice $\Z^2 = \Z + i \Z$, but the scaling
limit should be independent of the lattice.

A self-avoiding walk (SAW) $\omega = [\omega_0,\ldots,
\omega_n]$ of length $n$
is a finite nearest neighbor path in $\Z^2$
such that $\omega_j \neq \omega_k$ for $j < k$.
Let $|\omega| = n$ denote the length.

A rooted (random walk) loop $\eta = [\eta_0,\ldots,\eta_{2n}]$
of length $2n >0$ is a finite nearest neighbor path
(not necessarily self-avoiding) with $\eta_0 = \eta_{2n}$.
Again we write $|\eta| = 2n$ for the length.  An
unrooted loop is an equivalence class of loops under
the equivalence relation
\[ [\eta_0,\ldots,\eta_{2n}]
  \sim [\eta_j,\eta_{j+1},\ldots,\eta_{2n},
  \eta_1,\ldots,\eta_j] \]
  for each $j$.
The rooted random walk loop measure is the measure
on rooted loops, which assigns measure
$4^{-|\eta|}/|\eta|.$ to each loop $\eta$ with $|\eta|>0$.
This induces a measure $m^{RW}$ 
on unrooted loops called
the {\em random walk loop measure} by giving each
unrooted loop the sum of the weights of the different
rooted loops that give the unrooted loop.

\labove \textsf
{\begin{small}  \Heuristic  
One may think of the unrooted loop measure as assigning
measure $4^{-n}$ to each unrooted loop $\eta$ with 
$|\eta| = n$.  However, this is not exactly correct.
For example, if $n=4$ and $\eta = [x,y,x,y,x]$, then
there are only two different rooted loops that generate
the unrooted loop, and hence this unrooted loop has
measure $4^{-n}/2$.
\end{small}}
\lbelow

Suppose $D$ is a bounded domain in $\C$ and
$z,w$ are distinct points in $D$.  Let $\beta,
\lambda$ be fixed constants which are the parameters
of the model.  For each $n$, let $L_n
= n^{-1} \, \Z^2 \cap D$ and let $z_n,w_n$ be
points in $L_n$ closest to $z,w$ (if there is a tie
for ``closest'', we can choose arbitrarily).
Define the measure $\nu_n = \nu_{n,D,z,w}$ on
SAWs $\omega$ in $L_n$ with endpoints $z_n,w_n$
which gives $\omega$ measure
\[      \exp\left\{-\beta |\omega| + \lambda  
   \, m^{RW}(\omega,D,n) \right\},\]
 where  $m^{RW}(\omega,D,n)$ denotes the
 total $m^{RW}$ measure of (unrooted)
 loops $\eta$ in $L_n$
 that intersect $\omega$.
 Let $Z_n(D) = Z_n(D;\beta,\lambda)$ denote the 
 total mass of the measure.  This is also called
 the {\em partition function}.
 
 This model has two parameters but the conjecture
 is that there is a one-parameter family of critical
 models.  Let us write $\lambda = -\cent/2$ and
 write $\beta = \beta_{\cent}$ for the corresponding
 value of $\beta$.

\labove \textsf
{\begin{small}  \Heuristic  
The value of the critical $\beta$ is a lattice
dependent quantity.  The value   $\lambda = -\cent/2$
is not lattice dependent as long as we define the
random walk loop measure correctly.  For a given
lattice, the  rooted loop
measure is defined to give
 measure $p(\eta)/|\eta|$ to
every loop $\eta$ where $p(\eta)$ is the probability
that simple random walk 
 in the lattice starting at $\eta_0$
 produces the
loop $\eta$.  The value $\cent$ is
the ``central charge''   but 
we can think of it as a free parameter.
\end{small}}
\lbelow

\begin{conjecture}  For each $\cent \leq 1$, there corresponds
a (lattice dependent) $\beta$ and a (lattice independent) scaling
exponent $\tilde b$ such the measure $\nu_n$
has the following properties. 
\begin{itemize}
\item  For each bounded $D$ and distinct $z,w$ in $D$
there exists $\tmass_D^*(z,w)\in(0,\infty)$ such that
\[     Z_n \sim  n^{- 2 \tilde b}
  \, \tmass_D^*(z,w) , \;\;\;\;
     n \rightarrow \infty . \]
\item  There exists a limit measure
on simple curves
\[         \nu_D(z,w) = \lim_{n \rightarrow \infty}
     n^{2 \tilde b} \, \nu_n . \]
\item The family of measures $\{\nu_D(z,w)\}$
  satisfies the conformal covariance
relation: if $f: D \rightarrow f(D)$ is a conformal
transformation, 
\[   f\circ   \nu_D(z,w) = |f'(z)|^{\tilde b}
  \, |f'(w)|^{\tilde b} \, \nu_{f(D)}(f(z),f(w)). \]
\end{itemize}
\end{conjecture}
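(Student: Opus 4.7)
The plan is to treat this conjecture as a program rather than a completable proof, since the author explicitly notes that no proof is currently available outside the special cases $\cent=-2$ (loop-erased walk) and a variant at $\cent=1$. With that caveat, the natural approach breaks into three stages.

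First I would address the existence and uniqueness of the critical parameter $\beta=\beta_\cent$. Because the total mass $Z_n = \sum_\omega \exp\{-\beta|\omega| + \lambda\, m^{RW}(\omega,D,n)\}$ depends monotonically on $\beta$, one expects a sharp threshold between the subcritical regime (where typical paths have length $O(n)$ and the limit is degenerate) and the supercritical regime (where $Z_n$ diverges). I would try to show, by a submultiplicativity argument in the infinite lattice analogue (taking $D = \C$ and comparing two-point functions on cubes of increasing size), that there is a unique $\beta_\cent$ for which the two-point function decays polynomially with an exponent matching $2\tilde b$. The value of $\tilde b$ itself would be read off as $\tilde b = (6-\kappa)(\kappa-2)/(16\kappa)$, using the SLE dictionary given earlier in the introduction.

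Second, I would construct a discrete conformally covariant observable and prove it converges. Following the philosophy of \cite{LSWlerw,StasIsing}, one wants a ratio like $Z_n(D;\beta_\cent)/H_D^{(n)}(z,w)^{\tilde b}$ (where $H_D^{(n)}$ is a suitable discrete Poisson-type kernel), or more generally a martingale that tracks the tip of a partially grown curve. The $\lambda$-SAW weighting is designed precisely so that, at criticality, this ratio should converge to a continuous conformally covariant partition function; formally, a one-step Markov property of the $\lambda$-SAW measure gives a discrete analogue of the boundary perturbation rule, and passing to the limit should yield the continuum $\tmass_D^*(z,w)$. From convergence of this observable plus uniform control, tightness of the curve measures $n^{2\tilde b}\nu_n$ in the Aizenman-Burchard framework would follow, together with identification of subsequential limits as conformally covariant through the conformal invariance of the Brownian loop measure $\cent/2 \cdot m_D$.

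Third, I would identify the limit with the chordal/bulk $SLE_\kappa$ of the paper by verifying the domain Markov property in the limit and applying Schramm's characterization in simply connected domains, then extending through the boundary perturbation rule developed later in the paper to arbitrary $D$.

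The main obstacle, and the reason this remains a conjecture, is the second stage: for general $\cent$ we have no known discretely holomorphic observable, and the only cases with a proven scaling limit rely on very model-specific structure (loop-erasure of simple random walk, or FK/Ising fermionic observables). Without such an observable, establishing both the precise value of $\beta_\cent$ and the $n^{-2\tilde b}$ asymptotics of $Z_n$ seems beyond current techniques, and this is exactly where I would expect the argument to stall.
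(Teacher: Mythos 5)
The statement you are addressing is labelled a \emph{conjecture} in the paper, and the paper offers no proof of it: the author writes explicitly that ``the conjectures are open'' and uses them only as motivation for the continuum definition of $SLE_\kappa$ in multiply connected domains via the boundary perturbation rule. So there is no proof in the paper to compare your attempt against, and you are right to frame your write-up as a program rather than a proof. Your program is consistent with the role the conjecture plays in the paper: the paper's own heuristic derivation proceeds by comparing $\nu_n^1(\omega)/\nu_n(\omega)$ for $D_1\subset D$, invoking the convergence of the random walk loop measure to the Brownian loop measure from \cite{LJose} to obtain the continuum boundary perturbation rule \eqref{nov29.1}, and then appealing to Schramm's characterization in simply connected domains --- which is essentially your stages two and three. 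Your honest identification of the missing ingredient (a discretely holomorphic or otherwise convergent observable at the critical point, available only for $\cent=-2$ and the Ising/free-field cases) matches the paper's own assessment of why the statement remains open.

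One concrete slip: you quote $\tilde b = (6-\kappa)(\kappa-2)/(16\kappa)$, but from \eqref{bandc} one has $b=(6-\kappa)/(2\kappa)$ and $\tilde b = b(\kappa-2)/4 = (6-\kappa)(\kappa-2)/(8\kappa)$; your denominator is off by a factor of $2$. Also note that the first stage of your program (a sharp threshold in $\beta$ via submultiplicativity) is plausible but not discussed in the paper at all; the paper simply posits the existence of a lattice-dependent critical $\beta_\cent$ without suggesting a mechanism for locating it.
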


 There is also a boundary version of this conjecture.  
 Suppose $z$ is a boundary point of $D$ and let us assume
 that $\p D$ is analytic near $z$.  One can define
 the measure $\nu_n$ as above, but there are lattice
 issues involved.  We will not deal with them here
 and just state the following rough conjecture; see
 \cite{KLP} for a more precise statement including
 lattice issues.
 We also assume smoothness near the appropriate boundary
 points.
 
 \begin{conjecture}  For each $\cent \leq 1$, there corresponds
a (lattice dependent) $\beta$ and (lattice independent) scaling
exponents $b,\tilde b$ such the measure $\nu_n$
has the following properties. 
\begin{itemize}
\item  For each bounded $D$ and distinct $z,w$ in $\overline D$
there exists $\tmass_D^*(z,w)$ such that
\[     Z_n \sim  n^{-(b_z + b_w)}
  \, \tmass_D^*(z,w) , \;\;\;\;
     n \rightarrow \infty . \]
\item  There exists a limit measure on simple curves
\[         \nu_D(z,w) = \lim_{n \rightarrow \infty}
     n^{b_z+b_w} \, \nu_n . \]
\item The family of measures $\{\nu_D(z,w)\}$
  satisfies the conformal covariance
relation:
\[   f\circ   \nu_D(z,w) = |f'(z)|^{b_z}
  \, |f'(w)|^{b_w} \, \nu_{f(D)}(f(z),f(w)). \]
\end{itemize}
Here $b_\zeta = b \mbox { or } \tilde b$, respectively,
if $\zeta$ is a boundary point or an interior point.
\end{conjecture}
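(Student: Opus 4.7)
The plan is to proceed in two stages: first establish existence of a lattice-dependent critical $\beta_\cent$ together with tightness of $\{\nu_n\}$ at that value, and then identify any subsequential limit as the $SLE_\kappa$ measure $\mu_D(z,w)$ constructed later in the paper. The correspondence $\cent \leftrightarrow \kappa \leq 4$ is forced by $\cent = (6-\kappa)(3\kappa-8)/(2\kappa)$, and the exponents $b, \tilde b$ are read off from the scaling rules of the eventual continuum object ($b = (6-\kappa)/(2\kappa)$, $\tilde b = b(\kappa-2)/4$).

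For the first stage, with $\lambda = -\cent/2$ fixed, I would define $\beta_\cent$ as the critical value for a whole-plane or half-plane loop-weighted two-point function $\sum_\omega \exp\{-\beta|\omega| + \lambda\, m^{RW}(\omega)\}$, using a Hammersley-style submultiplicativity argument after controlling the extra loop weight picked up in concatenating two SAWs (the loops touching both pieces). This gives a lattice-dependent $\beta_\cent$. Tightness of $\nu_n$ at this critical value would then follow from two-sided estimates $Z_n(D;\beta_\cent,-\cent/2) \asymp n^{-(b_z+b_w)}$ together with uniform control on the diameter and length of a typical path under $\nu_n$.

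For the second stage, I would identify the discrete loop weight $\exp\{\lambda \, m^{RW}(\omega, D, n)\}$ as an approximation of the continuum boundary-perturbation factor $\exp\{-\tfrac{\cent}{2}\, m_D(\gamma, D\setminus D_1)\}$, invoking convergence of the random walk loop measure to the Brownian loop measure \cite{LJose}. The lattice model has an exact domain Markov property: conditioned on an initial segment $\omega_{[0,k]}$, the remainder is distributed as $\nu_{n, D\setminus \omega_{[0,k]}, \omega_k, w_n}$, reweighted by exactly the loops in $D$ touching both the past and the future of the path. In the scaling limit this reweighting converges, by the previous observation, to the continuum boundary perturbation rule, and the Markov property passes through as the probability-measure domain Markov property. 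Combined with asymptotic conformal covariance inherited from lattice rotational/translational symmetry, Schramm's characterization identifies any subsequential limit with the measure $\mu_D(z,w)$ from the introduction. Convergence of $n^{b_z+b_w} Z_n$ to a nontrivial $\tmass_D^*(z,w)$ then follows from the same loop-weighting machinery applied to the discrete partition function.

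The main obstacle is unquestionably tightness at the critical $\beta_\cent$, plus sharp estimates on $Z_n$. For $\cent = -2$ (loop-erased walks) this is handled by Wilson's algorithm and the LSW convergence theorem; for $\cent = 1/2$ (Ising) by discrete holomorphic observables; and for $\cent = 1$ by Schramm--Sheffield. None of these tools generalizes, and the self-avoiding-walk case ($\cent = 0$) is famously open despite decades of effort. A uniform treatment for all $\cent \leq 1$ would require a genuinely new input — for instance, an a priori length bound for $\omega$ under $\nu_{n,\cent}$, or a direct construction of $\mu_D(z,w)$ from a Brownian loop soup that bypasses tightness entirely and allows one to build the discrete measure from the continuum object. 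In the meantime the conjecture is best approached one value of $\cent$ at a time, using the continuum $\mu_D(z,w)$ constructed later in this paper as the predicted target of the scaling limit.
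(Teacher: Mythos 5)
This statement is explicitly labeled a conjecture, and the paper offers no proof of it; immediately after stating it the author writes that the conjectures are open and proceeds only under the \emph{assumption} that they hold, using them as heuristic motivation for the continuum definitions. Your proposal is therefore not being measured against a proof in the paper, and on its own terms it is not a proof either: it is a research outline whose first stage --- existence of a critical $\beta_\cent$ for the loop-weighted SAW, the two-sided bound $Z_n \asymp n^{-(b_z+b_w)}$, and tightness of $\nu_n$ --- is precisely the open content of the conjecture. No submultiplicativity argument is actually carried out (and controlling the loop weight $m^{RW}$ under concatenation is itself nontrivial, since the loops touching both pieces contribute a factor that need not be bounded uniformly in the pieces), and you candidly concede that the known tools work only for $\cent \in \{-2, 1/2, 1\}$. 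A sketch whose load-bearing step is acknowledged to be unavailable is a statement of strategy, not a proof.

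Two further points on the second stage. First, your identification of the discrete loop weight with the continuum boundary-perturbation factor via convergence of the random walk loop measure to the Brownian loop measure is exactly the heuristic the paper itself gives in Section 2 (leading to the relation for $d\nu_{D_1}(z,w)/d\nu_D(z,w)$), so that part is consistent with the author's intent. Second, invoking Schramm's characterization to identify a subsequential limit only works for simply connected $D$ with a boundary starting point: as the paper emphasizes, conformal invariance plus the domain Markov property do \emph{not} determine the measure in multiply connected domains, because slitting destroys the conformal type. For general bounded $D$ your identification step would additionally require the boundary perturbation rule in the limit --- which is the very axiom the paper adopts to \emph{define} the continuum object, not something one gets for free from the domain Markov property. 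If you want to present this material, present it as the paper does: as a conjecture with a heuristic derivation of the expected boundary perturbation rule, not as a theorem with a proof.
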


The conjectures are open, but let us assume that the
conjectures do hold.  Let
\[   \nu_D^\#(z,w) = \frac{\nu_D(z,w)}{\tmass^*_D(z,w)}\]
be the corresponding probability measures which are
conformally {\em invariant}:
\[   f\circ   \nu_D^\#(z,w) =  
    \nu_{f(D)}^\#(f(z),f(w)). \]
 Schramm \cite{Schramm} showed that if $D$ is simply
connected and $z \in \p D$, there is only a one-parameter
family of possible limit measures for $   \nu_D^\#(z,w) $
which are now called \textit{chordal} (if $w \in \p D$)
or \textit{radial} (if $w \in D$) \textit{Schramm-Loewner evolution
with parameter $\kappa$ ($SLE_\kappa$).} 
Analysis of $SLE$ \cite{RS,LSWrest}
shows that $0 < \kappa \leq 4$ (if we want a measure on
simple curves) and the other parameters are given by 
   \begin{equation}\label{bandc}  
 b = \frac{6-\kappa}{2\kappa}, \;\;\;\;
 \tilde b =  \frac{b(\kappa-2)}{4
 }, \;\;\;
  \cent=  6 \tilde b-b = b\,(3\kappa -8). 
  \end{equation} 

Suppose $z,w \in D$ and $D_1 \subset D$, and let
$\nu_n, \nu_n^1$ be the corresponding measures
as above and $L_n = D \cap n^{-1} \Z^2,
 L_n^1 = D_1 \cap n^{-1} \Z^2$. Then
 if $\omega$ is a SAW in $L_n$ connecting
 $z_n$ and $w_n$,
 \[  \frac{\nu_n^1(\omega)}
          {\nu_n(\omega)}  =
        \exp \left\{\frac \cent 2 \, [m^{RW}(\omega,D,n)
          - m^{RW}(\omega,D_1,n)]\right\}. \]
 As $n \rightarrow \infty$, the quantity on the right
 has a limit \cite{LJose}
 in terms of the Brownian loop measure
 \[   \lim_{n \rightarrow \infty}
    [m^{RW}(\omega,D,n)
          - m^{RW}(\omega,D_1,n)] = m_{D}(\omega,D\setminus D_1),\]
  where the right-hand side denotes the Brownian loop measure
 \cite{LW} of loops in $D$ that intersect
  both $\omega$ and $D \setminus D_1$.
 Hence the limit measures should satisfy for $\gamma \subset
 D_1$, 
\begin{equation}  \label{nov29.1}
  \frac{d \nu_{D_1}(z,w)}{d\nu_D(z,w)}
   (\gamma) = \exp \left\{ \frac{\cent}{2}
   \, m_{D}(\gamma,D\setminus D_1)\right\}.
   \end{equation}
For simply connected $D,D_1$ with $z \in \p D$, this was
established in \cite{LSWrest,LW}.
   
  Schramm's construction of $SLE$ makes generalizations
  to nonsimply connected domains difficult.  The purpose of 
this paper is to show that one can use the relation \eqref{nov29.1}
to define it.  This requires some work.
While we do not prove the  conjectures stated in this
section, it is helpful to remember that the definitions
we give in this paper are those of the conjectured
scaling limit of the $\lambda$-SAW with $\lambda = - \cent/2$.

\section{Preliminaries}  \label{prelsec}

In this paper, we  assume that $\kappa \in (0,4]$
and $\cent,b,\tilde b$ are as in \eqref{bandc}.
We  also  set
\[   a = \frac 2 \kappa. \]
Constants throughout may depend implicitly on
$\kappa$.

%\labove \textsf
%{\begin{small}  \Heuristic  
%It is perhaps more natural to think of $b$ as the free
%variable, in which case we restrict to $b \geq 1/4$ and
%set  
%$        \kappa = {6}/({2b+1}).$
%\end{small}}
%\lbelow

\subsection{Poisson kernel and related}  \label{poissec}

We establish notation and review facts about the
Poisson kernel.

\begin{itemize}

\item $\Half$ denotes the open
upper half plane, $\Disk$ the open unit
disk, and if $r > 0$,
\[    A_r = \{z \in \Disk:  e^{-r} < |z|  \}, \;\;\;\;
  S_r = \{z \in \Half: \Im(z) < r \},
  \] \[ \Disk_r = e^{-r} \Disk, \;\;\;\;
   C_r = \p \Disk_r  . \]
Under this notation $A_r = \Disk \setminus \overline{{\Disk_r}},
\p A_r = C_0 \cup C_r$.
Throughout this paper we fix
\[ \psi(z) = e^{iz} \] and note that $\psi$ maps $S_r$
(many-to-one) onto $A_r$.   We write $+ \infty,-\infty$ for the
two infinite points in $\p S_r$.
 
 \item   If $D$ is a domain,  then
$z$ is {\em $ \p D$-analytic} if $z \in \p D$ and
there is a neighborhood $N$ of $z$
and a conformal transformation
\[             \phi: N \rightarrow \phi(N) \]
with $\phi(z) = 0$ and $\phi(N \cap D) = \phi(N) \cap \Half$.
We say that $z$ is \textit{$D$-analytic} 
 if $z \in D$ or
$z$ is $\p D$-analytic.

\item If $\gamma$ is a curve, we write $\gamma_t$ for
$\gamma[0,t]$.
\item If  $z,w \in \p D$ and $\gamma:[0,t_0] \rightarrow D$
is a curve with $\gamma(0) = z, \gamma(t_0) = w$ we abuse
notation by writing
$\gamma \subset D$ if $\gamma(0,t_0) \subset D$.  If $t < t_0$,
we write $\gamma_t \subset D$ if $\gamma(0,t] \subset D$.

\item  If  $z \in D$ and $w$ is
$\p D$-analytic, let
 $H_D(z,w) $ denote the Poisson
kernel (that is, the inward
normal derivative
of the Green's function at $w$) normalized so that
\[               H_\Half(x+iy,0) = \frac{y}{x^2 + y^2} . \]
It satisfies the scaling rule
\[
     H_{ D}(z,w) =  |f'(w)| \,
      H_{ f(D)}(f(z),f(w)).
\]
(When writing rules like this, it will be implicitly assumed
that the quantities are well defined.  For example, in this
case $z \in D$,  $w$ is  $\p D$-analytic, and $f(w)$ is
$\p f(D)$-analytic.)
Under our normalization, the probability that a complex
Brownian motion starting at $z$ exits $D$ at
$V \subset \p D$ is
\[  \frac 1   \pi \int_V H_D(z,w) \, |dw|.\]

 \item  If $z,w $ are
distinct $\p D$-analytic points, 
we write $H_{\p D}(z,w)$ for the \textit{boundary} or
\textit{excursion Poisson kernel} given by 
\[         H_{\p D}(z,w) = \p_\n H_D(z,w) = H_{\p D}(w,z) , \]
where $\n = \n_z$ denotes the (inward) normal derivative
at $z$.  It satisfies
the scaling rule: 
\begin{equation}  \label{poisscale}
     H_{\p D}(z,w) = |f'(z)| \, |f'(w)| \, 
      H_{\p f(D)}(f(z),f(w)).
\end{equation}
%(When writing rules like this, it will be implicitly assumed
%that the quantities are well defined.  For example, in this
%case $z,w$ are assumed to be distinct $\p D$-analytic
%points and $f(z),f(w)$ are assumed to be $\p f(D)$-analytic.)

\item If $D$ is simply connected, there is a complex form
of the Poisson kernel $\cpois_D(z,w)$ such that
$H_D(z,w) = \Im \cpois_D(z,w)$.  This is defined up to
a real translation, and we choose the translation so that
\[   \cpois_\Half(z,0) = - \frac 1 z .\]
The function $f(z) = \cpois_D(z,w)$ can be characterized
 as the unique conformal
transformation $f: D \rightarrow \Half$ such that 
\[          f(w + \epsilon \n_w) =  \frac  i
 \epsilon +o(1), \;\;\;\epsilon \downarrow 0+.\]

\item The Poisson and boundary Poisson kernel for
the strip $S_r$ can be 
computed  using conformal invariance,
\begin{equation}  \label{dec1.3}
    H_{\p S_r}(z,0) =  -\frac{\pi}{2r}
 \, \coth \left(\frac{\pi z}{2r} \right), 
 \end{equation}
\begin{equation}  \label{nov20.1}
          H_{\p S_r}(0,x) = 
\frac{ \pi^2}{4 \, r^2} \left[\sinh\left( \frac{\pi x}{2r}\right)
  \right]^{-2} , 
  \end{equation}
  \begin{equation}  \label{jan31.1}    H_{\p S_r}(0,x +ir) = 
\frac{ \pi^2}{4 \, r^2} \left[\cosh\left( \frac{\pi x}{2r}\right)
  \right]^{-2} . \end{equation}
              
\item  If $z,w$ are distinct boundary points of $D$,
$D_1 \subset D$ with
 $\dist(z,D \setminus D_1)>0,$ $\dist(w,D \setminus D_1) > 0$, let
\[    Q_D(z,w;D_1) \]
denote the probability that a Brownian excursion in $D$
from $z$ to $w$ stays in $D_1$.  (A Brownian excursion
in $D$ is a Brownian motion starting at $z$ and conditioned
to go immediately into $D$ and exit at $w$.  It is not
difficult to make this precise.)  
We note
that $Q_D(z,w;D_1)$ is invariant under conformal
transformations of $D$, and if $z,w$ are $\p D$-analytic,
\[   Q_D(z,w;D_1) = \frac{H_{\p D_1}(z,w)}{H_{\p D}
   (z,w)}. \]
If $D \subset \Half$ is simply connected with $\Half
\setminus D$ bounded and $\dist(0,\Half \setminus
D)>0$, then  \cite[Proposition 5.15]{LBook}
\[            Q_\Half(0,\infty;D) = \Phi_D'(0),\]
where $\Phi_D: D \rightarrow \Half$ is a conformal
transformation with $\Phi(z) \sim z $ as $z \rightarrow 
\infty$. 
\end{itemize}

When studying $SLE$ it is useful to consider subdomains
of $\Half$ and the boundary point infinity.  In order to
make a number of formulas work in this case, it is useful
to adapt the following ``abuse of notation'' about derivatives.  
This can
be considered a kind of normalization at infinity.

\begin{itemize}

\item  When we consider
the conformal transformation $g:\Half \rightarrow
\Half$ given by  $g(z) = -1/z$,
then we write 
\begin{equation} \label{fundev}
 g'(0) = g'(\infty) =-1 .
\end{equation}

\item  If $D \subset \Half$ and $\Half \setminus D$
is bounded, then we say that $\infty$ is  
$\p D$-analytic.  If $D_1,D_2$ are two such  domains
and $f:D_1 \rightarrow D_2$ is a  conformal transformation
with $f(\infty) = \infty$, we define $f'(\infty)$
by
\[          f(z)  \sim \frac{z}{f'(\infty)}, \;\;\;
  z \rightarrow \infty.\]
Equivalently, if $F(z) = -1/f(-1/z) = g \circ f \circ g(z)$, then
\[               f'(\infty) = F'(0).\]

\item More generally, if $F:D \rightarrow D'$ is a conformal
transformation with $F(z) = \infty$ or $F(\infty) = z$,
we  compute derivatives  using the chain
rule and  \eqref{fundev}.

\end{itemize}

The boundary Poisson kernel $H_{\p D}(z,w)$
can be defined if $z$ or $w$ equals infinity using
the scaling rule \eqref{poisscale}.  Under our normalization
$H_{\p \Half}(x,\infty) = 1$.

\labove \textsf
{\begin{small}  \Heuristic  
 If $D,D'$ are simply connected domains,
$z,w$ are distinct $\p D$-analytic points, and $z',w'$
are distinct $\p D'$-analytic points,  then there is
a one parameter family of conformal transformations $f:D
\rightarrow D'$ with $f(z) = z', f(w) = w'$.  The
quantity $f'(z) \, f'(w)$ is invariant of the choice of
the transformation.  Our definitions of derivatives at
infinity are made so that this property holds as well when
$w = \infty$ or $w' = \infty$.
\end{small}}
\lbelow

\subsection{The annulus}  \label{annfunction}

The functions that arise from the Poisson kernel of
the annulus will be important. 
By considering different winding numbers,
 using the scaling rule, and applying \eqref{jan31.1},
  we can see that 
\[
H_{\p A_r}(1,e^{-r+ix}) = e^{r}  \sum_{k=-\infty}^\infty
         H_{\p S_r}(0, x + ir )
          = \frac{e^{r}}{2} \, \nzhan(r,x) , \]
 where $\nzhan(r,x)$ is defined by
 \begin{equation}  \label{nzhan}
   \nzhan(r,x) = \frac {\pi^2}{2 r^2} \sum_{k=-\infty}^\infty          
               \left[\cosh\left( \frac{\pi (x+ 2k\pi)}{2r}\right)
  \right]^{-2}. 
  \end{equation}
  We will view $\nzhan(r,x)$ as a function on $(0,\infty)
  \times \R$ satisfying $\nzhan(r,x) = \nzhan(r,x+ 2 \pi)$.
  Under our normalization of the Poisson kernel,
\begin{equation}  \label{dec1.1}
 e^{-r}  \int_0^{2\pi} H_{\p A_r}(1, e^{-r + ix}) \, dx
  = \frac \pi {r} , 
  \end{equation}
  which implies
\[     \int_0^{2\pi} \nzhan(r,x) \, dx =
  2 \int_0^{\pi} \nzhan(r,x) \, dx  = \frac {2\pi}r. \]
Indeed, $(r/2\pi) \nzhan(r,x)$ has the interpretation as the density of
the angle of the hitting point of an $h$-process in $A_r$
started at $1$ conditioned to leave $A_r$ at $C_r$ (in other
words, the $h$-process associated to the harmonic
function $h(z) = -\log |z|$).   Using this interpretation,
we can see that there exists $\rho  >0$ such that for all
$r$ sufficiently small
\begin{equation}  \label{dec10.1}
        \frac {\rho\, \pi}r
\leq      \int_0^r \nzhan(r,x) \, dx \leq  \frac {(1-\rho)\, \pi}r
.
\end{equation}
\labove \textsf
{\begin{small}  \Heuristic  
 To see \eqref{dec1.1}, recall that under our normalization
 of the Poisson kernel
 \[  e^{-r}  \int_0^{2\pi} H_{ A_r}(e^{-\epsilon}
 , e^{-r + ix}) \, dx\]
is $\pi$ times the probability that a Brownian motion
starting at $e^{-\epsilon} = 1 - \epsilon+ O(\epsilon^2)$ 
leaves $A_r$ at $C_r$.  A
standard estimate for Brownian motion  tells us that
this probability equals $\epsilon/r$.
\end{small}}
\lbelow

  \begin{lemma} \label{dec4.1}
   There exist $c < \infty$ such that 
if $r \geq 1, x \in \R$,
\[   \left|\nzhan(r,x) - \frac 1r\right| \leq c \, e^{-r}.\]
\end{lemma}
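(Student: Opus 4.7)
The plan is to establish the Fourier expansion
\[
\nzhan(r,x) = \frac{1}{r} + 2 \sum_{n=1}^\infty \frac{n \cos(nx)}{\sinh(rn)}
\]
and then read off the claimed bound from a trivial tail estimate. For $r \geq 1$ and $n \geq 1$ one has $\sinh(rn) \geq \tfrac{1}{2}(1-e^{-2})\,e^{rn}$, so
\[
\Bigl|\nzhan(r,x) - \tfrac{1}{r}\Bigr| \leq 2\sum_{n=1}^\infty \frac{n}{\sinh(rn)} \leq \frac{4}{1-e^{-2}} \cdot \frac{e^{-r}}{(1-e^{-r})^2} \leq c\, e^{-r}.
\]
Notice that the $n=0$ term $1/r$ is exactly the average of $\nzhan(r,\cdot)$ over $[0,2\pi]$, consistent with the identity $\int_0^{2\pi} \nzhan(r,x)\,dx = 2\pi/r$ recorded just before the lemma.

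To derive the expansion, I would apply Poisson summation to $h(y) = [\cosh(\pi y/(2r))]^{-2}$, since by definition
\[
\nzhan(r,x) = \frac{\pi^2}{2r^2}\sum_{k \in \Z} h(x+2\pi k).
\]
The Fourier transform $\hat h(n) = \int_\R h(y)e^{-iny}\,dy$ reduces, after the substitution $u = \pi y/(2r)$, to the classical identity
\[
\int_{-\infty}^\infty \frac{e^{-i\xi u}}{\cosh^2 u}\,du = \frac{\pi\xi}{\sinh(\pi\xi/2)},
\]
yielding $\hat h(0) = 4r/\pi$ and $\hat h(n) = 4r^2 n/(\pi\sinh(rn))$ for $n \neq 0$. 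Inserting $\sum_k h(x+2\pi k) = (2\pi)^{-1}\sum_n \hat h(n)e^{inx}$ into the definition of $\nzhan$ and pairing the $\pm n$ terms (using that $\sinh$ is odd, so $\hat h(-n) = \hat h(n)$) produces the cosine series above.

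The only substantive computation is the Fourier transform identity for $\cosh^{-2}$, which is a standard contour integral: close in the lower half-plane for $\xi > 0$ and sum the double-pole residues of $\cosh^{-2}(u)e^{-i\xi u}$ at $u = -i(2k+1)\pi/2$ for $k \geq 0$. Using the Laurent expansion $\cosh^{-2}(-i\pi/2 + w) = -w^{-2} + \tfrac{1}{3} + O(w^2)$, each residue evaluates to $i\xi\, e^{-\xi(2k+1)\pi/2}$, and the resulting geometric series collapses to $\pi\xi/\sinh(\pi\xi/2)$; the case $\xi < 0$ follows by symmetry. No step presents a real obstacle, since the exponential decay of the Fourier coefficients (and hence the lemma) is automatic from Poisson summation once the transform is known. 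Equivalently, one could invoke the Jacobi imaginary transformation of the theta function of which $\nzhan$ is essentially a logarithmic second derivative, but the direct Poisson summation is cleaner here.
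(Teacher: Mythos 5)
Your proof is correct, but it takes a genuinely different route from the paper's. You work directly from the explicit formula \eqref{nzhan}: Poisson summation together with the classical transform $\int_\R e^{-i\xi u}\cosh^{-2}u\,du = \pi\xi/\sinh(\pi\xi/2)$ gives the exact Fourier expansion $\nzhan(r,x) = \tfrac1r + 2\sum_{n\ge 1} n\cos(nx)/\sinh(rn)$, and the lemma is then a one-line tail bound; I have checked the transform, the residue computation, and the constant $\hat h(0)=4r/\pi$ (which correctly reproduces $\int_0^{2\pi}\nzhan = 2\pi/r$), and the estimate $\sinh(rn)\ge \tfrac12(1-e^{-2})e^{rn}$ for $r\ge 1$ does close the argument. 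The paper instead argues probabilistically: it identifies $e^{-r}\nzhan(r,x)$ with (a multiple of) the boundary Poisson kernel $H_{\p A_r}(e^{-r},e^{ix})$ and shows, via the strong Markov property at the circle $C_{r-1}$ and the explicit Poisson kernel of the disk, that the conditional exit distribution on $C_0$ given $T_0<T_r$ is uniform up to a multiplicative error $1+O(re^{-r})$. Your approach buys more: an exact identity from which the full asymptotic expansion in powers of $e^{-r}$ can be read off, with no hidden constants (it is, as you note, essentially the Jacobi imaginary transformation). The paper's approach buys robustness: the same Brownian-motion coupling argument is reused almost verbatim in the proof of Lemma \ref{dec4.lemma2}, where the annulus is replaced by a general domain $\Disk\setminus K$ and no explicit kernel is available, so the probabilistic proof serves as a template for estimates that your Fourier method cannot reach. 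Either proof is acceptable here.
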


\begin{proof} We will assume $r \geq 2$ (the case
$1 \leq r \leq 2$ is easy).
 Let $V$ be a subset of $[0,2\pi)$ which
can also be viewed as  
a periodic subset of $\R$. We need to show that
\[     \frac 1{2\pi}
\int_V \nzhan(r,x) \, dx = \frac{l(V)}{r}
 \, [1 + O(re^{-r})], \]
 where $l$ denotes length.  
 By definition,
\begin{eqnarray*}  \frac 1{2\pi}
\int_V \nzhan(r,x) \, dx& = &  
% \frac 1 \pi \int_{e^{r}V} H_{\p A_r}(1,z) \, |dz|\\
     \frac{e^{-r}}\pi  \, \int_V  H_{\p A_r}(1,e^{-r
      + ix} ) \, dx\\
      &  = & 
      \frac{e^{-r}}\pi  \, \int_V  H_{\p A_r}(e^{-r},e^{ix}
      ) \, dx . 
      \end{eqnarray*}
Let $B_t$ denote a complex Brownian motion and
$T_s = \inf\{t: B_t \in C_s\}$.  Let
\[   p(z;V) = \Prob^z\{B_{T_0} \in V\}, \;\;\;\;
  q(z;V) = \Prob^z\{ B_{T_0} \in V \mid T_0 < T_r\}, \]
and let $q_\pm(r;V)$ be the maximum and minimum of $q(z,V)$
on $C_{r-1}$.   Then,
\[  q_-(r,V) \leq 
  \frac {re^{-r}} \pi \int_V H_{\p A_r} (e^{-r},e^{ix}) \, dx  \leq \
  q_+(r,V).\]
Hence it suffices to show that if $z \in C_{r-1}$,  
\[         q(z,V) = l(V) \,[1 + O(r \, e^{-r})],\]
where the error term is uniform in $z$.
%Indeed, if this is true, we get
%\[    \frac {re^{-r}} \pi \int_V H_{\p A_r} (e^{-r},e^{ix}) \, dx 
%   \leq l(V) \,[1 + O(r \, e^{-r})] ,\]
%   and applying this estimate to $[0,2\pi) \setminus V$, we
%   get
% \[   \frac {re^{-r}} \pi \int_V H_{\p A_r} (e^{-r},e^{ix}) \, dx 
%   \geq l(V) \,[1 - O(r \, e^{-r})].\]
%   
If $z \in C_{r-1}$, then $\Prob^z\{T_0 < T_r\} = 1/r$, and
hence
\[
 p(z,V)  =  r^{-1} \, q(z,V) + (1- r^{-1}) \, \Prob^z
 \{B_{T_0} \in V \mid T_r < T_0\}. \]
Using the strong Markov property and the exact form of the
Poisson kernel in the disk, we see that
 \[  p(z,V)= l(V) \, [1+ O(|z|)], \;\;\;\;
 \Prob^z\{B_{T_0} \in V \mid T_r < T_0\} 
 = l(V) \, [1+ O(|z|)] , \]
 and hence if $z \in C_{r-1}$,
 \begin{eqnarray*}
      r^{-1} \, q(r,V) & =  & l(V) \, [1+ O(|z|)]
   - (1- r^{-1})\, l(V) \, [1+ O(|z|)]
     \\& = & l(V) \, [r^{-1} + O(e^{-r})]\\
     & = & r^{-1} \, l(V) \, [1 + O(re^{-r})].
     \end{eqnarray*}

\end{proof}

 Another important function will be
\[        \zhan_I(r,x)  = - \frac x{r}+ \int_0^x \nzhan(r,y) \, dy
  =   \int_0^x \left[\nzhan(r,y) - \frac 1{r}\right] \, dy,\]
which satisfies
$    \zhan_I(r,x) = \zhan_I(r,x + 2 \pi)$ and
\[    \zhan_I'(r,x) = J(r,x) - \frac 1r . \]
%and
%\[       \nzhan(r,x) = O(e^{-r}), \;\;\;\; r \rightarrow 
%\infty.\]
Here we are using the notation from \cite{Zhanannulus},
and the prime denotes an  $x$-derivative.
% We will
%also write $\biggy(r,x) = \zhanh_I(r,x)/2$ and note that
%\[        \zhanh_I'(r,x)    =2\,    \biggy'(r,x)
%    = \nzhan(r,x)  - \frac 1r . \]

\begin{lemma}  \label{dec2.lemma5}
Let $K(r,x) = r  \zhanh_I(r,x)$.  Then for all $r$, $K$
is an odd function of period $2\pi$ satisfying $K(r,\pi -x)
= K(r,\pi + x)$, $K(0) = K(\pi) = 0$, and
  \[  K(r,x) \leq \pi - x, \;\;\;\; 0 \leq x \leq \pi.\]
   Moreover, there exists $\epsilon > 0$
  such that for all $r$ sufficiently small and all $x$,
  \[      K(r,x) \leq \pi - \epsilon r.\]
  \end{lemma}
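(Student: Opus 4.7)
The algebraic properties ($2\pi$-periodicity, oddness, $K(r,0)=K(r,\pi)=0$, and the reflection identity at $\pi$) all follow directly from the integral representation $K(r,x)=\int_0^x[r\nzhan(r,y)-1]\,dy$. The function $\nzhan(r,\cdot)$ is even in its second argument, as is visible from \eqref{nzhan} by the substitution $k\mapsto -k$ together with $\cosh(-u)=\cosh(u)$; this makes the integrand even in $y$ and so $K(r,\cdot)$ odd. The $2\pi$-periodicity of $\nzhan$ combined with the identity $\int_0^{2\pi}\nzhan(r,y)\,dy=2\pi/r$ established just before the lemma says that the integrand has mean zero over one period, giving $K(r,x+2\pi)=K(r,x)$; combining this with evenness yields $K(r,\pi)=\tfrac{1}{2}\int_{-\pi}^{\pi}[r\nzhan(r,y)-1]\,dy=0$. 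The reflection identity at $\pi$ is then a formal consequence of oddness and periodicity.

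For the bound $K(r,x)\le\pi-x$ on $[0,\pi]$, I would rewrite the claim as $r\int_0^x\nzhan(r,y)\,dy\le\pi$. Since $\nzhan\ge 0$ the left side is nondecreasing in $x$, and by evenness and periodicity its value at $x=\pi$ is exactly $r\cdot\tfrac{1}{2}\int_0^{2\pi}\nzhan(r,y)\,dy=\pi$.

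For the refined bound $K(r,x)\le\pi-\epsilon r$ in the small-$r$ regime, by the reflection identity it suffices to consider $x\in[0,\pi]$. I would fix a small constant $\epsilon>0$ (for concreteness $\epsilon=1/(2\pi)$) and split into two cases. If $x\ge\epsilon r$, the previous step gives $K(r,x)\le\pi-x\le\pi-\epsilon r$. If $x<\epsilon r$, I would use a pointwise estimate $\nzhan(r,y)\le\pi^2/r^2$ valid on $[0,\pi]$ for all small $r$: in \eqref{nzhan} the $k=0$ term is at most $\pi^2/(2r^2)$, while for $k\ne 0$ and $y\in[0,\pi]$ the argument of $\cosh^{-2}$ has absolute value at least $\pi^2/(2r)$, so the remaining terms are $O(r^{-2}e^{-\pi^2/r})$. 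This gives
\[
K(r,x)\le r\int_0^x\nzhan(r,y)\,dy\le \pi^2 x/r\le \pi^2\epsilon=\pi/2,
\]
which is comfortably below $\pi-\epsilon r$ for every sufficiently small $r$.

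The only step that requires any real work is the uniform pointwise bound on $\nzhan$ in the small-$x$ regime; it depends on the explicit hyperbolic form of the annulus Poisson kernel but is a direct estimate. Everything else reduces to elementary manipulations of the definition and the normalization $\int_0^{2\pi}\nzhan(r,y)\,dy=2\pi/r$.
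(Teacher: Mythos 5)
Your handling of the algebraic identities and of the bound $K(r,x)\le \pi-x$ on $[0,\pi]$ is correct (and the paper's own proof dismisses these parts as straightforward). However, the reflection identity does \emph{not} follow formally from oddness and $2\pi$-periodicity, as you claim: those two properties give $K(r,\pi+x)=K(r,x-2\pi+x)\big|_{\text{shift}}=K(r,x-\pi)=-K(r,\pi-x)$, i.e.\ $K$ is \emph{anti}-symmetric about $x=\pi$. If the printed identity $K(r,\pi-x)=K(r,\pi+x)$ also held, $K$ would vanish identically; the statement evidently carries a sign error, and the derivation you describe actually proves the corrected version $K(r,\pi-x)=-K(r,\pi+x)$. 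You should have flagged this rather than asserting the false implication.

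This sign matters because it invalidates your reduction of the final estimate to $x\in[0,\pi]$. Since $K(r,\pi+u)=-K(r,\pi-u)$, an upper bound for $K$ on $[\pi,2\pi]$ requires a \emph{lower} bound $K(r,y)\ge -(\pi-\epsilon r)$ on $[0,\pi]$, and the trivial estimate $K(r,y)\ge -y$ fails for $y\in(\pi-\epsilon r,\pi]$. What is needed there is that $\int_0^{\pi-\epsilon r}\nzhan(r,y)\,dy\ge\epsilon$ for all small $r$, which is exactly what the lower bound in \eqref{dec10.1} supplies (the mass of $\nzhan(r,\cdot)$ on $[0,r]$ alone is already at least $\rho\pi/r$). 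The paper's proof explicitly notes that the last estimate uses \eqref{dec10.1}; your argument never invokes it, and the missing half-period is precisely where it is needed. Your case analysis on $[0,\pi]$ itself (splitting at $x=\epsilon r$ and using the pointwise bound $\nzhan\le \pi^2/r^2$) is sound and can be retained; you need only add the symmetric lower-bound argument on the other half of the period.
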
 

\begin{proof}  This is straightforward.  The last estimate
uses \eqref{dec10.1}.
\end{proof}

%\labove \textsf
%{\begin{small}  \Heuristic  
%We will need to study the
% function $K_r(x) = r \,H_I(r,x)$ for small $r$.  It is odd with
% period $2\pi$.  It is increasing for $0 \leq x \leq \pi$
% and $K_r(\pi - x)  = K_r(\pi + x)$.  If
% $0 < x < 2\pi$,
% \[  \lim_{r\rightarrow 0+} K_r(x) = \pi - x . \]
%\end{small}}
%\lbelow

\labove \textsf
{\begin{small}  \Heuristic  
  Although we will not need it for our main
   theorem, in a comment in Section
   \ref{diffsec} we will use the
   fact  
   that the function $\Phi(r,x) = r \, \nzhan(r,x)
 = r \, \zhan_I'(r,x)+1 $ satisfies
the differential equation
\begin{equation}  \label{kappa2}
         \dot \Phi =   \Phi''
  + \zhanh_I \, \Phi + \zhanh_I' \, \Phi.
  \end{equation}
 Here, as later in the paper, we use dots for $r$-derivatives
 and primes for $x$-derivatives.
To see this, we will need
   the following fact from 
  \cite{Zhanannulus}:  
 \[   \dot \zhan_I = \zhan_I'' + \zhan_I' \, \zhan_I
.\]
Hence $G = \zhan_I'$ satisfies
\[  \dot G =G''
   + \zhan_I \, G' + \zhan_I' \,
    G,\]
and 
\begin{eqnarray*}
\dot \Phi & = &  G + r \, G''
   + r\, \zhan_I \, G' +r\,  \zhan _I' \,
    G \\
    & = & G + \Phi'' +  \zhan_I\, \Phi' + \zhan_I' \, (\Phi-1)\\
      & = &  \Phi'' +  \zhan_I\, \Phi' + \zhan_I' \, \Phi\\
%      & = & \Phi'' +   \zhan_I\, \Phi' + (\nzhan_I-r^{-1}) \, \Phi
%     & = & \Phi''(r,x) +  \zhan_I(r,x) \, \Phi'(r,x) 
%      + [\zhan _I'(r,x) + r^{-1}] \, \Phi(r,x)\\
%   & = &  \Phi''(r,x)
%  + \zhanh_I(r,x) \, \Phi'(r,x) + \nzhan(r,x) \, \Phi(r,x)
\end{eqnarray*}
%\begin{eqnarray*}
%\dot \Phi(r,x) & = &  G(r,x) + r \,[G''(r,x)
%   + \zhan_I(r,x) \, G'(r,x) + \zhan _I'(r,x) \,
%    G(r,x)]\\
%     & = & \Phi''(r,x) +  \zhan_I(r,x) \, \Phi'(r,x) 
%      + [\zhan _I'(r,x) + r^{-1}] \, \Phi(r,x)\\
%   & = &  \Phi''(r,x)
%  + \zhanh_I(r,x) \, \Phi'(r,x) + \nzhan(r,x) \, \Phi(r,x)
%\end{eqnarray*}
 \end{small}}
 \lbelow

%   Although we will not need it for our main
%   theorem, we will use the following in an example. 
% Here, as later in the paper, we use dots for $r$-derivatives
% and primes for $x$-derivatives.
% 
% 
%\begin{proposition}  \label{kappa2}
%The function $\Phi(r,x) = r \, \nzhan(r,x)
% = r \, \zhan_I'(r,x)+1 $ satisfies
%the differential equation
%\[          \dot \Phi =   \Phi''(r,x)
%  + \zhanh_I(r,x) \, \Phi'(r,x) + \nzhan(r,x) \, \Phi(r,x).\] 
%  \end{proposition}
%  
%  
%  \begin{proof}
%  Let $G = \zhan_I'$. 
% One fact that we will use \cite{Zhanannulus} is 
% \[   \dot \zhan_I = \zhan_I'' + \zhan_I' \, \zhan_I,
% \;\;\;\;  \dot G =G''
%   + \zhan_I \, G' + \zhan_I' \,
%    G.\]
%Therefore,
%\begin{eqnarray*}
%\dot \Phi(r,x) & = &  G(r,x) + r \,[G''(r,x)
%   + \zhan_I(r,x) \, G'(r,x) + \zhan _I'(r,x) \,
%    G(r,x)]\\
%     & = & \Phi''(r,x) +  \zhan_I(r,x) \, \Phi'(r,x) 
%      + [\zhan _I'(r,x) + r^{-1}] \, \Phi(r,x)\\
%   & = &  \Phi''(r,x)
%  + \zhanh_I(r,x) \, \Phi'(r,x) + \nzhan(r,x) \, \Phi(r,x)
%\end{eqnarray*}
%\end{proof}
%
% 

 \begin{lemma}  \label{dec4.lemma2}
  There exists $c >0$ such
 that the following holds.  Suppose $r \geq 1$ and
 $  f: D \rightarrow A_r $
 is a conformal transformation with $f(C_0) = C_0$
  where 
  $D = \Disk \setminus K$ and $K$ is a compact
  subset containing the origin.  Then for
  $|z| = 1$,
  \[  \left|\, |f'(z)| - 1\, \right|
    \leq c \,e^{-r}, \;\;\;\;|f''(z)| \leq 
       c e^{-r} . \]
\end{lemma}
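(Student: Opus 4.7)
The plan is to combine Schwarz reflection across $C_0$ with exponential decay of Fourier modes on a sub-annulus of the doubled domain.

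First, I would observe that the modulus constraint forces $K$ to be small. By the Gr\"otzsch modulus inequality applied to the doubly connected domain $D$ (with $0$ in the ``hole''), the diameter of $K$ is bounded by $\mu^{-1}(r)$, which is comparable to $e^{-r}$; combined with $0 \in K$, this gives $K \subset \{|z| \leq c_0 e^{-r}\}$ for an absolute constant $c_0$, so that $D \supset \{c_0 e^{-r} < |z| < 1\}$.

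Next, since $f(C_0) = C_0$ and $C_0$ is analytic, the Schwarz reflection formula $f(1/\bar z) = 1/\overline{f(z)}$ extends $f$ to a conformal map from $D^\ast := D \cup C_0 \cup \{1/\bar z : z \in D\}$ onto the doubled annulus $\{e^{-r}<|z|<e^r\}$, and $D^\ast$ contains the annular neighborhood $A'' := \{c_0 e^{-r} < |z| < c_0^{-1} e^r\}$ of $C_0$.

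On $A''$ I would work with the harmonic function $\eta(z) = \log|f(z)|/r$, which vanishes on $C_0$, is antisymmetric under $z\mapsto 1/\bar z$, and satisfies $|\eta|\le 1$ on $\partial D^\ast$. Expanding $\eta(\rho e^{i\theta}) = \eta_0(\rho) + \sum_{n\neq 0}\eta_n(\rho)e^{in\theta}$, the condition $\eta(e^{i\theta})=0$ forces
\[ \eta_0(\rho) = B_0\log\rho, \qquad \eta_n(\rho) = A_n(\rho^n - \rho^{-n}) \quad (n\neq 0). \]
Since on $C_0$ the tangential derivative of $\log|f|$ vanishes, one has $|f'(e^{i\theta})| = r|\partial_\rho \eta(1,\theta)| = r|B_0 + \sum_{n\neq 0} 2nA_n e^{in\theta}|$. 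The total flux identity $\int_{C_0}|f'|\,|dz| = 2\pi$ forces $B_0 = 1/r$, yielding mean value $1$.

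Bounding the $A_n$ from $|\eta|\le 1$ on $|z|=c_0^{-1}e^r$ via sinh asymptotics gives $|A_n| \leq c_0^{|n|} e^{-|n|r}$, so the oscillation $\sum_{n\neq 0}2|n||A_n|$ is $O(e^{-r})$. Multiplying by $r$ yields the desired $\bigl||f'|-1\bigr| \le c e^{-r}$; the bound $|f''| \leq c e^{-r}$ follows from Cauchy's estimate applied to $f'-a_1$ on a small contour in $A''$ around $z$, or by the same Fourier argument with an extra factor of $|n|$ swamped by the exponential. The main obstacle is absorbing the prefactor of $r$ coming from $|f'| = r|\partial_\rho\eta|$: a naive bound gives only $O(re^{-r})$, and one must exploit the fact that $\eta$ on $|z|=c_0^{-1}e^r$ takes values in $[0,1]$ with mean $1 - \log c_0/r$, so that its nonzero Fourier modes satisfy $\sum_{n\neq 0}|\eta_n|^2 \le \log c_0/r$ by a Chebyshev-type variance estimate, absorbing the $r$ factor via Cauchy--Schwarz.
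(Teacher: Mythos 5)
Your route (Schwarz reflection across $C_0$ plus a Laurent--Fourier expansion of $\eta=\log|f|/r$ on the doubled annulus) is genuinely different from the paper's probabilistic argument, and it is sound up to and including the bound $|A_n|\le c\,c_0^{|n|}e^{-|n|r}$, which gives $\bigl||f'|-1\bigr|=O(re^{-r})$. You correctly identify that the whole difficulty is removing the remaining factor of $r$, but the mechanism you propose does not remove it. The Chebyshev bound $\sum_{n\neq 0}|\eta_n(\rho_+)|^2\le m(1-m)\le \log c_0/r$ is correct, but feeding it through Cauchy--Schwarz gives
\[
\sum_{n\neq 0}|n|\,|A_n|\;\le\; c\Bigl(\sum_{n\neq 0}|\eta_n(\rho_+)|^2\Bigr)^{1/2}\Bigl(\sum_{n\neq 0}n^2\rho_+^{-2|n|}\Bigr)^{1/2}\;\le\; c\,r^{-1/2}\,e^{-r},
\]
so after multiplying by $r$ you obtain only $O(\sqrt{r}\,e^{-r})$, not $O(e^{-r})$. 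An $\ell^2$ bound of order $1/r$ on the modes cannot do better: already the single coefficient $\eta_{\pm 1}(\rho_+)$ is only controlled by $O(r^{-1/2})$ this way, while the lemma requires $|\eta_{\pm1}(\rho_+)|=O(1/r)$.

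What is actually needed is the pointwise statement that $\eta$ deviates from its mean by $O(1/r)$ \emph{uniformly} on a circle of radius comparable to $e^{-r}$ (equivalently, the harmonic measure of $K$ in $D$ equals $1-O(1/r)$ uniformly on $\{|z|=Ce^{-r}\}$). That yields $|\eta_n(\rho_+)|=O(1/r)$ for every $n\neq 0$, hence $\sum|n||A_n|=O(r^{-1}e^{-r})$ and the full claim. This is exactly the nontrivial harmonic-measure input that the paper's proof supplies via Brownian hitting estimates (the assertion that $\phi_D=1-O(r^{-1})$ on $C_{r-s}$, combined with the near-uniformity of hitting distributions from Lemma \ref{dec4.1}); it can also be obtained by Beurling projection, since $K$ is a continuum containing $0$ of diameter and capacity comparable to $e^{-r}$. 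Without some such estimate your argument stalls at $O(\sqrt{r}\,e^{-r})$; the same defect propagates into your sketch of the $|f''|$ bound, which likewise needs a sup-norm control of $\log f-\log z$ on a fixed annulus around $C_0$ rather than on $C_0$ alone.
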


\begin{proof}  Let $\phi_D$ be the harmonic function on
$D$ with boundary values $0$ on $C_0$ and $1$ on
$K$ and let $\phi_r = \phi_{A_r}$.
 By conformal invariance,
\[     \phi_D(z) = \phi_r(f(z))= \frac{-\log |f(z)|}{r}.\]
Since $f$ maps $C_0$ to $C_0$, this implies
\[   r\, \p_\n \phi_D(z) = |f'(z)| , \]
where $\n$ denotes the inward unit normal.
Also, conformal invariance of excursion measure gives
\[  \int_{C_0}  \p_\n \phi_D(z) \, |dz|
  = \int_{C_0} \, \p_\n \phi_r( z ) \, |dz| = \frac{2\pi}
   r . \]
 Hence to prove the first estimate, 
 it suffices to show for $z,w \in C_0$, 
 \begin{equation}  \label{dec4.2}
  \p_\n \phi_D(z) = \p_\n \phi_D(w)
  \, [1 + O(e^{-r})]. 
  \end{equation}
 Using Koebe estimates, we can find a universal $s$ such
 that for $r$ sufficiently large, $K \subset \Disk_{r-s}$.
Suppose we start Brownian motions at $e^{-\epsilon} \, z$
and $e^{-\epsilon} \, w$, respectively.  The probability
that they reach $C_{r-s}$ without hitting $C_0$ is
$\epsilon/(r-s)$.  On $C_{r-s}$, $\phi_D) = 1 - O(r^{-1})$.
Using Lemma \ref{dec4.1}, we can see that the
 conditional distributions on $C_{r-s}$ given that
the Brownian motions reach $C_{r-s}$ are the same for
$z,w$ up to an error of order $O(re^{-r})$.  Hence,
if $q(z) = q(z,r,\epsilon)$ denotes the probability that
the Brownian motion starting at $z$ reaches $C_{r-s}$
before $C_0$ but does not hit $K$ before $C_0$, then
\[    |q(z) - q(w)| \leq c\frac{ \epsilon}{r-s}
 \, r^{-1} \, O(re^{-r})   \leq \frac{\epsilon}{r}
  \, O(e^{-r}), \] from
  which we conclude \eqref{dec4.2}.  Indeed, we conclude
  the stronger fact,
  \[     q(z) = -\frac{\log |z|}{r}
  [1 + O(e^{-r})]
  ,  \,\;\;\;\;e^{-1} < |z| < 1. \]
  This implies
  \[   |f(z)| = |z| \, [ [1 + O(e^{-r})]
  ,  \,\;\;\;\;e^{-1} < |z| < 1. \]

   For the second
  estimate, fix $z$ and assume without loss of generality
  that $z=1$ and $f(1) = 1$.  By Schwarz reflection, we
  can extend $f$ to a neighborhood of radius $1/2$ about
  $1$.  Let $, L(z) = \log z,
  g(z)  = \log f$. where $L(1) = g(1) = 0$.
   We have  $|\Re g(z) - \Re L(z)| 
   \leq e^{-r}$ and $g(1) = L(1)$.  From this 
   we can use standard arguments to conclude that
   $|g(z) - L(z)| = O(e^{-r})$.  Using the
   Cauchy integral formula, we get
   $|g'(z) - L'(z)|, |g''(z) - L''(z)| 
   \leq c \, O(e^{-r})$.

\end{proof}

A computation that we will do a little later will
give us a particular annulus function $\funone(r,x)$
which we now
define. Suppose
that $D = S_r, z=0, w = x + ri$ and let $\gamma_t$ be
a curve starting at the  origin parametrized so that
$\hcap[\gamma_t] = t$.  Let
 $D_t$ be the domain obtained by splitting $\Half$ by
 the nontrivial 
 $2\pi k$ translates of $\gamma_t$,
\[            D_t    =   S_r \setminus \bigcup_{k \in \Z \setminus \{0\}}
          [\gamma_t + 2\pi k] , \]
          and let
          \[   Q_t = Q_{D}(0,w;D_t).\]
Then (see the end of Section \ref{fradsec}),
one can check that as $t \rightarrow 0$,
\begin{equation}  \label{tomato}
         Q_t = 1 - \funone(r,x) \, t + o(t) , 
         \end{equation}
where
\[            \funone(r,x) = \sum_{k \in \Z \setminus \{0\}}
              \frac{H_{\p S_r}(0,2\pi k) \, H_{\p S_r}(2 \pi k,x+ir)}
                  {H_{\p S_r}(0, x+ir)}.\]
 Using
\eqref{nov20.1}  and  \eqref{jan31.1}, we get
%\begin{definition}
\begin{equation}  \label{fun1}
   \funone(r,x) = \frac{\pi^2} {4  r^2}
  \sum_{ k \in \Z \setminus \{0\}}  \frac{
    \cosh^2(\pi x/2r) }{\sinh^2(\pi^2 k/r)
      \, \cosh^2(\pi(x-2 \pi k)/2r)},
      \end{equation}
%\end{definition}

%We have not motivated this definition; see
% Lemma
%   \ref{estimatetwo} below to see where it
%   arises in our analysis,

% 
% 
% \begin{lemma}  \label{oct12.lemma1}
%If $y \in \R$ and
%\[         f(x) = \frac{\cosh^2x}{\cosh^2(x-y)}
%          + \frac{\cosh^2 x}{\cosh^2(x+y)}, \]
%then $f$ is increasing for $x \geq 0$.
%\end{lemma}
%
%\begin{proof}
%Since
%\[   f(x) = \frac{\cosh(2x) + 1}{\cosh(2x - 2y) + 1}
%              + \frac{\cosh(2x) + 1}{\cosh(2x + 2y) + 1},\] 
%               it suffices to show for every $y \in \R$,  that
%\[   F(x) = \frac{\cosh x + 1}{\cosh(x-y)+ 1}
%              + \frac{\cosh x + 1}{\cosh(x+y)+ 1}, \]
%is increasing for $x \geq 0$. 
%Using the sum rule, we
%get
%\[ \cosh(x-y)+ 1 +  \cosh(x +y)+ 1 =
%         2 \cosh x \cosh y + 2 , \]
%         Letting $r = \cosh y \geq 1$, we get
%\begin{eqnarray*}
% [\cosh(x-y) + 1] \, [\cosh(x+y) + 1]
%& = &      (\cosh x \cosh y + 1)^2 - \sinh^2 x \sinh^2 y \\
% & = &     (r \cosh x + 1)^2 - (r^2 - 1) (\cosh^2x - 1) \\
% & = & \cosh^2 x + 2r \cosh x + r^2 \\
%  & = & (\cosh x + r)^2 .
% \end{eqnarray*}
%Therefore,
%\[
%    F(x)    =  \frac{2r\, ( \cosh x + r^{-1}) \, (\cosh x + 1)}{(\cosh x + r)^2}
%      = 2 r \, e^{G(\cosh x)}, \]
%  where
%  \[   G(t) = \log (t + \frac 1 r) + \log (t+1) -
%       2 \log \, (t+r) . \]
% Since $r \geq 1$, $G'(t) > 0$ for $t > 0$ and hence $G$ and
% $F$ are increasing.
% 
% \end{proof}

 \begin{proposition}  \label{feb1.prop1}
  For fixed $r$,
$\funone(r,\cdot)$ is a positive, even function, that is increasing
in $|x|$.   There exists $ c> 0$ such that
\begin{comment}
 $\funone(r,x) = \funone(r,-x)$.  
 If $k$ is a positive integer, and $2\pi k \leq x
 < 2 \pi(k+1)$, then
 \begin{equation}  \label{oct7.1}
            \funone(r,x) \geq \frac{ \pi^2 k}{4 r^2}
  \, [1 + o(1)] , \;\;\;\; r \rightarrow 0+. 
  \end{equation}
\end{comment} 
 If $0  <  r \leq 1$ and $0 \leq x \leq  \pi$,
  \begin{equation}  \label{oct7.2}  \funone(r,x) \leq  \frac{c}{ r^2}
   \,  \exp \left\{- \frac{2 \pi} r ( \pi - x)\right\} 
   .
   \end{equation}
  \end{proposition}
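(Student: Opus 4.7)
The plan is to read each property directly off the defining series (\ref{fun1}), using elementary bounds on $\cosh$ and $\sinh$.

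\emph{Positivity and evenness.} Each summand in (\ref{fun1}) is a ratio of squares of positive quantities, so $\funone(r,x)>0$. For evenness, substitute $-x$ for $x$ in (\ref{fun1}) and reindex $k\mapsto -k$; since $\sinh^{2}(-y)=\sinh^{2}(y)$ and $\cosh^{2}(\pi(-x-2\pi k)/(2r))=\cosh^{2}(\pi(x+2\pi k)/(2r))$, the series returns to its original form, giving $\funone(r,-x)=\funone(r,x)$.

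\emph{Monotonicity in $|x|$.} By evenness it suffices to show $\funone'(r,x)>0$ for $x>0$. Differentiating each summand termwise (the sum is dominated in $k$ by a doubly exponential tail) and using the identity
\[
\frac{d}{dx}\frac{\cosh(\pi x/(2r))}{\cosh(\pi(x-2\pi k)/(2r))}=\frac{(\pi/(2r))\sinh(\pi^{2}k/r)}{\cosh^{2}(\pi(x-2\pi k)/(2r))}
\]
one obtains
\[
\funone'(r,x)=\frac{\pi^{3}\cosh(\pi x/(2r))}{4r^{3}}\sum_{k\ne 0}\frac{1}{\sinh(\pi^{2}k/r)\,\cosh^{3}(\pi(x-2\pi k)/(2r))}.
\]
Pairing the $k$th and $(-k)$th summands (using $\sinh(-y)=-\sinh(y)$), each pair equals
\[
\frac{1}{\sinh(\pi^{2}k/r)}\Bigl[\cosh^{-3}(\pi(x-2\pi k)/(2r))-\cosh^{-3}(\pi(x+2\pi k)/(2r))\Bigr].
\]
For $k\ge 1$ and $x>0$ one checks $|x-2\pi k|<x+2\pi k$, so the bracket is positive; since $\sinh(\pi^{2}k/r)>0$ each pair is positive and therefore $\funone'(r,x)>0$.

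\emph{Exponential bound.} For $r\le 1$ and $k\ge 1$ the argument $\pi^{2}k/r$ is at least $\pi^{2}$, so the crude estimates $\cosh^{2}(y)\le e^{2|y|}$, $\cosh^{2}(y)\ge e^{2|y|}/4$, and $\sinh^{2}(y)\ge c\,e^{2y}$ (valid for $y\ge\pi^{2}$) apply. For $0\le x\le\pi$ and $k\ge 1$, $|x-2\pi k|=2\pi k-x$, so the $k$th summand is at most
\[
\frac{C}{r^{2}}\cdot\frac{e^{\pi x/r}}{e^{2\pi^{2}k/r}\cdot e^{\pi(2\pi k-x)/r}}=\frac{C}{r^{2}}\,e^{2\pi x/r-4\pi^{2}k/r}.
\]
Summing the resulting geometric series over $k\ge 1$ gives $(C/r^{2})\,e^{-2\pi(2\pi-x)/r}$. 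The negative-$k$ terms, estimated by the same recipe (now $|x+2\pi |k||=x+2\pi|k|$), are bounded by $(C/r^{2})\,e^{-4\pi^{2}/r}$. On $[0,\pi]$ we have $2\pi-x\ge\pi-x\ge 0$ and $2\pi^{2}\ge 2\pi(\pi-x)$, so both estimates are dominated by $(C/r^{2})\,e^{-2\pi(\pi-x)/r}$, yielding (\ref{oct7.2}).

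No serious obstacle arises; the only mildly clever step is the pairing $k\leftrightarrow -k$ in the differentiated series, which converts a sum of mixed sign into one of visibly positive terms and makes monotonicity transparent. The announced bound is quite weak (the $k=1$ term actually decays like $e^{-2\pi(2\pi-x)/r}$), so even very crude exponential estimates suffice.
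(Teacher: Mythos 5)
Your proof is correct, and for the positivity, evenness, and the exponential bound \eqref{oct7.2} it is essentially the paper's argument (the paper uses the same $\cosh^2(\pi x/2r)\asymp e^{\pi x/r}$ and $\sinh^2(\pi^2k/r)\cosh^2(\pi(x-2\pi k)/2r)\gtrsim e^{2|k|\pi^2/r}e^{\pi(2\pi-x)/r}$ estimates and sums over $k$). The monotonicity step, however, is handled by a genuinely different route. The paper keeps the undifferentiated summands, pairs $k$ with $-k$, and reduces to showing that $F(x)=\frac{\cosh x+1}{\cosh(x-y)+1}+\frac{\cosh x+1}{\cosh(x+y)+1}$ is increasing for $x\ge 0$; this is done via the algebraic identity $[\cosh(x-y)+1][\cosh(x+y)+1]=(\cosh x+\cosh y)^2$, which lets one write $F=2re^{G(\cosh x)}$ with $G'>0$. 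You instead differentiate the series termwise, exploit the Wronskian-type identity $\frac{d}{dx}\bigl[\cosh(\pi x/2r)/\cosh(\pi(x-2\pi k)/2r)\bigr]=\frac{\pi}{2r}\sinh(\pi^2k/r)\cosh^{-2}(\pi(x-2\pi k)/2r)$, and only then pair $k\leftrightarrow -k$, at which point positivity of each pair is immediate from $|x-2\pi k|<x+2\pi k$ for $x>0$. Your version is shorter and makes the sign structure transparent, at the modest cost of having to justify termwise differentiation (which you do, correctly, by the exponential decay in $|k|$ locally uniformly in $x$); the paper's version avoids differentiating an infinite series but requires finding the factorization of the product of denominators. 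Both are complete proofs.
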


 \begin{proof}  The definition implies $\funone(r,x)
 = \funone(r,-x)$.   For $r \leq 1$, $0 \leq x < \pi$,  
 \[  \cosh^2(\pi x/2r) \asymp e^{\pi x/r}, \]
 \[   
 \sinh^2(\pi^2 k/r)
      \, \cosh^2(\pi(x-2 \pi k)/2r)
       \asymp e^{2|k|\pi^2/r} \, e^{\pi  |2 \pi k-x|/r}
          \geq    e^{2|k|\pi^2/r} \, e^{\pi (2 \pi-x)/r}. \]
 By summing over $k$, we get \eqref{oct7.2}.         
    The monotonicity in $|x|$ will follow if
    we show that   that for each integer $k$, 
 \[   \frac{
    \cosh^2(\pi x/2r) }{ 
     \cosh^2(\pi(x-2 \pi k)/2r)}+
       \frac{
    \cosh^2(\pi x/2r) }{ 
       \cosh^2(\pi(x +2 \pi k)/2r)}   \]
      is an increasing function of $|x|$. 
      
Indeed, we will now show that if $y \in \R$ and
\[         f(x) = \frac{\cosh^2x}{\cosh^2(x-y)}
          + \frac{\cosh^2 x}{\cosh^2(x+y)}, \]
then $f$ is increasing for $x \geq 0$.
Since
\[   f(x) = \frac{\cosh(2x) + 1}{\cosh(2x - 2y) + 1}
              + \frac{\cosh(2x) + 1}{\cosh(2x + 2y) + 1},\] 
               it suffices to show for every $y \in \R$,  that
\[   F(x) = \frac{\cosh x + 1}{\cosh(x-y)+ 1}
              + \frac{\cosh x + 1}{\cosh(x+y)+ 1}, \]
is increasing for $x \geq 0$. 
Using the sum rule, we
get
\[ \cosh(x-y)+ 1 +  \cosh(x +y)+ 1 =
         2 \cosh x \cosh y + 2 , \]
         Letting $r = \cosh y \geq 1$, we get
\begin{eqnarray*}
 [\cosh(x-y) + 1] \, [\cosh(x+y) + 1]
& = &      (\cosh x \cosh y + 1)^2 - \sinh^2 x \sinh^2 y \\
 & = &     (r \cosh x + 1)^2 - (r^2 - 1) (\cosh^2x - 1) \\
 & = & \cosh^2 x + 2r \cosh x + r^2 \\
  & = & (\cosh x + r)^2 .
 \end{eqnarray*}
Therefore,
\[
    F(x)    =  \frac{2r\, ( \cosh x + r^{-1}) \, (\cosh x + 1)}{(\cosh x + r)^2}
      = 2 r \, e^{G(\cosh x)}, \]
  where
  \[   G(t) = \log (t + \frac 1 r) + \log (t+1) -
       2 \log \, (t+r) . \]
 Since $r \geq 1$, $G'(t) > 0$ for $t > 0$ and hence $G$ and
 $F$ are increasing.
      \end{proof}

 \subsection{$SLE_\kappa$ in $\Half$}  \label{fslesec}
 
If $\kappa = 2/a \in (0,4]$, then 
 {\em chordal $SLE_\kappa$ (in $\Half$ from $0$
 to $\infty$)}  is the  solution to
   the {\em chordal Loewner equation}
\begin{equation}  \label{chordal}
    \p_t g_t(z) = \frac{a}{g_t(z) - U_t },\;\;\;\; g_0(z) = z,
 \end{equation}
 where $U_t = - B_t$ is a standard Brownian motion.
 With probability one \cite{RS}, this generates a 
 random path $\gamma:(0,\infty)\rightarrow \Half$
 such that the domain of  $g_t$ is $\Half \setminus
 \gamma_t$.  The curve is parametrized so that 
 $\hcap[\gamma_t] = at$ (see \cite[Chapter 3]{LBook}
 for definitions); in other words,
 \[    g_t(z) = z + \frac{at}{z} + O(|z|^{-2}),
 \;\;\; z \rightarrow \infty . \]

   For every $r > 0$, let
\[            \tau_r = \inf\{t > 0: \gamma(t) \not \in S_r\}
 = \inf\{t>0 : \Im \gamma(t) = r \}. \]

\labove \textsf
{\begin{small}  \Heuristic  
 $SLE_\kappa$ for $\kappa > 4$ is also very interesting, but the
 paths are not simple.  We restrict in this paper to $\kappa
 \leq 4$.
 \end{small}}
 \lbelow

Chordal $SLE_\kappa$ produces a probability measure on curves,
modulo (increasing) reparametrization, from $0$ to $\infty$.
By conformal transformation, we get a probability measure
on curves connecting distinct boundary points $z,w$ of
simply connected domains $D$.  We will denote this measure
by
  $\mu_D^\#(z,w)$.

\labove \textsf
{\begin{small}  \Heuristic  
To get a measure on parametrized curves, one should use
the natural parametrization as described in \cite{LShef}.
This parametrization satisfies a conformal covariance rule
under conformal transformations.
We would extend our definitions in this paper to 
parametrized curves, but it would not add anything to
our arguments here.  For this reason
we will consider curves modulo reparametrization as in
\cite{Schramm}.  
\end{small}}
\lbelow

{\em Radial $SLE_\kappa$ from $1$ to $0$ in $\Disk$} is
defined  by
the transformations on the disk
\[              \tilde g_t(e^{iz}) = e^{h_t(e^{iz})} \]
where $h_t$ satisfies 
\begin{equation}  \label{radloew}
             \p_th_t(z) = \frac{a}{2} \, \cot_2(h_t(z)
- U_t) , 
\end{equation}
where, as in \cite{Zhanannulus}, we write $\cot_2(z)
= \cot(z/2)$, and $U_t$ is a standard Brownian motion.  By
conformal invariance, this gives a probability measure on
curves $\mu^\#(z,w)$
connecting one boundary point $z$ and one interior point
$w$.

\subsection{Brownian bubble measure}

Our main interest is the Brownian loop measure.  However,
computations of the measure lead to considering excursions
and the 
boundary bubble measure.  

 Suppose
  $D$ is a domain with smooth (not necessarily connected) boundary.  
  For each $z \in \p D, V,V_1 \subset \p D$, we define
  {\em (Brownian) excursion measures}  by  
  \[  \exc_D(z,V) = \int_V H_{\p D}(z,w) \, |dw|, \]
 % and the  {\em (Brownian) excursion measure}  by  
  \[   \exc_D(V_1,V) = \int_{V_1}\exc_D(z,V)
   \, |dz| = \int_{V}\int_{V_1}  H_{\p D}(z,w) \, |dz|\,
   |dw|.\]
  They satisfy the scaling rules
  \[          \exc_D(z,V) = |f'(z)| \, \exc_{f(D)}(f(z),f(V)),\]
  \[           \exc_D(V_1,V) = \exc_{f(D)}(V_1,V).\]
In particular $\exc_D(V_1,V)$ is a conformal invariant and hence
  is well defined even if the boundaries are not smooth.  
  The quantity $\exc_D(z,V)$ needs local smoothness at $z$ to
  be defined.

Boundary bubbles in $D$ are loops rooted at $z \in \p D$
and otherwise staying in $D$.    We review the definitions
(see \cite[Section 5.5]{LBook}). The bubble measure is a $\sigma$-finite
measure on bubbles.  In $\Half$ we can define the measure, by
specifying for each simply connected domain $D \subset \Half$ with
$\dist(0, \Half \setminus D) > 0$, the measure of the set  
of bubbles at $0$ that do not lie in $D$.
  
  \begin{definition}
   If $D \subset \Half$ is a subdomain, $x \in \R$, and
 $\dist(x, \Half \setminus D) > 0$, then  
 \[    \Gamma(x;D) = \Gamma_\Half(x;D) =
 \p_y [H_\Half(z,x) - H_D(z,x)]\mid_{z = x }. \]
 \end{definition}

The quantity $\Gamma(x;D)$
is   the  bubble measure 
(in $\Half$ rooted at $x$) of bubbles that intersect
$\Half \setminus D$.
Alternatively, we can write 
 \begin{equation}  \label{nov4.1}
  \Gamma(x;D) = \lim_{\epsilon \downarrow 0}
\epsilon^{-1} \,      \E^{x+i\epsilon} [H(B_\tau,x)] , 
\end{equation}
where $B$ is a complex Brownian motion and 
$\tau = \tau_D = \inf\{t: B_t \not\in D\}$.  Note that
\[   \Gamma(x;D_1) - \Gamma(x;D_2) =  
\p_y [H_{D_2}(z,x) - H_{D_1}(z,x)]\mid_{z = x }. \] We can similarly
define $\Gamma_{D}(z;D')$ if $z$ is $\p D$-analytic, 
 $D' \subset D$, and $\dist(z,D \setminus D') >0$.  It is 
 defined  as in \eqref{nov4.1}, which we can also
write as
\[   \Gamma_D(z;D') = \int_{D \cap \p D'}
     H_D(w,z) \, d\exc_D(z,w).\]
  It satisfies the  following scaling rule:
if  $f:D \rightarrow f(D)$ is a conformal
transformation,  then
\[             \Gamma_D(z;D') = |f'(z)|^2 \,
  \Gamma_{f(D)}(f(z) ,f(D')) . \]
  If $D\subset \Half $ is simply connected, this quantity
  can be computed \cite[Proposition 5.22]{LBook}: 
  if $f:D \rightarrow \Half$ is
  a conformal transformation with $f(x) = x$, then
  \begin{equation}  \label{schw}             \Gamma(x;D) = - \frac 16 \, Sf(x) , 
  \end{equation}
  where $S$ denotes the Schwarzian derivative.  
Particular cases of importance to us are considered in the following
proposition. 

\begin{proposition} \label{deltaprop}
\[    \Gamma_{\Half}(0;S_r)= \frac{\pi^2}{12r^2}, \]
If $\Gamma(r) = \Gamma_\Disk(1;A_r)$, then 
as $r \rightarrow \infty$,
\begin{equation}  \label{Gammadef}
   \Gamma(r)=  
   \frac{1 +O(e^{-r})}{2r}
                 .
                \end{equation}
Moreover,
\[    \Gamma(r) =  \frac{\pi^2}{12r^2}
               + \delta(r), \]
 where
  \begin{equation}  \label{deltadef}
  \delta(r) = \sum_{k \in \Z \setminus \{0\}}
    [H_{\p \Half}(0,2\pi k) -
    H_{\p S_r}(0,2\pi k)] = \frac 1{12}- \frac{\pi^2}{2r^2} \sum_{k=1}^\infty
\left[ \sinh \left(\frac{  k \pi^2}{r}\right) \right]^{-2}.
  \end{equation}
  In particular,
  \[   \delta(r) = \frac{1 +O(e^{-r})}{2r} - \frac{\pi^2}{12r^2}.\]
\end{proposition}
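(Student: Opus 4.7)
Since $S_r$ is simply connected, apply \eqref{schw} to the conformal map $f(z) = e^{\pi z/r} - 1: S_r \to \Half$ fixing $0$: the ratios $f''/f' = \pi/r$ (constant) and $f'''/f' = \pi^2/r^2$ yield $Sf(0) = -\pi^2/(2r^2)$, hence $\Gamma_\Half(0; S_r) = \pi^2/(12r^2)$.

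For the decomposition $\Gamma(r) = \pi^2/(12r^2) + \delta(r)$, use the universal cover $\psi(z) = e^{iz}: \Half \to \Disk \setminus \{0\}$, with $\psi(0) = 1$, $\psi^{-1}(1) = 2\pi\Z \subset \R$, and $|\psi'(2\pi k)| = 1$. Since paths in $\Disk \setminus \{0\}$ lift uniquely and the Jacobian on $\R$ equals one, lifting Brownian motion yields, for $w \in S_r$,
\[
H_\Disk(\psi(w), 1) - H_{A_r}(\psi(w), 1) = \sum_{k \in \Z}\bigl[H_\Half(w, 2\pi k) - H_{S_r}(w, 2\pi k)\bigr],
\]
with uniformly convergent sum. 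Setting $w = i\epsilon$, dividing by $\epsilon$, and sending $\epsilon \to 0$: the $k=0$ term produces $\Gamma_\Half(0; S_r) = \pi^2/(12r^2)$, while for $k \neq 0$ the expansion $H_D(i\epsilon, 2\pi k) = \epsilon\, H_{\partial D}(0, 2\pi k) + O(\epsilon^2)$, valid for $D \in \{\Half, S_r\}$, contributes $H_{\partial\Half}(0, 2\pi k) - H_{\partial S_r}(0, 2\pi k)$. Term-by-term passage to the limit is justified by $H_{\partial\Half}(0, 2\pi k) = 1/(4\pi^2 k^2)$. This gives the first expression for $\delta(r)$; the closed form follows from $\sum_{k\neq 0} 1/(4\pi^2 k^2) = 1/12$ and \eqref{nov20.1}.

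For the asymptotic $\Gamma(r) = (1 + O(e^{-r}))/(2r)$, apply the analogue of \eqref{nov4.1} for $\Gamma_\Disk(1; A_r)$:
\[
\Gamma(r) = \lim_{\epsilon \to 0}\epsilon^{-1}\,\E^{1-\epsilon}\bigl[H_\Disk(B_{\tau_{A_r}}, 1)\,\mathbb{1}_{B_{\tau_{A_r}}\in C_r}\bigr].
\]
The harmonic function $-\log|z|/r$ on $A_r$ gives $\Prob^{1-\epsilon}(B_{\tau_{A_r}}\in C_r) = \epsilon/r + O(\epsilon^2)$. On $C_r = \{|w| = e^{-r}\}$, $H_\Disk(w, 1) = (1 - |w|^2)/(2|1-w|^2) = 1/2 + O(e^{-r})$ uniformly in the angle. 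The conditional exit density on $C_r$ equals $(r/(2\pi))\,\nzhan(r, \theta)$, which by Lemma~\ref{dec4.1} equals $1/(2\pi)\,(1 + O(re^{-r}))$. Combining yields $\E^{1-\epsilon}[\cdots] = (\epsilon/r)(1/2)(1 + O(e^{-r}))$, so $\Gamma(r) = (1 + O(e^{-r}))/(2r)$. Subtracting $\pi^2/(12r^2)$ from the formula of the previous paragraph yields the final expression for $\delta(r)$.

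The main technical obstacle is the sharp exponential error in the asymptotic; this rests essentially on Lemma~\ref{dec4.1} to control the conditional exit distribution on $C_r$ at rate $O(re^{-r})$, without which one would obtain only the weaker bound $(1 + o(1))/(2r)$.
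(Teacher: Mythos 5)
Your proof is correct and follows the paper's argument in all essentials: the same Schwarzian computation with $f(z)=e^{\pi z/r}-1$ for $\Gamma_\Half(0;S_r)$, the same decomposition of $\Gamma_\Disk(1;A_r)$ into the $k=0$ term plus the boundary Poisson kernel differences at the nonzero preimages $2\pi k$, and the same direct Brownian estimate (exit probability $\epsilon/r$ times $H_\Disk(\cdot,1)=\tfrac12+O(e^{-r})$ on $C_r$) for the $1/(2r)$ asymptotic. The only cosmetic difference is that you formalize the lifting step as a term-by-term differentiation of the Poisson-kernel identity under the covering map where the paper sorts bubbles by the endpoint of their lift, and your appeal to Lemma \ref{dec4.1} for the conditional exit law is actually unnecessary since the uniform estimate on $H_\Disk(\cdot,1)$ over $C_r$ already suffices.
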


\begin{proof}  Since $S_r$ is simply connected, we can use
\eqref{schw} with   $f(z) = e^{\pi z/r} -1$ 
to get
  \[     
    \Gamma_{\Half}(0;S_r) = -\frac {Sf(0)}6  = \frac{\pi^2}{12r^2}. \]
    The second equality follows from 
  \eqref{nov4.1}.
 Indeed, as noted previously 
 \[ \lim_{\epsilon \downarrow 0}
   \epsilon^{-1} \,
 \Prob^{1-\epsilon}
     \{B_\tau \in C_r\} = 1/r, \]
and 
  the exact form
 of Poisson kernel in $\Disk$ shows that
 \[    H_\Disk(z,1) = \frac 12 + O(e^{-r}),
 \;\;\;\; z \in C_r. \]

To each Brownian bubble in $\Disk$ rooted at $1$ that intersects $C_r$,
there is a corresponding path in $\Half$ that starts at $0$,
ends at $2 \pi k$ for some integer $k$,  and does not stay in $S_r$.
Only those paths that end at $0$ are Brownian bubbles in $\Half$
rooted at $0$.  Therefore, to compute $\Gamma_\Half(1;S_r)$ we can
subtract the measure of the other bubbles.  To get the measures
of the bubbles to be subtracted we consider the measure of excursions
in $\Half$ minus the measure of excursions in $S_r$.  We therefore
get  
\begin{eqnarray*}   \Gamma_{\Half}(0;S_r)
& = &  \Gamma_{\Disk}(1;A_r) - 
\sum_{k \in \Z \setminus \{0\}}
    [H_{\p\Half}(0,2\pi k) -
    H_{\p S_r}(0,2\pi k)]\\
    & = & \Gamma_{\Disk}(1;A_r) - \delta(r) .
    \end{eqnarray*}
 Using \eqref{nov20.1} and $H_{\p \Half}(0,x)  = x^{-2}$, we
 see that
 \begin{eqnarray*}
   \delta(r) &  = & 2\sum_{k=1}^\infty \left(\frac{1}{(2\pi k)^2} -
\frac{\pi^2}{4r^2} 
\left[ \sinh \left(\frac{  k \pi^2}{r}\right) \right]^{-2}\right)\\
&  = & \frac{1}{12} - \frac{\pi^2}{2r^2} \sum_{k=1}^\infty 
 \left[\sinh \left(\frac{  k \pi^2}{r}\right)\right] ^{-2}.
 \end{eqnarray*}

    \end{proof}

\subsection{Brownian loop measure}

In order to describe $SLE_\kappa$ in other domains, we introduce the
Brownian loop measure as first introduced in \cite{LW}.

\begin{definition}  The {\em rooted Brownian loop  measure}
on $\C$ is the measure on loops given by
\begin{equation}  \label{oct3.1}
         \frac{1}{2 \pi t^2} \, dt \times {\rm area} \times
\nu^{BB} , 
\end{equation}
where $\nu^{BB}$ denotes the probability measure induced
by a Brownian bridge of time duration one at the origin.
\end{definition}

To be more precise, a rooted loop is a continuous function
$\eta:[0,t_\eta]\rightarrow \C$ with $\eta(0) = \eta(t_\eta)$.  Such a
loop can be described by a triple $(t,z,\bar \eta)$ where
$t > 0$ is the time duration, $z = \eta(0)$ is the root,
and $\bar \eta:[0,1] \rightarrow \C$ is a loop of time
duration one starting at the origin.  The rooted loop
measure is obtained by choosing $(t,z,\bar \eta)$ according
to the measure \eqref{oct3.1}.  If $D \subset \C$, the the
rooted loop measure in $D$ is the rooted loop measure in $\C$
restricted to loops that lie in $D$.

\begin{definition}  The rooted loop measure on a domain
$D$ induces a measure on {\em unrooted loops} which we denote
by $m_D$.  We consider this as a measure on unrooted loops
modulo reparametrization. (However, the proof of conformal
invariance requires  considering  the parametrized loops.)

\end{definition}

For the purposes of this paper, we do not need to worry about
the time parametrization of the loops. The fundamental fact
that explains the importance of the loop measure is the following.
   We do this to emphasize that we do not need to assume
   that   
$D$ is simply connected.

\begin{proposition}[Conformal invariance]  If $f:D \rightarrow f(D)$ is
a conformal transformation, then
\[         f \circ m_D = m_{f(D)}. \]
\end{proposition}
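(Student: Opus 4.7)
The key analytic input is the conformal invariance of planar Brownian motion up to a time change (L\'evy's theorem): if $B_s$ is a Brownian motion in $D$ started at $z$ and $\sigma(s) = \int_0^s |f'(B_r)|^2\,dr$, then $\tilde B_u = f(B_{\sigma^{-1}(u)})$ is a Brownian motion in $f(D)$ started at $f(z)$. Thus the image under $f$ of a parametrized Brownian loop in $D$ is, after a path-dependent time change, a parametrized Brownian loop in $f(D)$. Since loops are considered modulo increasing reparametrization, this time change is invisible at the level of unparametrized loops, and the real issue is that the density of the loop measure transforms correctly.

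I would first establish the invariance at the level of the rooted Brownian loop measure and then pass to the unrooted one. Using $p_t(z,z) = 1/(2\pi t)$ together with the definition \eqref{oct3.1}, rewrite
\[
   m_D^R = \int_D \int_0^\infty \frac{1}{t}\,\mu^{br}_{D;z,z,t}\,dt\,dA(z),
\]
where $\mu^{br}_{D;z,z,t}$ denotes the unnormalized Brownian bridge of duration $t$ from $z$ to $z$ in $D$ (total mass $p_t^D(z,z)$). Pushing forward under $f$, the root transforms as $dA(z') = |f'(z)|^2\,dA(z)$ with $z' = f(z)$, and at the loop endpoint $B_t = z$ the time-change differential is $dt' = |f'(z)|^2\,dt$, where $t' = \sigma(t)$. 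Conditioning on the unparametrized image loop $\eta$ turns the path-dependent map $t \mapsto \sigma(t;\eta)$ into a deterministic one-to-one correspondence between bridge durations in $D$ and in $f(D)$, so that after a Fubini interchange the Jacobians combine with the transformation of the bridge density to give $\frac{1}{t'}\,\mu^{br}_{f(D);z',z',t'}\,dt'\,dA(z')$, yielding $f_* m_D^R = m_{f(D)}^R$ at the level of parametrized rooted loops modulo reparametrization.

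Finally, $f$ manifestly commutes with the root-changing equivalence relation (the image of an equivalence class of rooted loops in $D$ is an equivalence class of rooted loops in $f(D)$), so conformal invariance of the unrooted loop measure $m_D$ follows at once from that of $m_D^R$.

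The hard part is the bookkeeping in the second paragraph: because the time change $\sigma$ is path-dependent, the substitution $dt \mapsto dt'$ cannot be performed as a naive change of variable in the outer $dt$-integral, and must be carried out loop-by-loop after conditioning on $\eta$. A clean alternative that sidesteps this technicality is to invoke the approximation of $m_D$ by random walk loop measures from \cite{LJose}, where conformal invariance is manifest in the discrete setting and survives the scaling limit; passing to the limit then produces \textit{conformal invariance} of $m_D$ directly.
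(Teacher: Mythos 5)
There is a genuine gap, and it sits exactly where you park the ``bookkeeping'': your intermediate claim that $f_*m_D^R = m_{f(D)}^R$ at the rooted level is false, and no amount of conditioning on the image loop will rescue it. Write the rooted measure, as you do, with density $\frac 1t$ against $\mu^{br}_{D;z,z,t}\,dt\,dA(z)$. Pushing forward, the root contributes the Jacobian $|f'(z)|^2$ from $dA$, while the duration factor changes from $\frac 1t$ to $\frac 1{t'}$ with $t' = \int_0^t |f'(\eta(s))|^2\,ds$, i.e.\ $t'/t$ is the \emph{time-average} of $|f'|^2$ over the whole loop, not its value $|f'(z)|^2$ at the root. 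For a fixed rooted loop these two factors do not cancel (your ``$dt' = |f'(z)|^2\,dt$ at the endpoint'' is precisely the illegitimate differentiation of $\sigma$ with the path held fixed), so the rooted measure is genuinely not conformally invariant: the discrepancy $t'/(t\,|f'(z)|^2)$ depends on where the root sits on the loop. The cancellation occurs only upon averaging the root over the loop --- that is, the unrooting step, which you dismiss as ``manifest,'' is exactly where the Jacobian gets absorbed. The paper's device makes this averaging painless: for any positive $h$ the unrooted measure satisfies $d\mu_D/d\rho_{D,h} = [\int_0^{t_\eta} h(\eta(s))ds]^{-1}$, where $\rho_{D,h}$ roots loops according to $h\,dA$; choosing $h = |f'|^2$ makes $f\circ \rho_{D,h} = \rho_{f(D)}$ exactly (this is \eqref{cc44} plus a change of variables in the root integral), while $\int_0^{t_\eta} h(\eta(s))\,ds = t_{f\circ\eta}$ turns the weight into the correct $1/t_{f\circ\eta}$. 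That single choice of rooting density is the content of the proof; your outline never performs it.

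Your proposed fallback via the random walk loop measure also does not ``sidestep'' the issue: the discrete loop measure is defined relative to a lattice and is not conformally invariant in any sense --- conformal invariance is a property only of the continuum limit, and proving that the limit has it is exactly the continuum argument above. The convergence result of \cite{LJose} identifies the limit as $m_D$; it does not supply invariance for free.
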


\labove \textsf
{\begin{small}  \Heuristic  
 We have stated the proposition for loops, modulo reparametrization.
One can get a similar result for parametrized loops but then one
must change the parametrization as in the conformal invariance
of Brownian motion.
\end{small}} \lbelow

\begin{proof}[Sketch of proof]

  Let $\rho(z,z;t)$ be the measure
on paths associated to Brownian loops at $z$ of time duration $t$.
It is a measure of total mass $p_t(z,z) = (2 \pi t)^{-1}$ that can
be defined using standard Brownian bridge techniques.  Let
\[         \rho(z,z) = \int_0^\infty \rho(z,z;t) \, dt , \]
which is an infinite measure.  For any $D$, we define
$\rho_D(z,z;t), \rho_D(z,z)$ by restriction.   If
$f:D \rightarrow f(D)$ is a conformal transformation, and $\eta$
is a loop in $D$, we write $f \circ \eta$ for the corresponding loop
in $f(D)$ obtained using Brownian scaling on the parametrization.  In other
words, if $\eta$ has time duration $t_\eta$, then $f \circ \eta$
has time duration
\[      \int_0^{t_\eta} |f'(\eta(s))|^2 \, ds . \]
The measure $\rho_D(z,z)$ induces a measure $f \circ \rho_D(z,z)$ by
considering $f \circ \eta$.  Using the conformal invariance
of Brownian motion, one can check that 
\begin{equation}  \label{cc44}
           f \circ \rho_D(z,z) =  \rho_{f(D)}
   (f(z),f(z)).
  \end{equation}
   
   Suppose $h$ is a continuous, nonnegative function on $D$.  Then $h$
induces a measure on (rooted) loops by
\[      \rho_{D,h} = \int_D \rho_D(z,z) \, h(z) \, dA(z) , \]
where $A$ denotes area.  We can also consider this as a measure
on unrooted loops by forgetting the root.  We write $\rho_D$
for $\rho_{D,h}$ with $h \equiv 1$.  Another way to define the
Brownian loop measure $\mu_D$ on unrooted loops is
\[          \frac{d\mu_D}{d\rho_D}(\eta) = \frac 1{t_\eta }, \]
where $t_\gamma$ denotes the time duration of $\gamma$. More
generally,
\[            \frac{d\mu_D}{d\rho_{D,h}}(\eta) = \left[
  \int_0^{t_\eta} h(\eta(s)) \, ds \right]^{-1}. \]
Suppose $h(z) = |f'(z)|^{2}$.  Then
\eqref{cc44}
implies that 
\begin{eqnarray*}
               f \circ \rho_{D,h} & = &
 \int_D  \rho_{f(D)}(f(z),f(z)) \, |f'(z)|^2 \, dA(z)\\
   & = & \int_{f(D)} \rho_{f(D)}(w,w) \, dA(w) =  \rho_{f(D)}.
   \end{eqnarray*}
Also,
\[     \int_0^{t_\eta} h(\eta(s)) \, ds
          = \int_0^{t_\eta} |f'(\eta(s))|^{2} \,
             ds = \frac{1}{t_{f \circ \eta}}. \]
\end{proof}

By construction, $m_D$ also satisfies the restriction property.

\begin{itemize}
\item  If $V_1,V_2$ are subsets, we write either
$      m(V_1,V_2;D) $ or $m_D(V_1,V_2)$
for the $m_D$ measure of the set of loops in $D$ that intersect
both $V_1$ and $V_2$.

\item  Suppose $D \subset \Half$ is a domain (not necessarily
simply connected) with
$\dist(0,\Half \setminus D) > 0$.  Suppose
$\gamma$ satisfies \eqref{chordal} and 
$t < T:= \inf\{t: \dist(\gamma(t),\Half \setminus D) = 0\}$.
Then
\begin{equation}  \label{nov4.2}
  m(\gamma_t,\Half \setminus D; \Half) = a\int_0^t 
    \Gamma(U_s;g_s(D)) \, ds . 
\end{equation}
    If $D$ is simply connected, we can use \eqref{schw} to write
\begin{equation}  \label{nov4.2.alt}
m(\gamma_t,\Half \setminus D; \Half) = - \frac{a}6\int_0^t 
    Sf_s(U_s) \, ds ,
\end{equation}
where $f_s$ is a conformal transformation of $g_s(D)$
onto $\Half$ with $f(U_s) \in \R$.

\end{itemize}

\labove \textsf
{\begin{small}  \Heuristic  
The only functionals of the Brownian loop measure
that we will need are of the type on the left-hand 
side of \eqref{nov4.2}.  We might consider using
the right-hand side of \eqref{nov4.2} as the {\em definition}
of  $m(\gamma_t,\Half \setminus D; \Half)$.
However, it is not so easy to see from this formulation
to see
that if $\gamma$ is a curve in $\Half$ connecting
boundary points $0,x$, then
\[  m(\gamma_t,\Half \setminus D; \Half) = 
 m(\gamma_t^R, \Half \setminus D; \Half), \]
where $\gamma_t^R$ denotes the reversal of the path.
This is immediate from the loop measure description of
the quantity.  
\end{small}}\lbelow

\labove \textsf
{\begin{small}  \Heuristic  
The formula \eqref{nov4.2} comes from a Brownian
bubble analysis of the Brownian loop measure.  Suppose
$\gamma$ is a simple curve from $0$ to $\infty$
in $\Half$.  If $l$ is a loop in $\Half$ that intersects
$\gamma$, we can consider the first time (using the
time scale of $\gamma$) that the loop intersects $\gamma$.
If $l$ intersects $\gamma$ first at time $t$, then
$l$ is a ``boundary bubble'' in $\Half \setminus \gamma_t$ rooted at
$\gamma(t)$. We therefore can write the Brownian loop
measure, restricted to loops intersecting $\gamma$, as
an integral of the Brownian bubble measure in decreasing
family of domains $\Half \setminus \gamma_t$.  We can think of $\gamma$ as an
``exploration process'' for the Brownian loop measure.  This
idea is used in the construction of conformal loop ensembles
by Sheffield and Werner \cite{CLE}.  This exploration
idea is important in our analysis of $SLE_\kappa$ in an
annulus.
\end{small}}\lbelow

Although the Brownian loop measure is a measure on
unrooted loops, it is often convenient to choose roots
of the loops.  For example, if $\eta$ is an unrooted loop,
we can choose the root to be the closest point to the origin,
say $e^{-r + i \theta}$.
(Except for a set of measure zero, this point will be unique).
The rooted loop is then a Brownian bubble in the domain
$ O_r :=
\C \setminus \overline{\Disk_{r}}$.  This is the basis
for the following computation.

\begin{proposition}  \label{persimmon}

Suppose $D \subset \Disk$ is a simply connected domain with
$\dist(0, \p D) > e^{-r}$.  Then,
\[           m(\overline \Disk_r, \Disk \setminus D; \Disk)
   = \frac 1 \pi \int_r^\infty \int_0^{2\pi} \Gamma_{O_s}
   (e^{-s+i\theta}; D) \,  ds \, d\theta, \]
where $O_s = \Disk \setminus \overline \Disk_s .$
   \end{proposition}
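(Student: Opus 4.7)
The strategy is to reroot each unrooted loop at its (a.s.\ unique) point $z^*$ closest to the origin and apply a disintegration of $m_\Disk$ along this rerooting. For $m_\Disk$-a.e.\ $\eta\subset\Disk$ write $z^*(\eta)=e^{-s(\eta)+i\theta(\eta)}$; then $\eta$ lies in $\overline{A_{s(\eta)}}=\overline{O_{s(\eta)}}$ and touches $C_{s(\eta)}$ only at $z^*$, so viewed with base point $z^*$ the loop is a Brownian bubble in $O_{s(\eta)}$ attached at the boundary point $z^*$. Since $\overline{\Disk_r}\subset D$, for every $s\geq r$ we have $\Disk\setminus D\subset O_s$, and therefore $\eta\cap\overline{\Disk_r}\neq\emptyset$ is exactly $s(\eta)\geq r$ while $\eta\cap(\Disk\setminus D)\neq\emptyset$ is the event that the rerooted bubble meets $\Disk\setminus D$, whose mass under $\mu^{\mathrm{bub}}_{O_s,z^*}$ equals $\Gamma_{O_s}(z^*;D)$ by the definition of $\Gamma$.

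The key step is to establish that the pushforward of $m_\Disk$ under $\eta\mapsto(s(\eta),\theta(\eta),\tilde\eta)$, with $\tilde\eta$ the rerooted bubble, equals
\[   \frac{1}{\pi}\,ds\,d\theta\, d\mu^{\mathrm{bub}}_{O_s,e^{-s+i\theta}}(\tilde\eta). \]
That the density against $ds\,d\theta$ is a constant follows from symmetry. Rotational invariance of $m_\Disk$ eliminates $\theta$-dependence, while applying the dilation $z\mapsto\lambda z$ to $m_\C$ (of which $m_\Disk$ is the restriction to loops contained in $\Disk$) forces the density per $dA(z^*)$ to be proportional to $1/|z^*|^2$; this translates to a constant density per $ds\,d\theta$ via $dA(z^*)=e^{-2s}\,ds\,d\theta$. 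The value $\frac{1}{\pi}$ is then pinned down by a short-duration matching: starting from $\rho_\C=\frac{1}{2\pi t^2}\,dt\,dA(z)\,d\nu^{BB}$ together with the reflection identity $\nu^{BB}_{z,t}(\min|\eta(\cdot)|\in[|z^*|-dR,|z^*|])\sim\tfrac{4h}{t}e^{-2h^2/t}\,dR$ valid for $z=z^*+h\hat z^*$ with small $h$, the tangent half-plane limit $h\downarrow 0$ identifies the rerooted bridge with the Brownian excursion, whose duration density is $\frac{1}{2\pi t^2}$; accounting for the $1/t$ factor in the passage from rooted to unrooted measure gives exactly the constant $\frac{1}{\pi}$.

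Granting the disintegration, integrating the bubble variable against the event ``$\tilde\eta$ meets $\Disk\setminus D$'' produces the mass $\Gamma_{O_s}(e^{-s+i\theta};D)$, and integrating $(s,\theta)$ over $[r,\infty)\times[0,2\pi)$ yields the proposition. The main obstacle is the determination of the constant $\frac{1}{\pi}$ in the disintegration: the symmetry argument delivers only constancy in $(s,\theta)$, so pinning down the value requires the tangent half-plane matching between the rooted loop measure and the Brownian bubble measure described above.
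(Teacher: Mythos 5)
Your rerooting of each loop at its a.s.\ unique point closest to the origin is exactly the justification the paper offers for this proposition (it gives no further proof), so the approach is the intended one, and the substance lies precisely in the disintegration you set out to establish. The step that goes wrong is the one fixing the $s$-dependence of the density. The bubble measure here is not dilation invariant: by the covariance rule $\Gamma_D(z;D')=|f'(z)|^2\,\Gamma_{f(D)}(f(z),f(D'))$, the map $f(z)=\lambda z$ carries the bubble measure rooted at $w$ in $\C\setminus\overline{D(0,|w|)}$ to $\lambda^{2}$ times the bubble measure rooted at $\lambda w$ in $\C\setminus\overline{D(0,\lambda|w|)}$. Inserting this into the scale invariance of $m_\C$, the factor $\lambda^{2}$ cancels the Jacobian $\lambda^{-2}$ of $dA$, and the conclusion is that the density per $dA(z^*)$ is \emph{constant}, not proportional to $|z^*|^{-2}$ as you assert; your $|z^*|^{-2}$ would follow only if the bubble measure were scale invariant, which contradicts the very covariance rule you invoke when you identify the mass of the event with $\Gamma_{O_s}(z^*;D)$.

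The two readings differ by exactly $e^{-2s}$, and the difference is not cosmetic. With $\Gamma_{O_s}$ taken literally in the paper's Euclidean normalization, the correct identity is $m(\overline \Disk_r,\Disk\setminus D;\Disk)=\frac1\pi\int_r^\infty\int_0^{2\pi}e^{-2s}\,\Gamma_{O_s}(e^{-s+i\theta};D)\,ds\,d\theta$; the version without $e^{-2s}$ diverges (test $D=\Disk_v$, where the Euclidean-normalized mass at a root on $C_s$ is $\frac{e^{2s}}{2}\bigl(\frac{1}{s-v}-\frac1s\bigr)$, while the corrected integral gives $\log\frac{r}{r-v}$, which one can confirm independently via the winding-number decomposition using Proposition \ref{windloops} and Corollary \ref{torontocor}). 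The displayed statement is true only if $\Gamma_{O_s}(e^{-s+i\theta};\cdot)$ is read as the bubble mass computed in the logarithmic coordinate, i.e.\ $e^{-2s}$ times the Euclidean one — which is in fact the normalization used when the proposition is applied in Lemma \ref{lemma.jul13}, where $\Gamma_{A_r}(z;A_r\cap D)$ is evaluated as $\frac{u^{-1}-r^{-1}}{2}$ rather than $\frac{e^{2r}}{2}(u^{-1}-r^{-1})$. So you need to (i) redo the scaling step to get a constant density per $dA$, and (ii) say explicitly which normalization of the bubble mass this density is paired with. Your tangent half-plane matching to pin the constant $\frac1\pi$ is the right idea — it is how the analogous lowest-point decomposition in $\Half$ is proved — but it too must be carried out consistently in one fixed normalization.
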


\begin{lemma} \label{lemma.jul13}
There exists $c < \infty$ such that the following
is true.  Suppose $D \subset \Disk$ is a simply connected domain
containing the origin and $g: D \rightarrow \Disk$ is the conformal
transformation with $g(0) = 0, g'(0) > 0$.  Suppose that
$r \geq \log g'(0) + 2$.  Let $\phi: g(A_r \cap D) \rightarrow A_s$
be a conformal transformation sending $C_0$ to $C_0$ and
let $h = \phi \circ g$ which maps $A_r \cap D$ onto $A_s$.
 Then if $u = r - \log g'(0)$, $z \in C_r, w \in C_0$,
 \[              |s - u| \leq c \, e^{-u} , \;\;\;\;
 |\phi'(w) - 1| \leq c \, e^{-u} , \;\;\;\;
  |h'(z) - g'(0)| \leq c \,g'(0) \,e^{-u}, \] 
\[              \left|m_\Disk(\overline \Disk_r, \Disk \setminus
D) - \log(r/u) \right| \leq c \, e^{-u}. \]
\end{lemma}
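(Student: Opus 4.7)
For the first three estimates, the plan is to apply the Koebe distortion theorem to the inverse map $f=g^{-1}:\Disk\to D$ (which satisfies $f(0)=0$, $f'(0)=1/g'(0)$) after renormalizing to $\tilde f(w)=g'(0)f(w)$, a univalent function on $\Disk$ fixing $0$ with derivative $1$. The standard distortion estimates then give $|\tilde f(w)-w|\leq C|w|^2$ and $|\tilde f'(w)-1|\leq C|w|$ for $|w|\leq 1/2$. Translating back via $z=f(w)$ and using $u=r-\log g'(0)\geq 2$, I obtain for $|z|\leq e^{-r}$ the expansions $g(z)=g'(0)z\bigl(1+O(e^{-u})\bigr)$ and $g'(z)=g'(0)\bigl(1+O(e^{-u})\bigr)$; in particular, $|g'(z)-g'(0)|\leq c\,g'(0)\,e^{-u}$ on $C_r$, and the Jordan curve $g(C_r)=\p g(\overline{\Disk_r})$ is squeezed into the thin annular region $\{e^{-u}(1-Ce^{-u})<|w|<e^{-u}(1+Ce^{-u})\}$. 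Monotonicity of the conformal modulus for doubly connected planar domains, applied to the sandwich of $\Disk\setminus g(\overline{\Disk_r})$ between two concentric round annuli of moduli $u\pm O(e^{-u})$, then yields $|s-u|\leq ce^{-u}$.

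To estimate $\phi'$ on $C_0$, I apply Lemma~\ref{dec4.lemma2} directly to $\phi:\Disk\setminus g(\overline{\Disk_r})\to A_s$: the domain has the required form $\Disk\setminus K$ with $K$ compact containing $0$, and $\phi(C_0)=C_0$, so the lemma gives $\bigl|\,|\phi'(w)|-1\bigr|\leq ce^{-s}\leq ce^{-u}$ and $|\phi''(w)|\leq ce^{-u}$ on $C_0$. Upgrading this modulus control to complex-derivative control requires fixing the one-dimensional rotation ambiguity in $\phi$; with the canonical choice (say, $\arg\phi'(1)=0$), the bound on $|\phi''|$ integrated along $C_0$ forces $|\arg\phi'(w)|\leq ce^{-u}$ everywhere on $C_0$, hence $|\phi'(w)-1|\leq ce^{-u}$. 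For the final estimate $|h'(z)-g'(0)|\leq c\,g'(0)\,e^{-u}$ with $z\in C_r$, I write $h'(z)=\phi'(g(z))\,g'(z)$, which reduces matters to bounding $\phi'$ on the inner boundary $\p g(\overline{\Disk_r})$. This curve is an analytic Jordan curve (since $g$ is conformal on $D$), and the proof technique of Lemma~\ref{dec4.lemma2}---based on the harmonic function $-\log|\phi(\cdot)|/s$, the uniform-hitting asymptotic on $C_0$ from Lemma~\ref{dec4.1}, Schwarz reflection across the analytic inner boundary, and Cauchy estimates---adapts verbatim when run from the inner side, giving $\phi'(g(z))=1+O(e^{-u})$ with the same rotation convention, and hence the claimed bound on $h'$.

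For the loop measure estimate, I invoke Proposition~\ref{persimmon} to write
\[
m_\Disk(\overline{\Disk_r},\Disk\setminus D)=\frac{1}{\pi}\int_r^\infty\int_0^{2\pi}\Gamma_{A_\sigma}(e^{-\sigma+i\theta};D)\,d\theta\,d\sigma,
\]
and evaluate the inner integral asymptotically for each $\sigma\geq r$. For fixed $\sigma$, let $u_\sigma=\sigma-\log g'(0)$ and let $s(\sigma)$ be the modulus of $\Disk\setminus g(\overline{\Disk_\sigma})$; the first-stage estimates apply with $\sigma$ replacing $r$, giving $|s(\sigma)-u_\sigma|\leq ce^{-u_\sigma}$ and $|h_\sigma'(z)|=g'(0)\bigl(1+O(e^{-u_\sigma})\bigr)$ on $C_\sigma$ for $h_\sigma=\phi_\sigma\circ g:A_\sigma\cap D\to A_{s(\sigma)}$. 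Using the Brownian-motion interpretation $\Gamma_{A_\sigma}(z;A_\sigma\cap D)=\lim_{\epsilon\downarrow 0}\epsilon^{-1}\E^{z+\epsilon\n}[H_{A_\sigma}(B_\tau,z);\,\tau=\tau_{\p D}]$, transforming under $h_\sigma$, and invoking Lemma~\ref{dec4.1} to show that the hitting distribution on $C_0$ in $A_{s(\sigma)}$ of a Brownian motion started near $C_{s(\sigma)}$ is uniform up to $O(e^{-u_\sigma})$, a direct computation yields
\[
\int_0^{2\pi}\Gamma_{A_\sigma}(e^{-\sigma+i\theta};D)\,d\theta=\frac{\pi}{u_\sigma}-\frac{\pi}{\sigma}+O(e^{-u_\sigma}).
\]
Integrating in $\sigma$ over $[r,\infty)$, one has $\int_r^\infty(1/u_\sigma-1/\sigma)\,d\sigma=\log(r/u)$ by an elementary calculation, and the error contribution is $\int_r^\infty O(e^{-u_\sigma})\,d\sigma=O(e^{-u})$. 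The main obstacle will be making this inner-integral asymptotic precise with a uniform error bound in $\sigma$: strong enough at $\sigma=r$ (where $u_\sigma=u$ is only bounded below by $2$) while decaying fast enough as $\sigma\to\infty$ to guarantee absolute convergence of the error integral.
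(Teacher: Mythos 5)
Your proposal is correct and follows essentially the same route as the paper: Koebe distortion near the origin for the first estimates, harmonic-measure/hitting-distribution arguments (the content of Lemma \ref{dec4.lemma2}, which the paper in effect rederives rather than cites) for the derivative bounds on $\phi$ and $h$, and Proposition \ref{persimmon} combined with the excursion/bubble-measure computation $\int_0^{2\pi}\Gamma_{A_\sigma}\,d\theta = \pi(u_\sigma^{-1}-\sigma^{-1})+O(e^{-u_\sigma})$ integrated in $\sigma$ for the loop-measure asymptotic. Your explicit handling of the rotation normalization of $\phi$ is a point the paper glosses over, and the uniformity in $\sigma$ that you flag as the remaining obstacle does go through exactly as you outline, since all constants depend only on the lower bound $u_\sigma \geq u \geq 2$.
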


\begin{proof}  The Koebe-$1/4$ theorem applied to $g^{-1}$
shows that $\dist(0,\p D) \geq [4g'(0)]^{-1}.$  Applying the
distortion theorem to $g$ restricted to $\Disk_{u + \frac 32}$,
we see that there exists $c < \infty$ such that if
$|w| \leq e^{-r}$,
\[          |g(w) - g'(0) \, w| \leq c \, e^{-2u},\]
\[           |g'(w) - g'(0)| \leq c\, e^{-u}.\]
In particular,   if $|w| = e^{-r}$, then
\begin{equation}  \label{jul14.1}
      |g(w)| = e^{-u} \, [1 + O(e^{-u})]. 
      \end{equation}
Using this and monotonicity, we see that
\begin{equation}  \label{jul13.1}
        s = u + O(e^{-u}).
        \end{equation}

Let $U$ denote the 
 conformal annulus $g(A_r \cap D)$ so that
 $\phi$ maps $U$ onto 
the annulus $A_s$.  By conformal invariance we see that
$\log |g(z)|/s$ equals the probability that a Brownian
motion starting at $z$ exits $U$ at $g(C_r)$.  However,
we know that the inner boundary of $U$ lies within distance
$O(e^{-2u})$ of $C_u$.  If the Brownian motion gets
that close to $C_u$, the probability that it does not exit
at $C_u$ is $O(e^{-u}/u)$.  Therefore,
\[ \frac{\log |g(z)|}{s} =             \frac{\log |z|}{u} \, [1+O(e^{-u}/u)]. \]  
Hence, from \eqref{jul13.1}, we get
\[          \log |g(z)| = \log |z| \, [1 + O(e^{-u})], \]
which implies $|g'(e^{i\theta})| = 1 + O(e^{-u})$.   The argument
to show that $|h'(e^{-r + i \theta})| = g'(0) [ 1 + O(e^{-u})]$ is similar.

%sing
%Lemma \ref{dec4.1}, we can see that for $\epsilon$ near zero
%\[              |g(e^{-\epsilon + i \theta})|
% = g(e^{-\epsilon}) \, [1+ O(e^{-u})], \]
%\[         |g'(e^{i\theta})| = |g'(1)| \, [1+ O(e^{-u})]. \]

  By conformal invariance and symmetry, 
 \[  \exc_{A_r \cap D}(C_r, \p D)
 = \exc_{A_s}(C_s,C_0) =  \exc_{A_s}(C_0,C_s) = 2 \pi s^{-1} .\]
 Similarly, if
 \[            \hat v(z)  = \Prob^z\{B_{\hat \sigma} \in C_0\} =
    1-\frac{\log |z|}{r} ,\]
  where $\hat \sigma = \inf\{t: B_t \not \in A_r\}$, then
  \[   2 \pi r^{-1} = \exc_{A_r}(C_r,C_0) =
       \int_{C_r} \p_n \hat v(z) \, |dz| .\]
       By the strong Markov property, we can write
 \[       \exc_{A_r}(C_r,C_0)  = \int_{\Disk \cap \p D}
      \left[1 - \frac{\log |z|}{r}\right]
       \, d\exc_{A_r}(C_r,dz).\]
 The term $1 - \frac{\log |z|}{r}$ is the probability that a
 Brownian motion starting at $z$ exits $A_r$ at $C_0$.    
Therefore, using \eqref{jul13.1},
\[    \int_{\Disk \cap \p D} \frac{\log |z|}{r}  \,  d \exc_{A_r}(C_0,dz) =
   2 \pi \, [s^{-1} - r^{-1}] =  2 \pi \, [u^{-1} - r^{-1} + O(e^{-u}/u^2)].\]
  Lemma      \ref{dec4.1} implies  that if $V
  \subset \p D$ and $z,w \in C_r$,
  \[     \exc_{A_r}(z,V) = \exc_{A_r}(w,V) \, [1 + O(e^{-u})], \]
  and hence
  \[          \exc_{A_r}(z,V) = \frac{1}{2\pi} \, \exc_{A_r}(C_0,V)
  \, [1 + O(e^{-u})].\]
  Lemma \ref{dec4.1} can also be used to see that if $w \in \Disk
  \cap \p D, z
  \in C_r$,
  \[           H_{A_r}(w,C_r) = \frac 12 \, \frac{\log |w|}{r} \,
   [1+O(e^{-u})].\]
   Therefore, using \eqref{jul13.1},
   \[  \Gamma_{A_r}(z,A_r \cap D) =
      \int_{\Disk \cap \p D} H_{A_r}(w,z) \, d\exc_{A_r \cap D}
        (z,w) = \frac{u^{-1} - r^{-1}}{2}  \, [1 + O(e^{-u})]. \]
      
 From   Proposition \ref{persimmon} we know that
 the quantity we are interested in can be written as
 \[   \frac 1 \pi \int_0^{2\pi} \int_{r}^\infty 
   \Gamma_{A_t}(e^{-t + i \theta}; A_t \cap D)
     \, dt \, d \theta  = 
       \int_0^\infty \left[\frac{1}{u+ t} - \frac{1}{r+t}\right]
        \, [1 + O(e^{-u-t})] \,dt.\]
       By computing the  integral we see that this quantity 
       equals
    \[        \log(r/u) + O(e^{-u}).\]

\end{proof}
We will need to consider the Brownian loop measure in
an annulus.  If we fix the origin as a marked point, we can
divide loops into two sets: those with nonzero winding
number around zero and those with zero winding number.
%We do not worry about loops that  intersect
%the origin since the set of such loops has zero measure.
   If $A$ is a conformal annulus
such that $0$ and $\infty$ lie in different components
of $A^c$, then the measure of the set of loops in $A$
with nonzero winding number is finite.  It is a conformal
invariant which we calculate in the next proposition.

\begin{proposition}  \label{windloops}
Let $m^*(r)$ denote the Brownian  loop measure of loops
in $A_r$ that have nonzero winding number.  Then
\[  m^*(r) = \frac{r}{6} - 2\int_0^r \delta(s) \, ds, \]
where $\delta(s)$ is defined as in \eqref{deltadef}.  In
particular,  
there exists $C > 0$ such that as $r \rightarrow 
\infty$,
\begin{equation}  \label{nov20.2}
   e^{m^*(r)} =   C\, r^{-1} \, e^{r/6}\,[1+O(r^{-1})]   .
 \end{equation}
\end{proposition}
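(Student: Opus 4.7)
The plan is to establish the identity $m^*(r) = 2\int_0^r B^*(s)\,ds$ where $B^*(s) := 1/12 - \delta(s)$; the stated formula for $m^*(r)$ follows at once.

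First, I would identify $B^*(s)$ as a Brownian bubble measure. Proposition \ref{deltaprop} presents $\delta(s)$ as the measure at $1\in C_0$ of nonzero-winding bubbles in $\Disk$ that hit $\overline\Disk_s$, and the explicit formula \eqref{deltadef} shows $\delta(s)\to 1/12$ as $s\downarrow 0$. This limit is then the total bubble measure at $1$ of nonzero-winding bubbles in $\Disk$, so by subtraction $B^*(s) = 1/12 - \delta(s)$ is the bubble measure at $1$ of nonzero-winding bubbles staying inside $A_s$.

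Next, I would derive $m^*(r) = 2\int_0^r B^*(s)\,ds$ by a bubble decomposition in the spirit of Proposition \ref{persimmon}, but applied to the topological condition of nonzero winding. For each nonzero-winding loop in $A_r$, mark the (a.s.\ unique) point closest to the origin, $z^* = e^{-s^*+i\theta^*}$ with $s^* \in (0,r)$. Upon rooting at $z^*$, the loop becomes a boundary bubble in the sub-annulus $A_{s^*}$ rooted at $z^* \in C_{s^*}$ with nonzero winding. Rotational symmetry of the annulus allows integration over $\theta^*$, and the conformal self-map $z\mapsto e^{-s}/z$ of $A_s$ (swapping the boundary circles) relates the inner-boundary bubble measure to $B^*(s)$; combining this with the Jacobian of the $(z^*,t)\to(s^*,\theta^*,t)$ parametrization yields the identity.

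For the asymptotic \eqref{nov20.2}, I would apply Proposition \ref{deltaprop}: for $s\ge 1$, $\delta(s) = (2s)^{-1}(1 + O(e^{-s})) - \pi^2/(12 s^2)$. Splitting $\int_0^r = \int_0^1 + \int_1^r$ and integrating this expansion gives $\int_0^r \delta(s)\,ds = \tfrac12 \log r + C + O(r^{-1})$ for some constant $C$. Substituting yields $m^*(r) = r/6 - \log r - 2C + O(r^{-1})$, and exponentiating gives $e^{m^*(r)} = e^{-2C} r^{-1} e^{r/6}[1 + O(r^{-1})]$.

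The main obstacle will be the bubble decomposition in the second paragraph. The topological condition of nonzero winding is not a local event, so care is needed to use the local bubble-measure machinery correctly. In particular, the conformal rescaling factor picked up when moving the root from the inner to the outer circle of $A_s$ must combine exactly with the Jacobian of the change of variables to yield the clean integrand $B^*(s)$ rather than some unwanted power of $e^s$.
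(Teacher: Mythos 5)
Your proof is correct and follows essentially the same route as the paper: a bubble decomposition of the nonzero-winding loops rooted at an extremal-radius point of the loop, with the integrand identified as $\frac 1{12}-\delta(s)$, followed by integration of the asymptotic expansion of $\delta$ from Proposition \ref{deltaprop}. The only cosmetic difference is that you root at the point closest to the origin while the paper roots at the point of largest radius; this amounts to the substitution $s \mapsto r-s$ together with the inversion you describe.
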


\begin{proof}  By focusing on the point of the loop
of largest radius (see the appendix of \cite{LJSP}),
 we can give the expression
\[  m^*(r) = 2\pi \int_0^r \sum_{k \in \Z \setminus \{0\}}
             \frac 1 {\pi} \, H_{S_s}(0,2 \pi k)
             = \frac 16 - 2\delta(r)  . \]
 Proposition \ref{deltaprop} implies that
 there exists $c$ such that 
 \[    m^*(r) = \frac{r}{6} - \log r + c + O(r^{-1}), \;\;\;\;
 r \rightarrow \infty , \]
 from which \eqref{nov20.2} follows with $C = e^c$.

\end{proof}

\begin{corollary} \label{torontocor}  $\;$

\begin{itemize}

\item
Suppose $D\subset \Disk$ is a simply connected domain containing
the origin and suppose that $\dist(0,\p D) > e^{-r}$.   Let
$0 \leq s < r$ be defined by saying that the annulus
$D \setminus \overline \Disk_r$ is conformally equivalent
to $A_s$.  Then
the Brownian loop measure of loops in $A_r$ of nonzero winding
number that intersect $\Disk \setminus D$ is 
$                       m^*(r) - m^*(s)$.

\item  Under the same assumptions, the Brownian loop measure
of loops in $\Disk$ of nonzero winding number that intersect
$\Disk \setminus D$ is $\log g'(0)/6$ where $g:D \rightarrow 
\Disk$ is the conformal transformation with $g(0) = 0,g'(0) > 0$.

\end{itemize}

\end{corollary}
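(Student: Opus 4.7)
My plan is to prove Part 1 by a direct application of conformal invariance to the restriction of $m_{A_r}$ to loops with nonzero winding number, and then deduce Part 2 from Part 1 by sending $r \to \infty$ and invoking Lemma \ref{lemma.jul13} together with the asymptotics in Proposition \ref{windloops}.

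For Part 1, I would partition the loops in $A_r$ with nonzero winding number into those that stay in $D$ and those that intersect $\Disk \setminus D$. The loops staying in $D$ are exactly the loops in $A_r \cap D = D \setminus \overline{\Disk_r}$, which by the hypothesis $\dist(0,\p D) > e^{-r}$ is a topological annulus with boundary components $C_r$ and $\p D$. A conformal equivalence of this annulus with $A_s$ is in particular a homeomorphism, so it carries non-contractible loops to non-contractible loops; in either annulus, the origin lies in the bounded component of the complement, so having nonzero winding number around $0$ coincides with being non-contractible. Conformal invariance of the Brownian loop measure therefore identifies the mass of nonzero-winding-number loops in $A_r \cap D$ with $m^*(s)$, and subtracting from $m^*(r)$ gives the claim.

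For Part 2, let $s_r$ denote the conformal modulus from Part 1 as a function of $r$. Lemma \ref{lemma.jul13} gives $s_r = u + O(e^{-u})$ with $u = r - \log g'(0)$, and Proposition \ref{windloops} yields
\[ m^*(r) - m^*(s_r) = \tfrac{r - s_r}{6} - 2\int_{s_r}^{r}\delta(t)\,dt. \]
The first summand tends to $\log g'(0)/6$. Since $\delta(t) = O(1/t)$ by Proposition \ref{deltaprop} and the interval $[s_r,r]$ has bounded length, the integral tends to $0$. On the other hand, as $r$ grows, the set of loops in $A_r$ of nonzero winding number intersecting $\Disk\setminus D$ increases to the set of such loops in $\Disk\setminus\{0\}$, which differs from the set of such loops in $\Disk$ only by loops passing through the origin — a null set for the Brownian loop measure, since the measure has a density with respect to area in the root. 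Monotone convergence then yields Part 2.

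The only subtle point is the assertion in Part 1 that the conformal equivalence preserves the property of having nonzero winding number around the origin; this reduces to the purely topological fact that, in a conformal annulus whose complement has $0$ in its bounded component, nonzero winding number around $0$ is equivalent to being non-contractible. Everything else is bookkeeping with conformal invariance of $m_D$ and the asymptotics already proved above.
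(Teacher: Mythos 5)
Your argument is correct and is essentially the paper's intended proof, which is stated there in one line: Part 1 is the decomposition of nonzero-winding loops in $A_r$ into those contained in $D\setminus\overline{\Disk_r}$ (mass $m^*(s)$ by conformal invariance, since non-contractibility corresponds to nonzero winding number in each annulus) and those meeting $\Disk\setminus D$, and Part 2 is the $r\to\infty$ limit using $r-s_r\to\log g'(0)$ and $\delta(t)=O(1/t)$. Your explicit treatment of the topological point (conformal maps preserve non-contractibility, hence nonzero winding number about a point in the bounded complementary component) and of the monotone-convergence step supplies exactly the details the paper leaves implicit.
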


\begin{proof}  The first assertion follows immediately and the second
is obtained by considering comparing $\Disk \setminus A_r$
and $D \setminus A_r$ as $r \rightarrow \infty$.  \end{proof}

We ill use the following estimate in the discussion in the
next section but it will not figure in our main results.
See  \cite{Lnote} for a proof.

\begin{proposition}   \label{prop.nov30}
Let $k(r)$ denote the $m_{\Disk_{-r}}$ measure of loops that
intersect both $A_{-r} \setminus A_{-r+1}$ and $\Disk$.
Let $k'(r)$ be the measure of such loops that do not
separate the origin from $C_0$.  Then as $r \rightarrow 
\infty$,
\[  k(r) = r^{-1} + O(r^{-2}), \;\;\;\;
  k'(r) = O(r^{-2}). \]
In particular, if $V_1,V_2$ are disjoint
compact sets,  
 then there exists $\Lambda(V_1,V_2)$ such that
 as $r \rightarrow \infty$,
\[  m_{\Disk_{-r}}(V_1,V_2) =
    \log r - \Lambda(V_1,V_2) + o(1).\] 
\end{proposition}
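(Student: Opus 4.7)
The plan is to use conformal rescaling of the Brownian loop measure to reduce each of the three statements to a computation in $\Disk$. For the first asymptotic, under $z\mapsto e^{-r}z$, which is an $m$-preserving conformal isomorphism $\Disk_{-r}\to\Disk$, the shell $A_{-r}\setminus A_{-r+1}$ maps to $\overline{A_1}$ and $\Disk$ maps to the small concentric disk $\overline{\Disk_r}$, so $k(r)=m_\Disk(\overline{\Disk_r},\overline{A_1})$. This is exactly the quantity estimated by Lemma \ref{lemma.jul13} applied with $D=\Disk_1=e^{-1}\Disk$ (so that $\Disk\setminus D=\overline{A_1}$ modulo a measure-zero circle), $g(z)=ez$, $g'(0)=e$, and thus $u=r-1$; the lemma gives $k(r)=\log(r/(r-1))+O(e^{-(r-1)})=r^{-1}+O(r^{-2})$.

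For $k'(r)$ I would bound the complementary mass $k(r)-k'(r)$, the measure of loops in the same family that \emph{do} separate the origin from $C_0$. Under the same rescaling these correspond (up to loops meeting $C_0$ tangentially, which form an $m$-null set) to loops in $\Disk$ of nonzero winding about $0$ that meet both $\overline{\Disk_r}$ and $\overline{A_1}$. The second bullet of Corollary \ref{torontocor} with $D=\Disk_1$ gives the total mass of nonzero-winding loops in $\Disk$ meeting $\overline{A_1}$ as $(\log e)/6=1/6$, while the first bullet with the same $D$ (so $s=r-1$) identifies the subfamily that avoids $\overline{\Disk_r}$ as having mass $m^*(r)-m^*(r-1)$. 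Using Proposition \ref{windloops} together with the expansion $\delta(s)=1/(2s)+O(s^{-2})$ from Proposition \ref{deltaprop}, one computes $m^*(r)-m^*(r-1)=1/6-r^{-1}+O(r^{-2})$, so $k(r)-k'(r)=r^{-1}+O(r^{-2})$. Subtracting from the previous step yields $k'(r)=O(r^{-2})$.

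For the logarithmic asymptotic I would telescope
\[
m_{\Disk_{-r}}(V_1,V_2)=m_\Disk(V_1,V_2)+\sum_{j=0}^{r-1}\Delta_j,
\]
where $\Delta_j$ is the $m$-measure of loops in $\Disk_{-(j+1)}$ that meet $V_1$, $V_2$, and the shell $A_{-(j+1)}\setminus A_{-j}$ (equivalently, loops in $\Disk_{-(j+1)}$ not contained in $\Disk_{-j}$ meeting both $V_i$). For $j$ large enough that $V_1\cup V_2\subset\Disk$, these loops form a subfamily of those counted by $k(j+1)$, so the bound $k'(j+1)=O(j^{-2})$ makes the zero-winding part of $\Delta_j$ absolutely summable. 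For the winding part, I would root each loop at its point of largest modulus as in the proof of Proposition \ref{windloops}: conditional on a root on the outer circle, the loop is a Brownian bubble whose body fills $\Disk_{-(j+1)}$, and a quantitative recurrence estimate should give that the probability of missing the fixed sets $V_1,V_2$ is $O(j^{-1})$. This gives $\Delta_j=j^{-1}+b_j$ with $\sum_j|b_j|<\infty$, and the partial sum equals $\log r-\Lambda(V_1,V_2)+o(1)$, with $\Lambda$ identified as the absolutely convergent total of the correction terms. The main obstacle will be producing the recurrence/hitting estimate with a rate summable in $j$ and uniform enough in the root (using Lemma \ref{dec4.1} to control harmonic measure on the outer circle) to pin down the value of $\Lambda(V_1,V_2)$.
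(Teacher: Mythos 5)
The paper does not actually contain a proof of this proposition --- it defers to an external note --- so your argument has to stand on its own. The first part does: rescaling by $e^{-r}$ turns $k(r)$ into $m_\Disk(\overline{\Disk_r},\Disk\setminus\Disk_1)$, and Lemma \ref{lemma.jul13} with $g(z)=ez$, $u=r-1$ gives $k(r)=\log\frac{r}{r-1}+O(e^{-r})=r^{-1}+O(r^{-2})$. Your bookkeeping with Corollary \ref{torontocor} and Propositions \ref{windloops} and \ref{deltaprop} is also arithmetically correct and shows that the loops in this family with \emph{nonzero winding number} about the origin have mass $2\int_{r-1}^r\delta(s)\,ds=r^{-1}+O(r^{-2})$, so the zero-winding ones have mass $O(r^{-2})$ (in fact $\asymp \pi^2/6r^2$).

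The gap is in passing from winding number to separation. To conclude $k'(r)=O(r^{-2})$ you need the inclusion $\{\text{does not separate }0\text{ from }C_0\}\subseteq\{\text{zero winding number}\}$, i.e.\ that every nonzero-winding loop in the family separates the origin from the unit circle. That implication is false under the literal reading of $C_0$ as the unit circle: a loop can encircle the origin once at radius $e^{r/2}$ and send a single tendril down into $\Disk$; it has winding number one about $0$ and meets both the outer shell and $\Disk$, yet $0$ and almost all of $C_0$ lie in the same complementary component, so it does not separate them. Nonzero winding only forces separation of $0$ from $\infty$, not from $C_0$. To close this you must show that nonzero-winding loops reaching $\Disk$ which never complete a circuit around $0$ inside $\overline{\Disk}$ have mass $O(r^{-2})$ --- morally, conditioned to descend from scale $e^{r}$ to scale $1$ the loop makes order $r$ excursions into $\Disk$, each encircling $0$ within $\Disk$ with probability bounded below, so failure costs an extra factor $O(1/r)$. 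This quantitative recurrence/hitting estimate is exactly the ingredient you also defer in the final paragraph, where it is needed a second time to show that a loop descending to scale $1$ misses a fixed compact set with probability $O(1/j)$ (note too that $V_1,V_2$ must be non-polar, since a polar $V_i$ makes $m_{\Disk_{-r}}(V_1,V_2)=0$). The telescoping skeleton of the last step is sound, but until that estimate is supplied both the bound on $k'$ and the existence and identification of $\Lambda(V_1,V_2)$ remain incomplete.
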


\subsection{Chordal $SLE_\kappa$ in simply connected domains}
   \label{simplesec}
   
We will review two equivalent ways to construct $SLE_\kappa$
in simply connected domains for $\kappa  = 2/a\leq 4$. 
See \cite{LSWrest,LW,LBook,LPark} for more details.
 Suppose
$D$ is a simply connected subdomain of $\Half$
with $\dist(0,\Half \setminus D) > 0$.  Let $w$ be 
a nonzero $\p D$-analytic point; we allow $w=\infty$
as a possibility.  Let $\Phi: D \rightarrow \Half$
be the unique conformal transformation with $\Phi(0) =0,
\Phi(w) = \infty, |\Phi'(w)| = 1$.  Here we are using
the conventions about derivatives as discussed in Section
\ref{poissec}.  
The most important example for this paper is $D = S_r$
and $w = x + i r$ for some $x \in \R$.

Let   $g_t$ be the solution
of the Loewner equation
\[       \p_t g_t(z) = \frac{a}{g_t(z) - U_t} , \;\;\;\;
    g_0(z) = z , \]
where  $U_t = - B_t$ is a standard Brownian motion
defined on the probability space $(\Omega,\F,\Prob)$.  Then
the corresponding curve $\gamma$ is $SLE_\kappa$ in 
$\Half$ from $0$ to $\infty$ which with $\Prob$-probability
one is a simple curve with $\gamma(0,\infty)  \subset \Half$.

Let
\[   T = T_D= \inf\{t > 0 : \gamma(t) \not \in D \} . \]
For $t < T$,  let
\[     w_t = g_t(w), \;\;\;\;
      \gamma_t^* =  \Phi \circ \gamma_t ,\]
and let $\hat g_t$ be the unique conformal transformation of 
$\Half \setminus \gamma_t^*$ onto $\Half$ with $\hat g_t(z)
= z + o(1)$ as $z \rightarrow \infty$.  Let
\[         \Phi_t =  \hat g_t \circ \Phi \circ g_t^{-1} . \]
Then $\hat g_t$ satisfies the Loewner equation
\[   \p_t \hat g_t(z) = \frac{a \, \Phi_t'(U_t)^2}
                   {\hat g_t(z) - \hat U_t}, \;\;\;\;
  \hat g_0(z) = z , \]
where $\hat U_t = \hat g_t(\gamma^*(t)) = \Phi_t(U_t)$. 
Then $\Phi_t$ is the unique conformal
transformaton of $g_t(D\setminus \gamma_t)$ onto $\Half$ with $\Phi_t(U_t)=
\hat U_t, \Phi_t(w_t) = \infty, |\Phi_t'(w_t)| =
  |g_t'(w)|^{-1} . $
Moreover, using only the Loewner equation, one can show
that
\begin{equation}  \label{dots}
    \dot \Phi_t(U_t) = -\frac{3b}{2} \, \Phi_t''(U_t), \;\;\;\;
   \dot \Phi_t'(U_t) =  
\frac{a\Phi''_t(U_t)^2}{4
  \Phi'_t(U_t)} - \frac{2a \, \Phi'''_t(U_t)}{3} 
\end{equation}
where $\dot \Phi_t(U_t), \dot \Phi_t'(U_t)$ denote
 $\p_t \Phi_t(x), \p_t \Phi_t'(x)$ evaluated
at $x = U_t$. 

Let
\begin{equation}  \label{kt}
 H_t =  H_{\p g_t(D \setminus \gamma_t)}(x,w_t), \quad
  K_t = |g_t'(w)|^b \, H_t(U_t)^b =
     \Phi_t'(U_t)^b . 
\end{equation}
   The second equality for $K_t$ 
    follows from the scaling
rule for the Poisson
kernel.  A straightforward It\^o's formula calculation
using \eqref{dots} shows that
%\[    dK_t = K_t \, \left[ \frac{a\cent}{12} \, S\Phi_t
%(U_t) \, dt + \frac{b\, \Phi_t''(U_t)}{\Phi_t'(U_t)} \,
%  dU_t \right], \]
  \[    dK_t = K_t \, \left[ \frac{a\cent}{12} \, S\Phi_t
(U_t) \, dt + \frac{b\, H_t'(U_t)}{H_t(U_t)} \,
  dU_t \right], \]
where $S$ denotes the Schwarzian derivative.
Let 
\begin{eqnarray*}
  M_t & = & \exp \left\{-\frac {a\cent} {12} \int_0^t
  S\Phi_s(U_s) \, ds \right\} \, K_t\\
&  = & \exp \left\{ \frac \cent 2 \, m_\Half(
\gamma_t,\Half \setminus D) \right\}\,|g_t'(w)|^b\,
 H_t(U_t)^b .
 \end{eqnarray*}
 (To check the second equality, recall that we have
 parametrized so that  $\hcap(\gamma_t)
 = at$.)
% \[  M_t = \exp \left\{-\frac \cent {12} \int_0^t
%  S\Phi_s(U_s) \, ds \right\} \, K_t
% = \exp \left\{ \frac \cent 2 \, m_\Half(
%\gamma_t,\Half \setminus D) \right\}\,
% \Phi_t'(U_t)^b .\]
%(The second equality uses
%\eqref{nov4.2.alt}.)  
Then $M_t$ is a local 
martingale satisfying
\[    dM_t = \frac{b\, H_t'(U_t)}{H_t(U_t)}
  \, M_t \, dU_t = \frac{b\, \Phi_t''(U_t)}{\Phi_t'(U_t)}
  \, M_t \, dU_t. \]
%  \[    dM_t = \frac{b\, \Phi_t'(U_t)}{\Phi_t(U_t)}
%  \, M_t \, dU_t. \]

We can use Girsanov theorem to define a new probability
measure $\Prob^*$ obtained by weighting by the local
martingale $M_t$.  (The Girsanov theorem is stated for
nonnegative martingales; since we only have a local martingale,
we need to use stopping times.  However, as long as $t < T$,
there is no problem.)  The Girsanov theorem states that
\begin{equation}  \label{nov7.1}
           dU_t =  \frac{b\, H_t'(U_t)}{H_t(U_t)}\, dt
  + dW_t, \;\;\;\;  t < T , 
\end{equation}
where $W_t$ is a standard Brownian motion with respect
to $\Prob^*$.  

Another application of It\^o's formula using \eqref{dots}
shows that if
$U_t$ satisfies \eqref{nov7.1}, then $\hat U_t = \Phi_t(U_t)$ satisfies
\[          d \hat U_t = \Phi_t'(U_t) \, dW_t. \]
The upshot is that, {\em with respect to
 the measure $\Prob^*$},
$\eta_t$ has the distribution of (a time change of)
$SLE_\kappa$ from $0$ to $\infty$ in $\Half$.  Since
$\gamma_t = \Phi^{-1} \circ \eta_t$, this implies that
with respect to $\Prob^*$,
  $\gamma_t$ has the distribution of $SLE_\kappa$
from $0$ to $w$ in $D$.
 The Girsanov transformation \eqref{nov7.1}
 is sufficent
for understanding the probability measure $\mu^\#_D(0,w)$.  Note
that it is determined by the logarithmic derivative of $H_t$; the
``compensator'' terms do not need to be computed.

The example of importance in this paper is $D = S_r$ and
$w = x+ ir$.  It will suffice for us to consider the
probability measure $\mu^\#_{S_r}(0,w)$. The
drift term in \eqref{nov7.1} is somewhat complicated
to write down; however, at time $t=0$,  we can use
\eqref{jan31.1} to see that it equals
$b \funthree(r,x)$ where 
\begin{equation} \label{fun3}
 \funthree(r,x) = 
\frac{ H_{\p S_r}'(0,x+ir)}{H_{\p S_r}(0,x+ir)}=
%= - \frac{ \tmass_{S_r}'(0,x+ir)}{b\, \tmass_{S_r}(0,x+ir)}=
 \frac{ \pi 
 }{r} \, \tanh \left(\frac {\pi x }{2r}\right)  ,
 \end{equation}
 where the prime denotes derivative in
 the {\em first} component.
 This measure is the same (modulo time change) as
 the conformal image of $SLE_\kappa$ from $0$ to $\infty$
 in $\Half$; in particular, with probability one, the
 path leaves $S_r$ at $w$.

%
%
%\subsection{Chordal $SLE_\kappa$ in $S_r$}   
%
%If $z_0 = x+ir$, then chordal $SLE_\kappa$ in $S_r$ from $0$ to $z_0$
%can be described using the chordal equation in the upper half plane
%as described in Section \ref{simplesec}.  Indeed, we
%can take solutions to the Loewner equation \eqref{chordal}
%where $U_t$ satisfies
% \[         dU_t = J_t \, dt - dB_t , \]
% where $B$ is a standard Brownian motion and
%\[         J_t = \frac{\tmass_{g_t(S_{r,t})}'(U_t, g_t(z_0))}
%                 {\tmass_{g_t(S_{r,t})}(U_t, g_t(z_0))},\]
%Here  the prime denotes derivative in the first component.
%In particular, 
% \[                J_0 = \frac{\tmass_{S_{r}}'(0, z_0)}
%                 {\tmass_{S_{r}}(0, z_0)}
%=b\, \frac{H_{\p S_r}'(0,x+ir)}
%                             {H_{S_r}(0,x+ir)}
%                          = b \, \funthree(r,x),\]
%where
%\begin{equation} \label{fun3}
% \funthree(r,x) = -
%\frac{ H_{S_r}'(0,x+ir)}{H_{S_r}(0,x+ir)}
%= - \frac{ \tmass_{S_r}'(0,x+ir)}{b\, \tmass_{S_r}(0,x+ir)}=
% \frac{ \pi 
% }{r} \, \tanh \left(\frac {\pi x }{2r}\right)  .
% \end{equation}
%Here $'$ denotes derivative in the first component.
%
%

In analyzing annulus $SLE_\kappa$ we will be studying measures
that will turn out to be absolutely continuous with  respect to
$\mu_{S_r}^\#(0,x+ir)$.  To review the issues that we need to
address, let us recall the case of $SLE_\kappa$ from $0$ to
$\infty$ in a simply connected domain $D$ with $\Half \setminus D$
bounded and $\dist(0,\Half \setminus D) > 0$.  In this case,
when we weight by the appropriate local martingale $M_t$,
 then with $\Prob^*$-probability one, $T = \infty$
 and $\gamma(t) \rightarrow
 \infty$.  
If $T = \infty$ and $\gamma(t) \rightarrow \infty$, 
then a deterministic estiamte gives
\[    M_\infty = \exp \left\{\frac \cent 2 \, m_\Half(\gamma,
  \Half \setminus \gamma) \right\}\, 1\{\gamma \subset D\}, \]
and since this happens with $\Prob^*$-probability one,
\begin{equation}  \label{jan31.2}
      \E[M_\infty] = M_0 = \Phi'(0)^b . 
      \end{equation}

\labove \textsf
{\begin{small}  \Heuristic  
The argument we will use for the annulus is similar
to the proof for simply connected domains, so it is worth
reviewing the main steps.  Suppose $D$ is a simply connected
domain with $\Half \setminus D$ bounded
and $w = \infty$.
Here we were able to guess  the exact form  
for the partition function for $\mu_D(0,\infty)$, $\Phi_D'
(0)^b$.  Direct It\^o's formula calculation shows that $M_t$
as above
gives a local martingale.  However, to justify
\eqref{jan31.2}, we need that fact that  
the curve \textit{weighted by the local martingale}
goes to infinity without leaving the domain.  This gives the
necessary  ``uniform integrability''.\\ 
In the annulus case, we will consider two measures on
curves from $0$ to $w=x+ir$ in $S_r$. 
We will use the Feynman-Kac theorem
applied to a slightly different process to give a candidate
for the partition function.  Although we will not have an
explicit form of it, we will know that it satisfies a certain
PDE and hence  gives us a local martingale.  Having
a local martingale is not sufficient; we will also
need to show that the process weighted by the local martingale
leaves  the domain at $w$.  This will give the analogue of
\eqref{jan31.2}.  The argument for the annulus, as well as
the argument here, will require $\kappa \leq 4$.
\end{small}}
\lbelow

\subsection{Shrinking domains}  \label{shrinksec}

We will need a generalization of this where the
domain $D$ is replaced with a decreasing family
of domains $\{D_t: t > 0\}$.  Although what we describe
can be done more generally, we will restrict to the case
that we need in this paper.  This will lead to
a process that we call {\em locally chordal $SLE_\kappa$
in an annulus.} 

Let $D = S_r$
and $w \in \p S_r \setminus \{0\}$.  
(The case $S_\infty = \Half, w = \infty$ 
corresponds to radial $SLE$ and is
discussed in the next subsection.)
% Suppose $D$ is a
%domain as in the previous section and $w$ is $ \p D$-analytic.
%We assume that we have domains $D_t$ that decrease
%with time, that is, 
%\[             D_t \subset D_s \mbox{   if    }  s \leq t. \]
%If $\gamma_t$ is a Loewner curve driven by a random driving
%function $U_t$, we allow $D_t$ to be random but it must be
%measurable with respect to $\gamma_t$.  We will also require that
%the function
%\[  t \longmapsto H_{D_t}(0,w) , \]
%is $C^1$ in $t$.  
%One example, and the only example we will need to consider in this
%paper, starts with the domain $D = S_r$ 
%and defines
Let
\begin{equation}  \label{nov21.6}
        \tilde \gamma_t = \bigcup_{k \in \Z \setminus
\{0\}} (\gamma_t + 2 \pi k), \;\;\;\;
            D_t = D \setminus \tilde \gamma_t. 
      \end{equation}  and
\[         \hat D_t = D \setminus (\tilde \gamma_t \cup \gamma_t)
 =  \psi^{-1}[\Disk \setminus \eta_t],\]
where
$   \eta_t = \psi \circ \gamma_t.$
In other words, when we slit the domain $D= S_r$ by
$\gamma_t$ we also add slits at the $2\pi k$ translates
of $\gamma_t$. 

Let $T$ denote
the first $t > 0$ such that either $\gamma(t)
\in \p S_r$ or $\eta_t$ disconnects the origin from 
the unit circle,
\[  T = \inf\{t > 0 : \gamma_t \not\subset D_t\}.\]
% \mbox{ or }
%  \dist(w,D \setminus D_t) = 0 \}, \]
%and note that 
% $w$ is $\p D_t$-analytic for each $t < T$. 
  Let $\tilde D_t = g_t(\hat D_t)$, and, as before,
$U_t = g_t(\gamma(t))$.  We want to study the process that
evolves at time $t$ like chordal $SLE_\kappa$ from $\gamma(t)$
to $w$ in the domain $D_t$.   Equivalently, the process after
conformal transformation by $g_t$ evolves like chordal $SLE_\kappa$
from $U_t$ to $w_t = g_t(w)$ in $\tilde D_t$.  The latter
process can be defined in two equivalent ways.  Let $H_t(x)
= H_{\p g_t(D \setminus \gamma_t)}(x,w_t)$ as in the previous
section and let 
\[ \tilde H_t(x) = H_{\p \tilde D_t}(x,w_t), \quad
  Q_t(x) =  \frac{\tilde H_t(x)}{H_t(x)}. \]
 The process can be considered as either of the following.
\begin{itemize}

\item   $SLE_\kappa$ in
$\Half$ from $0$ to $\infty$ 
weighted by  $ \tilde H_t(U_t)^b .$

\item  $SLE_\kappa$ in $S_r$ from $0$ to $w$ weighted
by  $Q_t(U_t)^b$.
  
  \end{itemize}

\labove \textsf
{\begin{small}  \Heuristic  If $J_t$ is a positive process,
then 
 ``weighting by $J_t$''
 is in the sense of the Girsanov thoerem.  If
$J_t$ satisfies
\[             dJ_t = J_t \left[R_t \, dt + A_t \, dU_t\right] . \]
then
\[               N_t := \exp \left\{-  \int_0^t R_s  \, ds \right\}
 \, J_t , \]
 is a local martingale satisfying
\[                  dN_t = A_t \, N_t \, dU_t . \]
When we use the Girsanov theorem (using stopping times so that the local
martingale is a martingale), then 
\[                     dU_t = A_t \, dt + dW_t , \]
where $W_t$ is a Brownian motion in the new measure.
\end{small}}
\lbelow

%  We
%define $\Phi_t$ to be a  conformal transformation
%of $g_t(D_t)$ onto $\Half$ with $\Phi_t(w_t) = \infty$
%and $|\Phi_t'(w_t)| = |g_t'(w)|^{-1} . $ We 
%only consider
%derivatives of $\Phi_t$ so it suffices to define it
%up to a real translation.  

%Let $\Phi = \Phi^{r,w}: S_r \rightarrow
%\Half$ be the conformal transformation with $\Phi(0) =0,
%\Phi(w) = \infty, |\Phi'(w)| = 1$.  Then $\Phi_t =
%\hat g_t \circ \Phi \circ g_t^{-1}$ maps $g(S_r \setminus
%\gamma_t)$ onto $\Half$.   If $t < T$, there is a unique $r(t)$
%such that we can find a conformal transformation of $A_r
%\setminus \eta_t$ onto $A_{r(t)}$ mapping $\eta(t)$ onto $C_0$.
%This transformation can be lifted to $D_t$ giving a transformation
%\[              h_t:D_t \rightarrow S_{r(t)} , \]
%satisfying $h_t(z + 2 \pi) = h_t(z) + 2 \pi$ that
%maps $\gamma(t)$ to the real line.  This transformation is
%unique up to a real translation, and to be specific we set
%\[              h_t(\gamma(t)) = U_t . \]
%We define $\phi_t$ by $h_t = \phi_t \circ g_t$.  Note that $\phi_t$
%is a transformation of $g_t(S_r \setminus \gamma_t)$ onto $S_{r(t)}$
%with $\phi_t(U_t) = U_t$.
%We set $w_t^* = h_t(w)= \phi_t(w_t)$, and we note that $w_t^* - h_t(\gamma(t))$
%is independent of our choice of real translation.
%

Let
\begin{equation}  \label{dderiv}
\Delta_t = \frac{ \dot Q_t(U_t)}
   {Q_t(U_t)},
\end{equation}
where $\dot Q_t(U_t)$ denotes $\p_t Q_t(x)$ evaluated
at $x = U_t$.
Our assumptions allow us to conclude
that $\Delta_t$ is well defined and continuous
for $t < T$.
%In this case, \eqref{dots} becomes
%\[  \dot \Phi_{t}'(U_t) = \frac{a\Phi''_t(U_t)^2}{4
%  \Phi'_t(U_t)} - \frac{2a \, \Phi'''_t(U_t)}{3} 
% -
%      \Delta_t \, \Phi_t'(U_t) . \]

As in  \eqref{kt}, we define  
\[  K_t= |g_t'(w)|^b \, \tilde H_t(U_t)^b
 = |g_t'(w)|^b \, H_t(U_t)^b \, Q_t(U_t)^b . \] 
 Using the previous calculation and the chain rule, we see that
$K_t$ satisfies
\[  dK_t = K_t \, \left[\left(- b \Delta_t  +
\frac{a\cent}{12} \, S\Phi_t
(U_t)\right) \, dt + \frac{b\, \tilde H_t'(U_t)}{\tilde H_t(U_t)} \,
  dU_t \right].\]
If
\[   C_t =  \exp \left\{\int_0^t \Delta_s
 \, ds\right\},\]
\[  M_t = 
   C_t^b \,  \exp \left\{-\frac {a\cent}{12} \int_0^t
  S\Phi_s(U_s) \, ds \right\} \, K_t,
 \] 
then $M_t$  is a local 
martingale satisfying
\[    dM_t = \frac{b\, \tilde H_t'(U_t)}{\tilde H_t(U_t)}
  \, M_t \, dU_t. \]

The term 
\[  
- \frac a6 \int_0^t
  S\Phi_s(U_s) \, ds  \]
can be interpreted in terms of Brownian loops, but we need
to be careful.  At time $s$, $-S \Phi_s(U_s)/6$ represents
the measure of Brownian bubbles in $\Half$ rooted at $U_s$
that intersect $g_s(D_s)$.  
For every Brownian
loop $l$, let $s(l)$ be the smallest $s$ such that
$s(l) \cap \gamma_s \neq \emptyset$.  Then  
\[ - \frac a 6 \int_0^t
  S\Phi_s(U_s) \, ds  = 
   \tilde m_t   , \]
 where $\tilde m_t = \log \tilde \Lambda_t$ is the Brownian loop measure
of  $l$
in $D$ 
with $s(l) \leq t$ and $l \cap D \setminus D_{s(l)}
 \neq \emptyset $. 
 Then the local martingale is
\[            M_t = C_t^b \, \tilde \Lambda_t^{\cent/2} \, H_t(U_t)^b\,
  \, Q_t^b  =
    C_t^b \, \tilde \Lambda_t^{\cent/2} \, \tilde H_t(U_t)^b. \]
%where in this case
%\[    Q_t = \frac{H_{D_t \setminus \gamma_t} (\gamma(t),w)}
%   {H_{D \setminus \gamma_t} (\gamma(t),w)}. \]

Note that the only term in $M_t$ that has nontrivial
quadratic variation is $\tilde H_t(U_t)^b$.  
  Therefore, 
when we weight by the  local martingale, the process looks locally
like $SLE_\kappa$ from $\gamma(t)$ to $w$ in $D_t$.  We
call it {\em locally chordal $SLE_\kappa$} (we have defined it only
for $\kappa \leq 4$.)
This gives a probability measure on paths starting at $0$ in $S_r$.
We will use $\kappa \leq 4$ to show that with probability one the
paths leave $S_r$ at $w$.  We can also view the paths as living 
in the annulus $A_r$ and going from $1$ to $e^{-r+ix}$ with a
known total winding number.  In Section \ref{anneqsec} we will
use an annulus reparametrization of the curve.

%\begin{example}  As a slightly different example, we consider
%this construction in $S_\infty = \Half$.
%Suppose $\gamma$ is chordal
%$SLE_\kappa$ with conformal maps $g_t$ and
%driving function $U_t$ satisfying \eqref{chordal}.
%Let $H_t = \Half \setminus \gamma_t$ and 
%$D_t =  H_t \setminus \tilde \gamma_t =
%\Half \setminus \hat \gamma_t$
%as in \eqref{nov21.6}.   Let $w = \infty$, and hence
%$w_t = \infty$ for all $t$. Then we claim that the
%process one obtains is radial $SLE_\kappa$.  To 
%be more precise, for each $t$, one can
%find a conformal transformation $h_t: \Half \setminus
%\hat \gamma_t \rightarrow \Half$ with $h_t(\infty)
%= \infty$ and  $h_t(z + 2 \pi) = h_t(z)
%+ 2 \pi$.  
% One can see this by considering the corresponding
%maps on the unit disk.    There is an expansion,
%\[   h_t(iy) = z + x - \frac{a}{2} \, y
% + o(1), \;\;\;\;
%  \Im(z) \rightarrow \infty .\]
%  Here $r(t)$ is determined by the curve, continuous
%  and decreasing, and $x$ is arbitrary. 
%   We set $h_t$ uniquely by
%specifying $h_t(U_t) = U_t$.    We define
%$\phi_t$ by $h_t = \phi_t \circ g_t$.   Let $\sigma(r)
%= \inf\{t: r(t) = r\}$ and $h_r^*(z) = h_{\sigma(r)}. $
%In this case $\Delta_t$ is constant in $t$; indeed,
%one can show that
%\[   \Delta_t =
% a \sum_{k \in \Z \setminus \{0\}} H_{\Half}(0,2 \pi k)
%  = 2a\sum_{k=1}^\infty (2\pi k)^{-2} = \frac {a}{12}.\]
%Hence the local martingale $M_t$ can be written as 
%\[            M_t = [e^{at/2}]^{b/6} \, \tilde \Lambda_t^{\cent/2}
%  \, Q_t^b , \]

%\end{example}

\subsection{Radial $SLE_\kappa$ raised to $\Half$}  \label{fradsec}

Suppose $D$ is a simply connected domain, $z \in \p D$,
$w \in D$, and $\p D$ is locally analtyic at $z$.  Radial $SLE_\kappa$
in $D$ from $z$ to $w$ is a measure on paths
\[           \mu_D(z,w) = \tmass_D(z,w) \, \mu_D^\#(z,w) , \]
that satisfies the conformal covariance rule
\[           f \circ \mu_D(z,w) = |f'(z)|^{ b} \, |f'(w)|^{\tilde b}
 \, \mu_{f(D)}^\#(f(z),f(w)) . \]
 The conformal covariance rule determine the total mass up to a multiplicative
 constant and for convenience we choose the constant so that
 $\tmass_\Disk(1,0) = 1$.  
 
 To obtain the probability measure $\mu_D^\#(0,w)$ where $w \in \Half$,
we weight  chordal $SLE_\kappa$
by a particular local martingale. 
 Let $g_t$ be the
conformal maps for chordal $SLE_\kappa$ from $0$ to $\infty$,
and let $w \in \Half$.  Let $Z_t = g_t(w) - U_t$ and
\[            M_t = |g_t'(w)|^{\tilde b} \, H_\Half(
           Z_t,U_t)^b , \]
where $b,\tilde b$ are the boundary and interior scaling
exponents, respectively, as in \eqref{bandc}.  Then $M_t$
is a local martingale and the measure on the paths obtained
by weighting by this local martingale is that of 
radial $SLE_\kappa$.   In the weighted measure, the
path stops at finite (half plane capacity) time $T_w$ at which 
$\gamma(T_w) = w$.   This determines the probability measure
$\mu_\Half^\#(0,w)$ and conformal invariance 
determines the measure for all simply connected $D$.  Although
this is not the same definition as originally given by Schramm
\cite{Schramm}, the Girsanov theorem shows that it is equivalent.

One can also understand the relationship between radial 
and chordal $SLE_\kappa$ using the Brownian
loop measure.  Suppose
that $\gamma_t$ is a simple
curve in $\Half$ starting at the origin
and let $\eta_t = \psi \circ \gamma_t$.  We will assume that $t$
is small so that $\eta_t$ is also simple. Let
\[  \tilde h_t: \Disk \setminus \eta_t \rightarrow \Disk\]
be the conformal transformation with $\tilde h_t'(0) > 0$ and
suppose the curve has been parametrized so that $h_t'(0)
=e^{t}$. Let $g_t: \Half \setminus \gamma_t$ be the usual
conformal transformation with driving function
$U_t$; one can show that
\[             \p_t \hcap[\gamma_t] \mid_{t=0} = 2 , \]
which is why this is a standard choice of parametrization
for chordal $SLE$.
Let $\tilde \gamma_t, \hat \gamma_t$ be as in the previous
subsection and let $h_t$ be a conformal transformation
$h_t : \Half \setminus \hat \gamma_t \rightarrow \Half$
 such that $\psi(h_t(z))=
\tilde h_t(\psi(z))$.  This transformation is determined uniquely by
requiring that
\[          h_t(iy) = i[y - t] + o(1) , \;\;\;\;
  y \rightarrow \infty . \]
   We define $\phi_t$ by
\[                 h_t = \phi_t \circ g_t.\]

Let $\mu_1,\mu_2$ denote $\mu_\Half(0,\infty)$ and $\mu_\Disk(1,0)$.
The latter measure can be viewed as a measure on curves $\gamma_t$
by pulling back by $\psi$. (Note that $|\psi'(0)| = 1$ so the
derivative factor in the scaling rule equals one.) 
 We view these measures on the initial
segment $\gamma_t$.  The measure $\mu_2$ is supported on curves
such that $\gamma_t \cap \tilde \gamma_t \neq \emptyset$.
 Note that $\mu_2 \ll \mu_1$, and let $Y_t(\gamma_t)$
denote the Radon-Nikodym derivative so that $d \mu _2 = Y \, d\mu_1$.
Let $\tmass^*$ denote the partition function for the raised radial
$SLE$; in particular, $\tmass_\Half^*(0,\infty) = 1$.
 
  Although the
loop measure is conformally invariant, we must be careful here
because $\psi: \Half \rightarrow \Disk$ is not one-to-one.  Indeed,
each loop $l'$ in $\Disk$ has an infinite number of preimages in
$\Half$.  If $l'$ is a loop in $\Disk$ that intersects $\eta_t$, we
can specify a unique preimage by considering the smallest $s$ such
that $\eta_s \in l'$ and then rooting $l'$ at $\eta_s$.  We associate
to $l'$ the corresponding loop $l$ in $\Half$ rooted at $\gamma_s$.

Also, the loops of nonzero winding number in $\Disk$
have preimages that are not loops in $\Half$.  Since the
paths have been parametrized so that $\tilde h'(0) = e^{t}$,
Corollary \ref{torontocor}
implies that
the measure of such loops is deterministic
and equal to $t/6$. 
Using this idea, we get the formal expression
\[    Y(\gamma_t) = C_t 
 \,  \exp \left\{\frac \cent 2 \,[\hat m(\gamma_t)- (t/6)]\right\}
  \, \frac{\tmass_{\Half \setminus \hat \gamma_t}^*(\gamma(t),0)}{\tmass_{\Half
   \setminus \gamma_t}(\gamma(t),0)}.\]
   Here $C_t$ is a normalization to make this a probability measure
%   $m^*(\gamma_t)$ is the measure of loops in $\Disk$
%   of nonzero winding number
%   intersecting $\eta_t$, and 
   and 
 $\hat m(\gamma_t)$ denotes the measure of loops $l$ in $\Half$
that intersect $\gamma_t$ with the following property.
\begin{itemize}
\item  Let $s$ be the smallest time with $\gamma_t \in l$.  Then
\[    l \cap \tilde \gamma_s \neq \emptyset . \]
In other words, the loop hits a translate of $\gamma_t$ before it
hits $\gamma_t$ where time is measured on the curve $\gamma_t$.
\end{itemize}
The ratio of partition functions is only formal but we can make
sense of it by writing
\[ \frac{\tmass_{\Half \setminus \hat \gamma_t}^*(\gamma(t),\infty)}{\tmass_{\Half
   \setminus \gamma_t}(\gamma(t),\infty)}=
    \frac{\tmass_{\Half \setminus \hat \gamma_t}^*(\gamma(t),\infty)}
    {\tmass_{\Half
   \setminus \hat \gamma_t}(\gamma(t),\infty)}\; \frac{\tmass_{\Half \setminus \hat \gamma_t}(\gamma(t),\infty)}{\tmass_{\Half
   \setminus \gamma_t}(\gamma(t),\infty)}  .\]
The first term on the right equals one since, formally,
\[  \frac{\tmass_{\Half \setminus \hat \gamma_t}^*(\gamma(t),\infty)}{\tmass_{\Half
   \setminus \hat \gamma_t}(\gamma(t),\infty)}
    = \frac{|h_t'(\gamma(t))|^b \, \tmass_\Half^*(h_t(\gamma(t)),\infty)}
      {|h_t'(\gamma(t))|^b \, \tmass_\Half(h_t(\gamma(t)),\infty)}=1 .\]
For the second term, we use the formal computation 
\[  
 \frac{\tmass_{\Half \setminus \hat \gamma_t}(\gamma(t),\infty)}{\tmass_{\Half
   \setminus \gamma_t}(\gamma(t),\infty)} = \frac{|g_t'(\gamma(t))|^b
    \, \tmass_{g_t(\Half \setminus \hat \gamma_t)}(g_t(\gamma(t)),\infty)}
    {|g_t'(\gamma(t))|^b \, \tmass_\Half(U_t,\infty)} =
     \tmass_{g_t(\Half \setminus \hat \gamma_t)}(U_t,\infty), \]
and conformal covariance,
\[ \tmass_{g_t(\Half \setminus \hat \gamma_t)}(U_t,\infty)=
 \phi_t'(U_t)^b . \]
 Therefore,
 \[            Y_t(\gamma_t) =
 C_t \, e^{-\cent t/12}\,
  \exp \left\{\frac \cent 2 \, \hat m(\gamma_t)\right\}
  \, \phi_t'(U_t)^b . \]
  
This is a local martingale (and a martingale for $\kappa \leq 4$)
for chordal $SLE_\kappa$ and when we weight by the martingale we get
locally chordal $SLE_\kappa$ from $\gamma(t)$ to $\infty$
in $\Half \setminus \hat \gamma_t$.  Although we are considering
chordal $SLE_\kappa$, we are using the radial parametrization.
 This is the same as
radial $SLE_\kappa$ viewed on the covering
space $\Half$.  It remains to find the normalization factor
$C_t$. Since the weighted measure locally looks like chordal $SLE_\kappa$
in the infinitely slit domain and hence after mapping by $h_t$
looks like chordal $SLE_\kappa$, we get that $C_t = e^{\tilde b t}$
for some $\tilde b$.  To find the exponent we need only differentiate
at $0$.
The measure of loops that hit both $\gamma_t$ and a translate of $\gamma_t$
is of order $t^2$ and hence
\[   \p_t  \hat m(\gamma_t)\, \big|_{t=0} = 0.\]
We claim that 
\begin{equation}  \label{toronto.1}           
      \p_t\phi_t'(U_t) \,\big|_{t=0} = -\frac 1 6, \end{equation}
and hence
\[                 \tilde b = \frac{\cent}{12} + \frac b 6.\]

Let us sketch the proof of \eqref{toronto.1}.  We write ``small error''
for errors that are $o(t)$ as $t \downarrow 0$.  The quantity $\phi_t'(U_t)$
is the probability that a Brownian excursion in $\Half \setminus \hat \gamma_t$
from $\gamma(t)$ to $\infty$ does not hit $\tilde \gamma_t$.  Up to
small error, it is the probability that an excursion in $\Half$ from $0$
to $\infty$ does not hit $\tilde \gamma_t$.  The set $\tilde \gamma_t$
is a union of curves of half-plane capacity $2t$ rooted at the
points $2\pi k$, $k \in \Z \setminus \{0\}$.  The probability that an
excursion hits the translate $\gamma_t + 2 \pi k$ is exactly
\[              \p_y q(iy) \]
where $q(z) = \E^z[\Im[B_\tau]]$, $B$ is a standard Brownian motion
and $\tau$ is the first time that it leaves $\Half \setminus [\gamma_t
+ 2 \pi k]$.  As $t \downarrow 0$, up to small error this equals
\[                   \frac{1}{(2\pi k)^2} \, \hcap[\gamma_t]
 = \frac{t}{2 \pi^2}  . \]
The probability of hitting more than one translate is $O(t^2)$, and hence,
up to small error, the probability that the excursion hits $\tilde \gamma_t$
is 
\[          \sum_{k \in \Z \setminus \{0\}} 
   \frac{t}{2 \pi^2} = \frac t6.\]

\labove \textsf
{\begin{small}  \Heuristic  
In the last computation we use the fact that for a small curve rooted
at $x \in \R$, the expected value of $\Im(B_\tau)$ is given by
the half-plane capacity times a multiplicative constant of the Poisson
kernel.  In order to keep track of constants (perhaps made especially
confusing by our definition of $H$),  it is useful to remember
that for large $y$ if $D = \Half \setminus \overline \Disk$,
\[        \E^{iy}[\Im(B_\tau)] \sim \, \frac{1}{y} = H_{\Half}(y,0)
. \]
Hence, we get the general relation,
\[         \E^{z}[\Im(B_\tau)] \sim   H_\Half(z,x)\, \hcap[\gamma_t].\]
\end{small}}
\lbelow

The estimate \eqref{tomato} is done similarly.  In this case, the
probability that an excursion from $0$ to $x+ir$ in $S_r$ hits
the translate $\gamma_t + 2 \pi k$ is exactly,
$\p_y q(y)$ where 
\[       q(z) = \frac{\E^z[H_{S_r}(B_\tau,x+ir)]}
             {H_{\p S_r}(0,x+ir)}. \]
Here $\tau$ is the first time that the Brownian motion leaves
$S_r \setminus [\gamma_t + 2 \pi k]$.
Up to small error, if $B_\tau \not\in \p S_r$,
\[                H_{S_r}(B_\tau,x+ir) = \Im[B_\tau]
  \, H_{S_r}(2\pi k,x +ir) .\]
  Also, as $y \downarrow 0$,
 \[  \p_y \E^{iy} \left[\Im(B_\tau)\right]\mid_{y=0} =  \hcap[\gamma_t] \,
       H_{S_r}(0,2 \pi k)\, [1 + o(1)]. \]

 \subsection{Annulus Loewner equation}  \label{anneqsec}

We will need to consider the annulus Loewner equation
which is similar to the    chordal
equation \eqref{chordal}.  We will need to define
the 
  annulus
equation in the covering space $S_r$. We start
with some defintions.  Assume $U:[0,\infty)
\longrightarrow \R$ is continuous with $U_0 = 0$ and such that
the chordal equation \eqref{chordal} produces a simple
curve. 
Recall that $\psi(z) = e^{iz}$, $\tau_r = \inf\{t: \Im \gamma(t) = r\}$,
 and let $\eta_t = \psi \circ \gamma_t$.
Let
\[          \tilde \gamma_t = \bigcup_{k \in \Z \setminus \{ 0\}}
                      (\gamma_t + 2 \pi k), \;\;\;\;
              \hat \gamma_t = \gamma_t \cup \tilde \gamma_t, \]
 \[            T = \inf\{t: \gamma_t \cap \tilde \gamma_t \ne \emptyset\}. \]
Equivalently,
 $T$ is the first time that the curve
$\eta_t$ is not simple.  Note that $T \neq \tau_r$ for each $r$; indeed,
by the definition of $T$, there must be an $s < T$ with $\Im \gamma(s)
= \Im \gamma(T)$.  Let
\[                   S_{r,t} = S_r \setminus \gamma_t, \;\;\;\;
         \hat S_{r,t} = S_r \setminus \hat \gamma_t. \]

If $t < T \wedge \tau_r$, there is a unique $r(t)
 = r(t,\gamma_t) \in (0,r]$ such that there
is a conformal transformation
\[              \bar
 h_t: A_r \setminus \eta_t \rightarrow A_{r(t)}, \]
 with $\bar h_t(C_0) = C_0$.
The transformation $\bar h_t$ is unique up to a rotation.  This transformation
can be raised to the covering space $S_r$ to give a conformal
transformation
\[                 h_t:  \hat S_{r,t} \rightarrow S_{r(t)} \]
with $h_t(\pm \infty) = \pm \infty.$ This transformation
is unique up to a real translation, and we specify it uniquely
by requiring
\[                      h_t(U_t) = U_t. \]
We define $\phi_t$ by
\[                      h_t = \phi_t \circ g_t. \]
Note that $\phi_t$ is the unique conformal transformation of $g_t(S_{r,t})$
onto $S_{r(t)}$ with $\phi_t(\pm \infty) = \pm \infty$ and
$\phi_t(U_t) = U_t . $  Although $r(t)$ depends on
the curve $\gamma$, the next lemma shows that its derivative
at $0$ is independent of $\gamma$ assuming $\gamma$ has
the capacity parametrization.

\begin{lemma} \label{estimateone}
If $\gamma$ is a curve with $\hcap[\gamma_t] =
a t $, then
$             \dot r(0) = -  a/ 2 = -1/\kappa .$
\end{lemma}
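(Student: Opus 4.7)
The plan is to compute $\dot r(0)$ by analyzing $\dot h_0(z) := \partial_t h_t(z)|_{t=0}$, viewed as a meromorphic function on $S_r \setminus 2\pi\Z$. Two structural facts drive the computation: (i) $\dot h_0$ is $2\pi$-periodic in $z$; (ii) $\dot h_0$ has a simple pole at $z=0$ with residue $a$. Property (i) follows by differentiating the relation $h_t(z+2\pi) = h_t(z) + 2\pi$, which holds because $h_t$ is a lift of $\bar h_t$ under the covering $\psi$ with $h_0 = \mathrm{id}$ pinning down the choice of lift. For (ii), differentiating $h_t = \phi_t \circ g_t$ at $t=0$ using $g_0 = \phi_0 = \mathrm{id}$ and the chordal Loewner equation $\dot g_0(z) = a/z$ (since $U_0 = 0$) gives $\dot h_0(z) = \dot\phi_0(z) + a/z$. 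The slits in the domain of $\phi_t$ accumulate near $\{2\pi k\}_{k\ne 0}$, bounded away from $0$, and $\phi_t$ sends small real intervals near $0$ into $\R$, so by Schwarz reflection $\dot\phi_0$ extends holomorphically across the real axis near $0$; hence the principal part of $\dot h_0$ at $0$ is $a/z$, and by periodicity the same holds at every $2\pi k$.

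Using the Mittag-Leffler identity $\tfrac12 \cot(z/2) = \sum_{k \in \Z} (z - 2\pi k)^{-1}$ (principal value), define $F(z) := \dot h_0(z) - \tfrac{a}{2}\cot(z/2)$, a $2\pi$-periodic holomorphic function on $S_r$. Differentiating in $t$ the boundary conditions $h_t(\R + ir) = \R + ir(t)$ and $h_t(\R \setminus \hat\gamma_t) \subset \R$ gives $\Im \dot h_0 = \dot r(0)$ on $\R + ir$ and $\Im \dot h_0 = 0$ on $\R \setminus 2\pi\Z$. Since $\cot(x/2)$ is real for $x \in \R \setminus 2\pi\Z$, these translate to $\Im F = 0$ on $\R$ and $\Im F(x+ir) = \dot r(0) - \Im[\tfrac{a}{2}\cot((x+ir)/2)]$ on $\R + ir$.

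To extract $\dot r(0)$, I average in $x$ over one period. The Fourier expansion $F(z) = \sum_n b_n e^{inz}$ (valid since $F$ is periodic and holomorphic on $S_r$) gives $\tfrac{1}{2\pi}\int_0^{2\pi} F(x+iy)\,dx = b_0$, constant in $y$, so $\tfrac{1}{2\pi}\int_0^{2\pi} \Im F(x+iy)\,dx = \Im b_0$ is constant in $y$ and equals $0$ by the $y=0$ condition. The expansion $\cot((x+ir)/2) = -i[1 + 2\sum_{n\ge 1} e^{-nr} e^{inx}]$ (valid for $r > 0$ since $|e^{-r+ix}| < 1$) yields $\Im\cot((x+ir)/2) = -1 - 2\sum_{n\ge 1} e^{-nr}\cos nx$, whose $x$-average over $[0,2\pi]$ equals $-1$. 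Averaging the boundary condition at $y = r$ then gives $0 = \dot r(0) - \tfrac{a}{2}(-1)$, so $\dot r(0) = -a/2 = -1/\kappa$. The main technical obstacle is step (ii): justifying rigorously that $\dot h_0$ has precisely a simple pole of residue $a$ at $0$ and no other singularity in a fundamental domain, which requires uniform-in-$t$ regularity of $\phi_t$ near $z = 0$; the Fourier averaging step is then a routine calculation.
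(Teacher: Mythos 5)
Your argument is correct in substance but proceeds by a genuinely different route from the paper's. The paper's proof is probabilistic: it uses conformal invariance of excursion measure, $\exc_{D_t}(C_r,C_0\cup\eta_t)=1/r(t)$, together with a Brownian hitting estimate to show that $\dot r(0)$ does not depend on $r$, and then identifies the constant by a limiting comparison with the radial Loewner equation. You instead determine $\dot h_0=\p_t h_t\mid_{t=0}$ directly from its singularity and boundary data: writing $h_t=\phi_t\circ g_t$ and using that the nontrivial slits sit near $2\pi k$, $k\ne 0$, gives $\dot h_0(z)=a/z+O(1)$ near $0$, hence (by periodicity of the lift) a simple pole of residue $a$ at every point of $2\pi\Z$; subtracting $\tfrac{a}{2}\cot(z/2)$, whose partial-fraction expansion has exactly these poles and which is real on $\R\setminus 2\pi\Z$, leaves a $2\pi$-periodic holomorphic function on $S_r$ whose imaginary part has $x$-average independent of $y$, and matching the averages on the two boundary components (the period average of $\Im\cot((x+ir)/2)$ is indeed $-1$) yields $0=\dot r(0)+a/2$. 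I have checked the residue identification, the boundary conditions $\Im\dot h_0=0$ on $\R\setminus2\pi\Z$ and $\Im\dot h_0(x+ir)=\dot r(0)$, and the Fourier averaging; all are correct. What your approach buys is that it bypasses the Brownian estimates and the identification of the constant in a limiting regime, and it effectively re-derives the pole structure of the annulus Loewner kernel $\cpois_r$ along the way. Its cost is the regularity input you flag yourself: one must know that $t\mapsto h_t$ (equivalently $\phi_t$) is differentiable at $t=0$ with holomorphic derivative. This is on the same footing as the paper's own reliance on the quoted annulus Loewner equation and on the omitted-proof lemma computing $\dot\phi_0$, so it is an acceptable black box here; a fully self-contained write-up would need the uniform analyticity of $\phi_t$ on a fixed disk about $0$ (by Schwarz reflection, as you note) together with convergence of the difference quotients, e.g.\ via normal families.
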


\begin{proof} We will consider excursion measure
  defined
by
\[      \exc_{D}(V_1,V_2) = \frac 1{2 \pi^2} \int_{V_1}\int_{V_2}
H_D(z,w) |dz|\, |dw|.\]
This definition assumes $V_1,V_2$ are nice boundaries, but
this is a conformal invariant (see \cite[Chapter 5]{LBook}) and
hence is defined for rough boundaries as well.   In this
normalization, $\exc_r:= \exc_{A_r}(C_0,C_r) = 1/r$. 
Consider $D_t = A_r \setminus \eta_t$ where $\eta = \psi \circ
\gamma$.  We only need to consider small $t$ for which
$\eta$ is a simple curve in $A_r$.
 Let $\exc(t) = \exc_{D_t}(C_r,C_0 \cup \eta_t)$.  By
definition of $r(t)$ and conformal invariance of excursion 
measure, 
$            \exc(t) = 1/r(t).$
Therefore, by the chain rule
\begin{equation}  \label{oct25.1}             
 \dot \exc(0) = \frac{\dot r(0)}{r^2}.
\end{equation}

Suppose $r>1$ and $t$ is sufficiently
small so that $\Disk_1 \subset D_t$. 
% $D$ is a subdomain of $A_r$ with $D \cap A_s =
%D \cap A_r$.  Let $\partial = \partial D \setminus
%C_r$.   
Then using the strong Markov property,
\[ \exc_{A_r}(C_r,C_1) -\exc_{D}(C_r,D_t) =  
\exc_{\Disk_1 \cap A_r}
 (C_r, C_1) \; \E\left[q(B_{\tau_t})\right] 
  =\frac{1}{ r-1} \, \E\left[-\frac{\log |B_{\tau_t}|}r\right].\]
Here $B$ is a Brownian motion started uniformly
on $C_s$, $\tau_t$ is the first time that it leaves $D_t$
and $q(z)$ denotes the probability that a Brownian
motion starting at $z$ hits $C_r$ before $C_0$,
\[    q(z) = \frac{-\log |z|}{r} .\]
Therefore,
\[   \dot \exc(0) = \frac1{r^2} \, \frac{r}{r-1}
  \, \p_t \E[\log |B_{\rho_t \wedge \sigma_r}|]\mid_{t=0}. \]
where $\rho_t$ is the first time to leave $D_t$ and $\sigma_r$
is the first time to hit $C_r$. We claim that
 \begin{equation}  \label{dec2.1}
  \p_t\E[\log |B_{\rho_t \wedge \sigma_r}|]\mid_{t=0}
   = \frac{r-1}r \, \p_t\E[\log |B_{\rho_t}|]
 \mid_{t=0} .
 \end{equation}
 To see this, we first note that the probability starting
 at $C_1$ of hitting $C_r$ before $C_0$ is $1/r$.  Also,
 given $\rho_t < \sigma_r$, the probability of hitting
 $C_r$ before $C_0$ is $O(d_t/r)$ where $d_t =
 \diam(\gamma_t) = o(1)$.  Also, since we start
 with the uniform distribution on $C_1$, the distribution
 of $\sigma_r$ given that $\sigma_r < \sigma_0$ is
 also uniform.  Therefore,
 \[  \E[\log |B_{\rho_t}|\; ;\;\sigma_r < \rho_t]
  = \frac 1r \,   \E[\log |B_{\rho_t}|] \,[1+O(d_t)].\]
and hence
\[   \p_t  \E[\log |B_{\rho_t}|\; ;\;\sigma_r < \rho_t]
  |_{t=0} = \frac 1r   \, \p_t  \E[\log |B_{\rho_t}|]
   |_{t=0}.\]
from which \eqref{dec2.1} follows.  Note that the
right-hand side of \eqref{dec2.1} is the same if we
start the Brownian motion at the origin.

By comparison with \eqref{oct25.1}, we see that $\dot r(0)$
is independent of $r(0)$, and we can compute $\dot r(0)$ by
letting  $r \downarrow 0$.  In this case, we get the 
comparison of the chordal Loewner equation to the radial
Loewner equation. 
\end{proof}

We define
\[   \sigma_s = \inf\{t: r(t) = s\}. \]
Let  $\gamma^*$ be $\gamma$ with the ``annulus parametrization''
\[         \gamma^*(s) = \gamma(\sigma_{s}), \;\;\;\;
    0 \leq s \leq r , \]
and let 
\[        U^*_s = U_{r(s)}, \;\;\;\;
     h^*_s = h_{\sigma_s}.\]
%Suppose $0 < s < r$ and
%\[     \gamma^{*,s}(t) =  h_s^*[ \gamma^*(t)], \;\;\;\;
 %                0 \leq t \leq s . \]
The direction of ``time'' is reversed 
so one must be careful with minus signs.
 
\labove \textsf
{\begin{small}  \Heuristic  
 In the annulus parametrization, the radius takes the place
 of time.  However, the direction of ``time'' is reversed,
so one must take some care with minus signs.
\end{small}}
\lbelow

We will just state the annulus Loewner equation (see, e.g.,
\cite{BauerF,Komatu}).  It can also be described
  in terms of excursion
reflection Brownian motion (this helps motivate the formulas),
see \cite{Dren,LLap}.  
We review the facts here.
Let $\cpois_{S_r}(z,x) = \cpois_{S_r}(z-x)$ 
denote the complexification of the Poisson
kernel in $S_r$ which recall by \eqref{dec1.3}
is given by
\[ \cpois_{S_r}(z) =  - \frac{\pi}{2r} \,
                   \coth \left(\frac{\pi z}{2r} \right),
\]
 and satisfies
 \[  \Im \cpois(z) = H_{S_r}(z,0),\]
\[               \cpois_{S_r}(z) =  - \frac 1 z+ O(|z|), \;\;\;\; z
 \rightarrow 0 , \]
and if $x \in \R$, 
 \[ \Re \cpois_{S_r}(x) =
  - \frac{\pi}{2r} \,
                   \coth \left(\frac{\pi x}{2r} \right),\;\;\;\;
   \Re \cpois_{S_r}(x + ir) =
 %- \frac{\pi}{r} \, \Re \cpois_{S_\pi}
 %(\frac{\pi x}{r} + i \pi)  = \frac{\pi}{2r} \, \frac{\sinh(\pi x/r)}
  %                {\cosh(\pi x/r)  + 1} = 
  - \frac{\pi}{2r} \,
                   \tanh \left(\frac{\pi x}{2r} \right).            
 \]

  There exists a unique
  holomorphic function with period $2\pi$
  \[                     \cpois_r: S_r \rightarrow \Half_r , \]
  such that
  \[                    \cpois_r(z) =  - \frac 1 z  + o(1) , \;\;\;\;
    z \rightarrow 0, \]
  and  such that the induced map
  \[       \bar  \cpois_r(e^{iz}) = \cpois_r(z) \]
   is a conformal transformation of $A_r$ onto a domain of the
   form $\Half \setminus L$ for some horizontal line segment
   $L$.
  One can find
   this using excursion reflected Brownian motion (ERBM) as
  we now sketch. The imaginary part 
  $H_r = \Im \cpois_r$   will
  be the Poisson kernel for ERBM in the annulus.
   We can write
\begin{equation}  \label{nov17.1}
      H_r(z)  =  \frac{\Im(z)}{2r} +  H_{A_r}(e^{iz},1)   
    =  \frac{\Im(z)}{2r}
     - \frac {\pi}{2r} 
   \sum_{k \in Z} \Im \coth \left(\frac{\pi z}{2r} \right).
  \end{equation}
In this formula, the infinite sum represents the contribution
to the ERBM Poisson kernel by paths that
do not hit the ``hole'' $\Disk \setminus A_r$.  The first
term gives the contribution of paths that hit the hole first.
The probability of hitting the hole before hitting
$C_0$ is $\Im(z)/r$.  Given that it hits the hole, the
distribution of the first visit to $C_0$ is uniform
on the circle and hence the value of the kernel   
is $1/2$ (recall that in
our normalization, $H_\Disk(0,1) = 1/2$.)

One can check that the sum in \eqref{nov17.1} absolutely convergent.
However, the real parts are not absolutely convergent so we must
take a little care in the definition of $\cpois_r$.
We write
\begin{eqnarray*} 
  \cpois_r(z) &   =  & 
    \frac z{2r} - \frac \pi{2r} \, \coth\left(\frac{z\, \pi}{2r}\right) 
                        \\
                        &  & \hspace{.5in}
                          - \frac \pi {2r} \sum_{k=1}^\infty 
                       \left[ \coth\left(\frac{(z + 2 k \pi)\pi}{2r}\right) + \coth
                       \left(\frac{(z-2k\pi) \pi}{2r}\right)\right]\\
                       & = & 
 \frac{z}{2r}
 - \frac \pi{2r}\ppsum_k \coth\left(\frac{(z + 2 k \pi)\pi}{2r}\right),
 \end{eqnarray*}
 where we write 
 \[   \ppsum_k f(k) = \lim_{N \rightarrow \infty}
   \sum_{k=-N}^N f(k) . \]

 \begin{lemma}  As $z \rightarrow 0$,
 \begin{equation}  \label{expand}
   \cpois_r(z) = -\frac 1z + z \left(\frac 1{2r} -
   \Gamma(r)
           + \frac 1{12}
         \right) +O(|z|^3), 
         \end{equation}
% 
%  \begin{equation}  \label{expand}
%   \cpois_r(z) = -\frac 1z + z \left(\frac 1{2r} -
%         \frac{\pi^2}{12 r^2} + \frac 1{12}
%           - \delta(r) \right) +O(|z|^3), 
%         \end{equation} 
         where $\Gamma(r)$ is as defined in \eqref{Gammadef}.
%  In particular, we can write
%   \begin{equation}  \label{expand2}
%   \cpois_r(z) = -\frac 1z + z \left(\frac 1{2r} + \frac 1{12}
%     - \Gamma_{\Disk}(1,A_r)
%       \right) +O(|z|^3), 
%         \end{equation}
\end{lemma}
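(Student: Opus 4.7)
The plan is to read the expansion off the representation
\[
\cpois_r(z) = \frac{z}{2r} - \frac{\pi}{2r}\,\ppsum_k \coth\!\left(\frac{(z+2k\pi)\pi}{2r}\right)
\]
by Laurent expanding each summand near $z=0$ and then identifying the constant piece with $\Gamma(r)$ via the formula for $\delta(r)$ in \eqref{deltadef}. First I would handle the $k=0$ term using the standard expansion $\coth w = 1/w + w/3 - w^3/45 + O(w^5)$ with $w = \pi z/(2r)$, which yields
\[
-\frac{\pi}{2r}\,\coth\!\left(\frac{\pi z}{2r}\right) = -\frac{1}{z} - \frac{\pi^2 z}{12 r^2} + O(|z|^3).
\]
This already produces the pole $-1/z$ that must appear in \eqref{expand}.

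Next I would treat the $k\neq 0$ contribution by pairing $k$ with $-k$. Since $\coth$ is odd, the pair
\[
\coth\!\left(\frac{(z+2k\pi)\pi}{2r}\right) + \coth\!\left(\frac{(z-2k\pi)\pi}{2r}\right)
\]
is an odd function of $z$, so its Taylor expansion at the origin has vanishing constant and $z^2$ terms (this is exactly what makes the principal-value sum behave); only odd powers survive, and the $z$-coefficient of each pair is $-\frac{\pi}{r}\,\sinh^{-2}(k\pi^2/r)$, using $(\coth)' = -\sinh^{-2}$. Summability of $\sum_{k\ge 1}\sinh^{-2}(k\pi^2/r)$ (the terms decay like $e^{-2k\pi^2/r}$) justifies termwise differentiation and gives
\[
-\frac{\pi}{2r}\sum_{k\neq 0}\coth\!\left(\frac{(z+2k\pi)\pi}{2r}\right) = z\cdot\frac{\pi^2}{2r^2}\sum_{k=1}^{\infty}\sinh^{-2}\!\left(\frac{k\pi^2}{r}\right) + O(|z|^3).
\]

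Finally I would invoke \eqref{deltadef}, which reads $\delta(r) = \tfrac{1}{12} - \frac{\pi^2}{2r^2}\sum_{k\ge 1}\sinh^{-2}(k\pi^2/r)$, to rewrite the bracketed series as $\tfrac{1}{12}-\delta(r)$, and then use $\Gamma(r) = \frac{\pi^2}{12 r^2} + \delta(r)$ from Proposition \ref{deltaprop} to replace $\delta(r)$. Adding in the $z/(2r)$ prefactor from the definition of $\cpois_r$ and combining with the $k=0$ contribution, the $\pi^2/(12r^2)$ terms cancel exactly against $\delta(r)-\Gamma(r)$ and one arrives at
\[
\cpois_r(z) = -\frac{1}{z} + z\left(\frac{1}{2r} - \Gamma(r) + \frac{1}{12}\right) + O(|z|^3),
\]
as claimed. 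The calculation is essentially bookkeeping; the one place care is needed is in justifying the termwise differentiation of the principal-value sum and in checking that the pairwise cancellation really does kill the would-be $z^0$ and $z^2$ terms, which is where the identification with $\delta(r)$ rather than a divergent series comes from.
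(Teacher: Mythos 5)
Your proposal is correct and follows essentially the same route as the paper's proof: expand the $k=0$ term of the principal-value representation via $\coth w = 1/w + w/3 + O(|w|^3)$, observe that each $\pm k$ pair is odd in $z$ so only its linear coefficient $-\frac{\pi}{r}\sinh^{-2}(k\pi^2/r)$ contributes at this order, and identify the resulting series with $\frac{1}{12}-\delta(r)$ and hence with $\Gamma(r)$ via Proposition \ref{deltaprop}. The paper states this more tersely (it simply records that the derivative of the $k\neq 0$ sum at $z=0$ is $\frac{1}{12}-\delta(r)$), while you spell out the oddness and summability justifications, but the argument is the same.
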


\begin{proof}
 We
  use the first expression for the definition of $\cpois_r$. 
   Note that as $z \rightarrow 0$, 
 \[  \coth z =
 \frac 1z  + \frac{z}{3}
                     + O(|z|^3), \]
and hence
\[  \frac \pi {2r}\, \coth\left(\frac{z\, \pi}{2r}\right) 
= \frac \pi {2r} \left[ \frac{2r}{z \pi} + \frac{z\pi}{6r}
  + O(|z|^3)\right] = \frac 1 z + \frac{\pi^2z}{12 r^2} +O(|z|^3) \]
 Also the derivative at $z=0$ of 
\[ -\frac \pi {2r} \sum_{k=1}^\infty 
                       \left[ \coth\left(\frac{(z + 2 k \pi)\pi}{2r}\right) + \coth
                       \left(\frac{(z-2k\pi) \pi}{2r}\right)\right]\]
is 
$\frac 1{12}
 - \delta(r) . $

\end{proof}

%\begin{remark}  Note that
%\[  \lim_{r \rightarrow \infty} \beta(r) = \sum_{k \in \Z \setminus \{0\}}
%    H_{\p \Half}(0,2\pi k) 
%    =  2 \sum_{k=1}^\infty (2\pi k)^{-2} = \frac 1 {12}.\]
%   \end{remark}
%   
%   
%\begin{definition}
%We define
%\[   \delta(r) = \frac 1{12} - \beta(r) =
%    \sum_{k \in \Z \setminus \{0\} }[H_{\Half}(0,2 \pi k)
%      - H_{S_r}(0,2\pi k)] =
%         \sum_{k=1}^\infty \left(1 - 
%         \frac{\pi^2}{2r^2}
%         \left[ \sinh \left(\frac{  k \pi^2}{r}\right) \right]^{-2}   \right).\]
%\end{definition}
%
%
%
%   \begin{proposition}
%   \[   \Gamma_\Disk(1,A_r) = \frac{1}{12}
%    \frac{\pi^2}{r^2} \, + \delta(r) . \]
%  In particular, we can write
%   \begin{equation}  \label{expand2}
%   \cpois_r(z) = -\frac 1z + z \left(\frac 1{2r} + \frac 1{12}
%     - \Gamma_{\Disk}(1,A_r)
%       \right) +O(|z|^3), 
%         \end{equation}
% \end{proposition}
% 
% \begin{proof}
% By examining the definitions we see that
% \[  \Gamma_\Disk(1,A_r) = \Gamma_{\Half}(0,S_r) +
%  \delta(r) . \]
%  To compute $\Gamma_\Half(0,S_r)$ we consider $f(z) = e^{\pi z/r}
%      - 1$ and note that
%      \[  \Gamma_\Half(0,S_r) = - \frac 1 6 \, Sf(0)
%       = \frac{\pi^2}{12\, r^2}.\]
%       \end{proof}
%  
%  
%\begin{remark}
%A simple estimate shows that  as $r \rightarrow \infty$,
%\[  \Gamma_\Disk(1,A_r) = \frac{1}{2r} + O(r^{-2}) , \]
%and hence
%\begin{equation} \label{deltaest}
%\delta(r) = \frac 1{2r} + O(r^{-2}).
%\end{equation}
%\end{remark}

 Note that
 \[   \cpois_r(z+ir) = \frac{z+ir}{2r} - 
  \frac \pi {2r}  \ppsum_k
                      \tanh\left(\frac{(z + 2 k \pi)\pi}{2r}\right) 
                      = - \frac{\zhan_I(r,x)} 2 + \frac i2 , \]
where $\zhanh_I$ is as defined in 
Section \ref{annfunction}.

%we define
%\begin{equation}  \label{biggydef}
%\biggy(r,x) = - \Re \cpois_r(x+ir) =
%               -\frac{x}{2r} + \frac \pi {2r}\ppsum_k
%                \tanh\left(\frac{(x + 2 k \pi)\pi}{2r}\right) .
%\end{equation} 
%Note also that
%\[  \biggy'(r,x) =  - \frac 1 2 + \frac{\pi}{4r^2}
% \sum_{k=-\infty}^\infty \left [\cosh\left(\frac{\pi x}{2r} \right) \right]^{-2} =
%          \frac{\nzhan(r,x) -1}{2} , \]
%         where $\nzhan(r,x)$ is as defined in \eqref{nzhan}. 
%         This allows us to write
%   \[ 2 \biggy(r,x) = \int_0^x [\nzhan(r,y) - 1] \, dy . \]
%%In particular, if $x$ is real
%%\[  \cpois_r(x+ir) = 
%%  - \biggy(r,x) + \frac i2,
%%         \]
%%where $\biggy$ is as defined in \eqref{fun2}.
%%Let
%%\[    \zeta_r(z) = \cpois_r(z) + \frac 1 z, \]
%% which can be extended by Schwarz reflection
%% to an analytic function in a neighborhood of
%%the origin with $\zeta_r(0) = 0$.

The chordal equation \eqref{chordal} can be written as
\[         \p_t g_t(z) = - a\, \cpois_{\Half}(g_t(z) - U_t).\]
The annulus Loewner equation is similar,
\[         \p_t h_t(z) =  2 \, \dot r(t) \, \cpois_{r(t)}
     (h_t(z)-U_t) ,\]
or equivalently,
 \begin{equation}  \label{anneq}
          \p_r h_r^*(z) =  2\, \cpois_{r}
     (h_r^*(z)-U_r^*) . 
\end{equation}
An important observation is that if $r(0) = r$,
then 
for small $t$, the
functions $g_t$, $h_t$, and $h_{r- \frac{at}2}^*$
 are very close near the origin.  For future
 reference, we also note that
\begin{equation}  \label{future}
   \p_s \log (h_s^*)'(x+ir) \mid_{s=r}   =  
    2 \, \cpois_r'(x+ir) =    - \zhan_I'(r,x).
    \end{equation}

 \labove \textsf
{\begin{small}  \Heuristic  
  There may appear to be some arbitrariness in the choice of
  the real translation for the complex kernel 
  $\cpois_{\Half}(g_t(z) - U_t).$  It turns out that this choice
  is not so important.  We will write
  \[   d [h_r^*(z)- U_r^*] =  2\, \cpois_{r}
     (h_r^*(z)-U_r^*) - d U_r^* .\]
 If we had chosen a different
 real translation of $\cpois_{r}$, it would
 cancel here when we took the difference.
\end{small}}
\lbelow

\labove \textsf
{\begin{small}  \Heuristic  
  We have written the annulus equation
in the covering space $S_r$.  We would also consider the
function given by
\[           f_s(e^{iw}) = e^{i h_s(w)}, \;\;\;\;  0 \leq s \leq r. \]
There is a curve $\eta:(0,r) \rightarrow A_r$ with $\eta(0+) = 1$
such that $f_s$ is a conformal transformation of $A_r \setminus
\gamma_s$ onto $A_{r-s}$.  Such a transformation is defined
up to a rotation, but specifying continuity and
 $f_s(\eta(r-s))= U_s^*$ determines the rotation.
\end{small}}
\lbelow

We will need to compare the chordal and annulus 
equations at time $t=0$.  Recall that
$\phi_t$ is defined by
\[            h_t(z) = \phi_t(g_t(z)), \]
and that $\phi_t(U_t) = U_t = g_t(\gamma(t))$.
Although $g_t$ is not smooth at $\gamma(t)$, it
is not difficult to show that $\phi_t$ is analytic
in a neighborhood of $U_t$ and we can give
the derivatives.  We summarize the facts we need
in this lemma whose simple prove we omit.

\begin{lemma}  Suppose $K_{j,t}(z), j=1,2,
t \in [0,\epsilon]$ are analytic
functions in a punctured neighborhood of the origin
and are continuous in $t$.  Suppose $U_t$ is
a continuous function with $U_0=0$ and $g_t,h_t$
satisfy 
\[  \p_t g_t(z) = K_{1,t}(g_t(z) - U_t), \;\;\;\;
   \p_t h_t(z) =  K_{2,t}(h_t(z) - U_t) , \]
with $g_0(z) = h_0(z)$.  Suppose that for all $t$,
$K_{1,t}-K_{2,t}$ is analytic in the (unpunctured)
neighborhood.  If $\phi_t$ is defined by
$h_t(z) = \phi_t(g_t(z))$, then
\begin{equation}  \label{nov18.1}
\dot \phi_0(z) = [K_{2,0} - K_{1,0}](z),
\;\;\;\;   \dot \phi_0'(z) = [K_{2,0}- K_{1,0}]'(z). 
\end{equation}
\end{lemma}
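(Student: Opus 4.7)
The plan is to differentiate the defining identity $h_t(z) = \phi_t(g_t(z))$ in $t$ and evaluate at $t=0$, using the ODEs for $g_t$ and $h_t$ together with the initial condition $g_0 = h_0$.

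First I would observe that since $g_0 = h_0$, the identity $h_0(z) = \phi_0(g_0(z))$ forces $\phi_0 = \mathrm{id}$ on the image of $g_0$. In particular, $\phi_0(w) = w$ and $\phi_0'(w) \equiv 1$ on a punctured neighborhood of $0$ (and on a full neighborhood of $0$ since $\phi_t$ is assumed analytic there). Next, differentiating $h_t(z) = \phi_t(g_t(z))$ in $t$ by the chain rule gives
\[
\dot h_t(z) \;=\; \dot\phi_t(g_t(z)) + \phi_t'(g_t(z))\,\dot g_t(z).
\]
Substituting the ODEs and evaluating at $t = 0$, using $U_0 = 0$, $g_0 = h_0$, and $\phi_0'\equiv 1$, produces
\[
K_{2,0}(g_0(z)) \;=\; \dot\phi_0(g_0(z)) + K_{1,0}(g_0(z)),
\]
so $\dot\phi_0(g_0(z)) = (K_{2,0}-K_{1,0})(g_0(z))$. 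Because $g_0$ is conformal on a punctured neighborhood of $0$, we may substitute $w = g_0(z)$ and conclude $\dot\phi_0(w) = (K_{2,0}-K_{1,0})(w)$ on that punctured neighborhood; the assumption that $K_{2,0}-K_{1,0}$ extends analytically across the puncture then shows $\dot\phi_0$ does too, consistent with the prior assertion that $\phi_t$ is analytic at $U_t$.

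For the second identity, I would simply take the $w$-derivative of $\dot\phi_0(w)=(K_{2,0}-K_{1,0})(w)$, equivalently commuting $\partial_t$ and $\partial_z$ (which is justified by joint smoothness of $\phi_t(z)$ in $(t,z)$, which in turn follows from the hypotheses on $K_{j,t}$ and standard ODE regularity for Loewner-type flows). Alternatively, differentiate the chain-rule identity once more in $z$ before setting $t=0$ and note that all second-order terms drop out because $\phi_0'\equiv 1$ and $\phi_0''\equiv 0$.

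The only genuine issue is the one that the statement already papers over: verifying that $\phi_t$ is analytic across $U_t$ (not merely on a punctured neighborhood). This reduces to showing that the singular parts of $K_{1,t}$ and $K_{2,t}$ at $0$ produce the same leading singular behavior in $g_t$ and $h_t$ near $U_t$, so that their quotient-type composition $h_t\circ g_t^{-1}$ has a removable singularity at $U_t$. Under the hypothesis that $K_{2,t}-K_{1,t}$ is analytic at $0$, this is a short local computation with Laurent expansions, and once it is granted the rest of the lemma is the two lines of chain rule above.
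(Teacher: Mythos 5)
Your proof is correct, and since the paper explicitly omits the proof of this lemma as ``simple,'' your chain-rule computation (differentiate $h_t(z)=\phi_t(g_t(z))$ in $t$, use $\phi_0=\mathrm{id}$, $\phi_0'\equiv 1$, $\phi_0''\equiv 0$, and substitute $w=g_0(z)$) is exactly the intended argument. You also correctly isolate the only substantive point --- that $K_{2,t}-K_{1,t}$ being analytic across the puncture is what makes $\phi_t$, and hence $\dot\phi_0$, analytic in the full neighborhood --- which is the same issue the surrounding text of the paper addresses only by assertion.
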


We now return to the locally chordal $SLE_\kappa$ from $0$ to $z_0 = x + ir$
in $S_r$.  Given the path $\gamma_t$, 
 the process is moving infinitesimally like $SLE_\kappa$ in
$\hat S_{r,t}$ from $\gamma(t)$ to $z_0$.   By conformal invariance
we can also view it in $g_t(\hat S_{r,t})$ from $U_t$ to $g_t(z_0)$
or in $h_t(\hat S_{r,t}) = S_{r(t)}$ from $U_t$ to $h_t(z_0)$. 
Using the last perspective and \eqref{nov7.1} and \eqref{fun3},
we see that
\[           dU_t = b \,\funthree(r(t),R_t)\, dt - dW_t , \]
where $R_t = \Re[h_t(z_0)] - U_t$ and
$W_t$ is a standard Brownian motion.  We choose a time
parametrization so that the radius evolves linearly.  If
$U_t^* = U_{\sigma(t)}$ as above,
\[          dU_t^* =  b\kappa 
\,\funthree(r-t,R_t^*)\, dt - \sqrt \kappa \,dB_t . \] 
Using \eqref{future}, we see that if $f_t = h_{r-t}^*$, 
   \[    \p_t [\Re f_t(z_0)] = \zhan_I (r-t,R_t^*)
, \]
and hence
\begin{equation}  \label{feb1.1}
      dR_t^* =  \left[\zhan_I(r-t,R_t^*)  - b\kappa 
\,\funthree(r-t,R_t^*)\right]
  \, dt + \sqrt \kappa \, dB_t. \end{equation}

  We have written locally chordal $SLE_\kappa$ in the annulus
  as a one-dimensional SDE stopped at a finite time $r$.  The
  next lemma shows that the process leaves $S_r$ at $z_0$.  The
  equivalent statement is the following.
  
   \begin{lemma}  \label{locchord}
If $X_t$ satisfies 
\[  dX_t =  \left[\zhan_I(r-t,X_t)  - b\kappa 
\,\funthree(r-t,X_t)\right]
  \, dt + \sqrt \kappa \, dB_t, \;\;\;\; 0 \leq t < r, \]
     then with probability
one $X_{r-} = 0$. 
\end{lemma}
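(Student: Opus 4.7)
The plan is to identify the SDE for $X_t$ as governing locally chordal $SLE_\kappa$ in $S_r$ from $0$ to $z_0=x+ir$ (in the annulus parametrization), and to deduce $X_{r-}=0$ from the corresponding termination property for plain chordal $SLE_\kappa$ in the simply connected strip $S_r$, transferred by a Girsanov change of measure. Plain chordal $SLE_\kappa$ in $S_r$ terminates at $z_0$ almost surely; viewed through the annulus Loewner equation \eqref{anneq}, this says the analogous driving process $Y_t$, satisfying the $X$-SDE with the $\zhan_I$ term still present from the $h_t^*$ dynamics but with the drift coefficient of $\funthree$ adjusted, reaches $0$ at annulus time $r$. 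The locally chordal measure is obtained from plain chordal $SLE_\kappa$ in $S_r$ by weighting by the local martingale $C_t^b\,\tilde\Lambda_t^{\cent/2}\,Q_t^b$ constructed in Section \ref{shrinksec}, whose Girsanov tilt introduces exactly the remaining drift difference between $Y$ and $X$.

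The core step is to show that this weighting is a true martingale, uniformly integrable up to the terminal time $t=r^-$, so that the locally chordal law is absolutely continuous with respect to the reference chordal law on $[0,r)$ and the event $\{X_{r-}=0\}$ inherits full probability. First I verify by It\^o that the weight is a local martingale under chordal $SLE_\kappa$ whose Girsanov drift transforms the $Y$-SDE into the $X$-SDE. Second I bound each factor in the weight: $Q_t^b\le 1$ since $Q_t\in[0,1]$ and $b\ge 0$; $\tilde\Lambda_t^{\cent/2}$ is controlled via Brownian loop-measure estimates and the simplicity of $SLE_\kappa$ curves (both using $\kappa\le 4$); and the compensator $C_t^b=\exp\bigl(b\int_0^t\Delta_s\,ds\bigr)$ is controlled via Lemma \ref{dec2.lemma5}, which gives the upper bound $|\zhan_I(s,x)|\le(\pi-|x|)/s$ on a fundamental domain and hence an integrable bound on $\Delta_s$ as $s\downarrow 0$.

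The main obstacle is the uniform integrability as $t\uparrow r$: the annulus collapses, both $\funthree(r-t,\cdot)$ and $\zhan_I(r-t,\cdot)$ diverge like $1/(r-t)$, and so the delicate balance between the weight factors and the SDE drift must be tracked carefully. The hypothesis $\kappa\le 4$ enters in two independent ways, mirroring its role in the simply connected argument recalled around \eqref{jan31.2}: it yields simplicity of $SLE_\kappa$ curves (needed for the loop-measure bookkeeping) and it gives $b\kappa\ge 1$, so that in a Lyapunov estimate on $X_t^2$ the mean-reverting term $-b\kappa\,\funthree$ dominates the periodic $\zhan_I$ contribution. The dominance is what forces the process into the well $X=0$ rather than a shifted well $X=2\pi k$ with $k\ne 0$: because $\funthree$ distinguishes among integer shifts while $\zhan_I$ does not, only $X=0$ (and not $X=2\pi k$) is a stable endpoint as $t\uparrow r$.
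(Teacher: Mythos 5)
Your overall strategy --- realize the SDE as locally chordal $SLE_\kappa$, i.e.\ chordal $SLE_\kappa$ in $S_r$ weighted by the local martingale of Section \ref{shrinksec}, and transfer the almost-sure terminal behavior through the Girsanov change of measure --- is a different route from the paper, which instead analyzes the one-dimensional SDE directly: it time-changes via $Y_s = X_{r-e^{-s}}$, observes that the martingale part $\int 2e^{-s/2}\,dW_s$ converges a.s., uses Lemma \ref{dec2.lemma5} to see that the drift $m(s,y)$ is nonpositive above level $\epsilon/2$ and bounded above by $-c\,e^{-s}$ everywhere, and then runs an iterated positive-probability escape argument to get the process past the near-flat spots at $2\pi k$, $k\neq 0$. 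Your route is legitimate in principle, but as written it has a genuine gap at exactly the point you identify as "the core step": uniform integrability of the weight as $t\uparrow r$. The factor-by-factor bounds you propose do not close. For $8/3<\kappa\le 4$ one has $\cent>0$, so $\tilde\Lambda_t^{\cent/2}\ge 1$ and it blows up precisely when $\gamma_t$ approaches its translates $\tilde\gamma_t$ --- that is, on the event you are trying to rule out --- and the compensator $C_t^b$ has the same defect, since $\Delta_s$ is only guaranteed to be finite for $s<T$. Controlling these factors therefore presupposes that the weighted curve stays away from $\tilde\gamma$, which is essentially the conclusion of the lemma; the argument is circular unless you supply an independent a priori estimate.

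The second difficulty is your appeal to $b\kappa\ge 1$ via a Lyapunov estimate on $X_t^2$. In the borderline case $\kappa=4$ one has $b\kappa=1$ exactly, and near $x=2\pi k$ with $k\ne 0$ the function $\zhan_I(\rho,\cdot)$ vanishes (it is odd and $2\pi$-periodic) while $\funthree(\rho,2\pi k)\approx \pi/\rho$; Lemma \ref{dec2.lemma5} only gives $\rho\,\zhan_I(\rho,x)\le \pi-\epsilon\rho$, so after the time change the net drift at these points is negative but merely of order $e^{-s}$, which is \emph{smaller} than the residual noise of order $e^{-s/2}$. No pathwise domination or Lyapunov comparison forces the process through these near-stationary wells; this is precisely why the paper resorts to the iterated "succeed with positive probability, otherwise try again" argument. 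You correctly observe that $\funthree$ distinguishes the wells at $2\pi k$ from the well at $0$ while $\zhan_I$ does not, which is the right mechanism, but turning that observation into a proof requires either the paper's iteration or a quantitative replacement for it; your sketch does not supply one.
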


\labove \textsf
{\begin{small}  \Heuristic 
 This lemma should not be surprising.  If we considered chordal $SLE_\kappa$
 from $0$ to $x+ir$ in $S_r$ we know that (for $\kappa \leq 4$) the path
 leaves the domain at $x+ir$.  This lemma stays that the same thing
 for locally chordal $SLE_\kappa$.  Since for $r$ near zero, locally chordal and
 chordal $SLE_\kappa$ are almost the same, the lemma has to be true.
 One should expect $\kappa \leq 4$ to come into the proof, and this
 is the case.
\end{small}}
\lbelow

\begin{proof}
We discuss the most delicate case,
 $\kappa = 4$ for which
$b \kappa = 1$; if 
$\kappa < 4$, then $b \kappa > 1$ and the argument
is easier.  Our equation is 
\[    dX_t = \left[\zhan_I( r-t,X_t) -   \funthree( r-t,
   X_t) \right] \, dt + 2 \, dB_{t}.\]
 If $Y_s = X_{r-e^{-s}}$, then
 $Y_s$ satisfies
 \[   dY_s = m(s,Y_s) \, ds + 2 \, e^{-s/2} \, dW_s, \]
 where
 \[  m(s,y) = e^{-s} \,  
   \left[\zhan_I( e^{-s},y) -   \funthree( e^{-s},
   y) \right], \]
   and $W_s$ is a standard Brownian motion.   It suffices
   to show that for every $\epsilon > 0$, with probability one,
   $|Y_s| \leq \epsilon$ for all $s$ sufficiently large.  
   By symmetry it suffices to show that that $\limsup Y_s
    \leq 0$. Let
  \[     Z_s = \int_0^s 2\, e^{-r/2} \, dW_r, \]
  and note that with probability one $Z_\infty$
  exists and is finite.
    
  Using Lemma \ref{dec2.lemma5}, we can see that
  there exists $s_\epsilon$ such that 
  $m(s,y) \leq 0$ for $s  \geq s_\epsilon, y \geq
  \epsilon/2$.   Therefore,  if $Y_s \geq \epsilon$
and  $s \geq s_\epsilon$, 
   \[    Y_r \leq \epsilon + \max_{t \geq s_\epsilon}
                |Z_t - Z_{s_\epsilon}|. \]
 Therefore, it suffices to show that with probability
 one $\liminf Y_n \leq 0$.  In other words, for every
 $\epsilon > 0, s < \infty, y >0$, the probability
 that the process reaches $\epsilon$ given $Y_s = y$
 equals one.
 
 Although the drift $m(s,y)$ is negative, the absolute
 value is very small at $y$ slightly larger than an
 integer multiple of $2\pi$.  However, we also know
 from Lemma \ref{dec2.lemma5} that for all $y$,
 $m(s,y) \leq - c \, e^{-s}$.  Given this, we can
 see that if we start near $2 \pi k$, there is
 at least a positive probability that there will
 exist $s$ with 
$Y_s \leq 2 \pi k - c_1 \, e^{-s}$.  Given this, there
is a positive probability that the process will never
return to $\{y \geq 2 \pi k - (c_1/2) \, e^{-s}\}$
and since the drift is negative, this will imply that
it will get near $2 \pi (k-1)$.  This happens with
positive probability, but if it fails and we are near
$2 \pi k$ at a larger time $s'$ we can find $s'' >
s'$ for which $Y_{s''} \leq 2 \pi k - c_1 \, e^{-s''}$.
Eventually we will succeed and get to $2 \pi (k-1)$.
We can iterate this argument.
   
 \end{proof}

\section{Definition of $\mu_D(z,w)$}  \label{defsec}

\subsection{Definition of boundary
$SLE_\kappa$ for $\kappa \le 4$}  \label{boundarysec}

 We fix $\kappa  \in (0,4]$.  In this section, we will define
 boundary
$SLE_\kappa$ 
 as proposed
in \cite{LJSP}.  It is a  (positive) measure $\mu_D(z,w)$ on
simple curves $\gamma$ in a domain $D$ connecting
distinct $\p D$-analytic
 boundary points $z$ and $w$.  
If $D$ is simply connected, then
 the definition is the same as that of chordal $SLE_\kappa$.
   We write
\[  \tmass_D(z,w) = \|\mu_D(z,w)\| \]
for the total mass of the measure.
We {\em conjecture}  that $\tmass_D(z,w) < \infty$ for all
$D,z,w$.
In the case of simply connected domains, we know this is true,
and in this paper we will show it for $1$-connected domains for
$\kappa \leq 4$.   
From the construction it will follow
that $\tmass_D(z,w) < \infty$   for all  domains if $\kappa
\leq 8/3$ ($\cent \leq 0$).

Suppose $D_1 \subset D$ is a subdomain of
$D$ that agrees with $D$ in neighborhoods of $z$ and $w$.  
We let $\mu_D(z,w;D_1)$ be $\mu_D$ {\em restricted} to curves
$\gamma \subset D_1$.  Let
\[      \tmass_D(z,w;D_1) = \|\mu_D(z,w;D_1)\| .\] 
We will show that $\mu_D(z,w;D_1) < \infty$ for all
such simply connected $D_1$ for $\kappa \leq 4$. 
The measure $\mu_D^\#(z,w;D_1)$ is defined to be
the probability measure obtained by normalization
\[         \mu_D^\#(z,w;D_1) =
  \frac{\mu_D(z,w;D_1)}{\tmass_D(z,w;D_1)}. \]
If $\tmass_D(z,w) < \infty$, we write $\mu^\#(z,w)$ for
the probability measure.

\labove \textsf
{\begin{small}  \Heuristic  
What we call boundary $SLE$ should really be called
boundary/boundary $SLE$, but this terminology is
a bit cumbersome. 
In later subsections, we also discuss
boundary/bulk, bulk/boundary, and bulk/bulk cases.
\end{small}}
\lbelow

%It will follow that is is finite for all domains if $\kappa
%\leq 8/3$, but the case $8/3 < \kappa \leq 4$ is still open.
%(As we will see, it is the sign of $\cent$ that determines which
%values of $\kappa$ are easier to deal with.)
%If $\tmass_D(z,w)< \infty$, then we define the probability
%measure $\mu_D^\#(z,w)$ by normalization. If the measure
%is finite for all $k$-connected domains and all analytic
%boundary points, 
%then we will be able to define
% $\mu_D^\#(z,w)$ for $k$-connected
%domains even if the boundary points are not analytic.

%
%\begin{remark}
%Schramm \cite{Schramm} only defined $\mu_D^\#(z,w)$ for
%simply connected domains. ur definition is consistent
%with his definition.
%\end{remark}

%
%  This definition the original
%definition due to Schramm for the probability measure on
%simply connected domains.  We will define the process as
%a family of measure on curves.  We will list a number of properties,
%some of which may be redundant, and we define $SLE_\kappa$ to
%be the unique process that satisfies these properties.  
%We will then discuss existence of the process.
%
% 

In this definition and later on we use the convention
as described below equation \eqref{poisscale} that if
formulas are written with derivatives, then 
sufficient smoothness is assumed.

\begin{definition}  If $\kappa \leq 4$ and $b,\cent$
are as in \eqref{bandc}, \textit{boundary  $SLE_\kappa$} is 
 the unique
family of measures (modulo reparametrization)
 $\{\mu_D(z,w)\}$, where $D \subset \C$
and $z,w$ are distinct $\p D$-analytic points,  satisfying
the following.

\begin{itemize}

 \item   For each $D,z,w$, $\mu_D(z,w)$ is a positive measure
on curves   $\gamma:[0,t_\gamma]
\rightarrow D$ with $\gamma(0) = z, \gamma(t_\gamma) = w,
\gamma \subset D$.  The total mass is denoted by
\[   \tmass_D(z,w) = \|\mu_D(z,w)\|. \]
The normalization is chosen
so that  $\tmass_\Half(0,1) = 1$.

\item {\bf Conformal covariance}
If 
 $f:D \rightarrow f(D)$
is a conformal transformation,  then
\begin{equation} \label{cc}
   f \circ \mu_D(z,w) =  |f'(z)|^b \, |f'(w)|^b \, \mu_{f(D)}(z,w) .
   \end{equation}

\item 
   It follows from \eqref{cc} that
 the  probability measures are conformally
 invariant,
 \[  f \circ \mu_D^\#(z,w;D_1) = \mu_{f(D)}^\#(
 f(z),f(w); f(D_1)), \]
 and if $\tmass_D(z,w) <\infty$, 
 \begin{equation} \label{ci}
   f \circ \mu_D^\#(z,w) = \mu_{f(D)}^\#(z,w) .
   \end{equation}
In particular, $\mu_D^\#(z,w;D_1)$ (resp.,
$\mu_D^\#(z,w))$  can be defined for nonanalytic boundary
points provided that there is a conformal transformation $f:D 
\rightarrow f(D)$ such that $f(z),f(w)$ are $\p f(D)$-analytic
(resp., with $\tmass_{f(D)}(f(z),f(w)) < \infty$).
 
\item {\bf Domain Markov property}.  
If $\tmass_D(z,w) < \infty$,
then for  the probability measure
$\mu_D^\#(z,w)$, the conditional  probability measure of the remainder
of a curve $\gamma$
given an initial segment $\gamma_t$, is that of $\mu_{D \setminus \gamma_t}^\#
(\gamma(t),w)$.   If $D_1 \subset D$ is simply connected, 
for the probability measure
$\mu_D^\#(z,w;D_1)$, the conditional  probability measure of the remainder
of a curve $\gamma$
given an initial segment $\gamma_t$, is that of $\mu_{D \setminus \gamma_t}^\#
(\gamma(t),w;D_1 \setminus \gamma_t)$.

\item  {\bf Boundary perturbation}.  Suppose $D' \subset D$
are  domains that agree in neighborhoods of $\p D'$-analytic boundary
points $z,w$.  Then $\mu_{D'}(z,w)$ is absolutely continuous
with respect to $\mu_D(z,w)$ with Radon-Nikodym derivative
$Y = Y_{D,D',z,w}$ given by
\begin{equation} \label{perturb}
  Y(\gamma) = \frac{d\mu_{D'}(z,w)}{d\mu_D(z,w)} \, (\gamma)
  = 1\{\gamma \subset D'\} \, \exp\left\{\frac \cent 2\,
   m_D(\gamma,D \setminus D')\right\} . 
\end{equation}
\end{itemize}

\end{definition}

We will now construct the measure and in the process show
uniqueness.   For simply connected domains, we set
$\tmass_D(z,w) = H_{\p D}(z,w)^b$ and $\mu_D^\#(z,w)$
to be the conformal image of $\tmass_\Half(0,\infty)$
under a conformal transformation.  The discussion
in Section \ref{simplesec} shows that this is the
unique  family of measures that satisfy the conditions
above for simply connected $D$.

\begin{definition}  Suppose $D$ is a domain and $z,w$ are distinct
$\p D$-analytic boundary points.  Let $D_1$ be a simply connected
subdomain of $D$ that agrees with $D$ in neighborhoods of $z,w$.
Then  $\hat \mu_D(z,w;D_1)$  is  the measure absolutely continuous
with respect to $\mu_{D_1}(z,w)$ with Radon-Nikodym derivative
\begin{equation}  \label{oct26.1}
   \frac{d \hat \mu_D(z,w;D_1)}{d\mu_{D_1}(z,w)} (\gamma)
    = 1\{\gamma \subset D_1\} \, \exp \left\{- \frac \cent 2
     \, m_D(\gamma,D \setminus D_1) \right\}. 
     \end{equation}
\end{definition}

\labove \textsf
{\begin{small}  \Heuristic  
  A minus sign appears on the right-hand side above.  This
  is because we are writing the derivative of the measure
  on  the larger
  domain with respect to that on the smaller domain.
\end{small}}
\lbelow

The next proposition establishes a necessary consistency condition
for the measures $\hat \mu_D(z,w;D_j)$ in order to define
$\mu_D(z,w)$.

\begin{proposition}  \label{consistent}
 Suppose $D$ is a domain and $z,w$ are distinct
$\p D$-analytic boundary points.  Let $D_1,D_2$ be  simply connected
subdomains of $D$ that agree with $D$ in neighborhoods of $z,w$.
For $j = 1,2$, let $\nu_j$ be $\hat \mu_D(z,w;D_j)$ restricted to curves
$\gamma$ with $\gamma \subset D_1 \cap D_2$.  Then $\nu_1 = \nu_2$.
\end{proposition}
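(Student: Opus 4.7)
The plan is to compare both measures to a third measure $\mu_{D_0}(z,w)$ on an auxiliary simply connected subdomain $D_0 \subset D_1 \cap D_2$ that contains the curve in question. The key algebraic identity will be the natural additivity of the Brownian loop measure under nested restrictions: for $D_0 \subset D_j \subset D$,
\[
m_D(\gamma, D \setminus D_0) = m_D(\gamma, D \setminus D_j) + m_{D_j}(\gamma, D_j \setminus D_0).
\]
This follows from decomposing the set $\{$loops in $D$ hitting $\gamma$ and $D \setminus D_0\}$ according to whether or not the loop also hits $D \setminus D_j$: those that do are counted by the first term on the right, and those that do not are loops contained in $D_j$ hitting $\gamma$ and $D_j \setminus D_0$, which by definition of $m_{D_j}$ as the restriction of the loop measure are exactly counted by the second term.

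First I would fix a simple curve $\gamma$ from $z$ to $w$ with $\gamma \subset D_1 \cap D_2$ and construct a simply connected domain $D_0 \subset D_1 \cap D_2$ containing $\gamma$ and agreeing with $D_1, D_2, D$ in neighborhoods of $z$ and $w$. Concretely, since $D_1, D_2, D$ agree near $z, w$, a sufficiently thin tubular neighborhood of $\gamma$ inside $D_1 \cap D_2$, completed near the endpoints by this common boundary neighborhood, works; the result is homeomorphic to a rectangle and hence simply connected. Because $D_0 \subset D_j$ is simply connected and agrees with $D_j$ near $z,w$, the boundary perturbation rule for chordal $SLE_\kappa$ in simply connected domains (as reviewed in Section \ref{simplesec}) gives, for any curve $\gamma \subset D_0$,
\[
\frac{d\mu_{D_j}(z,w)}{d\mu_{D_0}(z,w)}(\gamma) = \exp\!\left\{-\tfrac{\cent}{2} \, m_{D_j}(\gamma, D_j \setminus D_0)\right\}.
\]

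Combining this with the defining formula \eqref{oct26.1} for $\hat\mu_D(z,w;D_j)$ and the additivity identity above, I would compute, for $\gamma \subset D_0$,
\[
\frac{d\hat\mu_D(z,w;D_j)}{d\mu_{D_0}(z,w)}(\gamma)
= \exp\!\left\{-\tfrac{\cent}{2} \bigl[m_D(\gamma, D \setminus D_j) + m_{D_j}(\gamma, D_j \setminus D_0)\bigr]\right\}
= \exp\!\left\{-\tfrac{\cent}{2}\, m_D(\gamma, D \setminus D_0)\right\},
\]
which is independent of $j \in \{1,2\}$. Hence $\nu_1$ and $\nu_2$ coincide when restricted to curves contained in $D_0$.

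Finally, to pass from agreement on curves in each auxiliary $D_0$ to agreement on the full set of curves in $D_1 \cap D_2$, I would observe that the class of curves $\{\gamma : \gamma \subset D_1 \cap D_2\}$ is the countable union of classes of the form $\{\gamma : \gamma \subset D_0^{(n)}\}$ taken over a suitable sequence of auxiliary domains (for instance, tubular neighborhoods at rational widths of a countable dense family of simple curves), so the equality of the two measures on each such class extends by countable additivity. The main obstacle is only the geometric step of producing the simply connected $D_0$ with the correct behavior at the boundary points $z, w$; once that is in place the rest is a direct manipulation of Radon--Nikodym derivatives using results already recorded in Section \ref{simplesec}.
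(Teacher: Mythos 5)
Your proposal is correct and follows essentially the same route as the paper's proof: both introduce an auxiliary simply connected $D_0 \subset D_1 \cap D_2$ containing the curve and agreeing with $D$ near $z,w$, apply the simply connected boundary perturbation rule to compare $\mu_{D_j}$ with $\mu_{D_0}$, and then use the additivity of the loop measure (partitioning loops hitting $D\setminus D_0$ according to whether they also hit $D\setminus D_j$) to show the Radon--Nikodym derivative of $\hat\mu_D(z,w;D_j)$ with respect to $\mu_{D_0}(z,w)$ is $\exp\{-\tfrac{\cent}{2}\,m_D(\gamma,D\setminus D_0)\}$, independent of $j$. Your extra care with the final exhaustion step is a welcome elaboration of what the paper leaves implicit.
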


\begin{proof}  Suppose $\gamma \subset D_1 \cap D_2$.  Then there
exists simply connected $\hat D \subset D_1 \cap D_2$ that agrees
locally with $D$ near $z,w$ such that $\gamma \subset \hat D$.  Hence
it suffices to show that for every simply connected domain $\hat D$,
$\nu_1$ and $\nu_2$, restricted to curves in $\hat D$, agree.
Suppose $\gamma \subset \hat D$.
Since $D_j, \hat D$ are simply connected,
\[   \frac{d\mu_{D_{j}}(z,w)}{d\mu_{\hat D}
   (z,w)} (\gamma)  =  \exp \left\{- \frac \cent 2
     \, m_{D_j}(\gamma,D_j \setminus \hat D) \right\}
.\]
Combining this with \eqref{oct26.1}, we get
 \[  \frac{d \hat \mu_D(z,w;D_j)}{d\mu_{\hat D}(z,w)} (\gamma)
    =  \exp \left\{- \frac \cent 2
     \, m_D(\gamma,D \setminus \hat D) \right\}. \]
 Here we use the fact that the loops in $D$ that intersect $\gamma$
 and $D \setminus \hat D$ can be partitioned into two sets:
 those that intersect $D \setminus D_1$ and those that are contained
 in $D_1$. 
\end{proof}

Given Proposition \ref{consistent} we can make the following
definition.

\begin{definition}
 Suppose $D$ is a domain and $z,w$ are distinct
$\p D$-analytic boundary points.  Then $\mu_D(z,w)$
is the measure on simple paths (modulo parametrization)
such that for each  simply connected $D_1 \subset 
 D$, $\mu_D(z,w)$ restricted to curves $\gamma \subset D_1$
 is $\hat \mu_D(z,w;D_1)$.  
  \end{definition}

 In other words, $\mu_D(z,w;D_1) = \hat \mu_D(z,w;D_1)$
 for simply connected $D_1$.  It follows immediately from
 the definition that the family of measures
 $\{\mu_D(z,w)\}$ satisfies \eqref{perturb}.
 Suppose $D$ is a domain
 and $z,w$ are distinct $\p D$-analytic points and $D_1$
 is a simply connected domain as above.
 Suppose  $f:D \rightarrow f(D)$ is a conformal
 transformation.  Then $f:D_1 \rightarrow f(D_1)$ is
 also a conformal transformation, and hence
 \[   f \circ \mu_{D_1}(z,w) = |f'(z)|^b \, |f'(w)|^b
  \, \mu_{f(D_1)}(f(z),f(w)). \]
  Conformal invariance of the loop
 measure then  implies that 
 \[  f \circ \mu_D(z,w;D_1) = |f'(z)|^b
  \, |f'(w)|^b \, \mu_{f(D)}(z,w; f(D_1)). \]
  Since this is true for every simply connected $D_1$,
  the family $\{\mu_D(z,w)\}$ satisfies \eqref{cc}.
  
  In this paper, we will show the following. (While
  we prove it in this paper, we could also derive
  this from \cite{Zhanannulus}.)
  
\begin{proposition}  If $D$ is a conformal annulus,
then $\tmass_D(z,w) < \infty$ and the family
$\{\mu_D(z,w)\}$ restricted to conformal annuli
satisfies the domain Markov property.
\end{proposition}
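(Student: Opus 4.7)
The plan is to use conformal covariance to reduce to the standard annulus $D = A_r$ with $z = 1, w = e^{-r+ix}$, and then to express the total mass by lifting paths to the covering strip $S_r$ via $\psi(w) = e^{iw}$. Every simple curve $\gamma$ in $A_r$ from $1$ to $w$ has a well-defined winding number $k \in \Z$ around the inner hole, and its lift in $S_r$ runs from $0$ to $x + 2\pi k + ir$. Thus I would decompose $\mu_{A_r}(1,w) = \sum_{k \in \Z} \mu_{A_r}^{(k)}$ where $\mu_{A_r}^{(k)}$ is supported on curves of winding number $k$, and reduce finiteness to a bound on each summand together with a summability estimate in $k$.

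For fixed $k$, I would invoke the locally chordal $SLE_\kappa$ machinery of Sections \ref{shrinksec} and \ref{anneqsec}. Starting from chordal $SLE_\kappa$ in $S_r$ from $0$ to $x + 2\pi k + ir$ and weighting by the local martingale $M_t = C_t^b \, \tilde\Lambda_t^{\cent/2} \, \tilde H_t(U_t)^b$ that converts chordal to locally chordal $SLE_\kappa$, one obtains a measure whose image under $\psi$ (after accounting for winding loops via Corollary \ref{torontocor} and the multi-valuedness of $\psi^{-1}$, exactly as in the discussion of radial $SLE$ raised to $\Half$ in Section \ref{fradsec}) coincides with $\mu_{A_r}^{(k)}$. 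Lemma \ref{locchord}, whose proof uses $\kappa \le 4$, guarantees that under the weighted measure the driving process reaches the target in finite annulus-capacity time $r$, yielding the analogue of \eqref{jan31.2}, namely $\|\mu_{A_r}^{(k)}\| = \E[M_r]$.

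Bounding $\E[M_r]$ and summing over $k$ is where the main work lies. For finiteness of $\E[M_r]$ the local martingale must be uniformly integrable; I would establish this by using the Feynman-Kac / annulus-Loewner analysis outlined for Sections \ref{crosssec} and \ref{partsec}, exhibiting a bounded smooth solution to the PDE satisfied by $\tmass(r,x)$ that dominates $M_t$ (this is where $\kappa \le 4$ is essential, for the same reason as in the simply connected case discussed after \eqref{jan31.2}). For the sum over $k$, the estimate \eqref{oct7.2} in Proposition \ref{feb1.prop1} on $\funone(r, x + 2\pi k)$ gives exponential decay of the locally chordal drift corresponding to large winding, which through the corresponding Girsanov exponent yields $\|\mu_{A_r}^{(k)}\|$ decaying rapidly enough to guarantee $\sum_k \|\mu_{A_r}^{(k)}\| < \infty$; alternatively one may dominate by comparison to chordal $SLE_\kappa$ in $S_r$ plus the winding-loop correction of Proposition \ref{windloops}.

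The domain Markov property then follows essentially for free from the construction. If $\gamma_t$ is an initial segment of a curve under $\mu_{A_r}^\#(z,w)$, then $A_r \setminus \gamma_t$ is again either $1$-connected or simply connected, so $\mu_{A_r \setminus \gamma_t}(\gamma(t), w)$ is already defined with finite mass by the above. Applying \eqref{perturb} to any simply connected $D_1$ containing $\gamma$ that agrees with $A_r$ near $z,w$, and using the cocycle identity $m_{A_r}(\gamma, A_r \setminus D_1) = m_{A_r}(\gamma_t, A_r \setminus D_1) + m_{A_r \setminus \gamma_t}(\gamma \setminus \gamma_t, (A_r \setminus \gamma_t) \setminus D_1)$ for the Brownian loop measure (restriction property), one verifies that the conditional law of the remainder given $\gamma_t$ is proportional to $\mu_{A_r \setminus \gamma_t}(\gamma(t), w)$, which after normalization is exactly $\mu_{A_r \setminus \gamma_t}^\#(\gamma(t), w)$. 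The principal obstacle throughout is verifying the uniform integrability underlying $\E[M_r] < \infty$, which genuinely requires the PDE analysis deferred to later sections and cannot be obtained from the soft definition alone.
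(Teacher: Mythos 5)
Your proposal follows essentially the same route as the paper: decompose $\mu_{A_r}(1,e^{-r+i\theta})$ by winding number on the covering strip $S_r$, control each sector via the locally chordal $SLE_\kappa$ local martingale together with the Feynman--Kac function $V\leq 1$ of Theorem \ref{main1} (with $\kappa\leq 4$ entering through Lemma \ref{locchord} and the monotonicity of $\funone$), obtain summability over $k$ from the $\cosh^{-2b}$ decay of $\tmass_{S_r}(0,x+2\pi k+ir)$ exactly as in \eqref{converge} (your ``alternative,'' which is the route the paper actually takes, rather than the bound \eqref{oct7.2}, which concerns $|x|\leq\pi$), and deduce the domain Markov property from the restriction/cocycle property of the Brownian loop measure. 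The one imprecision worth flagging is that the $\psi$-image of locally chordal $SLE_\kappa$ is \emph{not} $\mu_{A_r}^{(k)}$ itself --- there remains the Radon--Nikodym factor $\exp\{\frac{\cent}{2}m_{S_r}(\gamma)\}$ from the non-$\gamma$-good loops, and it is precisely the expectation of that factor, namely $V(r,x)\leq 1$, that gives finiteness of each sector when $\cent>0$ --- but your third paragraph invokes exactly the PDE/uniform-integrability analysis that supplies this, so this is a matter of bookkeeping rather than a gap.
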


When considering the measure $\mu_D(z,w)$ for
multiply connected domains, there are
two cases.
\begin{itemize}
\item  The {\em chordal} case: $z,w$ in the same component
of $\p D$.  Then there exists simply connected $\hat D$ such
that $D \subset \hat D$.
\item  The {\em crossing} case: $z,w$ in different components
of $\p D$.  Then there exists $1$-connected $\hat D$
such that $D \subset \hat D$.
 \end{itemize}

 \begin{proposition} $\;$  Suppose $D$ is a domain
 and $z,w$ are distinct $\p D$-analytic points.
 
 \begin{itemize}
 
 \item  If $\kappa \leq 8/3$, then $\tmass_D(z,w)
   < \infty$.

  \item  If $8/3 < \kappa \leq 4$, then for every
  simply connected $D_1 \subset D$ that agrees
  with $D$ near $z,w$, $\tmass_D(z,w;D_1) < \infty$.
  
\end{itemize}
\end{proposition}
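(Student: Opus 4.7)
The plan is to reduce the multiply connected partition function $\tmass_D(z,w)$ to one that is already known to be finite, by applying the boundary perturbation rule twice so as to express $\mu_D(z,w)$ as a reweighting of $\mu_{D'}(z,w)$ for a suitably chosen enclosing domain $D'$.

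First I would choose $D' \supset D$ that agrees with $D$ in neighborhoods of $z,w$, is simply connected in the chordal case (fill in every hole of $D$), and is $1$-connected in the crossing case (fill in every hole except the one whose boundary is the component of $\p D$ containing $w$); the dichotomy stated just before the proposition guarantees that such a $D'$ exists. For any simply connected $D_1 \subset D$ agreeing with $D$ near $z,w$, I would then combine the defining formula \eqref{oct26.1} for $\hat \mu_D(z,w;D_1)$ with the boundary perturbation rule \eqref{perturb} applied to $D_1 \subset D'$. The loops of $D'$ that intersect both $\gamma$ and $D' \setminus D_1$ split, according to whether the loop is contained in $D$, into the loops counted by $m_D(\gamma, D \setminus D_1)$ and the loops counted by $m_{D'}(\gamma, D' \setminus D)$, so the two loop-measure exponents combine to produce
\[
\frac{d\hat \mu_D(z,w;D_1)}{d\mu_{D'}(z,w)}(\gamma) = 1\{\gamma \subset D_1\}\,\exp\!\left\{\frac{\cent}{2}\, m_{D'}(\gamma, D' \setminus D)\right\}.
\]
The right-hand side depends on $D_1$ only through the indicator, and every simple curve from $z$ to $w$ in $D$ lies in some such $D_1$ (take a simply connected tubular neighborhood), so Proposition \ref{consistent} lets us remove the $D_1$-dependence altogether to give
\[
\frac{d\mu_D(z,w)}{d\mu_{D'}(z,w)}(\gamma) = 1\{\gamma \subset D\}\,\exp\!\left\{\frac{\cent}{2}\, m_{D'}(\gamma, D' \setminus D)\right\}.
\]

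For the first bullet ($\kappa \leq 8/3$), $\cent \leq 0$ makes the density above bounded by $1$, so $\tmass_D(z,w) \leq \tmass_{D'}(z,w)$; in the chordal case this equals $H_{\p D'}(z,w)^b < \infty$, and in the crossing case the finiteness of $\tmass_{D'}(z,w)$ is the $1$-connected result stated earlier in the excerpt. For the second bullet ($8/3 < \kappa \leq 4$) and a fixed simply connected $D_1 \subset D$, I would bound directly from \eqref{oct26.1}: the conditions $\cent > 0$ and $m_D(\gamma, D \setminus D_1) \geq 0$ force $d\hat \mu_D(z,w;D_1)/d\mu_{D_1}(z,w) \leq 1$, which gives $\tmass_D(z,w;D_1) \leq \tmass_{D_1}(z,w) = H_{\p D_1}(z,w)^b < \infty$.

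The only substantive step is the set-theoretic loop decomposition producing the key identity; everything else reduces to reading off the sign of $\cent$. The main bookkeeping obstacle I anticipate is the rigorous passage from the $D_1$-indexed formulas for $\hat \mu_D(z,w;D_1)$ to the unrestricted measure $\mu_D(z,w)$, which I would handle by fixing a countable cofinal family of simply connected $D_1 \subset D$ whose indicators $1\{\gamma \subset D_1\}$ exhaust the set of simple curves from $z$ to $w$ in $D$ and then invoking the consistency already established in Proposition \ref{consistent}.
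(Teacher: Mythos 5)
Your proposal is correct and follows essentially the same route as the paper: embed $D$ in a simply connected or $1$-connected $D'$ and use the boundary perturbation rule \eqref{perturb} with $\cent \leq 0$ to get $\tmass_D(z,w) \leq \tmass_{D'}(z,w) < \infty$, and for $\cent > 0$ use \eqref{oct26.1} directly to get $\tmass_D(z,w;D_1) \leq \tmass_{D_1}(z,w) < \infty$. The extra detail you supply on combining the two loop-measure exponents and on passing from the $D_1$-restricted measures to $\mu_D(z,w)$ is just a re-derivation of \eqref{perturb}, which the paper already records as an immediate consequence of the definition.
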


\begin{proof}   If $\kappa \leq 8/3$,
we can  consider $D$ as a subdomain of a simply
connected or $1$-connected domain $\hat D$ and
since $\cent \leq 0$, 
\eqref{perturb} implies that $\tmass_D(z,w)
\leq \tmass_{\hat D}(z,w)<\infty$.  If $8/3 < \kappa \leq 4$,
then $\cent >0$, and \eqref{oct26.1} implies 
that $\tmass_D(z,w;D_1) \leq \tmass_{D_1}(z,w) < \infty$.
\end{proof}
 
 \begin{proposition}  The family $\{\mu_D(z,w)\}$
 satisfies the domain Markov property.
 \end{proposition}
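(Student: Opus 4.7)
The plan is to reduce the domain Markov property, in both its unrestricted and its $D_1$-restricted form, to the already-established chordal simply connected case via the boundary perturbation rule \eqref{perturb} and a decomposition of the Brownian loop measure over the initial segment.

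The main step is the restricted case. Fix a simply connected $D_1\subset D$ that agrees with $D$ near $z,w$, and let $\gamma$ be a curve in $D_1$ with initial segment $\gamma_t$ and remainder $\tilde\gamma = \gamma[t,t_\gamma]$. Because $\gamma_t \subset D_1$, we have $D\setminus D_1\subset D\setminus\gamma_t$, so the set of loops $\ell$ in $D$ that meet both $\gamma$ and $D\setminus D_1$ splits according to whether $\ell\cap\gamma_t$ is empty. Loops that hit $\gamma_t$ contribute $m_D(\gamma_t,D\setminus D_1)$, which is measurable with respect to $\gamma_t$; loops that avoid $\gamma_t$ live in $D\setminus\gamma_t$, and by the restriction property of the Brownian loop measure their total mass is exactly $m_{D\setminus\gamma_t}(\tilde\gamma,D\setminus D_1)$. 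Hence
\[
 m_D(\gamma,D\setminus D_1) \;=\; m_D(\gamma_t,D\setminus D_1) \;+\; m_{D\setminus\gamma_t}(\tilde\gamma,D\setminus D_1).
\]
Combined with \eqref{oct26.1} (which expresses $\mu_D(\cdot;\,D_1)$ as $\mu_{D_1}$ reweighted by the exponential of $-\tfrac c2 m_D(\gamma,D\setminus D_1)$), this factors the Radon--Nikodym derivative into a function of $\gamma_t$ alone and a function of $\tilde\gamma$ alone. The chordal simply connected DMP applied to $\mu_{D_1}^\#(z,w)$ says that conditionally on $\gamma_t$ the law of $\tilde\gamma$ is $\mu_{D_1\setminus\gamma_t}^\#(\gamma(t),w)$; multiplying by the $\tilde\gamma$-factor and noting that $D_1\setminus\gamma_t$ is a simply connected subdomain of $D\setminus\gamma_t$ that agrees with it near $\gamma(t)$ and $w$, the definition of $\hat\mu_{D\setminus\gamma_t}(\gamma(t),w;\,D_1\setminus\gamma_t)$ is recovered up to normalization. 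Dividing by the appropriate partition functions therefore identifies the conditional law of $\tilde\gamma$ as $\mu_{D\setminus\gamma_t}^\#(\gamma(t),w;\,D_1\setminus\gamma_t)$.

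For the unrestricted statement, assume $\tmass_D(z,w)<\infty$. Since every curve $\gamma$ from $z$ to $w$ in $D$ is compact, one can exhaust the space of such curves by simply connected subdomains $D_1$ that agree with $D$ in neighborhoods of $z,w$, and by dominated convergence (using $\cent\le 1$ so the exponential weights are locally bounded, and the monotone behaviour of $1\{\gamma\subset D_1\}$) the probability measures $\mu_D^\#(z,w;\,D_1)$ converge to $\mu_D^\#(z,w)$ as $D_1\uparrow D$. The same convergence holds in the slit domain $D\setminus\gamma_t$ with $D_1\setminus\gamma_t$ in place of $D_1$, again using the loop-measure decomposition to control the normalizers. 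Passing to the limit in the restricted DMP then yields the unrestricted DMP.

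The main obstacle is the passage to the limit $D_1\uparrow D$ in the multiply connected setting: one must verify that $\tmass_D(z,w;\,D_1)\uparrow \tmass_D(z,w)$ and that the conditional normalizers $\tmass_{D\setminus\gamma_t}(\gamma(t),w;\,D_1\setminus\gamma_t)$ converge to $\tmass_{D\setminus\gamma_t}(\gamma(t),w)$ uniformly enough in $\gamma_t$ to justify the conditioning. For $\kappa\le 8/3$ ($\cent\le 0$) this is immediate from the monotone bound $\tmass_D\le \tmass_{\hat D}$ on any simply connected enclosing $\hat D$, while for $8/3<\kappa\le 4$ the finiteness of $\tmass_D(z,w)$ must be supplied separately (which is why the proposition is restricted to those $D$ for which the total mass is finite, and why the $1$-connected finiteness result of this paper is needed before the unrestricted DMP can be invoked there). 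The restricted statement, however, requires no such hypothesis and follows in full generality from the loop-measure splitting above.
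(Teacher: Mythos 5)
Your treatment of the restricted case is correct and is essentially the paper's own argument: split the loops in $D$ meeting both $\gamma$ and $D\setminus D_1$ according to whether they hit $\gamma_t$, observe that the first contribution is $\F_t$-measurable and the second is exactly the weight defining $\hat\mu_{D\setminus\gamma_t}(\gamma(t),w;D_1\setminus\gamma_t)$ relative to $\mu_{D_1\setminus\gamma_t}(\gamma(t),w)$, and invoke the domain Markov property for chordal $SLE_\kappa$ in the simply connected domain $D_1$.

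The unrestricted case is where your route diverges from the paper's, and there is a genuine gap. You propose to ``exhaust the space of curves by simply connected subdomains $D_1$'' and pass to the limit $D_1\uparrow D$ by monotone or dominated convergence. But if $D$ is multiply connected there is no increasing sequence of simply connected subdomains whose union is $D$ (an increasing union of simply connected open sets is simply connected), so the indicators $1\{\gamma\subset D_1\}$ do not increase monotonically along any exhaustion and the limit you invoke is not defined. One can cover the space of curves by a \emph{countable family} of simply connected $D_1$'s, but then the restricted DMP conditions on the non-$\F_t$-measurable event $\{\gamma\subset D_1\}$ as well as on $\gamma_t$, and reassembling the unrestricted conditional law from these pieces requires controlling a mixture over the partition cells --- which is essentially the whole difficulty, not a routine limit. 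The paper avoids this entirely by going in the opposite direction: it embeds $D$ in a simply connected superdomain $\hat D$ (the chordal case, with $\hat D=\Half$) or a conformal annulus (the crossing case), for which $\tmass_{\hat D}(z,w)<\infty$ and the DMP are already known, writes
\[
  \frac{d\mu_D(z,w)}{d\mu_{\hat D}(z,w)}(\gamma)
   = 1\{\gamma\subset D\}\,\exp\left\{\frac{\cent}{2}\,m_{\hat D}(\gamma,\hat D\setminus D)\right\},
\]
splits this weight over the initial segment exactly as in your restricted argument, and applies the DMP for $\mu_{\hat D}^\#(z,w)$. If you want a subdomain-based proof of the unrestricted statement you would need to carry out the countable decomposition and resummation explicitly; as written, the limiting step does not go through.
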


   \begin{proof}  Without loss of generality we may
   assume that $D$ is a subdomain of $\Half$ whose boundary
   includes $\R$ and $z=0$.  Let $D_1$ be a simply connected
   domain as above for which we know $\tmass_D(z,w;D_1) < \infty$
    and let $\gamma_t$ be an initial segment.
   To be more precise, let $t$ be a finite stopping
   time for chordal $SLE_\kappa$ in $D_1$. 
   Let $ \F_t$ be the corresponding $\sigma$-algebra
   generated by $\gamma_t$. 
For $\gamma \subset D_1$, let 
   \[  Y(\gamma) = \frac{\mu_D(z,w;D_1)}{\mu_{D_1}
   (z,w)} (\gamma) =  \exp \left\{\frac  \cent 2 \, m_D(
   \gamma,D \setminus D_1)\right\}. \] 
Let $\Prob,\E$ denote probability and expectation with
respect to the  probability
measure $\mu_{D_1}^\#(z,w)$.  Then,
\[   \tmass_D(z,w;D_1) = \tmass_{D_1}(z,w) \, 
             \E\left[Y\right]. \]
 By the domain Markov property for $SLE_\kappa$ is 
 simply connected domains,
 \[     \E[Y \mid \F_t] =  \exp \left\{\frac  \cent 2 \, m_D(
   \gamma_t,D \setminus D_1)\right\}\, \E_t^*[Y], \]
   where $\E_t^*$ denotes expectation with respect
   to  $\mu_{D_1 \setminus \gamma_t}^\#
   (\gamma(t),w)$.

    We will do the chordal case comparing
   to simple connected domains.  The crossing case is similar
   using conformal annuli.  Suppose $z,w$
   are in the same component of $\p D$.  Without loss of generality,
   we may assume that $D$ is a subdomain of $\Half$ and $z,w
   \in \R$.  We know that
   \[   \frac{d \mu_D(z,w)}{d \mu_\Half(z,w)} (\gamma)
   = 1\{\gamma \in D\}\, \exp\left\{\frac \cent 2
    \, m_\Half(\gamma,\Half \setminus D)\right\}. \]
    Let $\Prob,\E$ denote probabilities and expectations 
with respect to
    the measure $\mu^\#_\Half(z,w)$.  Let
    \[     Y_t = 1\{\gamma_t \subset D\} \, \exp \left\{\frac \cent 2
      \, m_\Half(\gamma_t,\Half \setminus D)\right\}, \;\;\;\;
         Y = Y_\infty.\]
    Suppose we are given an initial segment $\gamma_t$ and let
    $H_t = \Half \setminus \gamma_t$.   Here $t$
    can be a stopping time and we assume that $t < T=\inf\{s> 0:
    \gamma(s)  \in \R\}= \inf\{s >0: \gamma(s) = w\}$.
    (The equality is true with $\Prob$ probability one.)  Let $g_t$ denote the corresponding
    map and let $\F = \F_t$ denote the $\sigma$-algebra generated
    by $t$.  By the domain Markov property of
    $SLE_\kappa$ in simply connected domains,
    \[       \E\left[Y \mid \F\right]
      =     Y_t  \, \E^*_t
                 \left[\exp \left \{
                 \frac \cent 2 m_{H_t}(\gamma, H_t \setminus D) 
                 \right\} \right], \]
   where $\E^*_t$ denotes expectations with respect to $\mu_{H_t}^\#(\gamma(t),
   w)$.  More generally if $E$ is an event depending on the path $\gamma \setminus
   \gamma_t$,
    \[       \E\left[Y\, 1_E \mid \F\right]
      =     Y_t  \, \E^*_t
                 \left[1_E \, \exp \left \{
                 \frac \cent 2 m_{H_t}(\gamma, H_t \setminus D) 
                 \right\} \right], \]
                 
  If $\tmass_D(z,w) < \infty$, the proof for $\mu_D^\#(z,w)$ is similar
  and we omit it.
   
   \end{proof}

\begin{proposition}
If $z,w$ are $\p D$-analytic, then $\mu_D(w,z)$
is the same as the reversal of $\mu_D(z,w)$.
\end{proposition}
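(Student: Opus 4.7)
The plan is to reduce everything to Zhan's reversibility theorem \cite{Zhanrev} for chordal $SLE_\kappa$ in simply connected domains. Let $R$ denote the reversal operation on curves (modulo reparametrization); the goal is to show $R_*\mu_D(z,w) = \mu_D(w,z)$.

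First I would fix any simple curve $\gamma$ from $z$ to $w$ in $D$ and choose a simply connected subdomain $D_1 \subset D$ containing the trace of $\gamma$ and agreeing with $D$ in neighborhoods of $z$ and $w$; such $D_1$ exists because $\gamma$ is simple with $\p D$-analytic endpoints (and $\kappa \leq 4$ ensures the curves under consideration are simple). By the defining relation \eqref{oct26.1}, both
\[
\frac{d\mu_D(z,w;D_1)}{d\mu_{D_1}(z,w)}(\gamma) \quad \text{and} \quad \frac{d\mu_D(w,z;D_1)}{d\mu_{D_1}(w,z)}(\gamma)
\]
equal $\exp\{-\tfrac{\cent}{2}\, m_D(\gamma, D \setminus D_1)\}$.

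The crucial observation is that the functional $m_D(\gamma, D \setminus D_1)$ depends only on the trace of $\gamma$, not on its orientation, because the Brownian loop measure is a measure on unrooted, unoriented loops (this orientation-independence was already emphasized in the heuristic following \eqref{nov4.2}). Combining this with Zhan's theorem, which asserts $R_*\mu_{D_1}^\#(z,w) = \mu_{D_1}^\#(w,z)$, and with the symmetry $H_{\p D_1}(z,w) = H_{\p D_1}(w,z)$ of the boundary Poisson kernel (which forces $\tmass_{D_1}(z,w) = \tmass_{D_1}(w,z)$ since the simply connected partition function is $H_{\p D_1}^b$), one obtains $R_*\mu_{D_1}(z,w) = \mu_{D_1}(w,z)$ at the level of total measures. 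Transporting this equality through the matching Radon--Nikodym derivatives yields $R_*\mu_D(z,w;D_1) = \mu_D(w,z;D_1)$.

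Finally, every simple curve from $z$ to $w$ in $D$ lies in some simply connected $D_1$ of the above type, so these local identities patch together, via a consistency argument parallel to Proposition \ref{consistent}, into the global statement $R_*\mu_D(z,w) = \mu_D(w,z)$. I expect no serious analytic obstacle: the entire argument reduces to invoking Zhan's theorem and to the orientation-independence of the loop-measure functional. The only bookkeeping point is to check that reversal commutes with the restriction $\{\gamma \subset D_1\}$ and that the local identities are mutually consistent across different choices of $D_1$; both are immediate since the Radon--Nikodym factor is a function of the point set traced by $\gamma$ alone.
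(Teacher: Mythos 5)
Your proposal is correct and follows essentially the same route as the paper, which simply cites Zhan's reversibility theorem for simply connected domains and asserts that the general case follows; your write-up supplies the details (orientation-independence of the loop-measure Radon--Nikodym factor and the patching over simply connected subdomains) that the paper leaves implicit.
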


\begin{proof}
In the case of simply connected domains, this was
proved by Zhan \cite{Zhanrev}.  Given this, the general
case follows.
\end{proof}
   
% 
%We note that the conformal covariance rule \eqref{cc} can be
%considered the combination of two rules.
%\begin{itemize}
%
%
%\item  {\bf Conformal invariance.}  If $f:D \rightarrow f(D)$
%is a conformal transformation, then
%\begin{equation} \label{ci}
%   f \circ \mu_D^\#(z,w) = \mu_{f(D)}^\#(z,w) .
%   \end{equation}
%Given this rule, we can also define $\mu^\#_D(z,w)$ for
%non-analytic boundary points by
%finding 
% a conformal transformation
%\[            f: D \rightarrow f(D) \]
%such that $f(z),f(w)$ are $\p f(D)$-analytic.
%
%\item  {\bf Scaling rule.}  If $f:D \rightarrow f(D)$ is a
%conformal transformation, $z,w$ are $\p D$-analytic, 
%and $f(z),f(w)$ are $\p f(D)$-analytic, then
%\[         \tmass_D(z,w) = |f'(z)|^b \, |f'(w)|^b\,
% \tmass_{f(D)}(f(z),f(w)). \]
% where $b$ is the boundary scaling exponent as in \eqref{bandc}.
%In order to be unique, we specify
%\[            \tmass_\Half(0,1) = 1. \]
%
%\end{itemize}

We end this section with a number of remarks.

\begin{itemize}

\item 
In our definition we have started with the parameter $\kappa$
and defined the quantities $b, \cent$ in terms of $\kappa$.
We could have made $b,\cent$ free parameters, but then we
would find out that there was only a one-dimensional family
of pairs $(b,\cent)$ for which we could define such measures.
To establish this fact, we would use Schramm's argument and
$\kappa$ (as a function of $b$ or $\cent$) would be introduced.

\item
 Implicit in the domain Markov property is the
 assumption that the 
the initial segment may be chosen using a stopping time.  This
makes it a condition on curves modulo reparametrization.  Perhaps
this should be called the {\em strong} domain Markov property.

\item  It is also useful to have the measures
$\mu_D(x,\infty)$ where $D \subset \Half$ with 
$\Half \setminus D$ bounded and $\dist(x,\Half \setminus D)
> 0$.  To get this we find a conformal transformation
\[     f: D' \rightarrow D \]
with $f(z) = 0, f(w) = \infty$ and
use the conventions about derivatives as in Section \ref{poissec}.
%
% $|f'(w)| = 1$ in the
%sense of
%\[        f(w + \epsilon \, \n_w)  \sim \frac{i}{\epsilon},
%\;\;\;\;  \epsilon \rightarrow 0+.\]
%Here $\n_w$ denotes the inward unit normal at $w$.  In this case,
%we use \eqref{cc} to justify the definition
%\[         \mu_{D}(z,w) = |f'(z)|^{-b}
%   f \circ \mu_{D'}(z,\infty). \]
%Under our normalization, $\tmass_\Half(0,\infty) = 1$
%and hence $\mu_{D'}(0,\infty) = \mu_\Half^\#(0,\infty).$
%If $f(z) = rz$ is a dilation with $r > 0$, then
%\[         \tmass_{rD}(0,\infty) = \tmass_D(0,\infty).\]
%(This uses the fact that the ``derivative'' at infinity of
%$f$ should be taken as $r^{-1}$ rather than $r$.)
Under this convention, we see that $\tmass_\Half(0,\infty) = 1$.
If $D \subset \Half$ is simply connected with $\Half \setminus
D$ bounded and $\dist(0, \Half \setminus D) > 0$, then
$\tmass_D(0,\infty) = \Phi_D'(0)^b$ where $\Phi_D:D \rightarrow
\Half$ is a conformal transformation with $\Phi_D(\infty) =
\infty, \Phi_D'(\infty) = 1$.  
 
\end{itemize}

\subsection{Definition of boundary/bulk and bulk/bulk $SLE_\kappa$
for $\kappa \leq 4$}

The boundary $SLE_\kappa$ is a measure on curves connecting two
boundary points in a domain $D$.  We extend this definition
to allow one boundary point and one interior point (the radial 
or reverse radial case)
or two interior points (the bulk case).  In all the cases we will
write  $\mu_D(z,w)$ for the measure, $\tmass_D(z,w)$ for the total
mass, and if $\tmass_D(z,w) < \infty$ $\mu_D^\#(z,w)$
for the corresponding probability measure.   
The definition will be the same as the first definition 
in Section \ref{boundarysec} except that \eqref{cc} is
replaced with the following more general formula.
Note that this definition subsumes the previous one.

\begin{itemize}

\item {\bf Conformal covariance}
If $f:D \rightarrow f(D)$
is a conformal transformation, $z,w$ are  $D$-analytic,
and $f(z), f(w)$ are  $f(D)$-analytic, then
\begin{equation} \label{ccrad}
   f \circ \mu_D(z,w) =  |f'(z)|^{b_z} \, |f'(w)|^{b_w} \, \mu_{f(D)}(z,w) ,
   \end{equation}
 where $b_\zeta = b$ if $\zeta$ is a boundary point and $b_\zeta = \tilde b$
 if $\zeta$ is an interior point.  
\end{itemize}

\labove \textsf
{\begin{small}  \Heuristic  
We are writing $\mu_D(z,w)$ for all the cases in order
not to add more notation.  It is important to remember
that  the definitions of these measures
are different (although related, of course) depending on whether
$z,w$ are boundary or interior points.
\end{small}}
\lbelow

If $D$ is simply connected,
 $z$ is $\p D$-analytic and $w \in D$, then we  define
 $\mu_D(z,w)$ by
 \[      \mu_D(z,w) = \tmass_D(z,w) \, \mu_D^\#(z,w)
,\]
 where $\mu^\#_D(z,w)$ is radial $SLE_\kappa$ as in
 Section \ref{fslesec}.  The partition function $\tmass_D(z,w)$
 is determined up to a multiplicative constant by the
 rule \eqref{ccrad}, and we choose the constant so that
 $\tmass_\Disk(1,0) = 1$.  Using the relationship
 in Section \ref{fradsec}, one can check that this satisfies
 the necessary conditions.  In particular, the boundary
 perturbation rule \eqref{perturb} holds for simply
 connected domains.

It was essentially shown in \cite{Zhanannulus}, and we will
reprove it here, that radial $SLE_\kappa$ can be given
as a limit of boundadry/boundary  $SLE_\kappa$ in the annulus.  The following
theorem makes a more precise estimate.  

\begin{theorem}  There exists $c< \infty, q > 0$ such that
the following holds.  Let $t > 0$ and let $\gamma_t$
denote an initial segment of a path in $\Disk$ starting
at $1$ such that if $g: \Disk \setminus \gamma_t \rightarrow
\Disk$ is a conformal transformation with $g(0) = 0, g'(0) > 0$,
then $g'(0) = e^{t}.$  Suppose that $r \geq t + 2, 0 \leq \theta <
2\pi$, and let $\mu_1 = \mu_\Disk(1,0), \mu_2 = \mu_{A_r}^\#(1,
e^{-r+i\theta})$, both considered as probability measures on
initial segments $\gamma_t$.  Let $Y = d\mu_2/d\mu_1$.  Then
\begin{equation}  \label{nov28.1}
  |Y(\gamma_t) - 1| \leq  c\, e^{(t-r)q}. 
  \end{equation}
  Moreover, there exists $c_0 \in (0,\infty)$ such that
\begin{equation}  \label{nov28.2}
  \tmass(1,e^{-r+ix}) = c_0 \, e^{(b-\tilde b)r}
   \, r^{ \cent/2}\, [1 + O(e^{-qr})]. 
   \end{equation}
\end{theorem}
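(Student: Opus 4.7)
The plan is to prove the two assertions together, with \eqref{nov28.2} supplying the quantitative input needed to conclude \eqref{nov28.1}. For \eqref{nov28.1}, I would disintegrate both probability measures along initial segments of radial capacity $t$. Since $\mu_1 = \mu_\Disk(1, 0)$ is radial $SLE_\kappa$, while $\mu_2 = \mu_{A_r}^\#(1, e^{-r+i\theta})$ is obtained from $\mu_1$ by a Girsanov reweighting associated with the annulus partition function and a Brownian loop-measure correction (following the general framework of Sections \ref{simplesec} and \ref{shrinksec}), the Radon-Nikodym derivative takes the form
\[
Y(\gamma_t) = \frac{\tmass_{A_r \setminus \gamma_t}(\gamma(t), e^{-r+i\theta})}{\tmass(r,\theta)\, \tmass_{\Disk \setminus \gamma_t}(\gamma(t), 0)} \cdot \exp\left(\frac{\cent}{2} \Delta m(\gamma_t)\right),
\]
where $\Delta m(\gamma_t)$ is the difference of Brownian loop-measure weights coming from the two ambient domains $\Disk$ and $A_r$. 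I would use Lemma \ref{lemma.jul13} to compare $A_r \setminus \gamma_t$ to a standard annulus $A_s$ with $|s - (r - t)| \leq c\, e^{-(r - t)}$ (and the associated uniformizing conformal derivatives within $O(e^{-(r-t)})$ of their radial counterparts), and Corollary \ref{torontocor} together with Proposition \ref{prop.nov30} to evaluate $\Delta m(\gamma_t)$, so each factor on the right becomes explicit modulo exponentially small corrections.

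For \eqref{nov28.2}, the essential input is the analysis of $\tmass(r, x)$ from Sections \ref{crosssec} and \ref{partsec}, where a PDE for $\tmass$ is obtained via the Feynman-Kac representation applied to the locally chordal $SLE_\kappa$ diffusion in the covering strip $S_r$. I would extract the leading behavior by isolating three sources of $r$-dependence: the exponential factor $e^{(b-\tilde b)r}$ comes from the conformal derivative $|f'|^{b-\tilde b}$ at the target as the inner boundary $C_r$ shrinks, reflecting the transition between boundary scaling exponent $b$ and interior exponent $\tilde b$; the polynomial factor $r^{\cent/2}$ comes from combining the Brownian-loop-measure contributions in Proposition \ref{windloops} ($m^*(r) = r/6 - \log r + O(1)$) and Proposition \ref{prop.nov30} (the $\log r$ divergence from loops in a shrinking neighborhood of the hole hitting a disjoint set), both weighted by $\cent/2$; and the exponential error $O(e^{-qr})$ follows from the uniform decay in Lemma \ref{dec4.1}, the subleading terms in Proposition \ref{windloops}, and the sharp estimate \eqref{oct7.2} for $\funone$ in Proposition \ref{feb1.prop1}.

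Once \eqref{nov28.2} is in hand, substituting it into the formula for $Y$ yields \eqref{nov28.1}: the polynomial $r^{\cent/2}$ factors from the partition-function ratio cancel against the $\log r$ contributions in $\Delta m(\gamma_t)$ (again via Corollary \ref{torontocor} and Proposition \ref{prop.nov30}); the $e^{(b-\tilde b)r}$ factors match up via conformal covariance and Lemma \ref{lemma.jul13} to produce $1 + O(e^{-(r-t)})$; and the remaining pieces combine to leave the exponential error $e^{(t-r)q}$. The main obstacle I anticipate is the precise uniform bookkeeping of these cancellations: one needs the asymptotic \eqref{nov28.2} to hold uniformly in $x$ (not just pointwise), sharp control of the angular coordinates appearing as arguments of $\tmass(s, \cdot)$ relative to the original angle $\theta$, and consistent use of the algebraic identities from \eqref{bandc} among $b$, $\tilde b$, and $\cent$ in order to verify the cancellations among the various multiplicative factors.
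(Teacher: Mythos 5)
Your reduction of \eqref{nov28.1} to \eqref{nov28.2} is essentially the paper's argument: the Radon--Nikodym derivative is written as a ratio of partition functions of the slit domains times a loop-measure correction, Lemma \ref{lemma.jul13} supplies the deterministic estimates $s=u+O(e^{-u})$, $|\phi_t'(\zeta_t)|=1+O(e^{-u})$, $|h_t'(w)|^b=e^{atb/2}[1+O(e^{-u})]$ and $m_\Disk(\overline\Disk_r,\Disk\setminus D)=\log(r/u)+O(e^{-u})$, and then \eqref{nov28.2} applied to the image annulus $A_s$ makes every factor explicit up to $1+O(e^{-qu})$; since the leading product does not depend on the initial segment, normalizing gives \eqref{nov28.1}. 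One correction: the loop term $\Delta m(\gamma_t)$ is controlled by Lemma \ref{lemma.jul13}, not by Proposition \ref{prop.nov30}, which concerns loops in the growing disks $\Disk_{-r}$ and the whole-plane normalization and plays no role here.

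The gap is in \eqref{nov28.2} itself. What you give is an accounting of where the factors $e^{(b-\tilde b)r}$ and $r^{\cent/2}$ come from, but the substance of the theorem is the existence of a single constant $c_0$ together with an exponentially small, $x$-uniform error, and your sketch contains no mechanism for either. The route you indicate --- extracting $r^{\cent/2}$ from the asymptotics of $m^*(r)$ in Proposition \ref{windloops} --- cannot produce an $O(e^{-qr})$ error, because $e^{m^*(r)}=C\,r^{-1}e^{r/6}[1+O(r^{-1})]$ carries a genuinely polynomial correction coming from the $\pi^2/(2r^2)\sum[\sinh(k\pi^2/r)]^{-2}$ term in \eqref{deltadef}. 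The paper avoids ever expanding $m^*(r)$: it derives the exact PDE \eqref{fpde2} for $F$ and $\hat F$ (Proposition \ref{greatprop}), sets $K(r,x)=\lambda(r)\hat F(r,x)$ with $\lambda(r)=r^{b}\exp\{-\int_1^r\alpha(s)\,ds\}$ chosen so that the zeroth-order coefficient of the resulting equation \eqref{kpde} is exactly $b\,\zhanh_I'$, which is $O(e^{-r})$ by Lemma \ref{dec4.1}, while the asymptotics of $\lambda$ itself are exponentially sharp because $\Gamma(r)=\frac{1+O(e^{-r})}{2r}$. The remaining work --- showing $K(r,x)\to K_\infty$ at an exponential rate uniformly in $x$ --- is carried out through the Feynman--Kac representation \eqref{f-k} and the coupling Lemma \ref{couple} for the diffusion \eqref{xsde}, which yields $K(r,x)=K(r,y)[1+O(e^{-ur})]$ and two-sided bounds, and hence convergence. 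You flag uniformity in $x$ as an ``anticipated obstacle,'' but it is not a bookkeeping issue: it is the content of the theorem, and without the renormalization-plus-coupling step (or an equivalent) the proof is incomplete.
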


%
%
%that for all $r \geq 1$ and all $\theta$,
%\begin{equation}  \label{nov28.1}
%  \|\mu_\Disk(1,0) - \mu_{A_r}^\#(1,e^{-r+ix})  
%  \| \leq c \, e^{-ru}.
%  \end{equation}
%  Indeed, we can define $\gamma,\eta$ on the same
%  probability space such that $\gamma$ has the distribution
%  $\mu_\Disk(1,0)$; $\eta$ has the distribution 
% of $\mu_{A_r}^\#(1,e^{-r+ix})$ and, except for perhaps
% an event of probability $O(e^{-ru})$
% \[         \gamma  \bigtriangleup \eta \subset 
% \Disk_{r/2},\]
% where $\bigtriangleup$ denotes symmetric difference.
% Moreover, there exists $c_0 > 0$ such that
%\begin{equation}  \label{nov28.2}
%  \tmass(1,e^{-r+ix}) = c_0 \, e^{(b-\tilde b)r}
%   \, r^{- \cent/2}\, [1 + O(e^{-ru})]. 
%   \end{equation}
% \end{theorem}

We will write
\[     \mu_{A_r}(1,e^{-r + ix}) = c_0 \, e^{(b-\tilde b)r}
   \, r^{ \cent/2} \, \mu_\Disk(1,0) \, 
      [1 + O_t(e^{-qr})], \]
      as shorthand for \eqref{nov28.1} and
      \eqref{nov28.2}.
      
    \labove \textsf
{\begin{small}  \Heuristic  
We can see the interior scaling exponent as coming from
a computation from the annulus partition function.  Suppose
$D$ is a bounded domain, $0 \in D$ and $w \in \p D$ is $D$-analytic.
Suppose that $\epsilon$ is small and $|z| = \epsilon$.  Let
$D_\epsilon$ denote the conformal annulus obtained by removing
the closed disk of radius $\epsilon$.  Then by analysis of the
annulus partition function which is a boundary/boundary
quantity, we see as $\epsilon
\rightarrow 0$,
\[         \tmass_{D_\epsilon}(1,z) \sim c \, \epsilon^{\tilde b-b}
 \, [\log (1/\epsilon)]^{\cent/2}, \]
 and hence we can define $\tmass_D(1,0)$ (up to an arbitrary multiplicative
 constant) by 
 \[     \tmass_{D}(1,0) \sim  \epsilon^{b-\tilde b } \,
   [\log (1/\epsilon)]^{-\cent/2}\, \tmass_{D_\epsilon}(1,\epsilon).\]
If $f:D\rightarrow f(D)$ is a conformal transformation with $f(0) = 0$,
then $f(D_\epsilon)$ is approximately the disk of radius $f'(0) \, \epsilon$,
and
\begin{eqnarray*}
              \tmass_{D_\epsilon}(1,z)
 & \sim & |f'(1)|^b \, |f'(z)|^b \, \tmass_{D_{\epsilon f'(0)}}(f(1),f(z))\\
   & \sim & |f'(1)|^b \, |f'(0)|^b \, \tmass_{D_{\epsilon f'(0)}}(f(1),f(z))
   \end{eqnarray*}
Therefore, if $u = |f'(0)|$,
\begin{eqnarray*}
         \tmass_{D}(1,0) &  \sim &
  \epsilon^{b-\tilde b} \,
   [\log (1/\epsilon)]^{-\cent/2}\, \tmass_{D_\epsilon}(1,z)\\
   & \sim & |f'(1)|^b \, u^{\tilde b}\,(u\epsilon)^{b-\tilde b }\,
      [\log (1/\epsilon)]^{-\cent/2}\,  \tmass_{f(D)_{u\epsilon}}(f(1),
     f(uz))
      \end{eqnarray*} 
  Note that the logarithmic term 
     which includes the central charge does not contribute to the scaling
     exponent.
\end{small}}
\lbelow

We now define  boundary/bulk  and bulk/boundary
$SLE$.
 The consistency of this definition follows from the
fact that \eqref{perturb} holds for simply
connected domains.
 
%  
%\labove \textsf
%{\begin{small}  \Heuristic  
% We will use the $\lambda$-SAW intuition to try to
% understand the $r^{-\cent/2}$ term in \eqref{nov28.2}.
% Suppose $D$ is a domain containing the origin and
% $w$ is $(\p D)$-analytic.
%When one considers the $\lambda$-SAW in $D$
%from $w$ to $0$,  then all loops in $D$ with nonzero
% winding number about $0$ intersect every curve connecting
% $0$ and $w$.  Hence, in the scaling limit we ``normalize'' so
% that all these terms disappear.  This normalization occurs
% for $\mu_\Disk(1,0)$ but not for $\mu_{A_r}(1,e^{-r + ix}) $.
% The measure of the set of loops with nonzero winding number
% in $A_r$ is $m^*(r)$ and each of these intersect every
% crossing loop, so we can an extra factor of
%$e^{\cent m^*(r)/2}$. As $r \rightarrow infty$,
%$m^*(r) = (r/6) + O(1)$. 
%\end{small}}
%\lbelow
 
\begin{definition}  If $z \in D$ and $w$ is a
$\p D$-analytic boundary point, then 
$\mu_D(w,z)$ and $\mu_D(z,w)$ are
 defined as follows.
\begin{itemize}
\item  If $D$ is simply connected, 
\[       \mu_D(w,z)  =
  |f'(w)|^{-b} \, |f'(z)|^{-\tilde b} \,
  f\circ \mu_\Disk(1,0),\]
  where $f:\Disk \longrightarrow D$ is the
  conformal transformation with $f(1) = w,
  f(0) = z$.
  \item  If $D\subset D_1$ where $D_1$ is simply
  connected and agrees with $D$ near $z$ and $w$,
  then
  \[   \frac{d \mu_D(w,z)}
      {d \mu_{D_1}(w,z)}(\gamma)
        = 1\{\gamma \subset D\}
          \, \exp \left\{\frac \cent 2 \, 
            m_{D}(\gamma, D_1 \setminus D)
            \right\}. \]
  
\item   $\mu_D(z,w)$ is defined to be the measure obtained
  from $\mu_D(w,z)$ by reversing the paths.
 \end{itemize}
  \end{definition}

% \begin{proposition}
% Suppose $w$ is a $D$-analytic boundary
% point of a domain $D$ containing the origin.
% Let $D_r = D \setminus \{|z| \leq e^{-r}\}$.
% Then for $e^{-r} < \dist(0, \p D)/2$ and
% every $\theta$,
% \[     \mu_{D_r}(w,e^{-r + ix}) = c_0 \, e^{(b-\tilde b)r}
%   \, r^{- \cent/2} \, \mu_D(w,0) \, 
%      [1 + O_{D,w}(e^{-ru})], \]
%  where we write $O_{D,w}$ to indicate
%  that the error term  may depend on $w$
%  and $D$.
%  \end{proposition}

 We can define bulk/bulk $SLE_\kappa$ similarly.  There
 is technical issue if $D$ is all of $\C$.  Let us define
 $D$ to be {\em regular} if with probability one
 a Brownian motion exits the domain $D$.

 \begin{definition}  If $z,w$ are distinct points of
 a regular domain $D$, then $\mu_D(z,w)$ is defined by
 \[   \mu_D(z,w) =  c_0^{-1}
 \lim_{r \rightarrow \infty}
             e^{2(\tilde b - b)r}
                \, r^{\cent/2} \, \mu_{D_r}(z + e^{-r},
             w + e^{-r}),\]
  where 
  \[  D_r = \{\zeta \in D: |\zeta-z|
   > e^{-r} , \, |\zeta-w| > e^{-r}\}. \]
 \end{definition}
 
We could also have defined 
\[ \mu_D(z,w) =  c_0^{-1}\lim_{r \rightarrow \infty}
              e^{2(\tilde b - b)r}
                \, r^{\cent/2} \, \mu_{D_r}(z + e^{-r+i
               \theta},
             w + e^{-r+i\theta'}),\]
 for any $\theta,\theta'$. Alternatively,
 we could define
 \[  \mu_D(z,w) = c'\lim_{r \rightarrow \infty}
              e^{(\tilde b - b)r}
                 \, \mu_{D_{r,z}}(z + e^{-r+i
               \theta},
             w + e^{-r+i\theta'}),\]
where
\[    D_{r,z} = \{\zeta \in D: |\zeta - z| >
e^{-r} \}. \]
Our choice of definition has the advantage that it
follows immediately that $\mu_D(w,z)$ is the
reversal of $\mu_D(z,w)$.
If we want to let $D = \C$, we have to
renormalize.

\begin{proposition} If $z,w \in \Disk$, then
There exists $\tmass(z,w) \in (0,\infty)$ such
that  
\[     \tmass_{\Disk_{-r}}(z,w)  =
       \tmass(z,w) \, r^{-\cent/2} \,[
       1 +O(r^{-1})]. \]
 \end{proposition}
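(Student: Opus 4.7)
The plan is to compare $\tmass_{\Disk_{-r}}(z,w)$ across different values of $r$ via the boundary perturbation rule, and to extract the $r^{-\cent/2}$ behavior from the logarithmic growth of the Brownian loop measure given by Proposition \ref{prop.nov30}.

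The first step is to extend the boundary perturbation identity \eqref{perturb} to bulk/bulk $SLE$: for regular domains $D'\subset D$ both containing $z$ and $w$,
\[
\frac{d\mu_{D'}(z,w)}{d\mu_D(z,w)}(\gamma)\,=\,1\{\gamma\subset D'\}\,\exp\!\left(\tfrac{\cent}{2}\, m_D(\gamma, D\setminus D')\right).
\]
This follows from the boundary/boundary case applied to the approximating domains $D_t, D'_t$ (with small disks excised around $z,w$) in the defining limit of $\mu_D(z,w)$: the excised disks are identical in $D_t$ and $D'_t$, so $D_t\setminus D'_t = D\setminus D'$, and the loop factors converge as $t\to\infty$ by monotone convergence for the Brownian loop measure. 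Taking $D=\Disk_{-r}$ and $D'=\Disk_{-r_0}$ for fixed $r_0<r$ yields
\[
\tmass_{\Disk_{-r_0}}(z,w) \,=\, \tmass_{\Disk_{-r}}(z,w)\,\E^\#_{\Disk_{-r}}\!\left[1\{\gamma\subset\Disk_{-r_0}\}\, e^{(\cent/2)\,m_{\Disk_{-r}}(\gamma,\Disk_{-r}\setminus\Disk_{-r_0})}\right].
\]

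Next I will establish two uniform estimates. First, for every simple curve $\gamma\subset\overline{\Disk_{-r_0}}$ from $z$ to $w$,
\[
m_{\Disk_{-r}}(\gamma,\Disk_{-r}\setminus\Disk_{-r_0}) \,=\, \log(r/r_0) - \Lambda(\gamma) + O(1/r),
\]
where $\Lambda$ is a bounded functional. Since every relevant loop must cross $C_{-r_0}$, this reduces to estimating $m_{\Disk_{-r}}(\gamma,C_{-r_0})$; integrating the shell estimates $k(s)=s^{-1}+O(s^{-2})$ and $k'(s)=O(s^{-2})$ of Proposition \ref{prop.nov30} for $s\in[r_0,r]$, together with the conformal-comparison estimates of Lemma \ref{dec4.1} and Lemma \ref{lemma.jul13}, yields the sharpened $O(1/r)$ error uniformly over such $\gamma$. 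Second, under $\mu^\#_{\Disk_{-r}}(z,w)$ the probability that $\gamma$ exits $\Disk_{-r_0}$ is $O(1/r)$: by conformal invariance this equals the probability that the bulk/bulk $SLE$ path in $\Disk$ between the near-origin points $z/e^r$ and $w/e^r$ crosses $C_{r-r_0}$, which is of order $r_0/r$ by harmonic-measure comparison since the endpoints lie at scale $e^{-r}$ from the origin.

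Combining these estimates and passing to the limit yields
\[
\tmass_{\Disk_{-r_0}}(z,w) \,=\, \tmass_{\Disk_{-r}}(z,w)\,r^{\cent/2}\,\bigl(C(r_0) + O(1/r)\bigr),
\]
where $C(r_0)\in(0,\infty)$ is the limit of $r_0^{-\cent/2}\,\E^\#_{\Disk_{-r}}\!\left[\exp(-(\cent/2)\Lambda(\gamma))\right]$. Rearranging shows that $r^{\cent/2}\tmass_{\Disk_{-r}}(z,w) \to \tmass(z,w) := \tmass_{\Disk_{-r_0}}(z,w)/C(r_0)$ at rate $O(1/r)$; the limit is independent of $r_0$ by the consistency of boundary perturbation (composition of two perturbations). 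The main obstacle will be the uniform $O(1/r)$ loop measure expansion, since Proposition \ref{prop.nov30} is stated only with $o(1)$ for fixed sets, so one must revisit its shell-decomposition proof and ensure the quantitative bound survives uniformly over the random curve $\gamma$ and both scales $r_0, r$.
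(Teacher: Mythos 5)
Your overall strategy --- compare $\tmass_{\Disk_{-r}}(z,w)$ to a fixed reference scale via the boundary perturbation rule and extract the factor $r^{-\cent/2}$ from the $\log r$ asymptotics of Proposition \ref{prop.nov30} --- is exactly what the paper intends (its proof is a one-line citation of that proposition). But two of your intermediate claims do not hold as stated. First, the estimate that $\mu^{\#}_{\Disk_{-r}}(z,w)$ gives probability $O(1/r)$ to the event $\{\gamma \not\subset \Disk_{-r_0}\}$ for \emph{fixed} $r_0$ is false: as $r \to \infty$ this probability converges to the positive probability that the limiting whole-plane path from $z$ to $w$ exits $e^{r_0}\Disk$, not to zero. (Already for $\kappa=2$ the loop-erased walk picture shows the path reaches radius $e^{r_0}$ with probability bounded below uniformly in $r$.) The harmonic-measure heuristic you invoke does not control the $SLE$ path, and consequently you cannot discard the indicator $1\{\gamma\subset\Disk_{-r_0}\}$ when identifying your constant $C(r_0)$.

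Second, and more structurally, $C(r_0)$ is defined as a limit of expectations taken under the $r$-dependent probability measures $\mu^{\#}_{\Disk_{-r}}(z,w)$; the existence of that limit, let alone at rate $O(1/r)$, is never established and is essentially equivalent to the convergence you are trying to prove, so the argument is circular as written. The repair is to run the perturbation rule in the other direction: for $\gamma\subset\Disk_{-r_0}$ write $\mu_{\Disk_{-r}}(z,w)(d\gamma) = \exp\{-\frac{\cent}{2}m_{\Disk_{-r}}(\gamma,\Disk_{-r}\setminus\Disk_{-r_0})\}\,\mu_{\Disk_{-r_0}}(z,w)(d\gamma)$, so that all $r$-dependence sits in the explicit factor $r^{-\cent/2}e^{(\cent/2)\Lambda(\gamma)}[1+O(r^{-1})]$ integrated against a \emph{fixed} measure. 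One must then (i) check integrability of $e^{(\cent/2)\Lambda}$ against $\mu_{\Disk_{-r_0}}(z,w)$ --- note $\Lambda(\gamma)\to-\infty$ as $\gamma$ approaches $C_{-r_0}$, so it is not a bounded functional as you assert --- and (ii) show that the escaping mass $r^{\cent/2}\mu_{\Disk_{-r}}(z,w)(\{\gamma\not\subset\Disk_{-r_0}\})$ is small uniformly in $r$ once $r_0$ is large (for instance by letting $r_0$ grow with $r$); this is the correct replacement for the false exit estimate. The uniform $O(1/r)$ strengthening of Proposition \ref{prop.nov30}, which you rightly flag as needing proof, is a third outstanding point.
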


 \begin{proof} This essentially follows from  
  Proposition \ref{prop.nov30}.
 \end{proof}
  
 Using this as a guide, we define
 \[  \mu(z,w) =  c'
   \lim_{r \rightarrow \infty}
     r^{\cent/2} \, \mu_{A_{-r}}(z,w).\]
This satisfies the conformal covariance
rule
\[     f \circ \mu(z,w) = |f'(z)|^{\tilde b}
 \, |f'(w)|^{\tilde b} \, \mu(f(w),f(w) ),\]
where $f$ is a linear fractional transformation
(conformal transformation of the Riemann sphere).
Conformal covariance implies that there exists
$c'' \in (0,\infty)$ such that for all $z,w$,
\[      \tmass(z,w) = c''\, |z-w|^{-2 \tilde b}.\]
The probability measure $\mu^\#(z,w)$,  which is
invariant under linear fractional transformations,
is called \textit{whole plane $SLE_\kappa$}.
While we have defined $\mu(z,w)$ as a limit,
we could also imagine being able to define it
directly.  In this case, we get $\mu_D(z,w)$
by a (normalized) boundary perturbation rule.

\begin{proposition} \label{prop.subtle}
 If $D$ is a domain and
$z, w\in D$ are distinct,  then
\[ \frac{d\mu_D(z,w) }{d\mu(z,w)}(\gamma)
  = 1\{\gamma \subset D\}
  \,   \exp \left\{-\frac \cent 2 \,\Lambda(\gamma,
           \p D)\right\}
     \]
 where $\Lambda(\gamma,\p D)$ is as defined
 in Proposition \ref{prop.nov30}.
     \end{proposition}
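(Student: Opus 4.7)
The plan is to apply the boundary perturbation rule \eqref{perturb} in a large bounded approximating ambient domain and then pass to the whole plane. Without loss of generality, assume $D$ is bounded (the general case follows by first replacing $D$ by $D\cap\Disk_{-R_1}$ and eventually sending $R_1\to\infty$ using the same estimates). Fix a large $R$ with $D\subset\Disk_{-R}$. The first step is to establish the bulk/bulk analogue of \eqref{perturb}:
\begin{equation*}
 \frac{d\mu_D(z,w)}{d\mu_{\Disk_{-R}}(z,w)}(\gamma) = 1\{\gamma \subset D\}\exp\!\left\{\frac{\cent}{2}\, m_{\Disk_{-R}}(\gamma, \Disk_{-R}\setminus D)\right\}.
\end{equation*}
I would derive this by applying the boundary/boundary perturbation rule to the approximating measures $\mu_{D_r}(z+e^{-r},w+e^{-r})$ and $\mu_{(\Disk_{-R})_r}(z+e^{-r},w+e^{-r})$ which appear in the bulk/bulk limits defining $\mu_D(z,w)$ and $\mu_{\Disk_{-R}}(z,w)$. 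These two approximating domains share the two small boundary circles around $z$ and $w$ and agree in neighborhoods of them, so \eqref{perturb} applies; the common normalization factors $e^{2(\tilde b-b)r}r^{\cent/2}/c_0$ cancel in the ratio, and as $r\to\infty$ the loop measure $m_{(\Disk_{-R})_r}(\gamma,\cdot)$ converges to $m_{\Disk_{-R}}(\gamma,\cdot)$ because the contribution of loops touching the shrinking circles around $z,w$ is $o(1)$ when $\gamma$ is at positive distance from $\{z,w\}$ in its interior.

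Next, send $R\to\infty$. Decompose the loops in $\Disk_{-R}$ that intersect both $\gamma$ and $\Disk_{-R}\setminus D$ according to whether they hit $\p D$ or only $\C\setminus \Disk_{-R}$. The second group is controlled by the bounds on $k(r)$ and $k'(r)$ in Proposition \ref{prop.nov30} and contributes $o(1)$ for fixed compact $\gamma$. For the first group, the concluding estimate of Proposition \ref{prop.nov30} gives
\begin{equation*}
 m_{\Disk_{-R}}(\gamma,\p D) = \log R - \Lambda(\gamma,\p D) + o(1),
\end{equation*}
so
\begin{equation*}
 \exp\!\left\{\frac{\cent}{2}\, m_{\Disk_{-R}}(\gamma, \Disk_{-R}\setminus D)\right\} = R^{\cent/2}\, e^{-(\cent/2)\Lambda(\gamma,\p D)}\,(1+o(1)).
\end{equation*}
By the definition $\mu(z,w) = c'\lim_{R\to\infty} R^{\cent/2}\mu_{\Disk_{-R}}(z,w)$ (interpreting the $A_{-R}$ in the definition as this approximating sequence), the divergent factor $R^{\cent/2}$ is precisely what relates $\mu_{\Disk_{-R}}(z,w)$ to $\mu(z,w)$ in the limit. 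Substituting into the displayed bulk/bulk perturbation identity and absorbing the numerical constant into the already-arbitrary $c'$ yields
\begin{equation*}
 \frac{d\mu_D(z,w)}{d\mu(z,w)}(\gamma) = 1\{\gamma\subset D\}\exp\!\left\{-\frac{\cent}{2}\,\Lambda(\gamma,\p D)\right\},
\end{equation*}
as claimed.

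The main obstacle is the first step: the bulk/bulk extension of the boundary perturbation rule. Although at each fixed $r$ the statement is just \eqref{perturb} applied to boundary/boundary measures in $2$-connected domains, passing to the limit requires a dominated-convergence argument showing that the ratio of two sequences of (normalized) boundary/boundary measures converges to the ratio of their limits. The point that makes this feasible is that the two approximating sequences use the \emph{same} local model near $z$ and $w$, so their Radon--Nikodym derivative is a functional only of the loop configuration away from the shrinking circles. A secondary technicality is uniformity of the $o(1)$ errors in Proposition \ref{prop.nov30} over the set of curves $\gamma$ supporting $\mu(z,w)$, which is needed to interchange the $R\to\infty$ limit with the Radon--Nikodym derivative.
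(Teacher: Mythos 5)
Your argument is correct and follows the paper's own proof: compare $\mu_D(z,w)$ to $\mu_{\Disk_{-R}}(z,w)$ via the boundary perturbation rule and then invoke Proposition \ref{prop.nov30} together with the $R^{\cent/2}$ normalization built into the definition of $\mu(z,w)$. The preliminary step you supply (deriving the bulk/bulk perturbation identity by passing to the limit in the boundary/boundary rule for the approximating domains $D_r$) is a justification the paper leaves implicit, and your remaining caveats are harmless --- in particular the ``second group'' of loops in your decomposition is empty, since any loop in $\Disk_{-R}$ meeting both $\gamma$ and $\Disk_{-R}\setminus D$ must cross $\p D$.
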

     
\begin{proof}
For $r$ sufficiently large so that $\gamma 
\subset \Disk_{-r}$,
\[    \frac{d\mu_D(z,w)}
      {d\mu_{A_{-r}}(z,w)}(\gamma)
     = \exp \left\{\frac \cent 2 \, m_{A_{-r}}
      (\gamma,A_{-r} \setminus D)\right\}. \]
Proposition \ref{prop.nov30} implies that as
$r \rightarrow \infty$,
\[  m_{A_{-r}}
      (\gamma,A_{-r} \setminus D)
      = \log r - \Lambda(\gamma, \p D)
       + o(1) . \]
Therefore,  
  \[     \frac{d\mu_D(z,w)}
      {r^{\cent/2} \,
      d\mu_{A_{-r}}(z,w)}(\gamma)   
       \sim 
         \exp \left\{-\frac \cent 2 \,\Lambda(\gamma,
           \p D)\right\},\;\;\;\;
            r \rightarrow \infty . \]    
      
\end{proof}

\labove \textsf
{\begin{small}  \Heuristic  
While it might seem natural to define $\mu(z,w)$ 
using whole plane $SLE$ and then
   the proposition to define $\mu_D(z,w)$, there
is a disadvantage in this approach.  The reason is
that it is not so easy to prove that $\mu_D(z,w)$
satisfies the conformal covariance relation for 
conformal transformations of $D$ since the quantity
$\Lambda(\gamma,\p D)$ is not  conformally
invariant under transformations of $D$.
\end{small}}
\lbelow

\begin{example}  If $\kappa = 2$, then $\tmass_D(z,w)$
is proportional to the usual Green's function
for Brownian motion with
Dirichlet boundary conditions.  For this, it is well known that
\[    \tmass_{A_{-r}}(0,1) \sim r , \]
which agrees with the formula since $\cent = -2$.
Also, $\tilde b = 0$ which implies that $\tmass_D(z,w)$
is a conformal \textit{invariant}.  This is well
known for the Green's function.
   \end{example}

  \subsection{Multiple paths}
  
 Extending the definition of $SLE_\kappa$ to multiple
 is straightforward as in \cite{KL}.  Suppose
 $\z = (z^1,\ldots,z^k), \w = (w^1,\ldots,w^k)$
 are distinct analytic points in a domain $D$.
 The points can be bulk or boundary points.  The
 measure $\mu_D(\z,\w)$ is defined by giving its
 Radon-Nikodym derivative $Y$ with respect to
 the product measure
 \[  \mu_D(z^1,w^1) \times \cdots \times
           \mu_D(z^k,w^k). \]
 Let $\bar \gamma = (\gamma^1,\ldots,\gamma^k)$
 be a $k$-tuple of paths (modulo reparametrization)
 in $D$ where $\gamma^j$ goes from $z^j$ to $w^j$.
 Then 
\begin{equation}  \label{dec7.1}
  Y = 1\{\gamma^j \cap \gamma^l = \emptyset,
 \, j \neq l \} \, \exp \left\{\frac \cent 2
    \sum_{j=2}^k m_{D}(\gamma^j, 
      \gamma^1 \cup \cdots \cup \gamma^{j-1}) 
        \right\}. 
        \end{equation}

 \labove \textsf
{\begin{small}  \Heuristic  
One can consider the measure on multiple paths in
the context of the $\lambda$-SAW.  On the discrete
level, the measure on a $k$-tuple of paths 
$\bar \omega = (\omega^1,\ldots,\omega^k)$
is
\[      \exp\left\{-\beta (|\omega^1| + \cdots +
 |\omega^k|)  + \lambda  
   \, m^{RW}(\omega^1 \cup \cdots \cup
    \omega^k,D,n) \right\}.\]
The exponential factor on the right hand side of
\eqref{dec7.1} compensates for overcounting of
loops that intersect $\bar \gamma$.
\end{small}}
\lbelow       

\section{Crossing $SLE_\kappa$ in an annulus}  \label{crosssec}

In this section we study
 the measure $\mu_{A_r}(1,e^{-r+i\theta})$
  which
is a  measure on simple paths (modulo
reparametrization) $\eta$ from $1$ to $e^{-r+i\theta}$ in $A_r$.
Let us recall the definition.  Suppose $D'$ is a simply connected
subdomain of $A_r$ that agrees with $A_r$ in neighborhoods
of $1$ and $w = e^{-r + i \theta}.$  Then if $\eta$ is
a curve in $D'$ connecting $1$ and $w$,
\[   \frac{d \mu_{A_r}(1,w)}{d \mu_{D'}(1,w)}
  (\eta) = \exp \left\{- \frac \cent 2 \, m_{A_r}(\eta,
   A_r\setminus D')  \right\}. \]
We can write
\begin{equation}  \label{winding}
  m_{A_r}(\eta,A_r \setminus D') = \hat m_{A_r}(\eta,A_r \setminus
D') +    m^*(r) , 
\end{equation}
where $m^*(r)$ denotes the measure of the set of loops in $A_r$ of
nonzero winding number and  $ \hat m_{A_r}(\eta,A_r \setminus
D') $ is the measure of the set of loops of zero winding number
that intersect both $\eta$ and $A_r \setminus D'$. Here
we use the fact that every loop of nonzero winding number
intersects  both $\eta$ and $A_r \setminus D'$.   (This construction
assumes that there is a unique point on the Brownian loop
that goes through the point $\eta(t)$.  For each curve $\eta$ this is true
up to a set of loops of measure zero.  See the discussion after Theorem
12 in \cite{LW}.)

Let $\gamma$ be the continuous
preimage under $\psi$  of $\eta$ with $\gamma(0) = 0$, 
and let $D$ be the simply connected domain containing
$\gamma$ such that $\psi(D) = D'$.   Each loop $\ell'$
in $A_r$ has an infinite number of preimages under
$\psi$.   For each loop $\ell'$
in $A_r$ that intersects $\eta$,  we choose a unique
such preimage  as follows.  Consider the first time
$t$ such that $\eta(t) \in \ell'$. We   make $\ell'$
a rooted loop by choosing the root to be $\eta(t)$.
 Then we choose $\ell$
to be the (rooted) preimage of $\ell'$ that is rooted
at $\gamma(t)$.  The definition of $\ell$ implies that
if it is rooted at $\gamma(t)$, then 
\begin{equation}
\label{j5}
 \ell \cap  \tilde \gamma_t = \emptyset , 
 \end{equation}
where, as before, 
\[  \tilde \gamma_t = \bigcup_{k \in \Z \setminus
\{0\}} (\gamma_t + 2 \pi k).\]
We will call a loop $\ell$ {\em $\gamma$-good} if it
intersects $\gamma$ and  satisfies
\eqref{j5}.
Then $\ell \leftrightarrow \ell'$ gives a
 bijection between $\gamma$-good loops
in $S_r$ and loops in $A_r$
of zero winding number that intersect $\eta$.

If $r > 0$, $x \in \R$, we define the measure $\nu_{S_r}(0,x+ir)$
by the relation
\[  \frac{d\nu_{S_r}(0,x+ir)}{d\mu_{D}(0,x+ir)}
   (\gamma) = \exp \left\{ - \frac \cent 2 \, m_{S_r}(\gamma,S_r
   \setminus D;*) \right\},\;\;\;\;  \gamma \subset D \]
   where
$ m_{S_r}(\gamma,S_r
   \setminus D;*)$
denotes the Brownian loop measure of $\gamma$-good loops
in $S_r$ that intersect both $\gamma$ and $S_r \setminus D$.   
Recall that
\[  \frac{d\mu_{S_r}(0,x+ir)}{d\mu_{D}(0,x+ir)}
   (\gamma) = \exp \left\{ - \frac \cent 2 \, m_{S_r}(\gamma,S_r
   \setminus D) \right\}, \] 
   This leads to an alternative, equivalent definition
   of $\nu_{S_r}(0,x+ir) $.
 Note that $\psi \circ \gamma$ is a simple curve if and only if
 $\gamma \cap \tilde \gamma = \emptyset $. 
   
  \begin{definition}  The measure  $\nu_{S_r}(0,x+ir)$
  is the measure absolutely continuous with respect to 
  ${\mu_{S_r}(0,x+ir)}$ with Radon-Nikodym derivative
  \begin{equation} \label{anndef2}
   \frac{d\nu_{S_r}(0,x+ir)}{d\mu_{S_r}(0,x+ir)}(\gamma)
     = 1\{\gamma \cap \tilde \gamma = \emptyset \} \,
     \exp \left\{ \frac \cent 2 \, m_{S_r}(\gamma)\right\} , 
     \end{equation}
  where
$ m_{S_r}(\gamma) $ is the measure of loops in $S_r$
that intersect $\gamma$ but are not $\gamma$-good.
%    m_{S_r}(\gamma,S_r
%   \setminus D)  - m_{S_r}(\gamma,S_r
%   \setminus D;*) 
%   .\]
   We call this {\em annulus $SLE_\kappa$ in $S_r$ from
$0$ to $x+ir$}. 
   \end{definition}
   
 We can relate annulus $SLE_\kappa$ in $S_r$ to $SLE_\kappa$
 in $A_r$ by conformal covariance.  We define
$      \nu_{A_r}(1,x)  $ by
\begin{eqnarray} 
 \nu_{A_r} (1,x)
   &
   = & |\psi'(0)|^{-b} \, |\psi'(x+ir)|^{-b}   
\, e^{-\cent\,   m^*(r)/2}
   \, \psi \circ \nu_{S_r}(0,x+ir) 
 \nonumber \\
  & = &   e^{br} \, e^{-\cent \, m^*(r)/2}
   \, \psi \circ \nu_{S_r}(0, x + ir) ,  \label{anndef}
\end{eqnarray}
We think of this as annulus $SLE_\kappa$ from $1$
to $e^{-r + i x}$  restricted to curves
of a particular winding number. 
The term $e^{-\cent m^*(r)/2}$ is discussed
in Proposition \ref{windloops}.
 Annulus $SLE_\kappa$ is obtained
by summing over all winding numbers
\begin{equation}  \label{anndefsum}
   \mu_{A_r}(1,e^{-r + i\theta}) = \sum_{k \in \Z}
   \nu_{A_r}(1, \theta + 2 \pi k). 
   \end{equation}

 \subsection{Main result}
 We will show that the partition function for
 annulus $SLE$ on $S_r$ can be given in terms of a functional of
 locally chordal $SLE_\kappa$.  
Recall the functions  $\zhan_I$  from  Section
   \ref{annfunction}, $\funone$ from \eqref{fun1}, and $\funthree$  
 from  \eqref{fun3}.
 
%\begin{definition}  The function $V(r,x), 0 \leq r < \infty,
%x \in \R$ is defined by
%\[           V(0,x) = 1_{[-2 \pi,2\pi]}(x), \]
%and for $r > 0$, 
%\begin{equation}  \label{vdef}
%   V(r,x) = \E^x\left[\exp\left\{-2b\int_0^{ r} \funone(
%  r -s,X_s) \, ds
% \right\} \right], 
% \end{equation}
% where $X_t, 0 \leq t \leq r$ satisifes
% \begin{equation}  \label{xsde2}
%       dX_t = \left[\zhan_I( r-t,X_t) - b\kappa \, \funthree( r-t,
%   X_t) \right] \, dt + \sqrt \kappa \, dB_{t} , 
%   \end{equation}
%   and $B_t$ is a standard Brownian motion.
%   \end{definition}
%  
%   
   
\begin{theorem}  \label{main1}
If
$        \tilde \tmass(r,x) =  \|\nu_{S_r}(0,x+ri)\| , $
then 
\[     \tilde \tmass(r,x)   = V(r,x) \, \tmass_{S_r}(0,x+ri) . \]
Here
\begin{equation}  \label{vdef}
   V(r,x) = \E^x\left[\exp\left\{-2b\int_0^{ r} \funone(
  r -s,X_s) \, ds
 \right\} \right], 
 \end{equation}
 where $X_t, 0 \leq t \leq r$ satisifes
 \begin{equation}  \label{xsde2}
       dX_t = \left[\zhan_I( r-t,X_t) - b\kappa \, \funthree( r-t,
   X_t) \right] \, dt + \sqrt \kappa \, dB_{t} , 
   \end{equation}
   and $B_t$ is a standard Brownian motion.
In particular,  $ \tilde \tmass(r,x)$ is  $C^1$ in $r$, $C^2$ in
$x$   and
$\tilde \tmass(r,x) \leq \tmass_{S_r}(0,x+ri) .$
\end{theorem}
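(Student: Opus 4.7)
The plan is to derive a parabolic PDE satisfied by $F(r,x) := \tilde\tmass(r,x)/\tmass_{S_r}(0, x+ir)$, verify that $V(r,x)$ defined in \eqref{vdef} solves the same PDE with the same initial data, and conclude by uniqueness. From \eqref{anndef2} together with $\tmass_{S_r}(0, x+ir) = H_{\p S_r}(0, x+ir)^b$, we have
\[
F(r,x) \;=\; \E_P\!\left[1_{\{\gamma \cap \tilde\gamma = \emptyset\}}\, \exp\!\left\{\frac{\cent}{2}\, m_{S_r}(\gamma)\right\}\right],
\]
where $P = \mu_{S_r}^\#(0, x+ir)$ is chordal $SLE_\kappa$ in the strip.

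Under $P$, I would work in the annulus parametrization with driving function $U^*_s$ and relative position $R^*_s = \Re h^*_s(x+ir) - U^*_s$, and introduce the process
\[
M^F_s \;:=\; 1_{\{\gamma_{\sigma_s} \cap \tilde\gamma_{\sigma_s} = \emptyset\}}\, \exp\!\left\{\frac{\cent}{2}\, m_{S_r}(\gamma_{\sigma_s})\right\}\, F(r - s, R^*_s).
\]
The domain Markov property combined with the conformal equivalence of the residual problem at annulus time $s$ to the original problem with parameters $(r - s, R^*_s)$ forces $M^F_s$ to be a martingale under $P$ (with $M^F_0 = F(r,x)$ and $M^F_r = \bigl(1_{\{\gamma \cap \tilde\gamma = \emptyset\}}\, \exp\{(\cent/2) m_{S_r}(\gamma)\}\bigr)\, F(0,0)$, consistent with the natural normalization $F(0,0)=1$). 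Applying It\^o's formula, using the annulus Loewner equation \eqref{anneq} for $R^*_s$, the SDE for $U^*_s$ under chordal $SLE_\kappa$ in $S_r$, and the Schwarzian representation of the infinitesimal growth rate of $m_{S_r}(\gamma_{\sigma_s})$ (obtained from the decomposition $\tilde m_t = m_\Half(\gamma_t, \Half \setminus S_r) + m_{S_r}(\gamma_t)$ derived by partitioning loops exiting $\hat D_s$ according to whether they leave $S_r$ or merely hit a translate), and then setting the drift of $M^F_s$ to zero, one obtains the PDE
\[
\p_r F(r,x) \;=\; \frac{\kappa}{2} F_{xx}(r,x) + [\zhan_I(r,x) - b\kappa\, \funthree(r,x)]\, F_x(r,x) - 2b\, \funone(r,x)\, F(r,x),
\]
with $F(0, x) = 1$. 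The coefficient $-2b$ of $\funone$ arises as the sum of two contributions: $(\cent/2)$ times the infinitesimal bubble-measure rate of $m_{S_r}(\gamma_t)$ (proportional to $\funone$ via the conformal identification behind \eqref{tomato}), plus a contribution from differentiating the normalizing factor $H_{\p S_r}(0, x+ir)^{-b}$ in the annulus modulus $r$.

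The function $V(r,x)$ of \eqref{vdef} solves the same PDE with $V(0, x) = 1$ by the Feynman--Kac theorem applied to the SDE \eqref{xsde2} with potential $2b\, \funone$. Bounded smooth solutions are unique by standard parabolic theory (using constant second-order coefficient $\kappa/2$ and local boundedness of the first-order and potential coefficients), so $F = V$, which gives $\tilde\tmass(r,x) = V(r,x)\, \tmass_{S_r}(0, x+ir)$. The asserted regularity in $(r,x)$ transfers from that of $V$ (as a Feynman--Kac integral with smooth coefficients on compacts) and of $\tmass_{S_r}$ (explicitly from \eqref{jan31.1}); the bound $\tilde\tmass \leq \tmass_{S_r}$ follows from $V \leq 1$, which holds because $b > 0$ for $\kappa \in (0, 4]$ and $\funone \geq 0$ by Proposition \ref{feb1.prop1}.

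The main obstacle will be the It\^o bookkeeping in the second step, particularly producing the exact coefficient $-2b$ (rather than some variant such as $b\kappa$) in the potential term: one must correctly incorporate the tip time-rescaling factor $|h_t'(\gamma(t))|^2$ relating chordal and annulus parametrizations, combine it with the Schwarzian representation of the bubble-measure rate, and simultaneously account for the $r$-derivative of the partition-function normalization $H_{\p S_r}(0, x+ir)^b$.
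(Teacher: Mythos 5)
Your high-level strategy---identify the ratio $F(r,x)=\tilde\tmass(r,x)/\tmass_{S_r}(0,x+ir)$ with a Feynman--Kac solution of a parabolic PDE---is close in spirit to the paper's, and you have written down the correct PDE. But three steps do not go through as stated. First, the process you propose, $1\{\gamma_{\sigma_s}\cap\tilde\gamma_{\sigma_s}=\emptyset\}\exp\{(\cent/2)m_{S_r}(\gamma_{\sigma_s})\}\,F(r-s,R^*_s)$, is not a martingale: the Doob martingale of $Y$ is $\Lambda_t^{\cent/2}\,Q_t^b\,F(r(t),R_t)$, where $Q_t=Q_{S_r\setminus\gamma_t}(\gamma(t),z_0;S_r\setminus\hat\gamma_t)$ is the probability that an excursion from the tip to $z_0$ avoids the translates. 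The factor $Q_t^b$ appears because the residual problem is conformally equivalent to the $(r-s,R^*_s)$ problem only after removing \emph{all} translates, while the domain Markov property normalizes by the singly-slit domain; the discrepancy between the two boundary Poisson kernels is exactly $Q_t$. Your attribution of the coefficient $-2b\funone$ is also wrong: the bad-loop measure $m_t$ grows like $t^2$ (a bad loop must meet both $\gamma_t$ and a translate), so its rate contributes nothing to first order, and the entire potential term comes from $\p_tQ_t|_{t=0}=-a\funone$ (see \eqref{tomato} and Lemma \ref{estimatetwo}) after the factor $2/a$ from passing to the annulus time scale; the $r$-derivative of $H_{\p S_r}(0,x+ir)^b$ is already absorbed into the drift $\zhan_I-b\kappa\funthree$ of $R^*$ and the loop-measure compensator. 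Without the $Q_t^b$ factor the drift computation cannot produce the stated equation.

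Second, there is a circularity: applying It\^o's formula to $F(r-s,R^*_s)$ presupposes that $F$ is $C^1$ in $r$ and $C^2$ in $x$, which is part of what the theorem asserts. The paper avoids this by defining $V$ through the probabilistic formula \eqref{vdef}, obtaining its smoothness and its PDE from Feynman--Kac applied to the SDE \eqref{xsde2} (whose coefficients are smooth for $r>0$), and then building the local martingale $M_t=\Lambda_t^{\cent/2}Q_t^bV(r(t),R_t)$ out of $V$ rather than out of $F$. Third, and most seriously, the concluding appeal to ``standard parabolic uniqueness'' fails: the coefficients $\zhan_I(\rho,\cdot)$, $\funthree(\rho,\cdot)$, $\funone(\rho,\cdot)$ blow up as $\rho\downarrow0$, so the data at $r=0$ sit at a singular time and no off-the-shelf uniqueness theorem applies. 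This is precisely where the paper does its real work: one must justify $\E[M_{\tau_r}]=M_0$ (uniform integrability at the singular endpoint), which is done by weighting by the local martingale and showing the weighted path still terminates with $R^*_{r-}=0$; that step uses Lemma \ref{locchord} (where $\kappa\le4$ enters) together with the monotonicity of $V$ in $|x|$ from Proposition \ref{feb1.prop1}. Your proposal contains no substitute for this argument.
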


We used the functional in \eqref{vdef} as our definition,
but as we show now,  it is the solution of a PDE.  Let
us define $V(0,x) \equiv 1$.

 \begin{proposition}  \label{vprop}
The function $V(r,x)$  satisifes
 $0 \leq V(r,x) \leq 1$, is continuous
 on $[0,\infty) \times (-\pi,\pi)$
and  for $r > 0$ satisfies the equation
 \begin{equation}  \label{pde}
  \dot V = - 2b \,  \funone \, V
    + \left[ \zhan_I - b \kappa \, \funthree \right]
      \, V' + \frac \kappa 2 \, V'', 
%       \dot V(r,x) = - 2b \,  \funone(r,x) \, V(r,x)
%    + \left[ \zhan_I(r,x) - b \kappa \, \funthree(r,x) \right]
%      \, V'(r,x) + \frac \kappa 2 \, V''(r,x), 
      \end{equation}
where dot refers to $r$-derivatives and primes refer
to $x$-derivatives.

Moreover, for fixed $r$, $x \mapsto V(r,x)$ is an odd function
that is decreasing in $|x|$. 
\end{proposition}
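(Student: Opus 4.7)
My plan is to verify the four assertions in turn. The bound $0\le V(r,x)\le 1$ is immediate from \eqref{vdef}: Proposition \ref{feb1.prop1} gives $\funone\ge 0$, so the exponent is nonpositive. Continuity on $[0,\infty)\times(-\pi,\pi)$, together with the boundary value $V(0,x)=1$, follows from continuity of the SDE \eqref{xsde2} in its initial condition and in the parameter $r$ (the coefficients are smooth on $(0,\infty)\times\R$), combined with dominated convergence on the bounded integrand.

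For the PDE \eqref{pde}, I would invoke Feynman--Kac. The structural observation that makes this work is that the coefficients of \eqref{xsde2} and the integrand of \eqref{vdef} depend on time only through $r-s$, so by the Markov property, for $0\le t\le r$,
\begin{equation*}
u(t,x)\;:=\;\E^x\!\left[\exp\!\left\{-2b\int_t^{r}\funone(r-s,X_s)\,ds\right\}\Big|\,X_t=x\right]\;=\;V(r-t,x).
\end{equation*}
The time-inhomogeneous Kolmogorov backward equation with potential gives
\begin{equation*}
\p_t u + [\zhan_I(r-t,x)-b\kappa\funthree(r-t,x)]\, u' + \tfrac{\kappa}{2}u'' - 2b\,\funone(r-t,x)\,u \;=\; 0,\qquad u(r,\cdot)\equiv 1.
\end{equation*}
Substituting $u(t,x)=V(r-t,x)$, using $\p_t u=-\dot V$, and relabelling $r-t$ as $r$ yields \eqref{pde}.

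For the symmetry, I read ``odd'' as a typo for ``even'', since $V\ge 0$ forces any odd $V$ to vanish identically. Evenness follows from a symmetry of \eqref{xsde2}: since $J$ is even (see \eqref{nzhan}) the integral $\zhan_I$ is odd, and $\funthree$ is odd because $\funthree\propto\tanh$. Under the joint transformation $X_t\mapsto -X_t$, $B_t\mapsto -B_t$, \eqref{xsde2} is preserved (with starting point $-x$), and since $\funone$ is even (Proposition \ref{feb1.prop1}), the exponential integrand in \eqref{vdef} is unchanged. Hence $V(r,-x)=V(r,x)$.

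The most delicate step is monotonicity in $|x|$. By evenness it suffices to show $W:=V'\le 0$ on $[0,r]\times[0,\pi]$. Differentiating \eqref{pde} in $x$ produces a linear parabolic equation for $W$ of the form
\begin{equation*}
\dot W \;=\; \tfrac{\kappa}{2} W'' + [\zhan_I - b\kappa\funthree]\,W' + c(r,x)\,W \;-\;2b\,\funone'(r,x)\,V(r,x),
\end{equation*}
where $c(r,x) = \zhan_I' - b\kappa\funthree' - 2b\funone$. The source term $-2b\funone'V$ is nonpositive on $[0,\pi]$ because $\funone$ is increasing in $|x|$ there (Proposition \ref{feb1.prop1}). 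The parabolic boundary data all vanish: $W(0,\cdot)\equiv 0$ from $V(0,\cdot)\equiv 1$; $W(r,0)=0$ by evenness about $x=0$; and $W(r,\pi)=0$ because the symmetry $K(r,\pi-x)=K(r,\pi+x)$ of Lemma \ref{dec2.lemma5}, together with the analogous symmetry of $\funone$ about $\pi$, forces $V$ to be symmetric about $x=\pi$ as well. After the standard change of variable $\tilde W = e^{-Kt}W$ with $K$ chosen large enough to make the zero-order coefficient negative, the parabolic maximum principle concludes $W\le 0$. The main obstacle in executing this is ensuring sufficient regularity of $V$ to differentiate \eqref{pde} in $x$; I expect this to follow from parabolic Schauder estimates applied to \eqref{pde} with the smoothness of $\zhan_I,\funthree,\funone$ on $(0,\infty)\times\R$, with a mollification of the coefficients if needed to avoid technicalities near $r=0^+$.
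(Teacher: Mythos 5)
Your treatment of the bound $0\le V\le 1$, the Feynman--Kac derivation of \eqref{pde}, and the reading of ``odd'' as a typo for ``even'' all match the paper (whose proof is exactly the Feynman--Kac citation plus the observation $\funone\ge 0$). The genuine divergence is the monotonicity step, and there your argument has a gap. The parabolic boundary condition $W(r,\pi)=0$ rests on the claim that $\funone$, and implicitly the drift $\zhan_I-b\kappa\funthree$, have the symmetry about $x=\pi$ needed to make $V$ symmetric about $\pi$. This is false: $\funthree(r,x)=\frac{\pi}{r}\tanh(\pi x/2r)$ is not $2\pi$-periodic and $\funthree(r,\pi)\neq 0$, so the drift is not anti-symmetric about $\pi$; and from \eqref{fun1} (compare the $k=\pm1$ terms for small $r$) one finds $\funone(r,\pi+y)>\funone(r,\pi-y)$ for $y>0$. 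In fact the proposition you are proving asserts $V$ is strictly decreasing in $|x|$, which would give $V'(r,\pi)<0$, not $0$: your boundary datum at $x=\pi$ is precisely the sign you are trying to establish, so the maximum principle on $[0,\pi]$ cannot close. (It would also only yield monotonicity on $|x|\le\pi$, whereas the statement is used later for the weighted process $R_t^*$, which is not confined to $[-\pi,\pi]$.) The paper instead uses a coupling: for $0<x_1<x_2$ one couples two solutions of \eqref{xsde2} so that $|X^1_t|\le|X^2_t|$ for all $t$ (same driving Brownian motion until $X^1=-X^2$, then the reflection coupling, using that the drift is odd), and then Proposition \ref{feb1.prop1} gives a pathwise inequality between the two exponents in \eqref{vdef}. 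Your route could in principle be repaired by working on $[0,\infty)$ with a Phragm\'en--Lindel\"of type maximum principle and a decay estimate for $V'$ at infinity, but that is substantially more work than the coupling.

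A secondary point: your continuity argument appeals to dominated convergence ``on the bounded integrand,'' but $\funone(r-s,X_s)$ is not bounded; by \eqref{oct7.2} it is of order $(r-s)^{-2}$ when $X_s$ is near $\pm\pi$ as $s\uparrow r$. The delicate assertion is $V(0+,x)=1$, which the paper obtains by combining \eqref{oct7.2} with Lemma \ref{locchord} (the process is pinned to $0$ at the terminal time, hence stays away from $\pm\pi$ there with high probability); some version of that input is needed.
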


\begin{proof}  For $r > 0$, the function $\zhan_I,\funthree$
are smooth and $\funone  \ge 0$.  Hence \eqref{pde}
follows from the Feynman-Kac formula, see, e.g,
 \cite[Section 6.5]{Friedman}  or \cite[Section 5.7.b]{KarS}.
Combining   \eqref{oct7.2} with Lemma \ref{locchord},
 we see that
$V(0+,x) = 1$ for $|x|  < 2 \pi$. 
%%%%%%%%%%%%%%%%
\begin{comment}
If $x > 2 \pi + \delta$,
then by studying \eqref{pde}, except with small probability,
the solution to \eqref{pde} will spend at least time $cr$
with $X_t > 2 \pi$. On this set, \eqref{oct7.1} shows that
$\funone \geq c'/r^2$.  Hence, except for an event of
small probability
\[       \int_0^{ r} \funone(
  r -s,X_s) \, ds \geq c/r. \]
\end{comment}
%%%%%%%%%%%%%%%%%%%%%%% 
For the last assertion, we use Proposition \ref{feb1.prop1} which
states that $\funone(r,x)$
is an increasing function of $|x|$.  It is not difficult to see that
if $0 < x_1 < x_2 < \infty$, then we can couple process
$X_t^1,X_t^2$ on the same probability space, each satisfying
\eqref{xsde} with $X_0^j = x_j$ and such that
$|X_t^1| \leq |X_t^2|$ for all $t$.  In this coupling, we have
\[ \int_0^{ r} \funone(
  r -s,X_s^1) \, ds  \leq  \int_0^{ r} \funone(
  r -s,X_s^2) \, ds.\]

\end{proof}

%\begin{corollary}  For each $r,x$, $\tmass_{A_r}(1,e^{-r + i x})
%< \infty$.  Moreover the function $(r,x) \mapsto
%\tmass_{A_r}(1,e^{-r + i x})$ is $C^1$ in $r$ and $C^2$
%in $x$.
%\end{corollary}
%
%
%\begin{proof} Using \eqref{anndef}, we see that 
%\[ 
% e^{-br} \, e^{\cent \, m^*(r)/2} \,\tmass_{A_r}(1,e^{-r + i x})
% = \sum_{k=-\infty}^\infty \tilde \Psi(r,x)
%   \leq \sum_{k=-\infty}^\infty \Psi_{S_r}(0,x+ri) < \infty.\]
%   (The last inequality is an easy estimate.)
%\end{proof}

\subsection{Radon-Nikodym derivative}

Similarly to the approach for simply connected domains
as in Section \ref{simplesec}, we will 
  find an appropriate nonnegative local martingale and  use the
Girsanov theorem to analyze
the process weighted by the  local martingale.
  Suppose $(\Omega,\F,\hat \Prob)$ is
a probability space under which $U_t = - B_t$ is a standard Brownian
motion.  Let $g_t$ be the solution to the Loewner equation \eqref{chordal}
producing the random curve $\gamma$.  
Let $\gamma_t,\tilde \gamma_t, \hat \gamma_t$ be as above and fix
$r, z_0 = x+ ir$.
The following proposition is the particular case of
Section \ref{simplesec} for $D = D_r, w = x+ir$..

\begin{proposition}  If
\[           J_t = |g_t'(z_0)|^b \, H_{\p g_t(S_r\setminus
 \gamma_t)}(U_t,
             g_t(z_0))^b \, \exp \left\{\frac \cent 2
  \, m_\Half(\gamma_t,\Half \setminus S_r) \right\}, \]
then $J_t$ is a local martingale for $t < \tau_r$.  Moreover,
if one uses Girsanov, then under the weighted measure $\gamma$
has the distribution of $SLE_\kappa$ from $0$ to $x + ir$.  
\end{proposition}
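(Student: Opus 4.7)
The plan is to recognize that this proposition is nothing more than the computation reviewed in Section \ref{simplesec}, specialized to $D = S_r$ and $w = z_0 = x + ir$. First I would verify the hypotheses: $S_r$ is a simply connected subdomain of $\Half$ with $\dist(0, \Half \setminus S_r) = r > 0$, and $z_0$ is a $\p S_r$-analytic point (being a smooth interior point of the top boundary). With the unique conformal transformation $\Phi: S_r \to \Half$ satisfying $\Phi(0) = 0$, $\Phi(z_0) = \infty$, $|\Phi'(z_0)| = 1$, all the constructions of Section \ref{simplesec} apply verbatim.

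Next I would match $J_t$ to the local martingale $M_t$ derived there. Setting $H_t(x) = H_{\p g_t(S_r \setminus \gamma_t)}(x, g_t(z_0))$ and noting the identity
\[
- \frac{a}{6} \int_0^t S\Phi_s(U_s)\, ds = m_\Half(\gamma_t, \Half \setminus S_r),
\]
(coming from \eqref{nov4.2.alt}, valid since $S_r$ is simply connected), the formula
\[
M_t = \exp\left\{\frac{\cent}{2} m_\Half(\gamma_t, \Half \setminus S_r)\right\} |g_t'(z_0)|^b \, H_t(U_t)^b
\]
is literally $J_t$. The Itô's formula computation from Section \ref{simplesec}, which uses only \eqref{dots} together with the scaling rule for the Poisson kernel, then yields
\[
dJ_t = \frac{b\, H_t'(U_t)}{H_t(U_t)}\, J_t\, dU_t, \qquad t < \tau_r,
\]
proving the local martingale property.

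For the Girsanov statement, since $J_t$ is merely a local (not necessarily uniformly integrable) martingale, I would apply the Girsanov theorem along a localizing sequence of stopping times strictly below $\tau_r$, which is fine because on $[0, t]$ with $t < \tau_r$ the quantities $|g_t'(z_0)|$, $H_t(U_t)$, and the Brownian loop measure term are all continuous and strictly positive. The theorem then gives
\[
dU_t = \frac{b\, H_t'(U_t)}{H_t(U_t)}\, dt + dW_t, \qquad t < \tau_r,
\]
with $W_t$ a Brownian motion under the weighted measure. This is precisely \eqref{nov7.1} with $D = S_r$ and $w = z_0$. Finally, using $\hat g_t = \Phi_t \circ g_t \circ \Phi^{-1}$ and the second identity in \eqref{dots}, I would verify $d\hat U_t = \Phi_t'(U_t)\, dW_t$, so that under the weighted measure the image curve $\eta_t = \Phi \circ \gamma_t$ is a time change of chordal $SLE_\kappa$ from $0$ to $\infty$ in $\Half$; equivalently, $\gamma_t$ has the law of chordal $SLE_\kappa$ from $0$ to $z_0$ in $S_r$.

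The only point requiring any care is the localization: because the proposition only claims the property up to $\tau_r$, we do not need to worry about uniform integrability at the terminal time, and the ``compensator'' identification between the loop measure term and the Schwarzian integral makes the matching with $M_t$ clean. The restriction $\kappa \le 4$ is not needed for the local martingale property or the Girsanov identification on $[0, \tau_r)$; it only enters later (cf.\ Lemma \ref{locchord}) when one wants to conclude that the weighted curve actually exits $S_r$ at $z_0$.
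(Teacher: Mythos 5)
Your proposal is correct and is essentially identical to the paper's own (one-line) proof: the paper simply declares the proposition to be the particular case of Section \ref{simplesec} with $D = S_r$ and $w = x+ir$, which is exactly the specialization you carry out, including the identification of the loop-measure term with the Schwarzian integral via \eqref{nov4.2.alt} and the Girsanov step \eqref{nov7.1}. Your added remarks on localization below $\tau_r$ and on $\kappa \le 4$ being irrelevant at this stage are accurate and consistent with the paper's later use of Lemma \ref{locchord}.
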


Let $\Prob,\E$ denote expectations in the weighted measure under which
$\gamma$ has the distribution of  $\mu_{S_r}^\#(0,x+ir)$.

  If
$\ell$ is an (unrooted) loop in $S_r$, let
\[               \tilde s(\ell)  = \min\{t: \ell \cap \tilde \gamma_t
  \neq \emptyset \},\]
  \[    s(\ell) = \min\{t: \ell \cap \gamma_t \neq \emptyset\}. \]
It is not hard to show, using the fact that two-dimensional Brownian motion
does not hit points, that the loop measure of the set of loops with
$s(\ell) = \tilde s(\ell) < \infty$ is zero.
Let
\[         \Lambda_t =  \Lambda_t(\gamma_t, r) = 1\{T > t\} \, 
\exp\left\{  m_t \right \} , \]
where $m_t= m_{t,r}(\gamma_t)$ denotes the measure of the set of loops in $S_r$  that
satisfy
\[                  \tilde s (\ell) <  s(\ell) \leq t . \]
Theorem \ref{main1} can be rephrased as follows.

\begin{theorem}
If $\gamma$  has distribution $\mu_{S_r}^\#(0,x+ir)$,
 then 
\begin{equation}  \label{main11}
    \E\left[\Lambda_{\tau_r}^{\cent/2}\right] = V(r,x) . 
    \end{equation}
\end{theorem}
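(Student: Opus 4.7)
The plan is to combine a Feynman--Kac argument under locally chordal $SLE_\kappa$ with a change of measure to chordal $SLE_\kappa$. Let $\Prob_2$ denote the law of locally chordal $SLE_\kappa$ in $S_r$ from $0$ to $z_0 = x+ir$ from Section~\ref{shrinksec}. Under $\Prob_2$ in the annulus parametrization of Section~\ref{anneqsec}, the process $R_u^*$ satisfies precisely \eqref{feb1.1}, which is the SDE for $X_t$ in the theorem statement; Lemma~\ref{locchord} gives $R_{r-}^* = 0$ almost surely. Applying It\^o's formula to $V(r-u, R_u^*)$ and invoking the PDE \eqref{pde} of Proposition~\ref{vprop} shows that
\[
L_u := V(r-u, R_u^*)\,\exp\!\Big\{-2b\int_0^u \funone(r-s, R_s^*)\,ds\Big\}
\]
is a $\Prob_2$-martingale with $L_0 = V(r, x)$ and, using $V(0, 0) = 1$, $L_r = \exp\{-2b\int_0^r \funone\}$. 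In particular this recovers $V(r,x) = \E_{\Prob_2}[\exp\{-2b\int_0^r \funone\}]$.

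To bridge to $\E_\Prob[\Lambda_{\tau_r}^{\cent/2}]$, observe that both $\Prob = \mu_{S_r}^\#(0, x+ir)$ and $\Prob_2$ are Girsanov weightings of chordal $SLE_\kappa$ in $\Half$, respectively by the local martingales $M^{(s)}_t$ of Section~\ref{simplesec} and $M^{(lc)}_t$ of Section~\ref{shrinksec}. Since $M^{(s)}_0 = M^{(lc)}_0$, the ratio
\[
\frac{d\Prob_2}{d\Prob}\bigg|_{\F_t} = C_t^b\,Q_t(U_t)^b\,\Lambda_t^{\cent/2}
\]
is a $\Prob$-martingale on $\{t < T \wedge \tau_r\}$; the $\Lambda^{\cent/2}$ factor is produced by the decomposition $\tilde m_t = m_\Half(\gamma_t, \Half \setminus S_r) + m_t$ of the Schwarzian-based loop measure (loops in $\Half$ hitting $\gamma_t$ first at $s \leq t$ and also meeting $(\Half\setminus S_r) \cup \tilde\gamma_s$ split into those leaving $S_r$ and those staying in $S_r$ but being $\gamma$-bad), combined with the $m_\Half(\gamma_t, \Half \setminus S_r)$ already present in $M^{(s)}_t$. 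A change of variables then gives
\[
\E_\Prob[\Lambda_{\tau_r}^{\cent/2}] = \E_{\Prob_2}\!\Big[(C_{\tau_r}^b\, Q_{\tau_r}^b)^{-1}\Big].
\]

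The heart of the argument is then to establish $\E_{\Prob_2}[(C_{\tau_r}^b Q_{\tau_r}^b)^{-1}] = V(r, x)$, which combined with the Feynman--Kac identity above reduces to linking $(C_u^b Q_u^b)^{-1}$ with $\exp\{-2b\int_0^u \funone\,ds\}$ in $\Prob_2$-expectation at $u = r$. The natural route is to exhibit a $\Prob_2$-martingale connecting the two via It\^o's formula, using the restart identification $\Delta_t = -\funone(r(t), R_t^*)$ of \eqref{tomato} at capacity time $t$, the SDE $dU_t = b\tilde H_t'/\tilde H_t\,dt + d\tilde W_t$ under $\Prob_2$, and the Itô correction from the quadratic variation $(Q'/Q)^2$ of $\log Q_t(U_t)$. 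The main obstacle is precisely this martingale identity: it comes down to an algebraic cancellation between the drift of $\log(C_t Q_t(U_t))$ and the Itô correction, reducing to a relation among the boundary-Poisson-kernel derivatives of $H_t$, $\tilde H_t$ and the explicit form \eqref{fun1} of $\funone$. Uniform integrability in the limit $u \to r^-$ is supplied by $V \leq 1$ (Proposition~\ref{vprop}), Proposition~\ref{feb1.prop1} for the decay of $\funone$ away from the boundary singularity, and the restriction $\kappa \leq 4$, in analogy with the simply-connected argument culminating in \eqref{jan31.2}.
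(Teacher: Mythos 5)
Your route is, in substance, the paper's route: the Radon--Nikodym derivative of locally chordal $SLE_\kappa$ with respect to $\mu_{S_r}^\#(0,x+ir)$ is exactly the local martingale $N_t=\Lambda_t^{\cent/2}Q_t^bK_t^{ab}$ of Section \ref{crosssec} (your $C_t^b\,Q_t(U_t)^b\,\Lambda_t^{\cent/2}$, with the same loop-measure decomposition $\tilde m_t=m_\Half(\gamma_t,\Half\setminus S_r)+m_t$), and the theorem is reduced to a $\Prob_2$-expectation evaluated by the Feynman--Kac martingale $V(r-u,R_u^*)\exp\{-2b\int_0^u\funone\}$; the paper merely packages the two steps as the single $\Prob$-local martingale $M_t=\Lambda_t^{\cent/2}Q_t^bV_t=N_tO_t$ and stops it at $\tau_r$. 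But the step you call ``the heart of the argument'' --- a further martingale identity linking $(C_u^bQ_u^b)^{-1}$ to $\exp\{-2b\int_0^u\funone\}$ via an algebraic cancellation among Poisson-kernel derivatives --- is not where any work lies, and no such cancellation is needed. By construction $C_t=\exp\{\int_0^t\Delta_s\,ds\}$ with $\Delta_s=\dot Q_s(U_s)/Q_s(U_s)$, and \eqref{tomato} (Lemma \ref{estimatetwo}) identifies this rate, rewritten in the annulus parametrization, as the $\funone$ integrand; combined with the deterministic facts $Q_{\tau_r-}=1$ and $V(0,0)=1$ for a curve reaching $z_0$ with $\gamma\cap\tilde\gamma=\emptyset$, this gives the \emph{pathwise} identity $(C_{\tau_r}Q_{\tau_r})^{-b}=\exp\{-2b\int_0^{r}\funone(r-s,R_s^*)\,ds\}$, whose $\Prob_2$-expectation is $V(r,x)$ by the definition \eqref{vdef} --- that is, by the very martingale $L_u$ you already constructed in your first paragraph. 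Your own pieces close the argument; there is no residual identity to verify.

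The genuine gap is the step you relegate to the final sentence: justifying the change of measure at the terminal time, equivalently the optional stopping $\E[M_{\tau_r}]=M_0$. Here $N_t$ and $M_t$ are only local martingales and $\Lambda_{\tau_r}^{\cent/2}$ is unbounded for $\cent>0$, so ``in analogy with \eqref{jan31.2}'' does not suffice. The paper's mechanism is to weight by the \emph{full} martingale $M_t=\Lambda_t^{\cent/2}Q_t^bV_t$ and show that under the resulting measure $\Prob'$ one still has $\tau_r<T$ and $R_{r-}^*=0$ almost surely, which supplies the uniform integrability. This is precisely where the monotonicity of $V$ in $|x|$ (Proposition \ref{vprop}, resting on the monotonicity of $\funone$ in Proposition \ref{feb1.prop1}) enters: the additional drift $\kappa V'/V$ from \eqref{dec11.1} points toward the origin, so the $\Prob'$-process is at least as well behaved as locally chordal $SLE_\kappa$, to which Lemma \ref{locchord} applies. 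You cite the right ingredients ($V\le 1$, Proposition \ref{feb1.prop1}, $\kappa\le 4$) but must make this weighting argument explicit; it is the only place the theorem can actually fail.
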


We will prove \eqref{main11} in a series  of  propositions.
Recall the definition of $\funone$ from \eqref{fun1}.
  Let
 \[    R_t = \Re[h_t(z_0)] - U_t , \;\;\;\;
   V_t =  V(r(t),R_t) , \]
\[   Q_t = Q_{S_{r} \setminus \gamma_t}(\gamma(t),z_0; S_r \setminus
 \hat \gamma_t),\;\;\;\;
  K_t = \exp \left\{2\int_0^t \dot r(s) \, \funone(r(s), R_{s})
 \, ds\right\}. \]
\begin{equation}  \label{nt}
  N_t = \Lambda_t^{\cent/2} \, Q_t^b \, K_t^{ab} , \;\;\;\;
  O_t = K_t^{-ab} \, V_t , 
\end{equation}
\[   M_t = N_t \, O_t =  \Lambda_t^{\cent/2} \, Q_t^b \, V_t . \]

By conformal invariance, 
\[  H_{\p g_t( S_{r} \setminus \hat \gamma_t)}(U_t,
             g_t(z_0))  = Q_t \, H_{\p g_t(S_r\setminus \gamma_t)}(U_t,g_t(z_0)).\]
Therefore,
\[   \phi_t'(U_t) \, |\phi_t'(g_t(z_0))| \,
 H_{\p h_t( S_{r}\setminus \hat \gamma_t)}(U_t^*,h_t(z_0)) =
              Q_t \, H_{\p g_t(S_r \setminus \gamma_t)}(U_t,g_t(z_0)), \]
              and hence
\[   \phi_t'(U_t) \, |h_t'(z_0)| \,
 H_{\p h_t( S_{r}\setminus \hat \gamma_t)}(U_t^*,h_t(z_0))  = |g_t'(z_0)|\, 
              Q_t \, H_{\p g_t(S_r \setminus \gamma_t)}(U_t,g_t(z_0)).\]
Therefore, we can write
\[ J_t \, N_t =    C_t(z_0) \,
 H_{\p h_t( S_{r}\setminus \hat \gamma_t)}(U_t^*,h_t(z_0)) ^b
     , \]
   where
   \[  C_t(z_0) =   \phi_t'(U_t)^{-b} \,  |h_t'(z_0)|^b  
     \,  \,\exp \left\{ - \frac \cent
      2 \,  m_\Half(\gamma_t,\Half \setminus S_r) \right\}\,
   \Lambda_t^{\cent/2} \, K_t^{ab},  
   \]          
Important observations are that $C_t(z_0)$ is $C^1$ in $t$ and
$C_t(z_0) = C_t(z_0 + 2\pi)$.

\begin{lemma}  \label{estimatetwo}
Suppose $\gamma$ is a parametrized
with $\hcap[\gamma(0,t]] = at$. Let $\tilde \gamma_t$
be as above and $Q_t =Q_{S_r \setminus \gamma_t}(\gamma(t),x+ir;S_r
\setminus \hat \gamma_t) $. 
Then
\[  \p_t Q_t \mid_{t=0}
  =  -a\funone(r,x). \]
\end{lemma}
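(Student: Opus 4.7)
My plan is to reduce to the derivation of \eqref{tomato} sketched in the paragraph following it, carrying through an extra factor $a$ from the parametrization $\hcap[\gamma_t]=at$, and controlling the small difference between the two setups. I first write $Q_t = 1 - P_t$, where $P_t$ is the probability that a Brownian excursion in $S_r \setminus \gamma_t$ from $\gamma(t)$ to $x+ir$ meets $\tilde\gamma_t$, and let $p_k(t)$ denote the probability of hitting $\gamma_t + 2\pi k$. Since the translates are pairwise well separated and each has diameter $o(1)$, an inclusion/exclusion bound gives $P_t = \sum_{k\ne 0} p_k(t) + O(t^2)$, with summable control from the decay of the boundary Poisson kernel.

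Next, by conformal invariance of excursion measure, applying $g_t$ converts $p_k(t)$ into the probability that a Brownian excursion in $g_t(S_r)$ from $U_t$ to $g_t(x+ir)$ hits $g_t(\gamma_t+2\pi k)$. As $t\downarrow 0$, we have $g_t(S_r) \to S_r$, $U_t \to 0$, $g_t(x+ir) \to x+ir$, and $g_t(\gamma_t+2\pi k)$ is a small curve clustered near $2\pi k$. Additivity of half-plane capacity,
\[
\hcap[\gamma_t \cup (\gamma_t+2\pi k)] = \hcap[\gamma_t] + \hcap[g_t(\gamma_t+2\pi k)],
\]
combined with the fact that two well-separated small slits interact only to $O(t^2)$, yields $\hcap[g_t(\gamma_t+2\pi k)] = at + o(t)$ for each $k$.

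I then invoke the computation in the paragraph after \eqref{tomato}: for a small curve of hcap $\approx at$ rooted near $2\pi k$, the probability that a Brownian excursion (effectively from $0$ to $x+ir$ in $S_r$) hits it equals
\[
p_k(t) = at\cdot \frac{H_{\p S_r}(0,2\pi k)\,H_{\p S_r}(2\pi k, x+ir)}{H_{\p S_r}(0, x+ir)} + o(t),
\]
with error summable in $k$ by the exponential decay of $H_{\p S_r}$. Summing over $k \ne 0$ and using the definition \eqref{fun1} of $\funone(r,x)$ gives $P_t = at\cdot\funone(r,x) + o(t)$, so $\dot Q_t|_{t=0} = -a\,\funone(r,x)$.

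The main obstacle is making the $o(t)$ remainder uniform enough in $k$ that the summed error is still $o(t)$. This calls for quantitative comparison, for each bounded $k$, of the boundary Poisson kernels of $g_t(S_r)$ versus $S_r$ (via the Loewner expansion $g_t(z)=z+at/z+O(t^2|z|^{-2})$) together with a small-$t$ description of the shape of $g_t(\gamma_t+2\pi k)$ in terms of its hcap. The super-exponential decay of $H_{\p S_r}(0,\cdot)$ handles the tail $|k|\ge K$ automatically, so the detailed estimates only need to hold for finitely many $k$.
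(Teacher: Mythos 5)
Your proposal is correct and follows essentially the same route as the paper: the paper's proof of this lemma is simply a reference to \eqref{tomato}, whose derivation at the end of Section \ref{fradsec} is exactly the per-translate computation you give (hitting probability of $\gamma_t+2\pi k$ $\approx \hcap[\gamma_t]\cdot H_{\p S_r}(0,2\pi k)H_{\p S_r}(2\pi k,x+ir)/H_{\p S_r}(0,x+ir)$, summed over $k\neq 0$), with the factor $a$ entering through $\hcap[\gamma_t]=at$. Your added attention to uniformity of the $o(t)$ error in $k$ and to the $O(t^2)$ interaction between translates fills in details the paper leaves implicit, but does not change the argument.
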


% 
% 
% \begin{proof}  For ease we assume $a=1$.
% In this proof, ``up to small error'' means up
% to an error of $o(t)$ as $t \rightarrow 0$.  
%Up to small error,
% $1-Q_t$ is the probability that
% a Brownian excursion from $0$ to $x + ir$ in $S_r$ hits a 
%  half disk of the form
% \[           R_{k,t} = 
%        \{w \in \Half: |w-2\pi k| \leq \sqrt t \} \;\;\;\; k \in \Z \setminus
% \{0\}\]
% Up to small error, this is the sum of the probabilities for each
% slit.  If $\sigma_{k,t}$ denotes the first time that one hits the slit
% or the real line, then we know that up to small error, for
% $w$ near the origin.
% \[  \E^w[\Im(B_{\sigma(k,t)})] = H_{S_r}(w,2\pi k) \, t. \]
% Using this, we see that the probability that the excursion
% hits the disk up to small error is
% \[    \frac{ H_{\p S_r}(0,2 \pi k)\, H_{\p S_r}(2 \pi k,z_0)}
%            {H_{\p S_r}(0,z_0)} . \]
% 
% 
% \end{proof}

\begin{proof}  See \eqref{tomato}. \end{proof}

\begin{proposition} \label{prop1}   $\;$

 \begin{itemize}
 \item
  $N_t$ is a local martingale with respect
  to $\Prob$ for
$t < T \wedge \tau_r$.  In particular, $J_t \, N_t$ is
a $\hat \Prob$-local martingale.
\item With respect to $\Prob^*$, the curve $\gamma$
at time $t$ grows
like $SLE_\kappa$ from $\gamma(t)$ to $z_0$ in $\tilde S_{t,r}$.
\end{itemize}
\end{proposition}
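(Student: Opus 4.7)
The plan is to reduce the proof to the shrinking-domain, locally chordal $SLE_\kappa$ machinery of Section \ref{shrinksec} by means of the displayed identity
\[
J_t N_t \;=\; C_t(z_0)\, H_{\p h_t(\hat S_{r,t})}\!\bigl(U_t^*, h_t(z_0)\bigr)^b,
\]
where $C_t(z_0)$ is $C^1$ in $t$ and therefore of bounded variation. I first verify the identity by applying the boundary-Poisson-kernel scaling rule to $\phi_t : g_t(\hat S_{r,t}) \to S_{r(t)}$, which gives
$\tilde H_t(U_t) = \phi_t'(U_t)\,|\phi_t'(g_t(z_0))|\,H_{\p S_{r(t)}}(U_t^*, h_t(z_0))$; combined with $\tilde H_t(U_t) = Q_t H_t(U_t)$ and the chain rule $|h_t'(z_0)| = |\phi_t'(g_t(z_0))|\,|g_t'(z_0)|$, the $|g_t'(z_0)|^b$ factor in $J_t$ cancels against its counterpart from scaling $H_t$, and the remaining factors assemble into $C_t(z_0)$.

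Second, I compute $d[J_t N_t]$ by It\^o's formula under $\hat \Prob$, where $U_t$ is a standard Brownian motion. Because $C_t(z_0)$ has bounded variation, the only quadratic-variation contribution comes from the kernel factor $H_{\p h_t(\hat S_{r,t})}(U_t^*, h_t(z_0))^b$, and vanishing of the total drift is precisely the calculation carried out in Section \ref{shrinksec}: the drift $-b\Delta_t\,dt$ produced by differentiating $Q_t^b$ (with $\Delta_t = \dot Q_t(U_t)/Q_t(U_t)$) is absorbed by $K_t^{ab}$ through the conformal-invariance-extended form of Lemma \ref{estimatetwo} together with $\dot r(t) = -a/2$ from Lemma \ref{estimateone}, while the Schwarzian drift from applying It\^o to $H^b$ is absorbed by the loop-measure factor $\Lambda_t^{\cent/2}$ via the bubble decomposition \eqref{nov4.2.alt}. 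Therefore $J_t N_t$ is a $\hat \Prob$-local martingale. Since $J_t$ is itself a positive $\hat \Prob$-local martingale whose Girsanov weighting produces $\Prob = \mu_{S_r}^\#(0, z_0)$ (the simply-connected construction recalled in Section \ref{simplesec}), the quotient $N_t = (J_t N_t)/J_t$ is a $\Prob$-local martingale, which is the first bullet.

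For the second bullet, Girsanov applied to the weighting of $\hat \Prob$ by $J_t N_t$ (equivalently, weighting $\Prob$ by $N_t$) shows that $U_t$ acquires drift $b\,\p_u\log H_{\p h_t(\hat S_{r,t})}(u, h_t(z_0))\big|_{u=U_t^*}\,\phi_t'(U_t)\,dt$, which by the chain rule equals $b\,\p_u\log H_{\p g_t(\hat S_{r,t})}(u, g_t(z_0))\big|_{u=U_t}\,dt$. This is exactly the drift that produces chordal $SLE_\kappa$ from $U_t$ to $g_t(z_0)$ in the simply connected slit domain $g_t(\hat S_{r,t})$ via the Girsanov formula \eqref{nov7.1}, so pulling back by $g_t^{-1}$ shows that $\gamma$ grows locally like $SLE_\kappa$ from $\gamma(t)$ to $z_0$ in $\hat S_{r,t}$, that is, as locally chordal $SLE_\kappa$ in the annulus. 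The main technical obstacle is the drift-cancellation in step two: one must make the conformal-invariant extension of Lemma \ref{estimatetwo} from $t=0$ to arbitrary $t$ explicit enough that the coefficients in $d\log K_t^{ab}$, $b\Delta_t$, and the Schwarzian/bubble term from $\Lambda_t^{\cent/2}$ combine to zero, which is the same bookkeeping executed for the simply-connected locally chordal setup at the end of Section \ref{shrinksec}.
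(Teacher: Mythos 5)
Your proposal is correct and follows the paper's own route: the paper's proof of this proposition is literally the one-line reduction ``this is a particular case of Section \ref{shrinksec},'' and your argument simply makes that reduction explicit --- the identity $J_tN_t = C_t(z_0)\,H_{\p h_t(\hat S_{r,t})}(U_t^*,h_t(z_0))^b$ with $C_t(z_0)$ of bounded variation, the drift cancellation between $Q_t^b$, $K_t^{ab}$, and $\Lambda_t^{\cent/2}$, and the Girsanov identification of the weighted drift with that of chordal $SLE_\kappa$ in the slit domain. Nothing in your write-up departs from, or adds a gap to, the argument the paper intends.
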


\begin{proof}  This is a particular case of
Section \ref{shrinksec}.
\end{proof}

Let  $\Prob^*, \E^*$ denote the probabilities and expectations
 obtained from $\Prob$ by weighting by the local martingale $N_t$.
This is the same as the measure obtained from $\tilde \Prob$
by weighting by $J_t \, N_t$.  We have seen that this
is locally chordal $SLE_\kappa$ and we can consider the  
 path in the annulus parametrization.

%\begin{definition}  Locally chordal $SLE_\kappa$ from $0$
%to $z_0 = x + ir$ in $S_r$ is the solution to the annulus
%Loewner equation with $U_s^* = Z_{r-s}$
%where 
%\[         dZ_{s} =  \kappa \, J_{s} \, ds - \sqrt{\kappa}
% \, dB_s , \]
%where
%\[         J_s = \frac{\tmass_{S_{r-s}}'(U_{r-s}^*, h_{r-s}^*(z_0))}
%                 {\tmass_{S_{r-s}}(U_{r-s}^*, h_{r-s}^*(z_0))}
%   = b \, \funthree(r-s,R^*_s), \;\;\;\;
%  R_s^* = h_{r-s}^*(z_0) - Z_s^*. \]
%\end{definition}
%
%It follows from \eqref{anneq} that
%\[   \p_s h_{r-s}^*(z_0) = \zhan_I(r-s,R_s^*),\]
%and hence
%\[    dR_t^* = [\zhan_I(r-t,R_t^*) - 
%            b \, \kappa \, \funthree(r-t,R_t^*)] \, dt + 
% \sqrt \kappa \, d B_t.\]
%In Lemma \ref{locchord}, we showed directly from the
%differential equation that if $\kappa \leq 4$,
%then with probability, $R_r^*=0$ ($\kappa = 4$ is
%the most delicate case).  
%
% 

\begin{proposition}  Suppose $V$ is as defined
in \eqref{vdef}.
Then
\[        M_t^* = \exp\left\{-2b\int_0^t \, \funone(r-s, R_s^*)
 \, ds\right\} \, V(r-t, R_t^*) , \]
is a local martingale satisfying
\begin{equation}  \label{dec11.1}
   dM_t^* = \sqrt \kappa \,  \frac{V'(r-t,R_t^*)}{
        V(r-t,R_t^*)}\, M_t^* \, dB_t.
        \end{equation}
Moreover, if we weight by the local martingale using
Girsanov theorem then with probability one in the
weighted measure, $R_{r-}^* = 0$.
\end{proposition}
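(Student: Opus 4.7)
For the local martingale property, I would apply It\^o's formula to the product $M_t^* = A_t \cdot V(r-t, R_t^*)$ where $A_t = \exp\{-2b \int_0^t \funone(r-s, R_s^*)\, ds\}$. Since $dA_t = -2b \funone(r-t, R_t^*) A_t\, dt$, and the $R$-dynamics \eqref{feb1.1} combined with It\^o applied to $V(r-t, \cdot)$ give
\[
dV(r-t, R_t^*) = \left[-\dot V + (\zhan_I - b\kappa \funthree) V' + \tfrac{\kappa}{2} V''\right](r-t,R_t^*) \, dt + \sqrt{\kappa}\, V'(r-t,R_t^*)\, dB_t,
\]
the PDE \eqref{pde} satisfied by $V$ collapses the bracketed drift to $2b \funone V$. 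Substituting into the product rule $dM_t^* = V \, dA_t + A_t\, dV(r-t, R_t^*)$, the two $2b \funone V$ contributions cancel exactly, producing \eqref{dec11.1}.

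For the convergence statement, I would apply Girsanov along a suitable localization so that in the weighted probability the process $d\tilde B_t = dB_t - \sqrt{\kappa}\, V'(r-t,R_t^*)/V(r-t,R_t^*)\, dt$ is a standard Brownian motion. The drift of $R_t^*$ in the new measure becomes
\[
\zhan_I(r-t, R_t^*) - b\kappa\, \funthree(r-t, R_t^*) + \kappa\, \frac{V'(r-t, R_t^*)}{V(r-t, R_t^*)}.
\]
By Proposition \ref{vprop}, $V(r, \cdot)$ is monotone in $|x|$ with maximum at $0$, so the extra drift $\kappa V'/V$ always points toward the origin and therefore only reinforces the confining behaviour of $\zhan_I - b\kappa\, \funthree$ analyzed in the proof of Lemma \ref{locchord}. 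I would then conclude $R_{r-}^* = 0$ by rerunning the argument of Lemma \ref{locchord} essentially verbatim: the $\limsup$-control iteration that forces the process out of a neighbourhood of each $2\pi k$ continues to work, and the additional drift only accelerates the descent through the fundamental domain.

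The main technical obstacle is justifying Girsanov in the presence of coefficients that become singular as $t \uparrow r$ and in light of the a priori local-martingale (not martingale) nature of $M_t^*$. The natural remedy is to introduce $\sigma_\epsilon = \inf\{t : |R_t^*| \geq \pi - \epsilon\} \wedge (r - \epsilon)$, verify that $M^*_{t \wedge \sigma_\epsilon}$ is a bounded martingale using the exponential decay \eqref{oct7.2} of $\funone$ together with the boundedness and positivity of $V$ from Proposition \ref{vprop}, apply Girsanov on $[0, \sigma_\epsilon]$, and then pass to the limit $\epsilon \downarrow 0$. The confinement estimate in the previous paragraph, combined with Lemma \ref{dec2.lemma5}, then shows $\sigma_\epsilon \uparrow r$ and $R_{\sigma_\epsilon}^* \to 0$ almost surely in the weighted measure.
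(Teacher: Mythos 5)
Your proof is correct in substance and follows the same route as the paper: the paper's proof of the SDE is exactly the It\^o/product-rule computation you spell out (with the PDE \eqref{pde} cancelling the drift against $dA_t$), and the convergence claim is obtained, as you do, by observing that the unweighted process already satisfies $R_{r-}^*=0$ by Lemma \ref{locchord} and that the extra Girsanov drift $\kappa V'/V$ points toward the origin because $V$ is decreasing in $|x|$. The one concrete misstep is your localization $\sigma_\epsilon=\inf\{t:|R_t^*|\geq\pi-\epsilon\}\wedge(r-\epsilon)$: the process is not confined to $(-\pi,\pi)$ (the starting point $x$ need not lie there, and even when it does the proof of Lemma \ref{locchord} explicitly allows excursions past multiples of $2\pi$), so the claim $\sigma_\epsilon\uparrow r$ fails; you should instead localize only in $t$ (and, if needed, at $|R_t^*|\geq N$ with $N$ large), which suffices since the drift comparison with the unweighted process controls the path on all of $\R$.
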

 
\begin{proof}  The relation \eqref{dec11.1}
follows immediately from It\^o's formula.
For the second claim, we note that in the unweighted
measure we have $R_{r-}^* = 0$.   
Since $V$ is decreasing in
$|x|$, the additional drift given by the weighting points
toward the origin.
\end{proof}

\begin{proposition}  Suppose $\gamma$ is a simple curve
in $S_r$ from $0$ to $z_0$ with $T > \tau_r$.  Then,
\[      M_{\tau-} = \Lambda_{\tau-}^{\cent/2}  \in (0,\infty).\]
\end{proposition}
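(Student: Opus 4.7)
The plan is to invoke the factorization $M_t = \Lambda_t^{\cent/2} \, Q_t^b \, V_t$ from \eqref{nt} and show separately that $Q_t \to 1$, $V_t \to 1$, and $\Lambda_t$ converges to a positive finite limit, as $t \uparrow \tau_r$.

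For $Q_t$: by definition it is the probability that a Brownian excursion in $S_r \setminus \gamma_t$ from $\gamma(t)$ to $z_0$ avoids $\tilde\gamma_t$. Since $T > \tau_r$, the set $\tilde\gamma$ has positive distance from $z_0$, and $\gamma(t) \to z_0$ as $t \uparrow \tau_r$, so the excursion localizes in an arbitrarily small neighborhood of $z_0$ (scaling of excursion measure between coalescing boundary points) and therefore $Q_t \to 1$. For $V_t = V(r(t), R_t)$: the annulus modulus $r(t)$ decreases continuously to $0$ as $\eta_t$ reaches $C_r$, and $R_t = \Re[h_t(z_0)] - U_t \to 0$, because $g_t(\gamma(t)) = U_t$ together with $\gamma(t) \to z_0$ force $g_t(z_0) - U_t \to 0$, while $\phi_t(U_t) = U_t$ and Koebe-type distortion bounds for $\phi_t$ near $U_t$ propagate this to $h_t(z_0) - U_t \to 0$. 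By Proposition \ref{vprop}, $V$ is continuous on $[0,\infty)\times(-\pi,\pi)$ with $V(0,0)=1$, giving $V_t \to 1$.

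For the loop factor: since $T > \tau_r$, the indicator in the definition of $\Lambda_t$ is identically $1$ on $[0,\tau_r)$, so the limit equals $\exp\{m_{\tau_r-}\}$. I would bound $m_{\tau_r-}$ by the $m_{S_r}$-measure of loops intersecting both $\gamma$ and $\tilde\gamma$, and then by a union bound $\sum_{k \ne 0} m_{S_r}(\gamma, \gamma + 2\pi k)$. Each summand is finite because $\gamma$ and $\gamma + 2\pi k$ are disjoint compact sets in $S_r$. Summability in $k$ follows from exponential decay of the Brownian loop measure in the strip between sets at horizontal separation $\asymp 2\pi|k|$, a consequence of the Gaussian heat kernel estimate in $S_r$ whose large-horizontal-distance tail decays like $e^{-\pi|\Re(z-w)|/r}$. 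Hence $\Lambda_{\tau_r-} \in (0,\infty)$, and combining with the first two limits gives $M_{\tau_r-} = \Lambda_{\tau_r-}^{\cent/2} \in (0,\infty)$.

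The main technical obstacle is the quantitative loop-measure bound needed for the last step; the convergences $Q_t \to 1$ and $V_t \to 1$ reduce to standard localization of the excursion measure and continuity/distortion estimates for the annulus Loewner maps, respectively, both of which only use the geometric hypothesis $T > \tau_r$.
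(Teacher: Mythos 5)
Your proposal is correct and follows essentially the same route as the paper: factor $M_t = \Lambda_t^{\cent/2}\,Q_t^b\,V_t$, show $Q_{\tau_r-}=1$ and $(r(t),R_t)\to(0,0)$ so that $V_{\tau_r-}=1$ by the continuity statement in Proposition \ref{vprop}, and deduce $0<\Lambda_{\tau_r-}<\infty$ from $\dist(\gamma_{\tau_r},\tilde\gamma_{\tau_r})>0$. The paper compresses all of this into ``easy estimates''; your union bound over translates with exponential decay of the strip loop measure is exactly the kind of justification being elided.
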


\begin{proof}  Easy estimates show that under the assumptions,
$Q_{\tau - } = 1$, $r(\tau-) = 0$,
 $R_{\tau - } = 0$.  Proposition
\ref{vprop}, then gives $V_{\tau-}=1$.
The assumptions also imply that $\dist(\gamma_\tau,
\tilde \gamma_\tau) > 0$, which implies 
$0 < \Lambda_{\tau-} < \infty$.
\end{proof}

\begin{proposition}
$O_t, t < \tau_r \wedge T$ is a local martingale with respect to $\Prob^*$.
In particular, $M_t = N_t \, O_t, t < \tau_r \wedge T$ is a local martingale
with respect to $\Prob$, and $J_t \,M_t$ is a local martingale with
respect to $\hat \Prob$.
\end{proposition}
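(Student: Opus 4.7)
The plan is to prove the central claim that $O_t$ is a $\Prob^*$-local martingale on $[0, T\wedge \tau_r)$ by combining It\^o's formula with the Feynman--Kac PDE \eqref{pde} satisfied by $V$, and then to deduce the two ``in particular'' statements from standard Girsanov bookkeeping. By Proposition \ref{prop1}, $N_t$ is a $\Prob$-local martingale with $N_0=1$, and the measure $\Prob^*$ is defined as the Girsanov transform of $\Prob$ by $N_t$. Consequently, for any adapted process $X_t$ on $[0,T\wedge\tau_r)$, $X_t$ is a $\Prob^*$-local martingale if and only if $N_tX_t$ is a $\Prob$-local martingale. Taking $X_t=O_t$ gives that $M_t=N_tO_t$ is a $\Prob$-local martingale. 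Applying the same principle once more with $J_t$ playing the role of Girsanov density between $\hat\Prob$ and $\Prob$ (Proposition \ref{prop1} also asserts that $J_tN_t$ is a $\hat\Prob$-local martingale, and $N_t$ is by construction the density from $\Prob$ to $\Prob^*$) yields that $J_tM_t$ is a $\hat\Prob$-local martingale.

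For the core claim, I would start from the fact that under $\Prob^*$ the curve $\gamma$ grows like locally chordal $SLE_\kappa$ from $\gamma(t)$ to $z_0$ in $\hat S_{r,t}$ (the second bullet of Proposition \ref{prop1}). This is precisely the process analyzed in Section \ref{anneqsec}, so in the annulus parametrization $R_u^*$ satisfies the SDE \eqref{feb1.1}. Since $\dot r \equiv -a/2$ by Lemma \ref{estimateone}, the original-time coordinate $R_t$ is a deterministic linear time change of $R_u^*$ with $u = r-r(t)$, from which I can write down an explicit SDE for $R_t$ under $\Prob^*$ driven by a $\Prob^*$-Brownian motion $B_t$.

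It\^o's formula applied to $V(r(t),R_t)$, together with the PDE \eqref{pde}, collapses the drift into a scalar multiple of $\funone(r(t),R_t)V(r(t),R_t)\,dt$; the multiplier is fixed by the time-change factor $-\dot r(t)=a/2$. The product rule for $O_t=K_t^{-ab}V(r(t),R_t)$, together with $d\log K_t = 2\dot r(t)\funone(r(t),R_t)\,dt$, contributes a second drift of the same form whose coefficient is $-ab$ times the same $2\dot r\funone V$. The exponent $-ab$ is chosen exactly so that these two drifts cancel identically, leaving only the It\^o martingale term proportional to $K_t^{-ab}V'(r(t),R_t)\,dB_t$. Hence $O_t$ is a $\Prob^*$-local martingale on $[0,T\wedge\tau_r)$.

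The main obstacle is the careful bookkeeping of the time change between the original time and the annulus elapsed time, because \eqref{pde} is naturally phrased in the annulus parametrization whereas $K_t$, $\Lambda_t$ and $O_t$ are written in the original time. Once $du = (a/2)\,dt$ is used consistently, the cancellation is automatic: \eqref{pde} was derived precisely so that the annulus-parametrized $M_u^*$ of the preceding proposition is a local martingale, and the exponent in $K_t^{-ab}$ is the unique choice that restores the martingale property after unwinding the time change to the original clock.
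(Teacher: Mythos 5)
Your proposal follows essentially the same route as the paper: the paper first proves the annulus-parametrized statement (the proposition asserting that $M_t^*$ is a local martingale) via It\^o's formula combined with the Feynman--Kac PDE \eqref{pde} for $V$, and then disposes of the present proposition in one line as ``a restatement of the previous proposition in the original parametrization,'' with the two ``in particular'' clauses being exactly the Girsanov bookkeeping you describe. The one inaccuracy in your write-up is the claim that $\dot r \equiv -a/2$ and hence that $R_t$ is a \emph{deterministic linear} time change of $R_u^*$: Lemma \ref{estimateone} only gives $\dot r(0) = -a/2$, and for $t>0$ one has $\dot r(t) = -\frac{a}{2}\,\Phi_t'(U_t)^2$, so the time change $t \mapsto r - r(t)$ is random. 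This happens not to damage the argument --- the It\^o drift of $V(r(t),R_t)$ and the drift of $\log K_t^{-ab}$ each carry the common factor $\dot r(t)$, so the cancellation is homogeneous in $\dot r$, and local martingality is in any case preserved under continuous adapted time changes --- but the ``explicit SDE for $R_t$'' you would write down under the constant-rate assumption is not the correct one, and the cleanest fix is precisely the paper's: do the It\^o computation in the annulus clock, where the SDE \eqref{feb1.1} has unit rate, and then reparametrize.
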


\begin{proof}  This is a restatement of the previous
proposition in terms of
 the original parametrization. \end{proof}

Let $\Prob'$ denote the probability measure
obtained from weighting by the local martingale $M_t$.  

\begin{proposition}  With
$\Prob'$ probability one, $\tau_r < T$ and 
\begin{equation}  \label{oct15.2}
             M_0 = V(r,x) , \;\;\;\;
             M_{\tau_r} = \Lambda_{\tau_r}^{\cent/2} ,
\end{equation}
In particular,
\[    V(r,x) = M_0 = \E\left[M_{\tau_r}\right]
  = \E\left[ \Lambda_{\tau_r}^{\cent/2} \right]. \]
\end{proposition}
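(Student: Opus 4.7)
The plan is to verify \eqref{oct15.2} by computing $M_t=\Lambda_t^{\cent/2}Q_t^b V_t$ at its two endpoints and then extracting $V(r,x)=\E[\Lambda_{\tau_r}^{\cent/2}]$ from the fact that $M_t$ is a nonnegative local martingale under $\Prob$ whose Girsanov shift produces the probability measure $\Prob'$.

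First I would compute $M_0$. At $t=0$, $\gamma_0=\{0\}$ has zero half-plane capacity, so no loop $\ell$ can satisfy $\tilde s(\ell)<s(\ell)\leq 0$ and hence $\Lambda_0=1$. The set $\hat\gamma_0\setminus\{0\}=\{2\pi k:k\in\Z\setminus\{0\}\}$ is a countable set of boundary points, which a Brownian excursion from $0$ to $z_0=x+ir$ in $S_r$ avoids almost surely, so $Q_0=1$; together with $V_0=V(r(0),R_0)=V(r,x)$ this gives $M_0=V(r,x)$.

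Next, I would show $\tau_r<T$ with $\Prob'$-probability one. The measure $\Prob'$ is the two-step Girsanov shift of $\Prob$ by first weighting by $N_t$ to obtain $\Prob^*$ (under which $\gamma$ grows as locally chordal $SLE_\kappa$, governed by the one-dimensional SDE \eqref{feb1.1}) and then weighting by $O_t$. Up to the reparametrization $t\mapsto\sigma_t$, the local martingale $O_t$ agrees with $M_t^*$, so the preceding proposition states exactly that $R_{r-}^*=0$ $\Prob'$-almost surely. Since $V(r,\cdot)$ is even and decreasing in $|x|$ by Proposition \ref{vprop}, the additional Girsanov drift $\sqrt\kappa\, V'/V$ pushes $R_t^*$ toward zero, reinforcing this conclusion. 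Thus under $\Prob'$ the modulus $r(t)$ decreases to $0$ in finite real time, the curve reaches the top of $S_r$ exactly at $z_0$, and it does so without any translate collision, i.e., $\tau_r<T$ and $\gamma(\tau_r)=z_0$.

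Having done that I would compute $M_{\tau_r}$. Under $\Prob'$ we have $r(\tau_r)=0$ and $R_{\tau_r-}=0$, and since $V(0+,x)\equiv 1$ for $|x|<2\pi$ (by \eqref{oct7.2} and Lemma \ref{locchord}) we get $V_{\tau_r-}=1$. Because $\tau_r<T$, $\mathrm{dist}(\gamma_{\tau_r},\tilde\gamma_{\tau_r})>0$, which on the one hand gives $0<\Lambda_{\tau_r}<\infty$ and on the other hand shows $S_r\setminus\hat\gamma_{\tau_r}$ contains a fixed neighborhood of $z_0$, so the Brownian excursion ratio $Q_{\tau_r-}=1$. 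Therefore $M_{\tau_r}=\Lambda_{\tau_r}^{\cent/2}\cdot 1^b\cdot 1=\Lambda_{\tau_r}^{\cent/2}$.

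Finally, the identity $V(r,x)=M_0=\E[M_{\tau_r}]=\E[\Lambda_{\tau_r}^{\cent/2}]$ will follow from optional stopping once $(M_{t\wedge\tau_r})_{t\geq 0}$ is shown to be a uniformly integrable $\Prob$-martingale. The hard part is precisely this uniform integrability step: $M_t$ is a nonnegative local martingale, and by Fatou we get $\E[M_{\tau_r}]\leq M_0$, but the reverse inequality requires no mass to escape at time $\tau_r$. The bounds $V\leq 1$ and $Q_t\leq 1$ from Proposition \ref{vprop} reduce the problem to controlling $\Lambda_t^{\cent/2}$; for $\kappa\leq 4$ one has $\cent\leq 1$, and a localization by stopping the curve at distance $\varepsilon$ from $\tilde\gamma$ combined with the loop-measure estimates of Section \ref{shrinksec} dominates $M_{t\wedge\tau_r}$ by an integrable random variable, letting $\varepsilon\downarrow 0$ to conclude $L^1$-convergence and thereby \eqref{oct15.2}.
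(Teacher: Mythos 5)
Your computations of the endpoint values are correct and match the paper: $\Lambda_0=Q_0=1$ gives $M_0=V(r,x)$, and the facts $r(\tau_r-)=0$, $R_{\tau_r-}=0$, $Q_{\tau_r-}=1$, $V_{\tau_r-}=1$ together with $\dist(\gamma_{\tau_r},\tilde\gamma_{\tau_r})>0$ give $M_{\tau_r}=\Lambda_{\tau_r}^{\cent/2}$. Your argument that $\tau_r<T$ holds $\Prob'$-almost surely (the extra Girsanov drift $\kappa V'/V$ points toward the origin because $V$ is decreasing in $|x|$, so Lemma \ref{locchord} applies a fortiori) is also exactly the paper's argument.

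The gap is in your final step. You correctly identify uniform integrability as the crux, but the route you propose --- ``dominate $M_{t\wedge\tau_r}$ by an integrable random variable'' obtained by stopping at distance $\varepsilon$ from $\tilde\gamma$ --- is not substantiated and cannot work as stated: for $8/3<\kappa\leq 4$ one has $\cent>0$, and $\Lambda_t^{\cent/2}$ is unbounded as the curve approaches its translates, so no single integrable dominating variable is available (if one were, uniform integrability would be immediate and there would be nothing to prove). What the localization actually gives is $\E\left[M_{\tau_r\wedge\rho_\varepsilon}\right]=M_0$ for each $\varepsilon$, and the passage $\varepsilon\downarrow 0$ requires showing $\E\left[M_{\rho_\varepsilon};\rho_\varepsilon<\tau_r\right]\to 0$, which is the statement $\Prob'\{\rho_\varepsilon<\tau_r\}\to 0$. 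That is precisely the weighted-measure fact you already established in the middle of your proof but did not deploy here: since $M_{\tau_r}$ exists and is finite $\Prob'$-almost surely, the general principle for positive local martingales (spelled out in the heuristic following the paper's proof) yields $\E\left[M_{\tau_r}\right]=M_0$ directly, with no domination needed. So the missing idea is not a new estimate but the recognition that finiteness of $M_{\tau_r}$ under the \emph{weighted} measure is itself equivalent to the uniform integrability you want; you should close the argument by invoking that, rather than by dominated convergence.
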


\begin{proof}  The drift given by weighting by this martingale
has a stronger drift to the origin than for locally chordal $SLE_\kappa$
and we know that that the latter one is good.
\end{proof}

\labove \textsf
{\begin{small}  \Heuristic  
 There is a general principle that is being used here that is worth
 stressing.  Suppose $M_t$ is a positive local martingale for
 $t < \tau$.  The
 martingale convergence theorem implies that with probability one
 the limit $M_\tau = \lim_{t \rightarrow \tau-} M_t$ exists.  However,
 one cannot conclude $\E[M_0] = \E[M_\tau]$ without more assumptions.
 One way to establish this equality is to consider the paths
 weighted by the local martingale.  If $M_\tau$ exists and is finite
 with probability one \textit{in the new measure}, then we have
 uniform integrability and $\E[M_0] = \E[M_\tau]$.    In our
 case we establish that in the new measure we have $R^*_{r-} = 0$.
 If the latter fact holds, then we use an easy  
 deterministic estimate about curves to see that $M_\tau < \infty$.
\end{small}}
\lbelow

\labove \textsf
{\begin{small}  \Heuristic  
 At this point of the paper, the argument went very quickly, so it
 is a good idea to explain what has happened.  The goal was to
 estimate the expectation (with respect to chordal $SLE_\kappa$
 in $S_r$ from $0$ to $z_0$)
 of a random variable which is the
 exponential of the measure of a certain set of bad loops.  
 For a curve $\gamma$ and a loop $l$, we say that $l$ is bad
 if $l$ intersects $\gamma$, say at first time $s'$, but
 also intersects $\tilde \gamma$ at first time $s < s'$.  Suppose
 we have seen $\gamma_t$.  Then we can split the bad loops
 into three sets: those with $s < s' \leq t$; those with $s < t<
 s'$; and those with $t < s < s'$.  When we weight only by
 the first two sets of loops, we get the local martingale
 $N_t$, and the probability
  measure   is
 locally chordal $SLE_\kappa$.  Lemma \ref{locchord}
 shows that this is supported
  simple curves with $\gamma \cap \tilde \gamma= \emptyset$.
  We then weight again to include the third set of loops and
  this leads to the function $V$.  Since we can show
  directly that $V$ is decreasing in $|x|$ (and here we were
  lucky with the monotonicity proved in Proposition 
  \ref{feb1.prop1}), we can see that
  the extra drift given by weighting by these loops points
  towards the origin and hence this measure is also
  supported
  simple curves with $\gamma \cap \tilde \gamma= \emptyset$.
  This allows us to justify the equation $\E[M_0] = \E[M_{\tau_r}].$
 \end{small}}
 \lbelow

  \section{Annulus $SLE_\kappa$ from $0$ to $x$ in $S_r$}
  
The same ideas can be used to
analyze  $\nu_{S_r}(0,x)$ where $0 <| x|  < 2 \pi$.  For ease,
 we will assume $x > 0$, but the $x < 0$ case is done
the same way.   We will only sketch the ideas, since this case
is also considered in \cite{Zhanannulus}.
 Topological constraints restrict the values of $x$;
if $|x| \geq 2 \pi$ and $\gamma$ connects $0$ and $x$, then
$\eta = \psi \circ \gamma$ is not simple.  As before, 
 we define the measure by giving the Radon-Nikodym derivative
 as in \eqref{anndef2}
 \[    \frac{d\nu_{S_r}(0,x)}{d\mu_{S_r}(0,x)} (\gamma)= 
   Y(\gamma) = 1\{\gamma \cap \tilde \gamma = \emptyset\}
  \, \exp\left\{\frac \cent 2\, m(\gamma)\right\} . \]
  The relevant functions are the following.
  
\begin{equation}  \label{funner1}
   \funnerone(r,x) = \frac{\pi^2} {4  r^2}
  \sum_{ k \in \Z \setminus \{0\}}  \frac{
    \sinh^2(\pi x/2r) }{\sinh^2(\pi^2 k/r)
      \, \sinh^2(\pi(x-2 \pi k)/2r)},
      \end{equation}

\begin{equation}  \label{funner2} 
  \funnertwo(r,x)  
  = \frac{\pi}{2r} \,  \coth\left(\frac{\pi x}{2r}
  \right) + \frac{\pi}{2r} \sum_{k=1}^\infty \left[ \coth\left(\frac{\pi (x
   + 2 \pi)}{2r}\right) +
   \coth\left(\frac{\pi (x
   - 2 \pi)}{2r}\right)\right].
 \end{equation}

 \begin{equation} \label{funner3}
 \funnerthree(r,x)= - \frac{\p_x H_{S_r}(0,x)}{b\, H_{S_r}(0,x)} = 
 \frac{ \pi 
 }{r} \, \coth \left(\frac {\pi x }{2r}\right)  . 
 \end{equation}

 \begin{lemma}  \label{oct12.lemma11}
If $y \in \R$ and
\[         f(x) = \frac{\sinh^2x}{\sinh^2(x-y)}
          + \frac{\sinh^2 x}{\sinh^2(x+y)}, \]
then $f$ is increasing for $0 \leq x < y$.
\end{lemma}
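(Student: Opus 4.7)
The plan is to imitate the proof of the analogous monotonicity statement for the $\cosh$-version inside Proposition \ref{feb1.prop1}, replacing $+1$'s by $-1$'s systematically. The key identity is $\sinh^2 \xi = (\cosh(2\xi)-1)/2$, which lets us rewrite each ratio $\sinh^2 x/\sinh^2(x\pm y)$ as $(\cosh(2x)-1)/(\cosh(2x\pm 2y)-1)$. Substituting $u=2x$ and $v=2y$, it suffices to prove that
\[ F(u) = \frac{\cosh u - 1}{\cosh(u-v)-1} + \frac{\cosh u - 1}{\cosh(u+v)-1} \]
is increasing for $0 \le u < v$.

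Next I would combine the two fractions over a common denominator. The product identity $\cosh(u-v)\cosh(u+v) = \tfrac12[\cosh(2u)+\cosh(2v)]$ and the double-angle formula $\cosh(2\xi)=2\cosh^2\xi-1$ give the clean factorization
\[ [\cosh(u-v)-1][\cosh(u+v)-1] = (\cosh u - \cosh v)^2, \]
while the sum equals $2(\cosh u \cosh v - 1)$. Therefore
\[ F(u) = \frac{2(\cosh u - 1)(\cosh u \cosh v - 1)}{(\cosh u - \cosh v)^2}. \]
Setting $s=\cosh u$ and $r=\cosh v$ (so $1\le s<r$ for $0\le u<v$), it remains to verify that
\[ G(s) = \frac{(s-1)(rs-1)}{(s-r)^2} \]
is increasing in $s$ on $[1,r)$, because $\cosh$ is strictly increasing on $[0,\infty)$.

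A direct differentiation, followed by clearing $(s-r)^3$, gives a numerator that factors as
\[ (r-1)\bigl[(r+2) - s(2r+1)\bigr]. \]
Since $(s-r)^3<0$ and $r-1>0$ on our range, we need $(r+2) - s(2r+1) < 0$, i.e.\ $s > (r+2)/(2r+1)$. But for $r>1$ the inequality $r+2 < 2r+1$ is immediate, so $(r+2)/(2r+1)<1 \le s$, and $G'(s)>0$ throughout $[1,r)$. (The case $r=1$, i.e.\ $v=0$, is trivial since then $f\equiv 2$.) The main obstacle is just carrying out the differentiation and factoring cleanly; once the product of denominators collapses to $(\cosh u - \cosh v)^2$, the rest is a one-variable calculus check, and the fortuitous factor $(r-1)$ in the numerator of $G'(s)$ is what makes the sign analysis trivial.
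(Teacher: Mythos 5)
Your proof is correct and follows essentially the same route as the paper: the same reduction via $\sinh^2\xi = (\cosh(2\xi)-1)/2$ to the function $F$, and the same collapse of the product of denominators to $(\cosh u - \cosh v)^2$ with sum $2(\cosh u\cosh v - 1)$. The only divergence is the final calculus step, where the paper writes $F = 2r\,e^{G(\cosh x)}$ with $G$ a sum of logarithms whose derivative is termwise positive for $1 < t < r$, while you differentiate the rational function directly and factor the numerator of $G'(s)$ as $(r-1)\left[(r+2)-s(2r+1)\right]$ --- I verified this factorization and the ensuing sign analysis, so both endgames are equally valid.
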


\begin{proof}
Since
\[   f(x) = \frac{\cosh(2x) - 1}{\cosh(2x - 2y) - 1}
              + \frac{\cosh(2x) - 1}{\cosh(2x + 2y) - 1},\] 
               it suffices to show for every $y \in \R$,  that
\[   F(x) = \frac{\cosh x - 1}{\cosh(x-y)- 1}
              + \frac{\cosh x - 1}{\cosh(x+y)- 1}, \]
is increasing for $0 \leq x < y$. 
Using the sum rule, we
get
\[ \cosh(x-y)- 1 +  \cosh(x +y)- 1 =
         2 \cosh x \cosh y - 2 , \]
         Letting $r = \cosh y \geq 1$, we get
\begin{eqnarray*}
 [\cosh(x-y) - 1] \, [\cosh(x+y) - 1]
& = &      (\cosh x \cosh y - 1)^2 - \sinh^2 x \sinh^2 y \\
 & = &     (r \cosh x - 1)^2 - (r^2 - 1) (\cosh^2x - 1) \\
 & = & \cosh^2 x - 2r \cosh x + r^2 \\
  & = & (\cosh x - r)^2 .
 \end{eqnarray*}
Therefore,
\[
    F(x)    =  \frac{2r\, ( \cosh x - r^{-1}) \, (\cosh x - 1)}{(\cosh x - r)^2}
      = 2 r \, e^{G(\cosh x)}, \]
  where
  \[   G(t) = \log (t - \frac 1 r) + \log (t+1) -
       2 \log \, (t-r) . \]
 Since $r \geq 1$, $G'(t) > 0$ for $0 < t < r$ and hence $G$ and
 $F$ are increasing.
 
 \end{proof}

 \begin{definition}  The function $\tilde V(r,x), 0 \leq r < \infty,
0 < x < 2 \pi $ is defined by 
\begin{equation}  \label{vdef2}
   \tilde V(r,x) = \E^x\left[\exp\left\{-2b\int_0^{ \sigma} \funnerone(
  r -s,X_s) \, ds
 \right\} \right], 
 \end{equation}
 where $X_t, 0 \leq t < \sigma$ satisifes
 \begin{equation}  \label{xsde22}
       dX_t = \left[2\funnertwo( r-t,X_t) - b\kappa \, \funnerthree( r-t,
   X_t) \right] \, dt + \sqrt \kappa \, dB_{t} , 
   \end{equation}
with $B_t$ is a standard Brownian motion and
\[  \sigma = \inf\{t: X_t = 0 \}. \]
We define $V(r,0) = 1$. 
   \end{definition}

 An important observation is that if $X_t$ satisfies \eqref{xsde22}
 with $X_0 \in [0,2\pi)$, then
 with probability one $\sigma < r$ and $X_t \in [0,2\pi)$
 for $0 \leq t \leq \sigma$.  Hence this is well defined.  
 The function $\tilde V, 0 < r < \infty, 0 <  x < 2 \pi$
 \begin{equation}  \label{pde2}
  \dot {\tilde V}(r,x) = - 2b \,  \funnerone(r,x) \, \tilde V(r,x)
    + \left[ 2\funnertwo(r,x) - b \kappa \, \funnerthree(r,x) \right]
      \, \tilde V'(r,x) + \frac \kappa 2 \, \tilde V''(r,x), 
      \end{equation}
where dot refers to $r$-derivatives and primes refer
to $x$-derivatives.

 The definition of $\mu_{A_r}(1,e^{i\theta})$ for $0 < \theta < 2 \pi$
 takes a little more thought.  We write
 \[   \mu_{A_r}(1,e^{i\theta}) = \mu_{A_r}(1,e^{i\theta};R)+
 \mu_{A_r}(1,e^{i\theta};L) \]
 where $\mu_{A_r}(1,e^{i\theta};R) $ denotes $\mu_{A_r}(1,e^{i\theta})$
 restricted to curves $\eta$ such that the origin lies in the
 component of $\Disk \setminus \eta$ whose boundary includes
 $(e^{i\theta},1)$.   Then similarly to \eqref{anndef} we write
 \[   \frac{ d \mu_{A_r}(1,  e^{ix})}{d[\psi \circ \nu_{S_r}(0,x)]}
 (\eta)
    = e^{br} \, \exp\left\{-\frac \cent 2 m^*(r,\eta) \right\}, \]
    where $m^*(r,\eta)$ denotes the measure of the set of
    loops in $A_r$ of nonzero winding number that intersect
  $\eta$.  Unlike the crossing case, the quantity on the right
  hand side depends
  on $\eta$.   It is not hard to give an expression for this.  Let
  $\tilde A$ denote the component of $A_r \setminus \eta$ that
  contains $C_r$ on its boundary. let $r_\gamma = r_{\gamma,r}$ be such that
  $\tilde A$ is conformally equivalent to $A_{r_\gamma}$.  Then
  $m^*(r_\gamma)$ denotes the measure of loops of nonzero
  winding number in $\tilde A$ and hence 
  \[     m^*(r,\eta) = m^*(r) - m^*(r_\gamma). \]

 We could have also defined $\mu_{A_r}(1,e^{i\theta})$ by
 \[  \frac{d \mu_{A_r}(1,e^{i\theta})}{d \mu_\Disk(1,e^{i\theta})}(\gamma)
    = 1\{\gamma \subset A_r\} \, \exp \left\{\frac \cent 2 \,
               m_\Disk(\gamma,\Disk \setminus A_r)\right\}. \]
   Since these both satisfy \eqref{oct26.1}, they must give the
   same measure.

   There is a subtlety that is worth mentioning.  Let $J$ denotes the closed disk about $0$ of
   radius $e^{-r}$ so that $A_r = \Disk \setminus J$
   and   $f: A_r
   \rightarrow D \subset \Disk$ is a conformal transformation that sends $\p \Disk$ to
   $\p \Disk$.   Informally we can write $f(J) = K$ where $D =
   \Disk \setminus K$, but the conformal map $f$ is not defined on $J$. 
   If $z,w \in \p \Disk$, then
   \[    f \circ \mu_{A_r}(z,w) = |f'(z)|^b \, |f'(w)|^b \, \mu_{D}(f(z),f(w)).\]
   This gives one way to construct $\mu_D(f(z),f(w))$.  But we also define
   it by the Radon-Nikodym derivative.  Suppose $\gamma \subset A_r$,
   then    $ f\circ\gamma \subset D$ and 
   \[   \frac{d \mu_{A_r}(z,w)}{d\mu_\Disk(z,w)}(\gamma) = 
    \exp \left\{\frac \cent 2 m_\Disk(\gamma,J)\right\}, \]
   \[   \frac{d \mu_{D}(f(z),f(w))}{d \mu_\Disk(f(w),f(w))} (f \circ \gamma)
       =   \exp \left\{\frac \cent 2 m_\Disk(f \circ \gamma,K)\right\}. \]
    However, since $f$ is not a conformal transformation of the disk,
    we have no reason to believe that 
$  m_\Disk(\gamma,J) = m_\Disk(f \circ \gamma,K).$

       \section{Annulus $SLE_\kappa$ in $A_r$}
       \label{partsec}

 In the last section we considered the measure $\nu_{S_r}(0,x+ir)$
 which we called annulus $SLE_\kappa$ in the strip $S_r$.
 This was analyzed by comparing the measure to chordal $SLE_\kappa$
 in $S_r$.     Recall from \eqref{anndef} that the 
 measure on paths given by
  annulus $SLE_\kappa$ restricted to a particular
 winding number is 
 \[   \nu_{A_r}(1,x) = e^{br} \, e^{-\cent m^*(r)/2} \,
    \psi \circ \nu_{S_r}(0,x+ir).\]
The term $e^{br} = |\psi'(x+ir)|^b$ comes from conformal covariance
and $m^*(r)$ is the Brownian loop measure of loops in $A_r$ of nonzero
winding number.  Annulus $SLE_\kappa$ in $A_r$ from $1$ to
$e^{-r+i\theta}$ is obtained from summing
over all winding numbers
\[   \mu_{A_r}(1,e^{-r+i\theta}) = \sum_{k \in \Z}
   \nu_{A_r}(1,\theta + 2\pi k). \]
 In this section we will compare $  \nu_{A_r}(1,x)$ and
 $   \mu_{A_r}(1,e^{-r+i\theta})$ to 
 to radial $SLE_\kappa$ in order to derive PDEs for the annulus
 partition
 functions.  We will rederive an equation   
   from \cite{Zhanannulus}.
 
 \subsection{The differential equation}  \label{diffsec}

   Let $\tilde \tmass(r,x)
  = |\nu_{S_r}(0,x+ir)|$ be
 as in the previous section, and  let  $\hat F(r,x)$ and
 $F(r,x)$ denote the partition functions associated to
 annulus $SLE_\kappa$ and annulus $SLE_\kappa$ restricted to
 a particular winding number, respectively.  In other words,
 \[    F(r,x) = |\nu_{A_r}(0,x)| = \beta(r) \, \tilde \tmass(r,x) , \]
 where
\[  \beta(r) =  \exp \left\{br - \frac{\cent m^*(r)}{2} \right\}
    = e^{br} \, e^{-\cent r/12}
     \, \exp\left\{\cent  \int_0^r \delta(s) \,ds \right\}
      ,\]  and
%       $\hat F(r,x) =  \tmass_{A_r}(1,e^{-r + ix})$
%      denote the partition function for boundary
%      $SLE_\kappa$ in $A_r$ from $1$ to $e^{-r+ix}$.
%      Using \eqref{anndef}  and \eqref{anndefsum},
%      we can see that 
      \[ \hat F(r,x) = \tmass_{A_r}(1,e^{-r + ix})
       =  \sum_{k=-\infty}^\infty   F(r,
        x + 2 \pi k). \]
Since
\begin{equation}  \label{converge}
     F(r,x) = \beta(r) \, \tilde \tmass(r,x)
    \leq \beta(r) \, \tmass_{S_r}(0,e^{x + ir})
      \asymp \beta(r) \, r^{-2b} \, \left[
      \cosh \left(
     \frac{\pi x}{2r} \right) \right]^{-2b}, 
     \end{equation}
     we see that $\hat F(r,x) < \infty$. 
%  Recall from \eqref{nov20.2} that there exists
%$s > 0$ such that
%  \[    \exp \left\{\frac{m^*(r)}{2}
%   \right\} = s \, e^{ r/12} \,r^{-1/2}
%     \, \left[1 + O(r^{-1})\right], \;\;\;\;
%       r \rightarrow \infty.\]
       Recall the functions $\nzhan$ and $\zhan_I$
        from Section \ref{annfunction}.
       As before, we will use dot for $r$-derivatives
       and primes for $x$-derivatives.
%   \[\alpha(r) = 
%  b -\tilde b +  6 \tilde b \, 
%\, \left[\frac{\pi^2}{12r^2} + \delta(r) \right].\]
% \[  K(r,x) = r^{2b}\,\exp\left\{-\int_1^r \alpha(s)\right\}
% \, F(r,x) = c' \, r^{2b} \, 
%    \exp\left\{(\tilde b - \frac \cent{12})
% r-\frac{\tilde b \pi^2}{2r}   +
%     2b \int_0^r \delta(s) \, ds \right\}
%      \, \tilde \tmass(r,x). \]
% (Here $c'$ is an easily computed but unimportant constant.)
%    Recall that $\zhanh_I = 2 \biggy$, and define
%     \[ \newzhan(r,x) = \zhan_I'(r,x) + b^{-1} \, \alpha(r)
%      - \frac 1r, \;\;\;\;  \nzhan(r,x) = \zhanh_I'(r,x) + \frac 1r . \]
 
 \begin{proposition}  \label{greatprop}
%  \[\alpha(r) =
%  b -\tilde b +  (2b + \cent) \, 
%\, \left[\frac{\pi^2}{12r^2} + \delta(r) \right],\]
% \[\alpha(r) =  b + \tilde b \, (6\Gamma(r) - 1) =
%  b -\tilde b +  (2b + \cent) \, 
%\,\Gamma(r) . \]
%= \frac{\kappa \, b^2}{2} + (2b + \cent) \, 
%\,\Gamma(r) ,\]
%\[ \newzhan(r,x) = \newzhan_\kappa(r,x) = 
%\zhan_I'(r,x) +  \frac{\alpha(r)}b
%      - \frac 1r = \nzhan(r,x) + \frac {\alpha(r)} b
%      - \frac 2r.  \]
%Then
  $F$ satisfies the differential equation
\begin{equation}  \label{fpde2}
  \dot F = \frac{\kappa}{2} \, F''
  + \zhan_I\, F' +  \left[b \zhan_I' + b +
  \tilde b \, (6\Gamma(r) - 1)
      - \frac br\right]
    \, F.
\end{equation}
Moreover, $\hat F$ satisfies the same equation.
   \end{proposition}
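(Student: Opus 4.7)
The plan is to derive the PDE via an It\^o formula calculation, exploiting the factorization $F(r,x) = \beta(r)\,V(r,x)\,H_{\p S_r}(0,x+ir)^b$ established in the previous section; smoothness of $F$ in both variables follows immediately from Proposition \ref{vprop} together with the explicit formula \eqref{jan31.1} for $H_{\p S_r}$. So only the coefficient identification remains, and the task reduces to a calculation.

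The conceptual setup, which the paper indicates, compares $\nu_{A_r}(1, e^{-r+ix})$ (at fixed winding) to radial $SLE_\kappa$ in $\Disk$ from $1$ to $0$. In the annulus parametrization where the modulus runs linearly, $r(t) = r-t$ for $t \in [0,r]$, the state $x(t) := \Re h_t^*(x+ir) - U_t^*$ evolves via the annulus Loewner equation \eqref{anneq}: by \eqref{future} and the driver SDE from Section \ref{fradsec}, $dx(t) = \zhan_I(r-t, x(t))\,dt - dU_t^*$, with $U_t^*$ a $\sqrt{\kappa}$-Brownian motion plus chordal drift $b\kappa\funthree$ (from the Girsanov factor already computed in Section \ref{simplesec}). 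The domain Markov property for annulus $SLE$ then forces $F(r(t), x(t))$, multiplied by the appropriate conformal-derivative prefactors and the loop-measure correction comparing the annulus and disk settings, to be a local martingale. Applying It\^o to $F(r-t, x(t))$ and demanding that the drift match the logarithmic $t$-derivative of the prefactors yields a PDE whose coefficients can then be read off.

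The only delicate step is matching the constant coefficient, which splits into three contributions: (i) $\dot\beta(r)/\beta(r) = b - \tfrac{\cent}{2}\dot m^*(r)$ with $\dot m^*(r) = \tfrac16 - 2\delta(r)$ by Proposition \ref{windloops}, so combined with $\delta(r) = \Gamma(r) - \pi^2/(12r^2)$ from Proposition \ref{deltaprop} and the relation $\cent = 6\tilde b - b$ this produces $b + \tilde b(6\Gamma(r)-1)$ together with a leftover $\pi^2/r^2$ piece; (ii) the $r$-derivative $b\,\dot H/H$ of $H := H_{\p S_r}(0,x+ir)^b$, contributing $-2b/r + b x\funthree/r$; and (iii) the $\funthree$-polynomial terms arising from substituting $\p_x\log H = -\funthree$ (up to sign) into $\tfrac{\kappa}{2} F''/F + \zhan_I F'/F$, which simplify using the identity $\funthree' + \tfrac12\funthree^2 = \pi^2/(2r^2)$ derivable directly from \eqref{fun3}. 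The $V'/V$ cross-terms cancel between the $\tfrac{\kappa}{2} F''/F$ expansion and the $-b\kappa\funthree\,V'/V$ from the PDE for $V$, the $\pi^2/r^2$ pieces cancel after combining (i) and (iii), and everything collapses to $b\zhan_I' + b + \tilde b(6\Gamma(r)-1) - b/r$ as claimed.

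For $\hat F$, the same PDE follows from linearity and $2\pi$-periodicity in $x$ of $\zhan_I$ and $\zhan_I'$: summing \eqref{fpde2} over $x \mapsto x+2\pi k$ and justifying termwise differentiation via the Gaussian-decay bound \eqref{converge} gives the equation for $\hat F$. The main obstacle is purely algebraic bookkeeping of the numerous prefactors, exponents, and sign conventions---in particular the clean way the loop-measure correction $\beta(r)$ conspires with the Brownian-bubble coefficient $\Gamma(r)$ of Proposition \ref{deltaprop} to produce the $\tilde b(6\Gamma(r)-1)$ term---but no new conceptual ideas beyond standard It\^o and Girsanov calculus applied to the explicit annulus machinery of Sections \ref{annfunction} and \ref{anneqsec}.
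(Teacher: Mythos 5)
Your route is genuinely different from the paper's. The paper proves \eqref{fpde2} by comparing $\nu_{A_r}(1,x)$ directly to \emph{radial} $SLE_\kappa$ from $1$ to $0$: it exhibits the local martingale $M_t = J(t)\,F(r(t),R_t)$ for radial $SLE$ (whose driver is an undrifted Brownian motion, so the first-order coefficient is just $\zhan_I$ and the diffusion coefficient $\kappa$ comes from $\p_s\sigma_{r-s}=\kappa$), and then computes the zeroth-order coefficient as $-\dot J(r)$ by differentiating the four conformal prefactors one at a time (Lemma \ref{magic}, the loop-term derivative $\cent\,\Gamma(r)$, and \eqref{future}). You instead start from the factorization $F=\beta(r)\,V(r,x)\,H_{\p S_r}(0,x+ir)^b$ and the already-proved PDE \eqref{pde} for $V$, and reduce \eqref{fpde2} to algebra. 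That reduction is sound as far as it goes: the $V'/V$ cross-terms do cancel exactly as you say, $\p_r \log H^b = -2b/r + bx\funthree/r$ is right, and $\funthree'+\tfrac12\funthree^2 = \pi^2/(2r^2)$ is the correct identity for the pure $\funthree$ terms. Note, though, that your ``conceptual setup'' paragraph conflates the two comparisons: the driver with the $b\kappa\funthree$ drift belongs to the locally chordal process of Section \ref{crosssec}, not to the radial comparison of Section \ref{partsec}; in your actual computation that drift enters only through the PDE for $V$ and cancels, so this is a presentational muddle rather than an error.

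The genuine gap is at the point where you say ``everything collapses.'' After all the cancellations you list, what remains to be shown is a closed-form identity among the annulus special functions, namely that
\[
2\funone(r,x) + \nzhan(r,x) + \tfrac32\,\funthree(r,x)^2 - \funthree(r,x)\Bigl(\zhan_I(r,x)+\tfrac{x}{r}\Bigr)
\]
is \emph{independent of $x$} and equals the explicit combination of $\Gamma(r)$ and $\pi^2/r^2$ produced by $\dot\beta/\beta-\alpha(r)$. This is not bookkeeping: it is a nontrivial relation between the double sum \eqref{fun1} defining $\funone$, the theta-type sum \eqref{nzhan} defining $\nzhan$, and $\delta(r)$, and it is precisely the content of the proposition in your formulation. (It does hold --- at $x=0$ it reduces to $\sinh^{-2}u-\sinh^{-2}u\cosh^{-2}u=\cosh^{-2}u$ summed over $k$, and for general $x$ one needs hyperbolic addition formulas in the spirit of Proposition \ref{feb1.prop1} --- but you neither state it nor prove it, and your proof is incomplete without it.) A second, smaller gap: for $\hat F$ you invoke termwise differentiation justified only by the bound \eqref{converge} on $F$ itself; passing $\p_r$, $\p_x$, $\p_x^2$ through the sum requires decay of the \emph{derivatives} of the summands, which \eqref{converge} does not give. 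The paper sidesteps this by running the Feynman--Kac representation \eqref{f-k} for $F$, summing over translates inside the expectation, and then invoking Feynman--Kac again for $\hat F$; you should either do the same or supply derivative estimates.
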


   \labove \textsf
{\begin{small}  \Heuristic  
As in \cite{Zhanannulus}, we
 check that this is consistent with what we
   know about $\kappa = 2$ for which $b=1, \tilde b=0$.
%   \[b=1, \;\;\;\cent = -2, \;\;\;\tilde b=0,\;\;\;\alpha(r)
%    \equiv 1, \]
% \newzhan(r,x) = \nzhan(r,x) - \frac 1r.\]
   For $\kappa = 2$, from  arguments based on the loop-erased
   walk we know that the $SLE_2$ partition function for any domain $D$
   should
   be given by a multiple of the excursion Poisson kernel,
   $H_{\p D}(z,w)$.
Hence a solution to \eqref{fpde2} should be 
   \begin{eqnarray*}
      \hat F(r,x) & =  &  H_{\p A_r}(1,e^{-r+ix})\\
    & =  &  e^{r} \sum_{k\in \Z} H_{\p S_r}(0, x + 2\pi k + i)\\
%       & = &  2e^r \,\left[\Theta'(r,x) + \frac 1{2r} \right]
%      \\
      &  = & \frac 12 \, {e^r} \, \nzhan(r,x) .
      \end{eqnarray*}
If this is so, then
Proposition \ref{greatprop} implies that if
$
  \Phi(r,x) = 2re^{-r} \hat F(r,x) = r \, \nzhan(r,x) , 
$
then
$        \dot \Phi =  \Phi'' + \zhanh_I \, \Phi' + 
\zhanh_I'\, \Phi . $
But we noted this relation in   \eqref{kappa2}.
 \end{small}}
 \lbelow
  
 We set
 \[ \alpha (r) =   b + \tilde b \, [6\Gamma(r) - 1] =
  b -\tilde b +  (2b + \cent) \, 
\,\Gamma(r) , \]
\[ \newzhan(r,x) = \newzhan_\kappa(r,x) = 
\zhan_I'(r,x) +  \frac{\alpha(r)}b
      - \frac 1r ,\]
%      = \nzhan(r,x) + \frac {\alpha(r)} b
%      - \frac 2r, \]
which allows us to write \eqref{fpde2}
as
\begin{equation}  \label{fpde}
  \dot F = \frac{\kappa}{2} \, F''
  + \zhan_I\, F' +  b \,\newzhan 
    \, F.
\end{equation} 
  
% \begin{remark}    
%   We will check that this is consistent with what we
%   know about $\kappa = 2$ ($b=1, \cent = 1, \tilde b=0$).
%   For $\kappa = 2$, we expect that one solution is
%   given by  
%   \begin{eqnarray*}
%      2\, \hat F(r,x) & =  & 2\, H_{A_r}(1,e^{-r+ix})\\
%    & =  & 2\, e^{r} \sum_{k\in \Z} H_{S_r}(0, x + 2\pi k + i)\\
%       & = &  2\,e^r \,\left[\Theta'(r,x) + \frac 1{2r} \right]
%      \\&  = & {e^r} \, \nzhan(r,x) .
%      \end{eqnarray*}
%If  this is so, then
%Proposition \ref{greatprop} implies that if
%$
%  \Phi(r,x) = 2re^{-r} \hat F(r,x) = r \, \nzhan(r,x) , 
%$
%then
%$        \dot \Phi =  \Phi'' + \zhanh_I \, \Phi' + 
%\nzhan \, \Phi . $
%But we noted this relation in Proposition \ref{kappa2}.
%\end{remark}
  
  We will establish \eqref{fpde} for $F$.  We note that 
 $F(r,x)$ 
 is $C^1$ in $r$ and $C^2$ in $x$.  Indeed, in the
 previous section we showed the same for $\tilde \Psi(r,x)$, and
 it is easy to show that $m^*(r)$ is continuous in $r$
 and hence $\beta(r)$ is differentiable.
 Hene we can  use It\^o's formula freely.
 Before proceeding, let us show that this will
 also imply the result for $\hat F$.
  Let $X_t^{(r)},0\leq t \leq r$, denote
 a solution to the SDE
 \begin{equation}  \label{xsde}
   dX_t^{(r)} = \zhan_I(r-t,X_t^{(r)})
  \, dt + \sqrt \kappa \, dB_t. 
  \end{equation}
  Then \eqref{fpde} and the Feynman-Kac formula implies
 that for $r > t > 0$,
 \[  F(r,x) = \E^x\left[F(r-t,X_t^{(r)})
  \, \exp \left\{b\int_0^t \newzhan(r-s,X_s^{(r)})
    \, ds\right\} \right], \]
    where $\E^x$ denotes expectations assuming
    $X_0^{(r)} = x$.
  (We do not need to consider the delicate case $t=r$
  so the conditions for the Feynman-Kac formula are easily
  verified.)
 Using this and \eqref{converge}, we can see that
 \[   \hat F(r,x) = \E^x\left[\hat F(r-t,X_t^{(r)})
  \, \exp \left\{b\int_0^t \newzhan(r-s,X_s^{(r)})
    \, ds\right\} \right], \]
 and by invoking  the Feynman-Kac theorem again, we see
 that $\hat F$ also satisfies \eqref{fpde}.

To prove the proposition for $F$
we compare radial $SLE_\kappa$ (from $1$ to $0$
in $\Disk$) and
annulus $SLE_\kappa$ (from $1$ to $e^{-r + i \theta}$ in 
$A_r$) for $\kappa = 2/a \leq 4$.  These measures,
restricted to an initial segment of the path which has
not reached $C_r$, are absolutely continuous.

It is useful to view radial $SLE_\kappa$ raised
onto the covering space $\Half$ as we now describe. 
  We describe radial
$SLE_\kappa$ as a periodic function on $\Half$.   
We return to the radial Loewner equation \eqref{radloew}
which we write as 
 \begin{equation}  \label{feb7.1}
 \p G_t(z) =  \frac a 2 \, \cottwo(G_t(z) - U_t) ,
 \;\;\;\;  G_0(z) = z ,
 \end{equation}
 and view as an equation on $\Half$.
Here $U_t$
 is a standard Brownian motion with $U_0 = 0$
 and $\cot_2(z) = \cot(z/2)$.
 There is a corresponding curve $\gamma$ in $\Half$
 such that with probability one, for all $t$,
 $\gamma_t \cap \tilde \gamma_t = \emptyset.$
%  We can characterize $G_t$
%as the unique conformal transformation
%of $ \Half \setminus \hat \gamma_t $ onto $\Half$ 
%such that $G_t(\infty ) = \infty$ and
%\[        G_t(x+iy) = x+i\, \left(y - \frac a2\right) + o(1), \;\;\;\;
%  y \rightarrow \infty . \]
%
%\labove \textsf
%{\begin{small}  \Heuristic  
% Note that in the the $y \rightarrow \infty$ limit,
% $G_t$ has no real translation.  This is a restatement of the
% assumption in the radial Loewner equation  the derivative
% at the origin is positive.
%\end{small}}
%\lbelow
%
%  
% 
% 
Let $\eta_t = \psi \circ \gamma_t$ and define
 $\tilde g_t$ by
 \[       \tilde g_t(e^{iz}) = e^{i G_t(z)}. \]
Then $\tilde g_t$ is the unique conformal transformation
of $\Disk \setminus \eta_t$ onto $\Disk$ with
$\tilde g_t(0) = 0, \tilde g_t'(0) > 0$.  In fact,
$\tilde g_t'(0) = e^{at/2}$.    Radial $SLE$ is usually
described in terms of the differential
equation for $\tilde g_t$.

We now relate the equation \eqref{feb7.1}
to the annulus Loewner
equation described in Section \ref{anneqsec}. 
We fix an ``initial radius'' $r$.  As in
that section, 
  we   define $r(t)$ and $h_t$   by saying that 
\[  h_t:  S_r \setminus \hat \gamma_t
\rightarrow S_{r(t)} \]
is a conformal transformation satisfying
$h_t(z+2\pi) = h_t(z) + 2\pi$ 
with $h_t(\pm \infty) = \pm \infty$ and $h_t(\gamma(t))
= U_t$. Recall that
\[  \p_t h_t(z) =  2 \dot r(t)\, \cpois_{r(t)}
  (h_t(z) - U_t) . \]
    We define $\Phi_t$ by
\[          h_t = \Phi_t \circ G_t, \]
and define $\tilde h_t, \tilde \Phi_t$ by
\[  \tilde h_t(e^{iz}) = e^{i h_t(z)},\;\;\;\;
  \tilde  \phi_t(e^{iz}) = e^{i \Phi_t(z)}, \]
  so that
  $     \tilde h_t = \tilde \phi_t \circ \tilde g_t.$
Note that $\tilde h_t$ is the unique conformal transformation
of $A_r \setminus \eta_t$ onto $A_{r(t)}$ with
$\tilde h_t(\eta(t)) = e^{iU_t}$.  Also, for real $x$, 
\[  |\tilde \phi_t'(e^{ix})| = \Phi_t'(x).\]
We note that \eqref{dec4.2} implies that
for $r(t) \geq 2$ and $x \in \R$, 
\[  |\Phi_t'(x)| = 1 + O(e^{-r(t)}), \;\;\;\;
    |\Phi_t''(x)| = O(e^{-r(t)}). \]

As in that section, we let
\[  \sigma_s = \inf\{t: r(t) = s \}, \;\;\;\;
  h_s^* = h_{\sigma_s} , \]
and we set
\[    \tilde h_s^* = \tilde h_{\sigma_s} ,\;\;\;\;
  \tilde \phi_s^* = \tilde \phi_{\sigma_s}, \;\;\;\;
    \tilde g_s^* = \tilde g_{\sigma_s}. \]

\begin{lemma} \label{magic} Under the assumptions above,
\[  \p_t |\tilde \phi_t'(1)| \mid_{t=0}=
 \p_t \Phi_t'(0) \mid_{t=0} = a \, \left[
\Gamma(r)
  -
 \frac{1}{2r} \right], \]
%
%\[  \p_t |\tilde \phi_t'(1)| \mid_{t=0}=
% \p_t \Phi_t'(0) \mid_{t=0} = a \, \left[
% \delta(r)  + \frac{\pi^2}{12 r^2}
%  -
% \frac{1}{2r} \right], \]
 \[   \p_s |(\phi_{r-s}^*)'(1)|\mid_{s=0} = 
  2\, \Gamma(r)-  \frac 1{r}
  .\]
  Here $\Gamma(r)$ is as defined in \eqref{Gammadef}.
  \end{lemma}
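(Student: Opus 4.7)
The plan is to apply the comparison lemma just preceding Lemma \ref{magic}, whose conclusion is \eqref{nov18.1}, to the two Loewner flows $G_t$ and $h_t$, which coincide at $t = 0$ with the identity. The radial flow satisfies $\p_t G_t(z) = K_{1,t}(G_t(z) - U_t)$ with $K_{1,t}(w) = (a/2)\cot_2(w)$, while the annulus flow satisfies $\p_t h_t(z) = K_{2,t}(h_t(z) - U_t)$ with $K_{2,t}(w) = 2\dot r(t)\,\cpois_{r(t)}(w)$. Since $\Phi_t$ is defined by $h_t = \Phi_t \circ G_t$, the lemma will give
\[
\p_t \Phi_t'(0)\,\bigl|_{t=0} = (K_{2,0} - K_{1,0})'(0),
\]
once we check that $K_{2,0} - K_{1,0}$ is analytic in a neighborhood of $0$.

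First I would pin down $\dot r(0)$ by residue matching. The Laurent series $\cot(w) = 1/w - w/3 + O(|w|^3)$ gives $\cot_2(w) = 2/w - w/6 + O(|w|^3)$, so $(a/2)\cot_2(w) = a/w + O(w)$, while \eqref{expand} yields $\cpois_r(w) = -1/w + O(w)$, making the residue of $K_{2,0}$ at the origin equal to $-2\dot r(0)$. Analyticity of $K_{2,0} - K_{1,0}$ then forces $\dot r(0) = -a/2$, in agreement with Lemma \ref{estimateone}. Substituting this and the full expansion from \eqref{expand},
\[
K_{2,0}(w) - K_{1,0}(w) = -a\,\cpois_r(w) - \tfrac{a}{2}\,\cot_2(w) = aw\Bigl[\Gamma(r) - \frac{1}{2r}\Bigr] + O(|w|^3),
\]
because the $1/w$ poles cancel and the $a/12$ contributions to the coefficient of $w$ also cancel. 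Differentiating and evaluating at $0$ produces $\p_t \Phi_t'(0)\mid_{t=0} = a[\Gamma(r) - 1/(2r)]$.

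The equality $\p_t|\tilde\phi_t'(1)|\mid_{t=0} = \p_t\Phi_t'(0)\mid_{t=0}$ is then immediate: differentiating $\tilde\phi_t(e^{iz}) = e^{i\Phi_t(z)}$ at $z = 0$ gives $\tilde\phi_t'(1) = \Phi_t'(0)\,e^{i\Phi_t(0)}$, and since $\Phi_t$ preserves the real line (both $h_t$ and $G_t$ respect the $\R$-boundary and the $2\pi$-periodicity) and $\Phi_0'(0) = 1 > 0$, the modulus equals $\Phi_t'(0)$ for small $t$. Finally, for the second identity I would reparametrize through $\sigma_s = \inf\{t : r(t) = s\}$: implicit differentiation of $r(\sigma_s) = s$ gives $\sigma'(r) = 1/\dot r(0) = -2/a$, and $\phi_{r-s}^* = \Phi_{\sigma_{r-s}}$ together with the chain rule yields
\[
\p_s(\phi_{r-s}^*)'(1)\bigr|_{s=0} = -\sigma'(r)\,\p_t\Phi_t'(0)\mid_{t=0} = \frac{2}{a}\cdot a\Bigl[\Gamma(r) - \frac{1}{2r}\Bigr] = 2\Gamma(r) - \frac{1}{r}.
\]
The only subtle point in the argument is the residue bookkeeping that determines $\dot r(0)$ and ensures that $K_{2,0} - K_{1,0}$ meets the analyticity hypothesis of the comparison lemma; everything else reduces to a routine Taylor expansion of two explicit kernels.
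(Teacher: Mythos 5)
Your proof is correct and follows essentially the same route as the paper: apply the comparison formula \eqref{nov18.1} to the radial and annulus Loewner kernels, use $\dot r(0)=-a/2$ together with the expansions of $\cot_2$ and \eqref{expand} so that the $1/w$ and $1/12$ terms cancel, and then reparametrize via $\sigma$ for the starred version. The only cosmetic difference is that you recover $\dot r(0)=-a/2$ by residue matching rather than by citing Lemma \ref{estimateone} directly, which you correctly note is consistent with that lemma.
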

%
%\begin{remark}  Recall that $\beta(\infty) = 1/12$,
%so this derivative vanishes as $r \rightarrow \infty$
%which is what we would expect.
%\end{remark}

\begin{proof}
Note that
\[  \frac a2 \, \cot_2(z) = a \left[
\frac 1z - \frac{z}{12}\right]  + O(|z|^3).\]
Recall that $\dot r(0) = -a/2$ and from \eqref{expand}
we have 
\[  -a\cpois_r(z) =  a\left[\frac 1z + z \, \left(\Gamma(r) - 
\frac 1{12} 
  -
 \frac{1}{2r} \right)\right] + O(|z|^3),\]
Therefore, the first result follows from \eqref{nov18.1}
and the second from $\tilde \phi_t = \tilde \phi^*_{r(t)}.$
\end{proof}

%    
%Let $V(r,x)$ as before and let
%\[   \annmass(r,x) =   \tmass_{A_r}(1,e^{-r + ix}).\]
%Recall that
%\begin{eqnarray} 
%  e^{-rb} e^{\cent m^*(r)/2}\, 
%   \annmass(r,x) & =  & \sum_{k \in \Z} V(r,x+2\pi k)  
%\, \tmass_{S_r}(0,x+ 2\pi k + i)\nonumber \\
% &  = & \sum_{k \in \Z} V(r,x+2\pi k) 
%\, H_{S_r}(0,x+ 2\pi k + i)^b.   \label{nov16.1}
%   \end{eqnarray}
  
%  
%  Let $V(r,x)$ as before and let
%\[   \annmass(r,x) = \sum_{k \in \Z} V(r,x+2\pi k) 
%\, \tmass_{S_r}(0,x+ 2\pi k + i)
%  = \sum_{k \in \Z} V(r,x+2\pi k) 
%\, H_{S_r}(0,x+ 2\pi k + i)^b.\] 
%Recall that
%\begin{equation}  \label{nov16.1}
%  \tmass_{A_r}(1,e^{-r + ix})
%   = e^{rb} \, \exp \left\{ - \frac \cent 2\,
%   m^*(r) \right\} \, \annmass(r,x) .
%   \end{equation}
 
Let $\mu_1,\mu_2,\mu_3$ denote $\mu_{\Disk}(1,-1),
\mu_\Disk(1,0),$ and $ \nu_{A_r}(1,x)$, respectively,
and let $w = e^{-r + i x}$.  Let $z_t = e^{i U_t}
= \tilde g_t(\eta(t)), \zeta_t
= \tilde g_t(-1), w_t = \tilde g_t(w), x_t = \arg w_t$,
where $x_t$ is chosen to be continuous in $t$ with $x_0 = x$.
If $t < \tau_r$, these three measures are absolutely continuous
with respect to each other and we can write down the 
Radon-Nikodym derivatives.  Recall from Section \ref{fradsec} that 
\[       \frac{d\mu_2}{d\mu_1}(\eta_t)
           =\frac{\tilde g_t'(0)^{\tilde b}\,
             \tmass_{\Disk}(z_t,0)}{
               |\tilde g_t'(-1)|^b \, \tmass_{\Disk}(z_t,
                \zeta_t)}  = \frac{\tilde g_t'(0)^{\tilde b}}{
               |\tilde g_t'(-1)|^b \, \tmass_{\Disk}(z_t,
                \zeta_t)}. \]
%Here we use $\mu_{D}(z,w)$ for radial $SLE_\kappa$ from
%$z$ to $w$ which has total mass $\tmass_D(z,w)$ which
%satisfies
%\[          \tmass_{D}(z,w) = |f'(z)|^b \,
%  |f'(w)|^{\tilde b} \, \tmass_{f(D)}(f(z),f(w)),
%  \;\;\;\;  \tilde b =  \frac{b(1-a)}{2a}.\]
%  Also,
Using similar reasoning for annulus $SLE$ with respect
to chordal $SLE$, we get 
\[       \frac{d\mu_3}{d\mu_1}(\eta_t)
           =\frac{|\tilde g_t'(w)|^{ b}\,
              |\nu_{\tilde g_t(A_r)}( z_t,
              x_t)|\,\exp\left\{
                \frac \cent 2 \, m_{\Disk}(\Disk_r,\eta_t)\right\}}{
               |\tilde g_t'(-1)|^b \, \tmass_{\Disk}(z_t,
                \zeta_t)} .\]  
 We have not actually defined the measure
 $\nu_{\tilde g_t(A_r)}( z_t,
              x_t)$, so let us describe it now.
               Since $\tilde g_t(A_t)$ is a conformal
  annulus whose outer boundary is the unit circle, we can
  define $\nu_{\tilde g_t(A_r)}( z_t,
              x_t)$ in the same way that $\nu_{A_r}(1,x)$ was defined.
     In other words, it is annulus $SLE$ between $z_t$ and $w_t$ in
     the conformal annulus $\tilde g_t(A_r)$ restricted to curves
     of a particular winding number.  The choice of winding number
 is determined  by continuity in $t$.

Let
\[   M_t =  \frac{d\mu_3}{d \mu_2}(\eta_t)
          =
       \tilde g_t'(0)^{-\tilde b} \,
           |\tilde g_t'(w)|^{ b}\,
             |\nu_{\tilde g_t(A_r)}( z_t,
              x_t)|  \,\exp\left\{
                \frac \cent 2 \, m_{\Disk}(\Disk_r,\eta_t)\right\}
 .\]
We see that   $ M_t$ is a
local martingale for radial $SLE_\kappa$.
  Let
$\tilde h_t = \tilde \phi_t \circ \tilde g_t$.
Conformal covariance implies that
\[    |\nu_{\tilde g_t(A_r)}( z_t,
              x_t)|= |\tilde \phi_t'
              (e^{iU_t})|^b \, |\tilde \phi_t'(\tilde{g}
            _t(w))|^b \, \tmass_{A_{r(t)}}(e^{iU_t},
            \tilde \phi_t(\tilde g_t(w))) . \]
Therefore,  
\[    M_t = 
  \tilde g_t'(0)^{-\tilde b}\,
              |\tilde\phi_t'(e^{iU_t})|^b
              \, \exp\left\{\frac \cent 2 \, 
                 m_{\Disk}(\Disk_r,\eta_t)\right\}
  \,
           |\tilde h_t'(w)|^{ b}\,
             \annmass(r(t),
         R_t ),\]
%         \[  \tilde M_t = 
%  \tilde g_t'(0)^{-\tilde b}\,
%              |\tilde\phi_t'(e^{iU_t})|^b
%              \, \exp\left\{\frac \cent 2 \, 
%                 m_{\Disk}(\Disk_r,\eta_t)\right\}
%  \,
%           |\tilde h_t'(w)|^{ b}\,e^{br(t)}
%            \, \exp \left\{-\frac {\cent} 2
%            \, m^*(r(t)) \right\} \, 
%             \annmass(r(t),
%         R_t ),\]
 where
 \[           
  R_t = \Re [  h_t(z) - U_t].\]
We have shown   the following.
 
 \begin{proposition}
 If $U_t$ is a standard Brownian motion, then
 \[   M_t = 
           J(t) \, \annmass(r(t),
         R_t )
            ,\]
 is a local martingale
 where
 \[      J(t) = \tilde g_t'(0)^{-\tilde b}\,
              |\tilde\phi_t'(e^{iU_t})|^b
              \, \exp\left\{\frac \cent 2 \, 
                 m_{\Disk}(\Disk_r,\eta_t)\right\}
  \,
           |\tilde h_t'(w)|^{ b},\]
and  $R_t = \Re[h_t(x+ir) -
 U_t]$.
\end{proposition}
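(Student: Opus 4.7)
The plan is to recognize $M_t$ as the Radon--Nikodym derivative $d\mu_3/d\mu_2$ on the filtration generated by initial segments $\eta_t$, stopped at any $t<\tau_r$. Once this identification is made, the local martingale property is automatic: any density process between two mutually absolutely continuous probability measures on a filtration is a martingale under the reference measure (after localization it is a local martingale with respect to the completed filtration). So the task reduces to verifying that the explicit product $J(t)\,F(r(t),R_t)$ is indeed that RN derivative.

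First I would establish mutual absolute continuity of $\mu_1,\mu_2,\mu_3$ on initial segments up to $\tau_r$. The boundary perturbation rule \eqref{perturb} and \eqref{oct26.1} apply because for $t<\tau_r$ the curve $\eta_t$ stays in a neighbourhood of $\p\Disk$ bounded away from both $\{0\}$ and $\overline{\Disk_r}$; hence $m_\Disk(\eta_t,\Disk\setminus A_r)$ is finite and the interior scaling for radial $SLE$ is controlled by the usual factor $\tilde g_t'(0)^{\tilde b}$. The paragraph immediately preceding the proposition already writes
\[ \frac{d\mu_3}{d\mu_1}(\eta_t) = \frac{|\tilde g_t'(w)|^{b}\,\|\nu_{\tilde g_t(A_r)}(z_t,x_t)\|\,\exp\{(\cent/2)\, m_\Disk(\Disk_r,\eta_t)\}}{|\tilde g_t'(-1)|^{b}\,\tmass_\Disk(z_t,\zeta_t)}, \]
\[ \frac{d\mu_2}{d\mu_1}(\eta_t) = \frac{\tilde g_t'(0)^{\tilde b}}{|\tilde g_t'(-1)|^{b}\,\tmass_\Disk(z_t,\zeta_t)}. \]

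Next I would divide to cancel the chordal factors, then apply conformal covariance of $\nu$ via the transformation $\tilde\phi_t:\tilde g_t(A_r)\to A_{r(t)}$ to write
\[ \|\nu_{\tilde g_t(A_r)}(z_t,x_t)\| = |\tilde\phi_t'(e^{iU_t})|^{b}\,|\tilde\phi_t'(\tilde g_t(w))|^{b}\,F(r(t),R_t). \]
Here $r(t)$ is the conformal modulus of $\tilde g_t(A_r)$, produced by the annulus Loewner equation of Section~\ref{anneqsec}, and $R_t = \Re[h_t(x+ir)-U_t]$ is precisely the real coordinate in $S_{r(t)}$ of the image of the target point. Using the chain rule $|\tilde h_t'(w)| = |\tilde\phi_t'(\tilde g_t(w))|\cdot|\tilde g_t'(w)|$ and regrouping the surviving factors gives $M_t = J(t)\,F(r(t),R_t)$ exactly as stated.

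The main obstacle is bookkeeping the winding number: $\nu_{\tilde g_t(A_r)}(z_t,x_t)$ is annulus $SLE_\kappa$ in $\tilde g_t(A_r)$ restricted to a specific winding class, and one must verify that the continuous choice $x_t=\arg w_t$ with $x_0=x$ selects the image of the original winding class in $A_r$ under $\tilde g_t$; this is where the restriction $t<\tau_r$ is essential, since only then is $\tilde g_t(A_r)$ a genuine conformal annulus and the winding well defined. A secondary issue is regularity: the expression $J(t)\,F(r(t),R_t)$ must be a semimartingale, which requires continuity of $r(t)$ and $R_t$ (from the annulus Loewner equation) together with the $C^{1,2}$ smoothness of $F$, i.e., of $\tilde\tmass$ multiplied by the smooth function $\beta(r)$; this is exactly what Theorem~\ref{main1} and Proposition~\ref{vprop} supply. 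No It\^o computation is needed for this proposition itself --- the SDE for $M_t$ and the resulting PDE for $F$ are carried out in the proof of Proposition~\ref{greatprop}, where the local martingale established here is the starting point.
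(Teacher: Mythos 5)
Your proposal is correct and follows essentially the same route as the paper: the text identifies $M_t$ as the density process $d\mu_3/d\mu_2$ on initial segments (whence the local martingale property), divides the two Radon--Nikodym derivatives taken with respect to $\mu_1$, and then uses conformal covariance through $\tilde\phi_t$ together with the chain rule $|\tilde h_t'(w)|=|\tilde\phi_t'(\tilde g_t(w))|\,|\tilde g_t'(w)|$ to regroup the factors into $J(t)\,\annmass(r(t),R_t)$. Your additional remarks on the winding-number bookkeeping and on the regularity of $F$ are consistent with what the paper relies on implicitly.
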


Using this proposition, we can write down a differential
equation for $ \annmass(s,x)$.  It is convenient 
%to
%multiply by a constant $e^{-br}$ (here $r$ is fixed and we
%use $s$ for the varying radius) and 
to 
write the local martingale in the annulus parametrization.
Let $U_s^* = U_{\sigma_{r-s}}$.  Then $U_s^*$ is a
martingale with quadratic variation $\sigma_{r-s}$.
Let $R_s^* = h_{r-s}^*(z) - U_s^*$.   Then
\[    dR_s^* =  \p_s h_{r-s}^*(z) \,dt
    + d U_s^*\]
Note  that
\[ \p_s h_{r-s}^*(z) = \zhan_I(r,x),\;\;\;\;
 \p_s \sigma_{r-s}\mid_{s=r} = 2/a = \kappa.\]
The last proposition becomes the following.

\begin{proposition}
For fixed $r >0$, if  $R_s^* = h_{r-s}^*(z) - U_s^*$
and 
\[ 
 M_s^* =  J^*(r - s) \, F(r-s, R_s^*) , \]
where
\[ J^*(s) = 
 (\tilde  g_s^*)'(0)^{-\tilde b} \, |\tilde(\phi_s^*)'(e^{iU_{\sigma_s}})|^b
              \, \exp\left\{\frac \cent 2 \, 
                 m_{\Disk}(\Disk_s
                 ,\eta_{\sigma_s})\right\}
  \,
           |(\tilde  h_s^*)'(w)|^{ b}  
            ,\]
 then $M_s^*$ is a martingale.       
   \end{proposition}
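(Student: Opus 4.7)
The plan is to obtain the martingale property for $M_s^*$ from the local martingale property of $M_t$ in the previous proposition by (i) identifying $M_s^*$ as a deterministic time-change of $M_t$, and (ii) upgrading local martingale to genuine martingale via the Radon-Nikodym interpretation $M_t = (d\mu_3/d\mu_2)(\eta_t)$.

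For step (i), the map $s \mapsto \sigma_{r-s}$ is, given the curve $\gamma$, a continuous, strictly increasing bijection from $[0, r)$ onto $[0, \tau_r)$; strict monotonicity and continuity follow from continuity of $t \mapsto r(t)$ together with $\dot r(0) = -1/\kappa < 0$ (Lemma \ref{estimateone}), propagated throughout the lifetime via the annulus Loewner equation \eqref{anneq}. Substituting $t = \sigma_{r-s}$ into $J(t)$ and using $r(\sigma_{r-s}) = r-s$, $\tilde g_{\sigma_{r-s}} = \tilde g_{r-s}^*$, $\tilde h_{\sigma_{r-s}} = \tilde h_{r-s}^*$, $\tilde\phi_{\sigma_{r-s}} = \tilde\phi_{r-s}^*$, $U_{\sigma_{r-s}} = U_s^*$, and consequently $R_{\sigma_{r-s}} = R_s^*$, matches $M_{\sigma_{r-s}}$ with $M_s^*$ factor by factor, modulo reconciliation of the Brownian loop-measure factor (see obstacle below). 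A continuous deterministic time-change of a continuous local martingale is again a continuous local martingale in the time-changed filtration $\F_s^* := \F_{\sigma_{r-s}}$, which coincides with the natural filtration of $U_s^*$.

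For step (ii), $M_t$ was constructed as the Radon-Nikodym density of the finite positive measure $\mu_3$ (the winding-number sector of annulus $SLE_\kappa$) with respect to the probability measure $\mu_2$ (radial $SLE_\kappa$), restricted to the initial-segment filtration on $\{t < \tau_r\}$, where $\mu_3 \ll \mu_2$. Such a density is automatically a true martingale under $\mu_2$ by the tower property of conditional expectations; since the deterministic time change preserves conditional expectations (up to relabeling the filtration), $M_s^*$ is a true martingale on $[0, r)$.

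The main obstacle is the loop-measure reconciliation in step (i): $J(t)$ contains the factor $m_\Disk(\overline{\Disk_r}, \eta_t)$ while $J^*(s)$ as written contains $m_\Disk(\overline{\Disk_s}, \eta_{\sigma_s})$, and these do not coincide under literal substitution. I would resolve this by invoking conformal invariance of the Brownian loop measure: the discrepancy between the two loop-measure factors should cancel against matching discrepancies in the conformal-derivative factors $|(\tilde\phi_s^*)'(e^{iU_{\sigma_s}})|^b$ and $|(\tilde h_s^*)'(w)|^b$ under the conformal map $\tilde g_{\sigma_s}$ between the original annulus and the residual annulus of modulus $s$. If this bookkeeping proves delicate, a robust alternative bypasses the identification entirely and verifies the martingale property of $M_s^*$ directly by It\^o's formula: derive the SDE for $R_s^*$ from \eqref{anneq} (the drift $\zhan_I(r-s, R_s^*)\,ds$ arises as the real part of $-2\cpois_{r-s}$ via the expansion at the end of Section \ref{anneqsec}), compute the logarithmic derivative of $J^*(r-s)$ using Lemma \ref{magic} and \eqref{future}, apply the PDE \eqref{fpde} for $F$ from Proposition \ref{greatprop}, and verify that the resulting drift of $M_s^* = J^*(r-s) F(r-s, R_s^*)$ vanishes, with the combination $\alpha(r-s) = b + \tilde b[6\Gamma(r-s)-1]$ matching the contributions of the individual factors in $J^*$.
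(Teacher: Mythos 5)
Your primary route is essentially the paper's: the paper offers no separate argument here, simply presenting $M_s^*$ as the local martingale $M_t$ of the preceding proposition rewritten in the annulus parametrization $t=\sigma_{r-s}$, and your step (ii) supplies the implicit upgrade to a true martingale via the density interpretation $M_t = (d\mu_3/d\mu_2)(\eta_t)$ together with $\|\mu_3\| = F(r,x)<\infty$ and the fact that $\sigma_{r-s}<\tau_r$ for $s<r$ under both measures, so no mass escapes before the time change. Three caveats. First, the time change $s\mapsto \sigma_{r-s}$ is not deterministic --- $\sigma_u=\inf\{t: r(t)=u\}$ depends on the realized curve --- so you should appeal to preservation of the (local) martingale property under a continuous increasing family of stopping times (optional sampling for the closed density martingale), not a deterministic time-change lemma; the conclusion is unaffected. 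Second, your ``main obstacle'' is a typo rather than a cancellation to be engineered: the loop-measure factor in $J^*$ should be read with the fixed disk $\Disk_r$, i.e.\ as $m_\Disk(\Disk_r,\eta_{\sigma_s})$, exactly as produced by literal substitution $t=\sigma_s$ into $J(t)$ and exactly as it appears in the subsequent computation of $-\dot J(r)$; no conformal-invariance bookkeeping is needed. Third, your fallback of verifying that the drift of $M_s^*$ vanishes by invoking the PDE \eqref{fpde} must be discarded: in the paper that PDE is \emph{derived} from this proposition by applying It\^o's formula to $M_s^*$ at $s=0$, so using it to prove the martingale property would be circular.
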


If we write dots for $r$-derivatives, then by considering the
martingale at time $s=0$ and using It\^o's formula, we get
the equation
\[   \dot F =   
              \frac \kappa 2 \, F''+
              \zhan_I \, F'   - \dot J \,F , \]
 where
 \[   -\dot J(r) = \p_s J(r-s) \mid_{s=0}. \]
 All the remains for proving Proposition \ref{greatprop}
 is to calculate $-\dot J(r)$.

\begin{lemma}
\[  -\dot J(r) = \alpha(r)
+  b \zhan_I'(r,x)  - \frac{b}r.\]
\end{lemma}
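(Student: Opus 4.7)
My plan is to compute $-\dot J(r)=\p_s J^*(r-s)|_{s=0}$ by applying It\^o's formula to $\log J(t)$ at the chordal time $t=0$ and then converting to the annulus parametrization via the chain rule. Since $r(\sigma_u)=u$ and $\dot r(0)=-a/2$ (Lemma \ref{estimateone}), one has $dt/ds|_{s=0}=-1/\dot r(0)=2/a$; because $J^*(r)=J(0)=1$ and each factor in $J$ will turn out to have no martingale component at $t=0$, the $s$-drift of $J^*(r-s)$ at $s=0$ equals $2/a$ times the $t$-drift of $\log J(t)$ at $t=0$. Thus it suffices to show the latter equals $(a/2)[\alpha(r)+b\zhan_I'(r,x)-b/r]$.

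Decompose $\log J(t)=-\tilde b\log \tilde g_t'(0)+b\log\Phi_t'(U_t)+(\cent/2) m_\Disk(\Disk_r,\eta_t)+b\log|\tilde h_t'(w)|$, using $|\tilde \phi_t'(e^{iU_t})|=\Phi_t'(U_t)$ for $U_t\in\R$. The first three drifts are routine. The factor $\tilde g_t'(0)=e^{at/2}$ is deterministic and contributes $-a\tilde b/2$. For $b\log\Phi_t'(U_t)$, It\^o's formula produces a $b\Phi_t''/\Phi_t'\, dU_t$ term and an It\^o correction $(\kappa b/2)[\Phi_t'''/\Phi_t'-(\Phi_t''/\Phi_t')^2]$; both vanish at $t=0$ since $\Phi_0=\text{id}$ forces $\Phi_0''=\Phi_0'''=0$, leaving only the bare $t$-drift $b\dot\Phi_0'(0)/\Phi_0'(0)=ab[\Gamma(r)-1/(2r)]$ by Lemma \ref{magic}. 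For the loop term, the radial analog of \eqref{nov4.2} writes $m_\Disk(\Disk_r,\eta_t)=a\int_0^t \Gamma_\Disk(\tilde g_s(1);\tilde g_s(A_r))\,ds$ as a finite-variation process whose rate at $t=0$ is $a\Gamma_\Disk(1;A_r)=a\Gamma(r)$, contributing $a\cent\Gamma(r)/2$.

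The subtle fourth term exploits the covering relation $\tilde h_t(e^{iz})=e^{ih_t(z)}$. With $w=e^{iz}$ and $z=x+ir$, differentiating in $w$ gives $\tilde h_t'(w)=h_t'(z)\,e^{i(h_t(z)-z)}$, whence $|\tilde h_t'(w)|=|h_t'(z)|\,e^{-\Im h_t(z)+r}$ and
\[
\p_t \log|\tilde h_t'(w)|\big|_{t=0}=\p_t \log|h_t'(z)|\big|_{t=0}-\p_t \Im h_t(z)\big|_{t=0}.
\]
The annulus Loewner equation at $t=0$ gives $\p_t h_t(z)|_{t=0}=2\dot r(0)\cpois_r(z)=-a\cpois_r(x+ir)$, and from $\cpois_r(x+ir)=-\zhan_I(r,x)/2+i/2$ (Section \ref{anneqsec}) one reads off $\p_t \Im h_t(z)|_{t=0}=-a/2$, while differentiation of the equation in $z$ yields $\p_t h_t'(z)|_{t=0}=-a\cpois_r'(x+ir)=a\zhan_I'(r,x)/2$ (real). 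Since $h_0'(z)=1$, the fourth term contributes $b(a\zhan_I'(r,x)/2+a/2)$.

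Summing the four contributions gives $(a/2)[(b-\tilde b)+(2b+\cent)\Gamma(r)+b\zhan_I'(r,x)-b/r]=(a/2)[\alpha(r)+b\zhan_I'(r,x)-b/r]$, and multiplying by $2/a$ yields the claim. The main obstacle is the fourth term: it is tempting to identify $|\tilde h_t'(w)|$ with $|h_t'(z)|$, but the factor $e^{-\Im h_t(z)+r}$ coming from the periodic covering map $\psi(z)=e^{iz}$ supplies the extra $ab/2$ that converts the $-\tilde b$ from the other three contributions into the $b-\tilde b$ needed to complete $\alpha(r)$.
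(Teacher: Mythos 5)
Your computation is correct and follows essentially the same route as the paper: the same four-factor decomposition of $\log J$, the same inputs (Lemma \ref{magic}, the expansion of $\cpois_r$ at $x+ir$, the bubble-measure rate for the loop term, and the covering-map factor relating $|\tilde h_t'(w)|$ to $|h_t'(z)|$), differing only in that you differentiate in the capacity time $t$ and convert by the chain-rule factor $2/a$ rather than working directly in the annulus parametrization. Your explicit check that the martingale and It\^o-correction terms of $\Phi_t'(U_t)$ vanish at $t=0$ because $\Phi_0$ is the identity is a welcome clarification of a point the paper passes over silently.
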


\begin{proof}
 
We have parametrized radial $SLE_\kappa$ such that
\[ \p_t \tilde g_t'(0) = (a/2) \, g_t'(0), \]
and hence
\[  \p_t\log \left[\tilde g_t'(0)^{-\tilde b}
  \right]|_{t=0} = - \frac{a \tilde b}{2} = \frac{b(1-a)}{4},\]
  \[  \p_s \log \left[ (\tilde g_{r-s}^*)'(0)^{-\tilde{b}}
   \right]_{s=0}  = -\tilde b . \] 
The relationship between the Brownian loop measure
and the bubble measure implies
\[    \p_s \frac{\cent}{2} \, m_\Disk(\Disk_r,
\eta_{\sigma_{r-s}})\mid_{s=0} = \frac 2 a  \p_t 
               \frac{\cent}{2} \,  m_{\Disk}(\Disk_r,\eta_t)
                 |_{t=0} 
  =  \cent \, \Gamma_{\Disk}
 (1,A_r) = \cent\, \Gamma(r). \]  
Lemma \ref{magic} shows that 
  \[
   \p_s \log |(\tilde  \phi_{r-s}^*)'(U_s^*)|^b \mid_{s=0}
=-\frac b{r} + 2b\Gamma(r).\]
% \[
%   \p_s \log |(\tilde  \phi_{r-s}^*)'(U_s^*)|^b \mid_{s=0}
%=-\frac b{r} + 2b\delta(r) + \frac{b\pi^2}{6r^2}.\]
 Recall that if $z = x + ir,
w = e^{iz} = e^{-r + i x} ,  $
$  \tilde h_s^*(w) = e^{i h_s^*(z)}, $
and hence
\[        |(\tilde h_{r-s}^*)'(w)|
          =   e^r \, e^{- \Im[h_{r-s}^*(z)]}
            \, |(h_{r-s}^*)'(z)| = 
              e^s \, |(h_{r-s}^*)'(z)|. \]
Therefore, using  \eqref{future}, we have
\[   \p_s  \log |(\tilde h_{r-s}^*)'(w)|^b \mid_{s=0}
   = b +    b \zhan_I'(r,x)  .\]
Adding all the terms, 
 gives
\[ b -\tilde b +  (\cent + 2b) \, 
\,\Gamma(r)
+  b \zhan_I'(r,x)  - \frac{b}r=
 \alpha(r) +  b \zhan_I'(r,x)  - \frac{b}r.\]
\end{proof}

\subsection{Comparing annulus $SLE$ with radial $SLE$ large $r$}

We now have an essentially complete description of annulus
$SLE_\kappa$.  In our framework, this is a measure $\mu_{A_r}
(1,e^{-r+ix})$ of total
mass $\hat F(r,x)$.  In the next subsection, we will prove
the following.

\begin{theorem}  \label{bast.theorem}
There exist $c_* ,q\in(0,\infty)$ such
that uniformly in $x$,
\[    \hat F(r,x) = 
 c_* \, r^{\cent/2} \, e^{(b-\tilde b)r}
  \, [1 + O(e^{-qr})], \;\;\;\; r \rightarrow \infty. \]
\end{theorem}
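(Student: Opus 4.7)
The plan is to carry out an asymptotic analysis of the PDE \eqref{fpde2} satisfied by $\hat F$ (from Proposition \ref{greatprop}) in the regime $r\to\infty$. The key inputs are the large-$r$ expansions
\[
\Gamma(r) = \frac{1+O(e^{-r})}{2r},\qquad \zhan_I(r,x),\ \zhan_I'(r,x) = O(e^{-r}),
\]
from Proposition \ref{deltaprop} and Lemma \ref{dec4.1}, both uniform in $x\in\R/2\pi\Z$. Setting $A(r) = r^{\cent/2} e^{(b-\tilde b)r}$, I would substitute the ansatz $\hat F(r,x) = A(r)\, G(r,x)$ into \eqref{fpde2}. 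A direct bookkeeping computation, using the identity relating $b,\tilde b,\cent$ from \eqref{bandc}, should show that the $O(1/r)$ pieces of the coefficient of $F$ (namely the contributions of $\tilde b\cdot 6\Gamma(r)$ and $-b/r$) cancel against the $(\cent/2)/r + (b-\tilde b)$ coming from $\dot A/A$, leaving
\[
\dot G = \tfrac{\kappa}{2}\, G'' + \zhan_I\, G' + Q(r,x)\, G, \qquad \sup_x |Q(r,x)| \le c\,e^{-qr},
\]
for some $q>0$ and all $r\ge 1$. The theorem is then equivalent to the claim that $G(r,x)\to c_*$ uniformly in $x$ at exponential rate.

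To analyze $G$, I would use the Feynman--Kac representation: for $r > r_0 \ge 1$,
\[
G(r,x) = \E^x\!\left[G\bigl(r_0, X_{r-r_0}\bigr)\,\exp\!\left\{\int_0^{r-r_0} Q(r-s, X_s)\,ds\right\}\right],
\]
where $X$ is the time-inhomogeneous diffusion on $\R/2\pi\Z$ with $dX_t = \zhan_I(r-t, X_t)\,dt + \sqrt{\kappa}\,dB_t$, started at $X_0=x$. Because both $\zhan_I(r-t,\cdot)$ and $Q(r-s,\cdot)$ decay like $e^{-q(r-t)}$, the exponential weight is $1 + O(e^{-qr_0})$ and the process $X$ is an exponentially small perturbation of Brownian motion with diffusion coefficient $\sqrt\kappa$ on the circle.

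Uniform convergence of $G$ then follows from a synchronous (mirror) coupling on the circle of two copies of $X$ started at $x,x'$ and driven by the same Brownian motion: such a coupling succeeds within an exponentially tailed random time $T$. Combined with an a priori bound $G(r,\cdot)\le C$ obtained from the boundary perturbation rule \eqref{perturb} applied to a simply connected subdomain containing $A_r$ (or from the monotonicity $\mu_D\le \mu_{D_1}$ for $\cent\le 0$, with a separate crude bound for $0<\cent\le 1$ using bounded winding), this shows $|G(r,x)-G(r,x')|\le c\,e^{-q'r}$. Applying the Feynman--Kac identity a second time to compare $G(r',x)$ with $G(r,x)$ for $r'>r$ then gives the Cauchy property $|G(r',x)-G(r,x)|=O(e^{-qr})$, so $G(r,x)$ converges to a single limit $c_*\ge 0$ uniformly in $x$. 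Positivity $c_*>0$ can be read off from the $\kappa=2$ case, where the explicit formula $\hat F(r,x)=\tfrac12 e^r\,\nzhan(r,x)$ combined with Lemma \ref{dec4.1} gives $c_*=\tfrac12$, and for general $\kappa$ from an explicit lower bound $\hat F(r,x)\ge \tmass_{A_r}(1, e^{-r+ix}; D_1)>0$ for a concrete simply connected corridor $D_1\subset A_r$ whose partition function can be estimated via conformal covariance of simply connected chordal $SLE_\kappa$ and the boundary perturbation formula.

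The main obstacle is the coefficient arithmetic in the first step: the cancellation of the $O(1/r)$ terms to produce exactly the exponent $\cent/2$ on $r$ hinges on the precise relations among $b,\tilde b,\cent$ in \eqref{bandc}, and any mismatch would leave an $O(1/r)$ residual in $Q$ which would both destroy the exponential rate and shift the polynomial prefactor. Once that cancellation is in hand, the remainder of the argument is standard large-time asymptotics for a diffusion on a compact state space with exponentially small time-dependent drift and potential, the only subtlety being to maintain $x$-uniformity in all estimates (which is what the coupling step is designed to provide).
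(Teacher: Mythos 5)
Your proposal is correct and follows essentially the same route as the paper: the paper normalizes $\hat F$ by the factor $\lambda(r)=r^{b}\exp\{-\int_1^r\alpha(s)\,ds\}\asymp r^{-\cent/2}e^{(\tilde b-b)r}$ (rather than by the bare asymptotic prefactor, which saves it from having to exhibit the $O(1/r)$ cancellation explicitly), observes that $K=\lambda\hat F$ then solves $\dot K=\frac\kappa2 K''+\zhan_I K'+b\,\zhan_I'K$ with all variable coefficients $O(e^{-r})$, and concludes via the Feynman--Kac representation together with a coupling on the circle (Lemma \ref{couple}) that $K$ converges to a positive constant at an exponential rate, uniformly in $x$. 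Your synchronous coupling and your route to the a priori two-sided bounds are interchangeable with the paper's independent coupling and its iteration of the Feynman--Kac identity down to $r=1$.
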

Let $\mu_2 = \mu_{\Disk}(1,0)$ as before and
let $\mu_4 = \mu_{A_r}
(1,e^{-r+ix})$ with corresponding probability measure
$\mu_4^\#$.   Suppose $t$ is sufficiently small so
that a curve starting at the unit disk cannot reach $C_r$ by
time $t$. Then, similarly to the previous section, if
$w = e^{-r + ix}$ and $\zeta_t = \tilde g_t(\gamma(t))$,
 we can
write 
\[
 \frac{d\mu_4}{d\mu_2} (\eta_t)  =  \frac{|\tilde g_t'(w)|^b}{\tilde g_t'(0)^
 {\tilde b}}
 \, \exp \left\{\frac \cent 2 m_\Disk(\Disk_r, \eta_t)\right\}
     \,  
       {|\mu_{\tilde g_t(A_r \setminus \eta_t)}(\zeta_t , \tilde g_t(w) )|}
        .\]
        
\begin{proposition} There exists $q > 0$ such
that uniformly over 
$t> 0$, $r \geq   \frac {ta}{2} + 2$, and all initial segments
$\gamma_t$,
\[  \frac{d\mu_4}{d\mu_2} (\eta_t) =
c_* \,e^{r(b - \tilde b)}
  \,r^{\cent/2} \,  [1 + O(e^{-qu})] ,\]
  where 
$u = r -\frac {ta}{2}$.   In particular, there exists $c < \infty$
such that 
\[   \left| \frac{d\mu_4^\#}{d\mu_2^\#} (\eta_t)-1 \right| \leq c \, e^{-qu}.\]
\end{proposition}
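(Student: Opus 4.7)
The plan is to apply Theorem \ref{bast.theorem} after using Lemma \ref{lemma.jul13} to compare the conformal annulus $\tilde g_t(A_r \setminus \eta_t)$ with a standard annulus $A_s$, and then to track how the various error terms combine.

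First, I would invoke Lemma \ref{lemma.jul13} with $D = \Disk \setminus \eta_t$ and $g = \tilde g_t$. Since $\tilde g_t'(0) = e^{at/2}$, we have $u = r - \log \tilde g_t'(0) = r - at/2$, with $u \geq 2$ by hypothesis. The lemma produces a conformal map $\phi: \tilde g_t(A_r \setminus \eta_t) \to A_s$ with $s = u + O(e^{-u})$, and gives $|\phi'(\zeta_t)| = 1 + O(e^{-u})$ on the outer boundary together with $|(\phi \circ \tilde g_t)'(w)| = \tilde g_t'(0)\,[1 + O(e^{-u})]$ on $C_r$. Combining the latter with the Koebe distortion bound $|\tilde g_t'(w)| = \tilde g_t'(0)\,[1 + O(e^{-u})]$ for $|w| = e^{-r}$ yields $|\phi'(\tilde g_t(w))| = 1 + O(e^{-u})$. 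By conformal covariance,
\[ |\mu_{\tilde g_t(A_r \setminus \eta_t)}(\zeta_t, \tilde g_t(w))| = |\phi'(\zeta_t)|^b \, |\phi'(\tilde g_t(w))|^b \, \hat F(s, \theta_t), \]
where $\theta_t = \arg \phi(\tilde g_t(w)) - \arg \phi(\zeta_t)$.

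Next, Theorem \ref{bast.theorem} gives $\hat F(s, \theta_t) = c_* s^{\cent/2} e^{(b-\tilde b)s}[1 + O(e^{-qs})]$ uniformly in $\theta_t$, which, via $s = u + O(e^{-u})$ (and $u \geq 2$), rewrites as $c_* u^{\cent/2} e^{(b-\tilde b)u}[1 + O(e^{-qu})]$ after possibly shrinking $q$. Lemma \ref{lemma.jul13} also supplies $m_\Disk(\overline \Disk_r, \Disk \setminus D) = \log(r/u) + O(e^{-u})$, so the loop factor is $(r/u)^{\cent/2}[1 + O(e^{-u})]$; and the derivative ratio satisfies $|\tilde g_t'(w)|^b / \tilde g_t'(0)^{\tilde b} = e^{(r-u)(b-\tilde b)}[1 + O(e^{-u})]$. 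Plugging these into the given expression for $d\mu_4/d\mu_2(\eta_t)$, the factors $e^{(r-u)(b-\tilde b)} \cdot e^{(b-\tilde b)u} = e^{(b-\tilde b)r}$ and $(r/u)^{\cent/2} \cdot u^{\cent/2} = r^{\cent/2}$ combine to produce exactly $c_* \, r^{\cent/2} \, e^{(b-\tilde b)r}[1 + O(e^{-qu})]$.

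For the normalized statement, $\tmass_\Disk(1,0) = 1$ implies $\mu_2^\# = \mu_2$, while $|\mu_4| = \hat F(r,x) = c_* r^{\cent/2} e^{(b-\tilde b)r}[1 + O(e^{-qr})]$ by another application of Theorem \ref{bast.theorem}. Dividing the preceding display by $|\mu_4|$ and using $r \geq u$ to absorb $e^{-qr}$ into $e^{-qu}$ yields the bound $|d\mu_4^\#/d\mu_2^\# - 1| \leq c\, e^{-qu}$. The only real obstacle is bookkeeping: one must check that every $O$-constant produced by Lemma \ref{lemma.jul13} and Theorem \ref{bast.theorem} is uniform in the initial segment $\gamma_t$, in $x$, and in $(r,t)$ subject to $u \geq 2$, and that a single exponent $q > 0$ can be extracted after multiplying the errors. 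All the conceptual content, however, is already encoded in the two cited results.
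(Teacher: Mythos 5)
Your proof is correct and follows essentially the same route as the paper: both use Lemma \ref{lemma.jul13} for the deterministic conformal-map and loop-measure estimates, conformal covariance to reduce the slit annulus to $A_s$ with $s = u + O(e^{-u})$, and Theorem \ref{bast.theorem} to evaluate $|\mu_{A_s}(\cdot,\cdot)|$, after which the factors combine exactly as you describe. Your extra care about uniformity in $\gamma_t$ and about shrinking $q$ is sound and slightly more explicit than the paper's own write-up.
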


\begin{proof}
 Let $\phi_t: \tilde g_t(A_r \setminus \eta_t) \rightarrow A_s$ be a conformal
 transformation sending $C_0$ to $C_0$ and let $h_t = \phi_t \circ \tilde g_t$. 
Using conformal covariance, we write
\[  \frac{d\mu_4}{d\mu_2} (\eta_t) = 
  \frac{|h_t'(w)|^b\, |\phi_t'(\zeta_t)|^b}{\tilde g_t'(0)^
 {\tilde b}}\, \exp \left\{\frac \cent 2 m_\Disk(\Disk_r, \eta_t)\right\}
       {|\mu_{A_s}(h_t(w),\phi_t(\zeta_t) )| } .\]

Suppose $t$ is given, $r \geq   \frac {ta}{2} + 2$ and let
$u = r -\frac {ta}{2}$.  
Recall that in our normalization $\tilde g'_t(0) = e^{at/2}$.
Using the deterministic estimates from Lemma \ref{jul13.lemma1}, we get
\[              |h_t'(w)|^b = e^{atb/2} \, [1 + O(e^{-u})], \]
\[             |\phi_t'(\zeta_t)|^b = 1 + O(e^{-u}),\]
\[    \exp \left\{\frac \cent 2 m_\Disk(\Disk_r, \eta_t)\right\}
  = (r/u)^{\cent/2} \, [1 + O(e^{-u})], \]
  \[  s = u + O(e^{-u}) , \]
  \[   {|\mu_{A_s}(h_t(w),\phi_t(\zeta_t) )| } 
  =  c_* \, u^{\cent/2} \, e^{(b-\tilde b)u}
  \, [1 + O(e^{-u})] . \]
  Combining these estimates gives the first equality and
  since the dominant factor does not depend on the initial
  segment, the second equality follows.
 \end{proof}

%\begin{theorem}  There exists $c < \infty$ such that
%if $r \geq \frac {ta}{2} + 2$, then for every
%$\eta_t$,
%\[   \left|\frac{d\mu^\#_4}{d\mu^\#_2}(\eta_t) - 1 \right| 
%  \leq c \, \exp \left\{ \frac{ta}{2} - r\right\}    .\]
%  \end{theorem} 
%  
%\begin{proof}  Let $ u =  r-\frac{ta}{2} $ and $g =\tilde g_t$.
%To prove the result it suffices to show that
% \[ \frac{d\mu_4}{d\mu_2} (\eta_t) 
% = L(r,t) \, [1 + O(e^{-u})], \]
%where $L(r,t)$ does not depend on $\eta_t$.
%In the next section we will show that there exists $c_*$ such
% that
% \[    \hat F(s,x) = c_* \, s^{\cent/2} \, e^{(b-\tilde b)s}
%  \, [1 + O(e^{-s})]. \]
%  For the remainder of the terms, there will be uniform estimates
%  on conformal maps.  See Lemma \ref{lemma.jul13}.
%\end{proof}
%  
  
\subsection{Proof of Theorem \ref{bast.theorem}}

 Let
\[  \lambda(r) = r^{b}\,\exp\left\{-\int_1^r \alpha(s)\, ds\right\},\]
\[   K_1(r,x) = \lambda(r) \, F(r,x), \]
 \[  K(r,x) = \lambda(r)
 \, \tmass_{A_r}(1,e^{-r+ix})
 =\lambda(r)
 \, \hat F(r,x) = \sum_{k \in \Z}
    F(r, x+ 2\pi k) .\]
Proposition \ref{deltaprop} gives
\[     \alpha(r) = b - \tilde b + (2b+ \cent) \, \Gamma(r)
  = b - \tilde b  + \frac{2b + \cent +O(e^{-r}) }{2r}  ,\]
  and hence
      \[ \lambda(r) 
    = \lambda_\infty \, r^{- \cent/ 2} \, e^{(\tilde b - b)r}
     \, [1 + O(r^{-1}e^{-r})].\]
 Therefore, to prove Theorem \ref{bast.theorem}, it suffices
 to show that there exists $K_\infty \in (0,\infty)$ and $c < \infty$
 such that
 \[             |K(r,x) - K_\infty|\leq c\, e^{-r}. \]

     Since
  \[   \dot \lambda(r) = \lambda(r) \, \left[\frac{b}{r} - \alpha(r) \right],\]
   it follows from Proposition \ref{greatprop} that 
 $K_1,K$ satisfy
 \[  \dot K_1
 = \frac{\kappa}{2} \, K_1''
  +   \zhanh_I K_1' + b\, \zhanh_I'\,
   K_1 ,\]
\begin{equation}  \label{kpde}  \dot K
 = \frac{\kappa}{2} \, K''
  +   \zhanh_I K' + b\, \zhanh_I'\,
   K .
 \end{equation}
 
% \begin{lemma}  There exists $\lambda_\infty \in (0,\infty)$
% such that as $r \rightarrow \infty$,
%\[ \lambda(r) 
%    = \lambda_\infty \, r^{- \cent/ 2} \, e^{(\tilde b - b)r}
%     \, [1 + O(r^{-1}e^{-r})].\]
%\end{lemma}
%
%\begin{proof}
%Proposition \ref{deltaprop} gives
%\[     \alpha(r) = b - \tilde b + (2b+ \cent) \, \Gamma(r)
%  = b - \tilde b  + \frac{2b + \cent +O(e^{-r}) }{2r}  . \]
%  \end{proof}
% $\delta(r) = (2r)^{-1} + O(r^{-2})$, we see there exists 
%$\lambda_\infty \in (0,\infty)$ such that
% \[ \lambda(r) 
%    = \lambda_\infty \, r^{- \frac \cent 2} \, e^{(\tilde b - b)r}
%     \, [1 + O(r^{-1}e^{-r})], \;\;\;\;
%       r \rightarrow \infty . \]
%In this subsection, we consider $r \geq 1$  
%Let  
%$ K(x) = K(1,x)  .$ 
%We know that
%\[  K(x) = c \, \annmass(r,x) \leq 
%c \, H_{S_1}(0,x+i) \leq  c \, e^{-2bx}.\] 
% For $r >0$, we
%let $X_t^{(r)}, 0 \leq t < r$ denote a solution to
%the SDE
%\begin{equation}  \label{rsde}
%     dX_t = \zhanh_I(r-t,X_t) \, dt + \sqrt \kappa 
%\, d B_t. 
%\end{equation} 
The Feynman-Kac representation  tells us that
if $r > t > 0$,
\begin{equation}  \label{f-k}
K(r,x) = \E^x\left[
   K(r-t,X_t^{(r)}) \, \exp \left\{\int_0^t
       \nzhan(r-s,X_s^{(r)}) \, ds \right\}
         \right], 
  \end{equation}
         where $X_t^{(r)}$ satisfies \eqref{xsde}.
Recall that
\begin{equation}  \label{nov25.30}
   |\zhanh_I(r,x)|,   \,
   |\nzhan(r,x) | \leq  c_0 \, e^{-r}
    ,
   \;\;\;\; r \geq 1, 
   \end{equation}  
which implies
 \begin{equation}  \label{thank.1}
     \left|\int_0^{r-t}
    \zhanh_I'(z,X_{s}^{(r)}) \, ds \right| \leq c \, e^{-t}
    , 
     \;\;\;\;
  \exp\left\{b\int_0^{r-1}
    \zhanh_I'(z,X_{s}^{(r)}) \, ds \right\} 
     \asymp 1 , 
     \end{equation}
     and for $r \geq 1$, 
    \[  K(r,x) \asymp \E^x \left[K(X_{r-1}) \right]
           \leq c \, \E^x\left[ \exp \left\{-2b X_{r-1} \right\}
           \right], \]
where $X_s = X_{s}^{(r)}$.

   \labove \textsf
{\begin{small}  \Heuristic  
Those experienced with PDEs can probably skip the rest of this
section.  Since $|\zhan_I| +|\zhan_I'| = O(e^{-r})$, for large $r$ the
equation 
\eqref{kpde} is well approximated by the standard heat equation
$\dot K = \frac \kappa 2 \, K''$.  One just needs to keep track
of the error terms.  I have taken a probabilistic approach using
coupling, but this is just personal preference.
 \end{small}}
 \lbelow

We will use standard coupling techniques to analyze the
equation.  Here is the basic estimate.  We write 
$x \equiv y$ if $(y-x)/2\pi \in \Z$. 

\begin{lemma}  \label{couple}
  There exist $u > 0, c< \infty$
such that the following holds.  Suppose $r \geq 2$
and $X_t
= X_t^{(r)}, Z_t = Z_t^{(r)}$ are independent
solutions to 
\eqref{xsde} with $X_0 = x , Z_0 = y$ with
$x \leq y < x+ 2 \pi$.  Let
\[  T = \inf\{t: X_t \equiv Z_t  \}
. \]
Then,
\[   \Prob\{T \geq t \} \leq c \, e^{-ut} , \]
and if $t \leq 1$,
\[    \Prob\{T \geq t^2\}  \leq 
            c \,t^{-1} \, (y-x).\]
If we define
\[   Y_t = \left\{ \begin{array}{ll} Z_t & t < T\\
                    Z_T + (X_t - X_T) & t \geq t 
                    \end{array} \right. \]
Then $Y_t$ satisfies \eqref{xsde} with $Y_0 = y$
and $Y_t \equiv X_t$ for $t \geq T$.  

\end{lemma}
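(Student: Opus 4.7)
The plan is to analyze the difference process $D_t = Z_t - X_t$. Since $B^X$ and $B^Z$ are independent standard Brownian motions, we have
\[
dD_t = \bigl[\zhan_I(r-t, Z_t) - \zhan_I(r-t, X_t)\bigr]\,dt + \sqrt{2\kappa}\, d\tilde B_t,
\]
for a standard Brownian motion $\tilde B_t$, with $D_0 = y-x \in [0,2\pi)$ and $T = \inf\{t : D_t \in 2\pi\mathbb{Z}\}$. The function $\zhan_I(s,\cdot)$ is $2\pi$-periodic and, by Lemma \ref{dec2.lemma5}, satisfies $|\zhan_I(s,\cdot)| \leq \pi/s$, so the drift is uniformly bounded once $s = r - t$ is bounded below.

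\textbf{Short-time estimate.} For $t \leq 1$ and $r \geq 2$ one has $r - s \geq 1$ throughout $s \in [0,t^2]$, hence the drift integral over that interval is bounded by $2\pi t^2$ and $|D_s - (y-x) - \sqrt{2\kappa}\tilde B_s| \leq 2\pi t^2$. A standard reflection-principle bound gives that the probability a Brownian motion started at $y-x$ remains in $(0,2\pi)$ over $[0,t^2]$ is at most $c(y-x)/t$; the $O(t^2)$ drift correction is absorbed into the constant (e.g., by a Girsanov/Cameron--Martin comparison since the drift is bounded). This yields $\Prob\{T \geq t^2\} \leq c\,t^{-1}(y-x)$.

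\textbf{Exponential decay.} For the first bound I would establish a one-step estimate: there exists $\delta > 0$ (independent of $r \geq 2$) such that, starting from any $D_s \in (0,2\pi)$ with $r - s \geq 2$, $\Prob\{\exists \sigma \in [s, s+1]: D_\sigma \in 2\pi\mathbb{Z} \mid \mathcal{F}_s\} \geq \delta$. This is because $\sqrt{2\kappa}\tilde B$ has macroscopic fluctuations on a unit time scale while the drift is bounded by $\pi$, giving a positive probability of reaching either endpoint. Iterating gives $\Prob\{T \geq t\} \leq c\,e^{-ut}$ for $t \leq r-1$; in the remaining range $t > r - 1$ we have $\Prob\{T \geq t\} \leq \Prob\{T \geq r - 1\} \leq c'e^{-ut}$ trivially (since $r \geq t$).

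\textbf{Construction of $Y_t$.} For $t < T$ we have $Y_t = Z_t$, which satisfies \eqref{xsde} driven by $B^Z$. For $t \geq T$ we have $Y_t - X_t = Z_T - X_T \in 2\pi\mathbb{Z}$, so by the $2\pi$-periodicity of $\zhan_I(r-t,\cdot)$ we get $\zhan_I(r-t,Y_t) = \zhan_I(r-t,X_t)$ and hence $dY_t = \zhan_I(r-t,Y_t)\,dt + \sqrt\kappa\,dB^X_t$. Defining $B^Y_t = B^Z_t$ for $t \leq T$ and $B^Y_t = B^Z_T + (B^X_t - B^X_T)$ for $t > T$, the strong Markov property at the stopping time $T$ (applied to the joint process $(X, Z, \tilde B)$) shows that $B^Y$ is a standard Brownian motion, so $Y$ solves \eqref{xsde} with $Y_0 = y$.

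The main obstacle will be the non-uniform Lipschitz constant of $\zhan_I$ as $r - t \to 0$, but since we only need the one-step coupling estimate on time intervals where $r - s$ is bounded away from $0$ and the residual regime $t \in (r-1, r]$ is controlled trivially, this never materializes as a serious difficulty.
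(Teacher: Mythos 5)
The paper states Lemma \ref{couple} without proof (it is treated as a standard coupling fact), so there is no argument of the author's to compare yours against; what you have written is the natural argument and it is essentially correct. The reduction to the difference process $D_t=Z_t-X_t$ with diffusion coefficient $\sqrt{2\kappa}$, the identification $T=\inf\{t: D_t\in 2\pi\Z\}$ as the exit time of $D$ from $(0,2\pi)$, the uniform one-step exit estimate (valid because $|\zhan_I(s,\cdot)|\le \pi/s$ keeps the drift bounded by $2\pi$ as long as $r-t\ge 1$, so the Gaussian fluctuation dominates on unit time scales), and the periodicity argument showing that $Y$ solves \eqref{xsde} after concatenating the driving Brownian motions at the stopping time $T$ are all sound.

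The one place to tighten is the short-time estimate. The naive "subtract the drift" bound gives $\Prob\{T\ge t^2\}\le c\,t^{-1}\bigl[(y-x)+2\pi t^2\bigr]=c\,t^{-1}(y-x)+c't$, and the additive $c't$ term is \emph{not} dominated by $c\,t^{-1}(y-x)$ when $y-x\ll t^2$; likewise the Girsanov/Cauchy--Schwarz comparison you mention as an alternative only yields $\bigl(t^{-1}(y-x)\bigr)^{1/2}$, which is too weak. The correct fix is the pathwise comparison $D_s\le (y-x)+2\pi s+\sqrt{2\kappa}\,\tilde B_s$ together with the exact first-passage (Bachelier--L\'evy) formula for Brownian motion with \emph{constant} drift: the probability that such a process started at $d$ stays positive up to time $\tau$ has $d$-derivative bounded by $c\tau^{-1/2}+c'$ uniformly, hence is at most $c''d/\sqrt\tau$ for $\tau\le 1$, which with $\tau=t^2$ gives the stated $c\,t^{-1}(y-x)$ with no additive correction. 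With that substitution the proof is complete.
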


%  Clearly, $G(x)$ is bounded uniformly
%above and below and using \eqref{thank.1}, we see
%that there exist $c_1,c_2$ such that 
%\begin{equation}  \label{thank.5}
%    c_1 \leq G(r,x) \leq c_2 , \;\;\; r \geq 1, \,
%   x \in \R. 
%   \end{equation}
%   The next proposition improves this estimate.
   
 \begin{proposition} $\;$
 
 \begin{itemize}
 
 \item There exist $0 < c_1 < c_2 < \infty$
 such that 
 \begin{equation}  \label{thank.5}
    c_1 \leq K(r,x) \leq c_2 , \;\;\; r \geq 1, \,
   x \in \R. 
   \end{equation}
   
   \item There exists $K_\infty \in (0,
 \infty)$ and $u > 0$ and $ c < \infty$ such that  
 \[    |K(r,x) - K_\infty| \leq c \, e^{-ur}.\]
   
   \end{itemize}

   \begin{proof}  For fixed $r$, $x \leq y \leq x +
   2\pi$, let $X_t,Y_t,T$ be as in Lemma 
  \ref{couple} and let $m_-(r), m_+(r)$ be
  the minimum and maximum, respectively, of $K(r,x)$
  for $0 \leq x \leq 2\pi$.   
  From \eqref{f-k} and \eqref{thank.1},
  we see that
$ c_1 \, m_-(1)  \leq 
     K(r,x) \leq c_2 \, m_+(1). $
 Using \eqref{thank.1},
 \[   K(r,x) = \E^x\left[F(r/2,K_{r/2})\right]
   \, [1 + O(e^{-r/2})]. \]
     This gives \eqref{thank.5}.
Combining this with the coupling, we see that
\[    K(r,x) = K(r,y) \, \left[1 + O(e^{-ur}) \right]. \]
  
  \end{proof}
   
 \end{proposition}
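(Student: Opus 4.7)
My plan is to leverage the Feynman-Kac representation \eqref{f-k} together with the exponential smallness \eqref{nov25.30} and the coupling from Lemma \ref{couple}, in three steps. To get the uniform bounds \eqref{thank.5}, I would specialize \eqref{f-k} to $t = r-1$: this expresses $K(r,x)$ as an expectation of $K(1, X_{r-1}^{(r)})$ multiplied by an exponential factor whose integrand is $O(e^{-(r-s)})$ by \eqref{nov25.30}. By \eqref{thank.1} this factor lies in a fixed interval bounded away from both zero and infinity. Since $K(r,x) = \lambda(r)\sum_k F(r, x+2\pi k)$ is manifestly $2\pi$-periodic in $x$, and $K(1, \cdot)$ is continuous and strictly positive, it is bounded above and below by positive constants; combining these observations gives \eqref{thank.5}.

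Next, to show that $K(r,\cdot)$ becomes nearly constant as $r \to \infty$, I would use the coupling lemma to bound $|K(r,x) - K(r,y)|$ exponentially in $r$. Fix $r \geq 2$ and $0 < y-x < 2\pi$, and run coupled solutions $X_t, Y_t$ of \eqref{xsde} from $x$ and $y$ with coupling time $T$; after $T$ one has $X_t \equiv Y_t \pmod{2\pi}$. The crucial observation is that both $K(1,\cdot)$ and the coefficient appearing in the exponent of \eqref{f-k} are $2\pi$-periodic in the spatial variable, so the two expressions \eqref{f-k} for $K(r,x)$ and $K(r,y)$ differ only through contributions from the pre-coupling interval $[0,T]$. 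On $\{T \leq r/2\}$ those contributions are $O(e^{-r/2})$ by \eqref{nov25.30}; on $\{T > r/2\}$ Lemma \ref{couple} gives a factor $\Prob\{T > r/2\} \cdot \|K\|_\infty = O(e^{-ur/2})$. Hence $|K(r,x) - K(r,y)| \leq c\, e^{-u' r}$ with $u' = \min(u,1)/2$.

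Finally, for the limit $K_\infty$, I would set $\bar K(r) = (2\pi)^{-1}\int_0^{2\pi} K(r,y)\,dy$, so that the previous step yields $|K(r,x) - \bar K(r)| \leq c\, e^{-u' r}$ uniformly in $x$. Applying \eqref{f-k} with $t = 1$ to pass from time $r$ to $r+1$, the exponential factor is $1 + O(e^{-r})$ by \eqref{nov25.30}, and $\E^x[K(r, X_1^{(r+1)})] = \bar K(r) + O(e^{-u' r})$ by the previous step. Thus $K(r+1,x) = \bar K(r) + O(e^{-u' r})$ uniformly in $x$; averaging in $x$ gives $|\bar K(r+1) - \bar K(r)| \leq c\, e^{-u' r}$, and summing this geometric-type series shows $\{\bar K(r)\}$ is Cauchy with limit $K_\infty$ satisfying $|\bar K(r) - K_\infty| \leq c\, e^{-u' r}$. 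Combining with the contraction estimate of the second step gives $|K(r,x) - K_\infty| \leq c\, e^{-u' r}$, and positivity $K_\infty > 0$ follows from the lower bound in \eqref{thank.5}. The main delicate point I foresee is the bookkeeping in the coupling argument: one must carefully exploit the $2\pi$-periodicity of $\zhan_I$ (equivalently of the exponent in \eqref{f-k}) and of $K$ in $x$ to conclude that the integrands truly agree past the coupling time, so that only the pre-coupling interval contributes; once this is in place, everything else reduces to routine estimates from \eqref{nov25.30}.
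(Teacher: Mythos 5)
Your proposal is correct and follows essentially the same route as the paper: the two-sided bound comes from the Feynman--Kac representation run down to a fixed time together with the uniform boundedness of the exponential weight, and the $x$-independence comes from the coupling of Lemma \ref{couple} combined with the $2\pi$-periodicity of $K$ and of the potential, exactly as in the paper's proof. Your third step (averaging in $x$ and telescoping $\bar K(r+1)-\bar K(r)$ to produce $K_\infty$) is a welcome addition, since the paper's proof stops at the estimate $K(r,x)=K(r,y)\,[1+O(e^{-ur})]$ and leaves the existence of the limit implicit.
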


$  $

\end{document}